\newtheorem{theorem}{Theorem}[chapter]
\newtheorem{proposition}[theorem]{Proposition}
\newtheorem{lemma}[theorem]{Lemma}
\newtheorem{corollary}[theorem]{Corollary}
\newtheorem{problem}{Problem}
\theoremstyle{definition}
\newtheorem{definition}[theorem]{Definition}
\newtheorem{example}[theorem]{Example}
\theoremstyle{remark}
\newtheorem{remark}[theorem]{Remark}
\numberwithin{section}{chapter}
\numberwithin{equation}{chapter}
\newcommand{\bbA}{{\mathbb{A}}}
\newcommand{\bbB}{{\mathbb{B}}}
\newcommand{\bbC}{{\mathbb{C}}}
\newcommand{\bbD}{{\mathbb{D}}}
\newcommand{\bbI}{{\mathbb{I}}}
\newcommand{\bbF}{{\mathbb{F}}}
\newcommand{\bbN}{{\mathbb{N}}}
\newcommand{\bbR}{{\mathbb{R}}}
\newcommand{\bbT}{{\mathbb{T}}}
\newcommand{\bbZ}{{\mathbb{Z}}}
\newcommand{\A}{{\mathcal{A}}}
\newcommand{\B}{{\mathcal{B}}}
\newcommand{\C}{{\mathcal{C}}}
\newcommand{\D}{{\mathcal{D}}}
\newcommand{\F}{{\mathcal{F}}}
\newcommand{\G}{{\mathcal{G}}}
\renewcommand{\H}{{\mathcal{H}}}
\newcommand{\I}{{\mathcal{I}}}
\newcommand{\J}{{\mathcal{J}}}
\newcommand{\K}{{\mathcal{K}}}
\renewcommand{\L}{{\mathcal{L}}}
\newcommand{\M}{{\mathcal{M}}}
\newcommand{\N}{{\mathcal{N}}}
\renewcommand{\O}{{\mathcal{O}}}
\newcommand{\R}{{\mathcal{R}}}
\renewcommand{\S}{{\mathcal{S}}}
\newcommand{\T}{{\mathcal{T}}}
\newcommand{\U}{{\mathcal{U}}}
\newcommand{\X}{{\mathcal{X}}}
\newcommand{\fA}{{\mathfrak{A}}}
\newcommand{\fJ}{{\mathfrak{J}}}
\newcommand{\fM}{{\mathfrak{M}}}
\renewcommand{\phi}{\varphi}
\newcommand{\upchi}{{\raise.35ex\hbox{\ensuremath{\chi}}}}
\newcommand{\Ad}{\operatorname{Ad}}
\newcommand{\alg}{\operatorname{alg}}
\newcommand{\Aut}{\operatorname{Aut}}
\newcommand{\card}{\operatorname{card}}
\newcommand{\id}{{\operatorname{id}}}
\newcommand{\rt}{\operatorname{rt}}
\newcommand{\Rad}{\operatorname{Rad}}
\newcommand\Span{\mathop{\rm span}}
\newcommand\supp{\mathop{\rm supp}}
\newcommand{\lt}{\operatorname{lt}}
\newcommand{\ca}{\mathrm{C}^*}
\newcommand{\cenv}{\mathrm{C}^*_{\text{env}}}
\newcommand{\cmax}{\mathrm{C}^*_{\text{max}}}
\newcommand{\phist}{u}
\newcommand{\sca}[1]{\left\langle#1\right\rangle}
\newcommand\cpr{\rtimes_{\alpha}^{r}\, {\mathcal{G}}}
\newcommand\cpf{\rtimes_{\alpha}\, {\mathcal{G}}}
\newcommand\cpu{ \check{\rtimes}_{\alpha}{\mathcal{G}}}
\newcommand\cpd{ \hat{\rtimes}_{\alpha}\, {\mathcal{G}}}
\begin{document}

\frontmatter
\title[Crossed Products]{Crossed Products of Operator Algebras}

\author[E.G. Katsoulis]{Elias~G.~Katsoulis}
\address {Department of Mathematics
\\East Carolina University\\ Greenville, NC 27858\\USA}
\email{katsoulise@ecu.edu}

\author[C. Ramsey]{Christopher~Ramsey}
\address {Department of Mathematics and Statistics
\\ MacEwan University \\ Edmonton, Alberta\\Canada}
\email{ramseyc5@macewan.ca}

\date{December 24, 2015}

\subjclass[2010]{Primary 47L65; Secondary 46L07, 46L08, 46L55, 47B49, 47L40}

\keywords{$\ca$-correspondence, crossed product, Dirichlet algebra, gauge action, semi-Dirichlet algebra, semisimple algebra, operator algebra, TAF algebra, tensor algebra}

\maketitle

\tableofcontents

\begin{abstract}
We study crossed products of arbitrary operator algebras by locally compact groups of completely isometric automorphisms. We develop an abstract theory that allows for generalizations of many of the fundamental results from the selfadjoint theory to our context. We complement our generic results with the detailed study of many important special cases. In particular we study crossed products of tensor algebras, triangular AF algebras and various associated C$^*$-algebras. We make contributions to the study of C$^*$-envelopes, semisimplicity, the semi-Dirichlet property, Takai duality and the Hao-Ng isomorphism problem.  We also answer questions from the pertinent literature.
\end{abstract}

\mainmatter

\chapter{Introduction} \label{intro}
In this monograph we develop a theory of crossed products that allows for a locally compact group to act on an arbitrary operator algebra, not just a $\ca$-algebra. We establish foundational results, uncover permanence properties and demonstrate important connections between our crossed product theory and various lines of current research in both the non-selfadjoint and the $\ca$-algebra theory.

The reader familiar with the non-selfadjoint literature knows well that crossed product type constructions have occupied the theory since its very beginnings. However most constructions in that theory involve the action of a semigroup which rarely happens to be a group, on an operator algebra which is usually a $\ca$-algebra. There is a good reason for this and it goes back to the early work of Arveson \cite{Arv1, ArvJ}. Arveson recognized that in order to better encode the dynamics of a homeomorphism $\sigma$ acting on a locally compact space $\X$, one should abandon group actions and instead focus on the action of $\bbZ^+$ on $C_0(\X)$ implemented by the positive iterates of $\sigma$. This initiated the study of what Peters coined as the semicrossed product $C_0(\X) \times_{\sigma} \bbZ^+$ \cite{Pet}. The study of semicrossed products by $\bbZ^+$, $\bbF^+_n$ (the free semigroup on $n$ generators) and other important semigroups has produced a steady stream of important results and continues to this day at an increasing pace and depth \cite{Arv1, ArvJ, DFK,  DavKatCr, DavKatAn, DavKatMem,  KakKatJFA1, MS, Pet, Ram2}. 

In this monograph we follow a less-travelled path: we start with an arbitrary operator algebra, preferably non-selfadjoint, and we allow a whole group to act on it. It is remarkable that there have been no systematic attempts to build a comprehensive theory around such algebras even though this class includes all crossed product $\ca$-algebras.  Admittedly, our interest in group actions on non-selfadjoint operator algebras arose reluctantly as well. Indeed, apart from certain important cases (see, e.g., \cite{Dav, MS2, R2}), the structure of automorphisms for non-selfadjoint operator algebras is not well understood. Our initial approach stemmed from an attempt to settle two open problems regarding semi-Dirichlet algebras (which we do settle using the crossed product). We soon realized that even for very ``elementary" automorphisms (gauge actions), the crossed product demonstrates a behavior that allows for significant results. 

The monograph is organized in eight chapters, including this introduction which appears as Chapter~\ref{intro}. Chapter~\ref{prel} establishes the terminology used in the monograph and contains many of the fundamental results from operator algebra theory that we require in the sequel. Most of the results contained here come from five main sources \cite{BlLM, BO, Katsura, Paulsen, Will}, with additional sources mentioned within the chapter. Chapter~\ref{prel} also contains some original material, i.e., Propositions~\ref{prop integral} and \ref{extension}, to be used in later chapters.

In Chapter~\ref{basic} we define the various crossed products appearing in the monograph. Given a $\ca$-dynamical system $(\A, \G, \alpha)$ there are two natural choices for a crossed product, the (full) crossed product $\A \cpf$ and the reduced crossed product $\A \cpr$. In the general case of an operator algebra $\A$ there are many more choices, which we call relative crossed products, depending on the various choices of a $\ca$-cover for $\A$. After a careful consideration, we single out the appropriate choice for the (full) crossed product (Definition~\ref{fulldefn}) as the relative crossed product coming from the universal $\ca$-cover $\cmax(\A)$ of $\A$. Because all relative reduced crossed products coincide (Corollary \ref{allrcoincide}), the quest for a reduced crossed product trivializes. With the appropriate definitions at hand, we can now transfer results from the selfadjoint theory to our context. For instance, in Theorem~\ref{full Raeb} we generalize to the non-selfadjoint setting a result of Raeburn~\cite{Raeb} regarding the universality of the crossed product of $\ca$-algebras. In Theorem~\ref{r=f} we show that if the locally compact group $\G$ is amenable, then all relative crossed products coincide; the proof of this result requires the theory of maximal dilations \cite{DrMc}. In Theorem~\ref{Naimark} we give a ``covariant" generalization of Naimark's Theorem on positive definite group representations. This allows us to obtain the von Neumann-type inequality of Corollary~\ref{vonNeumann}. 

Iterated crossed products play a prominent role in the selfadjoint theory. Our first task in Chapter \ref{Takai} is to explain how to make sense of an iterated crossed product within the framework of our theory. After accomplishing this, we move on to Takai duality. Indeed one of the central results of the selfadjoint theory involving iterated crossed products is the Takai Duality Theorem \cite{Takai}, which extends the Pontryagin Duality to the context of operator algebras and $\ca$-dynamical systems. In Theorem~\ref{Takai duality} we succeed in extending the Takai Duality to the context of arbitrary dynamical systems not just selfadjoint. Apart from its own interest, this extension has significant applications for the study of semisimplicity for operator algebras, as witnessed in Chapter~\ref{semis}. (See Theorem~\ref{Tsemis} and Example~\ref{Texam}.)

One of the immediate consequences of our early theory and a key ingredient in the proof of our Takai duality, is the identity
\begin{equation*} 
\cmax \big(\A \cpf\big)\simeq \cmax (\A)\cpf .
\end{equation*}
(See Theorem~\ref{cmaxthm}.)
One of the motivating questions of the monograph is the validity of the other identity
\begin{equation} \label{introident}
\cenv\big(\A \cpf\big)\simeq \cenv(\A)\cpf ,
\end{equation}
regarding the $\ca$-envelope of the crossed product. In Chapter~\ref{basic} we verify this identity in the case where $\G$ is a locally compact abelian group (Theorem~\ref{abelianenv}). In Chapter~\ref{Dirichlet Sect} we continue this investigation and in Theorem~\ref{Dirichletenv} we verify (\ref{introident}) in the case where $\A$ is Dirichlet but $\G$ arbitrary. In Chapter~\ref{Dirichlet Sect} we also present the first application of our theory. In \cite{DKDoc}, Davidson and Katsoulis made a comprehensive study of dilation theory, commutant lifting and semicrossed products, with the class of semi-Dirichlet algebras playing a central role in the theory. At the time of the writing of \cite{DKDoc}, our understanding of the abundance of semi-Dirichlet algebras was limited and the following two questions arose regarding them. Are there any semi-Dirichlet algebras which are not isometrically isomorphic to tensor algebras of $\ca$-correspondences? Are there any semi-Dirichlet algebras which are neither tensor algebras of $\ca$-correspondences nor Dirichlet algebras? In Theorem~\ref{crossedtensor} and Corollary~\ref{neither} we answer both questions in the affirmative. A key ingredient in producing these results is Theorem~\ref{generatingSemiDir} which asserts that the reduced crossed product of a semi-Dirichlet operator algebra is also semi-Dirichlet. If one wishes to study semi-Dirichlet algebras, then the crossed product is indeed an indispensable tool.

In Chapter~\ref{semis}, we uncover another permanence property in the theory of crossed product algebras. In Theorem~\ref{firstsemisimple} we show that if $\A$ is a semisimple operator algebra and $\G$ a discrete abelian group, then $\A\cpf $ is semisimple. This raises the question whether the converse is also true. It turns out that in certain cases this is indeed true but in other cases it is not.  To demonstrate this we investigate a class of operator algebras which was quite popular in the mid 90s: triangular AF algebras \cite{DavKatAdv, Don, DonH, DonHJFA, Hu, LS, Pow}. Building on the beautiful ideas of Donsig \cite{Don}, we prove Theorem~\ref{mainsemisimple} which states  that if $\A$ is a strongly maximal TAF algebra and $\G$ a discrete abelian group, then the dynamical system $(\A, \G, \alpha)$ is linking if and only if $\A \rtimes_\alpha \G$ is semisimple. In Example~\ref{linking example}, we present an example of a non-semisimple TAF algebra $\A$ that admits a linking automorphism $\alpha$. Therefore $\A \rtimes_{\alpha} \bbZ$ is semisimple even though $\A$ is not, thus refuting the converse of Theorem~\ref{firstsemisimple}. On the other hand, Theorem~\ref{TUHFsemisimple} shows that for TUHF algebras the semisimplicity of $\A$ and $\A \cpf$ are equivalent properties. We expect more in this direction, with the investigation of other dynamical systems $(\A, \G , \alpha)$ and the semisimplicity of the associated crossed products. We truly envision the study of semisimplicity (or other permanence properties) for crossed products as a theory that will parallel in interest and abundance of results that of simplicity for selfadjoint crossed products. As evidence we offer a remarkable, we believe, result which shows that for crossed products by compact abelian groups, the situation of Theorem~\ref{firstsemisimple} reverses. In Theorem~\ref{Tsemis} we show that if $\A\cpf$ is a semisimple operator algebra and $\G$ a compact abelian group, then $\A$ is semisimple. Furthermore, in Example~\ref{Texam} we show the converse is not in general true. Both these results are accomplished through the use of our non-selfadjoint Takai duality.

Chapter~\ref{Hao} makes a connection with a topic in $\ca$-algebra theory, which is currently under investigation or impacts the work of various authors, including Abadie, Bedos, Deaconu, Hao,
Kaliszewski, Katsura, Kim, Kumjian, Ng, Quigg, Schafhauser and others \cite{Ab, BKQR, DKQ, HN, KQR, Katsuracomm, Kim, Sch}. These authors are either using or currently investigating the validity of the Hao-Ng isomorphism Theorem beyond the class of amenable locally compact groups. This is a problem seemingly irrelevant to the non-selfadjoint theory as it involves the functoriality of two crossed product constructions in $\ca$-algebra theory. It is a consequence of our Theorem~\ref{HNtensor1} that the investigation of the previously mentioned authors  is intimately connected with the verification of the identity (\ref{introident}) for a very special class of non-selfadjoint dynamical systems $(\A, \G , \alpha)$, where $\A$ is the tensor algebra of a $\ca$-correspondence and $\alpha: \G \rightarrow \Aut \A$, the action of a locally compact group by generalized gauge automorphisms. Actually, Theorem~\ref{HNtensor1} leads to a recasting of the Hao-Ng Isomorphism Problem, which we verify in the case of (not necessarily injective) Hilbert bimodules (Theorem~\ref{HNBim}).

It is worth mentioning that the main focus of Chapter~\ref{Hao} is not the Hao-Ng Isomorphism problem itself but instead verifying another permanence property for the crossed product: the crossed product of a tensor algebra $\A$ by a locally compact group $\G$ of gauge automorphisms remains in the class of tensor algebras. (We have seen in Theorem~\ref{crossedtensor} that this is not the case when the group $\G$ acts by arbitrary automorphisms.) In order to obtain the affirmative answer (Theorem~\ref{HNtensor2}) we use a result of independent interest, which we label the Extension Theorem. The Extension Theorem (Theorem~\ref{extend}) gives a very broad criterion for verifying whether an operator algebra ``naturally" containing a $\ca$-correspondence $X$ is isomorphic to the tensor algebra of $X$. This a very general result with additional applications to appear elsewhere.

The monograph closes with Chapter~\ref{problems}, where we list some open problems for further investigation. With each open problem listed, we give a brief commentary intended to help the reader guide himself through the pertinent material or literature. Two of these problems concern the classification of crossed products. This a topic which is left untouched in this monograph and we plan to address it in a subsequent work.  

Beyond the specific problems of Chapter~\ref{problems}, this work also suggests two general directions for future research: one ``abstract" and another one more ``concrete". The current $\ca$-algebra literature is occupied with the study of more general concepts of a crossed product, e.g., crossed products by coactions, twisted actions and much more. This is such a broad area that we will not even be attempting to survey it here; see however \cite{Es} and the references therein. If one wishes to develop non-commatative duality and the appropriate versions of our non-selfadjoint Takai duality of Chapter~\ref{Takai}, then the abstract study of these more general crossed products at the non-selfadjoint level is of the highest priority. 

On the concrete side, the non-selfadjoint operator algebra theory is currently being infused by a wealth of very deep and far-reaching studies. In a broad sense, the current rapid development of ``free analysis" and ``free function theory" involves naturally certain non-selfadjoint operator algebras, as witnessed in the works of Muhly and Solel \cite{MSf1, MSf2}, Popescu \cite{Pop3, Pop4, Pop5, Pop6} and others.  Closer to this monograph, Shalit and Solel \cite{SS} pioneered recently the study of a new class of operator algebras, the tensor algebras of subproduct systems. These algebras  exhibit a very diverse and unexpected behavior as demonstrated in the recent papers \cite{DMar2, KS}. Even for very special cases the study of these algebras relates to topics which are very popular and quite demanding, such as the Drury-Arveson spaces and algebraic varieties associated with ideals \cite{DRS, DRS2, Har}, stochastic matrices \cite{DMar, DMar2}, subshifts \cite{KS, SS} and more. It seems to us that the tensor algebras of subproduct systems and their peripheral algebras should be the natural place to extend the theory of Chapter~\ref{Hao}. In particular it would be very interesting to see how the Hao-Ng isomorphism problem manifests itself in that context. This of course hinges on understanding what the Cuntz-Pimsner algebra of a subproduct system should be; again this is a topic of important current research \cite{KS, DMar2, Vis, Vis2}. 

Finally a word about the groups appearing in this monograph. Our main goal is to develop a comprehensive theory of crossed products that is applicable to all locally compact groups. Hence the majority of our work concerns that generality. Nevertheless many of our results are new and interesting even in the case where $\G=\bbZ$. For instance, this is the case with all (counter)examples appearing in Chapter~\ref{Dirichlet Sect} or the semisimplicity results of Chapter~\ref{semis}. A special mention needs to made for Chapter~\ref{Hao}. There we took the unusual step of ``duplicating" proofs in order to give a more elementary and self-contained treatment of the case where $\G$ is discrete. We believe that this adds to the monograph as it makes very accessible a work that bridges the selfadjoint with the non-selfadjoint theory.


\chapter{Preliminaries} \label{prel}
\section{Generalities} \label{Gen}
The term \textit{operator algebra} is understood to mean a norm closed subalgebra of the algebra of all bounded operators acting on a Hilbert space. All algebras in this monograph are assumed to be approximately unital, i.e., they possess a contractive approximate identity. All representations (and homomorphisms into multiplier algebras, whenever applicable) will be required to be non-degenerate.

On occasion we will need to exploit the richer structure of unital operator algebras.  If $\A$ is an operator algebra without a unit, let $\A^1 \equiv \A + \bbC I$. If $\phi : \A \rightarrow \B$ is a completely isometric homomorphism between non-unital operator algebras, then Meyer \cite[Corollary 3.3]{Mey} shows that $\phi$ extends to a complete isometry $\phi^1: \A^1 \rightarrow \B^1$. This shows that the unitization of $\A$ is unique up to complete isometry.

In the category of unital algebras with morphisms the completely contractive maps, the concept of a dilation of a morphism is defined as follows. Let $\A$ be a unital operator algebra and $\pi:  \A \rightarrow B(\H)$ be a completely contractive map. A \textit{dilation} $\rho:\A \rightarrow B(\K)$ for $\pi$ is a completely contractive map so that $P_{\H} \rho(.)\mid_{\H}= \pi$. A completely contractive map is called \textit{maximal} if it admits no non-trivial dilations. (Since we are within the unital category, all maps so far are either assumed or required to be unital.) Dritschel and McCullough \cite[Theorem 1.2]{DrMc} have shown that any completely contractive representation $\pi$ of an operator algebra $\A$ admits a maximal dilation $\rho$, which also happens to be multiplicative.

Given an operator algebra $\A$, a $\ca$-cover $(\C, j)$ for $\A$ consists of a $\ca$-algebra $\C$ and a completely isometric homomorphism $ j : \, \A \rightarrow \C$ with $\C = \ca(j(\A))$. 

\begin{definition} \label{firstequiv}
Two $\ca$-covers $( \C_i, j_i)$, $i=1,2$, of an operator algebra $\A$ are said to be \textit{equivalent}, denoted as $(\C_1, j_1)\simeq (\C_2 , j_2)$, provided that there exists $*$-isomorphism $j \colon \C_1 \rightarrow \C_2$ that makes
following diagram
 \begin{equation} \label{little triangle}
\xymatrix{\C_1 \ar[rd]^{j} \\
\A \ar[u]^{j_1} \ar[r]_{j_2}  & \C_2}
\end{equation}
commutative.
\end{definition} 

Given an operator algebra $\A$, the $\ca$-envelope $\cenv(\A) \equiv (\cenv(\A), j)$ is any $\ca$-cover of $\A$ satisfying the following property: for any other $\ca$-cover $(\C , i)$ of $\A$  there exists a $*$-epimorphism $\phi : \, \C \rightarrow \cenv(\A)$ so that $\phi(i(a)) = j(a)$, for all $a \in \A$. As it turns out the collection of all $\ca$-covers that qualify as the $\ca$-envelope for $\A$ forms an equivalence class under the equivalence of Definition~\ref{firstequiv}. The concept of the $\ca$-envelope plays a paramount role in abstract operator algebra theory \cite{Arvsub, Arvenv, DavKen}.

If $(\C , j)$ is a $\ca$-cover of a unital operator algebra $\A$, then there exists a largest ideal $\J \subseteq \C$, the \textit{Shilov ideal} of $\A$ in $(\C, j)$, so that the quotient map $\C \rightarrow \C \slash \J$ when restricted on $j(\A)$ is completely isometric. It turns out that $\cenv(\A) \simeq ( \C \slash \J, q\circ j)$, where $q:\C \rightarrow \C\slash \J$ is the natural quotient map. A related result asserts that if $\pi: \A \rightarrow B(\H)$ is a completely isometric representation of a unital operator algebra $\A$ and $\rho$ a maximal dilation of $\pi$, then $\big( \ca \big(\rho(\A)\big), \rho\big) \simeq \cenv(\A)$. See \cite{Arvenv, DrMc} for more details.
If $\A$ is a non-unital operator algebra then we can describe the $\ca$-envelope of $\A$ by invoking its unitization as follows: if $\cenv(\A^1)\simeq \big(\cenv(\A^1), j^1\big)$, then $\cenv(\A) \simeq (\C, j)$, where $\C \equiv \ca(j^1(\A))$ and $j\equiv {j^1}_{\mid \A}$. See the proof of~\cite[Proposition 4.3.5]{BlLM} for the precise argument. The existence of a $\ca$-envelope for a non-unital algebra $\A$ implies now the existence of a Shilov ideal for $\A$ in any $\ca$-cover $(\C, j)$. See the proof of \cite[Proposition 1.9]{Kak2}.

If $\A$ is an operator algebra then there exists a $\ca$-cover $\cmax(\A) \equiv ( \cmax(\A) , j)$ with the following universal property: if $\pi: \A \rightarrow \C$ is any completely contractive homomorphism into a $\ca$-algebra $\C$, then there exists a (necessarily unique) $*$-homomorphism $\phi: \cmax(\A) \rightarrow \C$ such that $\phi\circ j = \pi$. The cover $\cmax(\A)$ is called the maximal or universal $\ca$-algebra of $\A$. The equivalence class of this $\ca$-cover also plays a crucial role in abstract operator algebra theory~\cite{Bl1, Bl2}. See also \cite{BlLM} and the references therein for more applications of $\cmax(\A)$.

 We list a few more results regarding (approximately unital) operator algebras. The interested reader should consult the comprehensive monograph of Blecher and Le Merdy \cite{BlLM} for more details. By \cite[Lemma 2.1.7]{BlLM}, the $\ca$-cover of an approximately unital operator algebra $\A$ is actually unital only when $\A$ itself is unital. Furthermore a contractive approximate unit for $\A$ is also an approximate unit for any $\ca$-cover $\C = \ca(\A)$ of $\A$  \cite[Lemma 2.1.7]{BlLM}. 
If $\A$ is an operator algebra, then
\[
M(\A) \equiv \{ x \in \A^{**} \mid xa , ax \in \A, \mbox{ for all } a \in \A\}
\]
is the multiplier algebra of $\A$. For any completely isometric non-degenerate representation $\pi: \A \rightarrow B(\H)$, the algebra $$\{ T \in B(\H)\mid T\pi(a), \pi(a)T \in \A, \mbox{for all } a \in \A\}$$ is completely isometrically isomorphic to $M(\A)$ via an isomorphism that fixes $\A$ elementwise \cite[Proposition 2.6.8]{BlLM}. Furthermore, $M(\A) \subseteq M(\C))$ for any $\ca$-cover $\C$ of $\A$ \cite[page 87]{BlLM}. Therefore, $\A \subseteq M(\A)$ is a (two-sided) ideal, which is essential both as a left and a right ideal of $M(\C)$.

Let $\A$, $\B$ be operator algebras. A completely contractive homomorphism $\phi: \A\rightarrow M(\B)$ is said to be a multiplier-nondegenerate morphism, if both $ [ \phi(\A)\B]$ and $[ \B\phi(\A) ]$ are dense in $\B$. There are many equivalent formulations of this property based on Cohen's factorization theorem; see \cite[Section~\ref{prel}.6.11]{BlLM}. A  multiplier-nondegenerate morphism $\phi: \A \rightarrow M(\B)$ always admits a unique, unital and completely contractive extension $\overline{\phi} : M(\A) \rightarrow \M(\B)$  \cite[Proposition 2.6.12]{BlLM}; such a map is easily seen to be strictly continuous.  

Finally we need to explain how we make sense of integrals where the integrand is a function taking values in the multiplier algebra of an operator algebra. (Propositions~\ref{Raeburnthm}, \ref{classifyrepns} and Theorem~\ref{full Raeb}.) If the integrand is norm continuous, then see \cite[Lemma 1.91]{Will}. Otherwise we use the following.

\begin{proposition} \label{prop integral}
Let $\G$ be a locally compact group with left-invariant Haar measure $\mu$. Let $\A$ be an operator algebra and let $\G \ni s \mapsto f(s) \in M(\A)$ be a strictly continuous function with compact support. Then there exists a unique element $\int f(s) d\mu(s) \in M(\A)$ satisfying
\begin{equation} \label{integralsense}
\begin{split}
\Big(\int f(s) d\mu(s) \Big) a&= \int f(s) a d \mu(s)\\
a\Big(\int f(s) d\mu(s) \Big) &= \int af(s) d \mu(s), 
\end{split}
\end{equation}
for all $a \in \A$.

Furthermore, if $\B$ is an approximately unital operator algebra and $\phi : \A\rightarrow M(\B)$ is a completely contractive, multiplier-nondegenerate morphism, then
\begin{equation} \label{barL}
\overline{\phi}\Big(\int f(s)d\mu(s)\Big)= \int \overline{\phi}\big(f(s)\big) d \mu(s).
\end{equation}
\end{proposition}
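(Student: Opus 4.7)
The plan is to build the integral $\int f(s)\,d\mu(s)$ by specifying its left and right actions on $\A$, and then to invoke the realization of $M(\A)$ as (essentially) the double centralizer algebra of $\A$ to assemble these into a genuine multiplier. First I would observe that strict continuity of $f$ means precisely that, for each fixed $a \in \A$, the maps $s \mapsto f(s)a$ and $s \mapsto af(s)$ are norm continuous, and since $\supp f$ is compact these maps are norm continuous with compact support and take values in $\A$. Hence the Bochner integrals
\begin{equation*}
L(a) := \int f(s)a\,d\mu(s), \qquad R(a) := \int af(s)\,d\mu(s)
\end{equation*}
exist in $\A$. A uniform boundedness argument applied to the operators $a \mapsto f(s)a$ on $\A$ (or, after picking a faithful non-degenerate representation, applied to $\{f(s)\}_{s\in \supp f}$ acting on the appropriate Hilbert space) gives $\sup_{s} \|f(s)\| < \infty$, so $L$ and $R$ are bounded linear maps on $\A$.

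Next I would verify the centralizer identity $aL(b) = R(a)b$ for all $a,b\in\A$. By continuity of left and right multiplication by $a$ (respectively $b$) on Bochner integrals,
\begin{equation*}
a L(b) = \int a f(s) b\,d\mu(s) = R(a) b .
\end{equation*}
By the characterization of $M(\A)$ as the algebra of double centralizers of $\A$ (consistent with the embedding $M(\A)\hookrightarrow B(\H)$ recalled in the text), the pair $(L,R)$ corresponds to a unique element $m \in M(\A)$ satisfying $ma = L(a)$ and $am = R(a)$ for all $a\in\A$. Setting $\int f(s)\,d\mu(s) := m$ yields the required element, and uniqueness follows immediately because $\A$ is essential as both a left and right ideal of $M(\A)$: any $m' \in M(\A)$ satisfying (\ref{integralsense}) agrees with $m$ on $\A$ on both sides, hence $m'=m$.

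For the second assertion I would use that the extension $\overline{\phi}:M(\A)\to M(\B)$ is strictly continuous, so $s\mapsto\overline{\phi}(f(s))$ is a strictly continuous, compactly supported function from $\G$ into $M(\B)$, and the integral on the right of (\ref{barL}) is defined by the first part (with $\B$ replacing $\A$). To compare it with $\overline{\phi}\bigl(\int f(s)\,d\mu(s)\bigr)$, I would test both against elements of the dense set $\phi(\A)\B\subseteq\B$. For $a\in\A$ and $b'\in\B$, set $m=\int f(s)\,d\mu(s)$; then $ma\in\A$ and
\begin{equation*}
\overline{\phi}(m)\phi(a)b' = \phi(ma)b' = \Bigl(\int \phi\bigl(f(s)a\bigr)\,d\mu(s)\Bigr)b' = \Bigl(\int \overline{\phi}\bigl(f(s)\bigr)\,d\mu(s)\Bigr)\phi(a)b',
\end{equation*}
where the middle equality uses that the bounded linear map $\phi$ commutes with Bochner integration and the last equality uses the defining property (\ref{integralsense}) applied in $M(\B)$. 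By symmetric reasoning the two multipliers also agree under right multiplication by elements of $[\B\phi(\A)]$, which is dense in $\B$ by multiplier-nondegeneracy of $\phi$. Essentiality of $\B$ in $M(\B)$ then forces equality in $M(\B)$, establishing (\ref{barL}).

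The step I expect to be the main obstacle is the rigorous passage from ``strictly continuous with compact support'' to norm-boundedness of $\{f(s)\}$ and to Bochner integrability of the auxiliary integrands, because strict continuity of $f$ does not a priori give norm continuity of $f$ itself; the uniform boundedness principle has to be invoked at the right level (either on $\A$ via the left/right multiplication operators or on a faithful Hilbert space realization of $M(\A)$, combined with non-degeneracy of the representation) to control $\sup_s \|f(s)\|$. The remaining computations are essentially formal manipulations with Bochner integrals and the essentiality of $\A$ in $M(\A)$.
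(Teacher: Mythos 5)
Your argument is correct, but for the existence part it takes a genuinely different route from the paper. The paper does not assemble the multiplier by hand: it fixes a $\ca$-cover $\C$ of $\A$, notes that $M(\A)\subseteq M(\C)$ and that a contractive approximate identity for $\A$ is also one for $\C$, uses this to upgrade strict continuity of $f$ in $M(\A)$ to strict continuity in $M(\C)$, and then simply quotes the $\ca$-algebra version of the statement (Lemma 1.101 of Williams) to obtain $\int f(s)\,d\mu(s)\in M(\C)$; the observation that $f(s)a$ and $af(s)$ lie in $\A$ then forces the integral into $M(\A)$. You instead construct the Bochner integrals $L(a)=\int f(s)a\,d\mu(s)$ and $R(a)=\int af(s)\,d\mu(s)$ directly and realize the pair $(L,R)$ as a multiplier. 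That is a legitimate and more self-contained alternative, and your handling of the boundedness issue via uniform boundedness is fine; the one step you should not leave to a bare citation is the passage from $(L,R)$ to an element of $M(\A)$, since the Busby double-centralizer theorem is standard for $\ca$-algebras but not off-the-shelf for non-selfadjoint operator algebras. With the paper's definition $M(\A)=\{x\in\A^{**}\mid x\A,\,\A x\subseteq\A\}$ the cleanest repair is to take $m$ to be a weak-$*$ cluster point of $L(e_i)$ in $\A^{**}$ for a contractive approximate identity $\{e_i\}$ (or to define the integral weak-$*$ in $\A^{**}$ from the start) and verify $ma=L(a)$, $am=R(a)$ using the identities $L(ab)=L(a)b$, $R(ab)=aR(b)$ and $aL(b)=R(a)b$, all of which your integrals satisfy. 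Your proof of (\ref{barL}) is essentially identical to the paper's: test against $\phi(\A)\B$ and $\B\phi(\A)$ and invoke essentiality of $\B$ in $M(\B)$. What the paper's route buys is brevity and the outsourcing of all multiplier-theoretic subtleties to the selfadjoint literature; what yours buys is independence from that literature and an explicit description of how the integral acts.
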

\begin{proof}
If $\A$ is a $\ca$-algebra, then the existence and uniqueness of such an element follows from Lemma 1.101 in \cite{Will}. We will rely on this result in order to to explain the validity of (\ref{integralsense}) and (\ref{barL}) in general.

Let $\C$ be a $\ca$-cover for $\A$; as we noticed earlier we have $M(\A) \subseteq M(\C)$. Let $\{e_i\}_{i \in \bbI}$ be a contractive approximate identity for $\A$ (and therefore for $\C$ as well). For any $c \in \C$, the functions $G \ni s \mapsto f(s)c \in \C$ and $s  \mapsto cf(s) \in \C$ can be uniformly approximated by the norm continuous functions $s \mapsto f(s)e_ic $, $i \in \bbI$, and $s  \mapsto ce_i f(s) $, $i \in \bbI$, respectively and so they are norm continuous. Hence $s\mapsto f(s)$ is strictly continuous in $\M(\C)$. Lemma 1.101 in \cite{Will} implies the existence of an element $\int f(s)d \mu(s) \in M(\C)$ so that 
\begin{equation*}
\begin{split}
\Big(\int f(s) d\mu(s) \Big) c&= \int f(s) c d \mu(s)\\
c\Big(\int f(s) d\mu(s) \Big) &= \int cf(s) d \mu(s), 
\end{split}
\end{equation*}
for all $c \in \C$. However, the above equations show that for any $a \in \A$ both $\big(\int f(s) d\mu(s) \big) a$ and $a\big(\int f(s) d\mu(s) \big)$ are in $\A$ and so $\int f(s) d\mu(s) \in M(\A)$.

In order to establish (\ref{barL}), assume that $\phi: \A \rightarrow M(\B)$ is a multiplier- nondegenerate morphism, i.e., $ [ \phi(\A)\B]$ and $[ \B\phi(\A) ]$ are dense in $\B$. Since $\B$ is also approximately unital, both integrals in (\ref{barL}) are well-defined. Therefore, for arbitrary $a \in \A$, $b \in \B$, we have
\begin{equation*}
\begin{split}
\overline{\phi}\Big(\int f(s)d \mu (s)&\Big) \phi (a)b =  \phi\Big( \int f(s)  d \mu (s) a\Big)b =  \phi\Big( \int f(s) a d \mu (s) \Big)b\\
&= \Big(\int \phi\big( f(s)a\big) d \mu (s)\Big) b = \Big(\int \overline{\phi}\big( f(s) \big) \phi(a) d \mu (s)\Big) b\\
& =\Big(\int \overline{\phi}\big( f(s) \big)d \mu (s)\Big)  \phi(a) b .
\end{split}
\end{equation*}
A similar argument establishes 
\[
 b\phi (a)\overline{\phi}\Big(\int f(s)d \mu (s)\Big) =b \phi(a) \Big(\int \overline{\phi}\big( f(s) \big)d \mu (s)\Big).
\]
Since $\B\subseteq M(\B)$ is an essential ideal, the conclusion follows.
\end{proof}

\begin{remark} \label{multiplier nondeg}
If $\phi: \A \rightarrow B(\H)$ is a contractive, non-degenerate representation, then it can also be viewed as a morphism $\phi: \A \rightarrow M\big( \K(\H)\big)$, where $\K(\H)$ denotes the compact operators. Since $\A$ is approximately unital, then it follows that $\phi: \A \rightarrow M\big( \K(\H)\big)$ is also a multiplier-nondegenerate morphism and so (\ref{barL}) is applicable for such a $\phi$.

To see the multiplier-nondegeneracy of $\phi$, let $\{e_i\}_{i \in \bbI}$ be a contractive approximate identity for $\A$. The non-degeneracy of $\phi$ implies that $\{\phi (e_i)\}_{i \in \bbI}$ converges strongly to the identity $I \in B(\H)$. Hence for an $k \in \K(\H)$, we have $\lim_i \phi(e_i)k = k$ in norm and so \cite[Lemma 2.1.6]{BlLM} implies $\lim_i k^*\phi(e_i) = k^*$. Therefore $[ \K(\H)\phi(\A)] \subseteq \K (\H)$ is dense. The density of  $[ \phi (\A) \K(\H)]$ in $\K(\H)$ is elementary to verify.
\end{remark}

\section{$\ca$-correspondences and tensor algebras}
A $\ca$- correspondence $(X,\C,\phi_X)$ consists of a $\ca$-algebra $\C$, a Hilbert $\C$-module $(X, \sca{\phantom{,},\phantom{,}})$ and a
(non-degenerate) $*$-homomorphism $\phi_X\colon \C \rightarrow \L(X)$ into the adjointable operators on $X$.

Two $\ca$-correspondences $(X, \C, \phi_X)$ and $(Y, \D, \phi_Y)$ are said to be \textit{unitarily equivalent} if there exist a $*$-isomorphism $\sigma: \C \rightarrow \D$ and a linear surjection $W: X \rightarrow Y$ so that:
\begin{itemize}
\item[(i)] $W(x c)= (W x)\sigma(c)$ and $W(\phi_X(c)x) = \phi_Y(\sigma(c))Wx$, 
\item[(ii)] $\sca{Wx, W x'} = \sigma\big( \sca{x, x'}\big)$,
\end{itemize}
for all $c \in \C$ and $x,x'\in X$. In that case we say that the pair $(W, \sigma)$ implements the unitary equivalence.

An isometric (Toeplitz) representation $(\rho,t)$ of a $\ca$-correspondence into
a $\ca$-algebra $\D$, is a pair consisting of  a $*$-homomorphism $\rho\colon \C \rightarrow \D$ and a linear map $t\colon X \rightarrow D$, such that
\begin{enumerate}
 \item $\rho(c)t(x)=t(\phi_X(c)(x))$,
 \item $t(x)^*t(x')=\rho(\sca{x,x'})$,
\end{enumerate}
for all $c \in \C$ and $x,x'\in X$. A representation $(\rho , t)$ is said to be \textit{injective} iff $\rho$ is injective; in that case $t$ is an isometry.

The $\ca$-algebra generated by a representation $(\rho,t)$ equals the closed linear span of $t^n(\bar{x})t^m(\bar{y})^*$, where for simplicity $\bar{x}\equiv (x_1,\dots,x_n)\in X^n$ and $t^n(\bar{x})\equiv t(x_1)\dots t(x_n)$.
For any
representation $(\rho,t)$ there exists a $*$-homomorphism
$\psi_t:\K(X)\rightarrow B$, such that $\psi_t(\theta^X_{x,y})=
t(x)t(y)^*$.

It is easy to see that for a $\ca$-correspondence $(X,\C,\phi_X)$ there exists a universal Toeplitz representation, denoted as $(\rho_{\infty} , t_{\infty})$, so that any other representation of $(X,\C,\phi_X)$ is equivalent to a direct sum of sub-representations of $(\rho_{\infty} , t_{\infty})$. We define the Cuntz-Pimsner-Toeplitz $\ca$-algebra $\T_X$ as the $\ca$-algebra generated by all elements of the form $\rho_{\infty}(c), t_{\infty}(x)$, $c \in \C$, $x \in X$. By the universality of $(\rho_{\infty} , t_{\infty})$ the algebra $\T_X$ satisfies the following property: for any Toeplitz representation $(\rho,t)$ of $X$, there exists a representation $\rho \rtimes t$ of $\T_X$, called \textit{the integrated form of}  $(\rho,t)$, so that $\rho(c) = \big((\rho \rtimes t)\circ \rho_{\infty}\big)(c)$, for all $c \in \C$, and $t(x) = \big((\rho \rtimes t)\circ t_{\infty}\big)(x)$, for all $ x \in X$. (See \cite[Definition 3.1]{KatsuraJFA} and the subsequent comments.)

We say that a Toeplitz representation $(\rho , t)$ admits a gauge action if there exists a family $\{ \gamma_z\}_{z \in \bbT}$ of $*$-endomorphisms of $\ca\big( (\rho \rtimes t)(\T_X)\big)$ so that 
\[
\gamma_z(\rho(c)) = \rho(c),  \mbox{ for all } c \in \C, \quad \gamma_z(t(x))=z t(x), \mbox{ for all } x \in X.
\]
The following result of Katsura \cite[Theorem 6.2]{KatsuraJFA} gives an easy to use criterion for verifying that a Toeplitz representation $(\rho, t)$ \textit{integrates} to a faithful representation $\rho\rtimes t$ of $\T_X$.

\begin{theorem} [Gauge Invariant Uniqueness Theorem] \label{GIUThm}

Let $(X, \C, \phi_X)$ be a $\ca$-correspondence and let $(\rho, t)$ a Toeplitz representation of $(X , \C, \phi_X)$ that admits a gauge action and satisfies
\begin{equation} \label{Icriterion}
I_{(\rho, t)}' \equiv \{ c \in \C \mid \rho(c) \in  \psi_t (\K(X)) \} = \{0\}.
\end{equation}
Then $\rho\rtimes t$ is a faithful representation of $\T_X$.
\end{theorem}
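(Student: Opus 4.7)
The plan is to use the classical ``conditional expectation averaging'' strategy that one employs to prove gauge-invariant uniqueness theorems for Cuntz-Pimsner-type algebras. Set $\pi \equiv \rho\rtimes t$ and let $\B \equiv \pi(\T_X)$. Since $(\rho,t)$ admits a gauge action $\{\gamma_z\}_{z\in\bbT}$ on $\B$, and the universal representation $(\rho_\infty,t_\infty)$ admits the canonical gauge action $\{\beta_z\}_{z\in\bbT}$ on $\T_X$, we have $\pi\circ \beta_z=\gamma_z\circ \pi$ for all $z\in\bbT$. Averaging against normalized Haar measure on $\bbT$ yields faithful conditional expectations
\begin{equation*}
E : \T_X \longrightarrow \T_X^{\beta}, \qquad E(a) = \int_{\bbT} \beta_z(a)\,dz,
\end{equation*}
and analogously $F:\B\rightarrow \B^{\gamma}$, and the intertwining identity $\pi\circ E=F\circ\pi$.

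Given this, the faithfulness of $\pi$ on all of $\T_X$ reduces to the faithfulness of $\pi$ on the fixed-point subalgebra $\T_X^{\beta}$. Indeed, if $\pi(a)=0$ for some $a\in\T_X$, then $\pi(a^*a)=0$, so $F(\pi(a^*a))=0$ and the intertwining gives $\pi\bigl(E(a^*a)\bigr)=0$; faithfulness on $\T_X^{\beta}$ then forces $E(a^*a)=0$, and faithfulness of $E$ gives $a^*a=0$.

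The main content and the hard step is showing that $\pi$ is isometric on $\T_X^{\beta}$. The plan here is to filter the fixed-point algebra by its natural ``degree-balanced'' subalgebras. Let
\begin{equation*}
\B_n \equiv \overline{\spn}\bigl\{\, t_\infty^k(\bar x)\,t_\infty^k(\bar y)^{*} \;:\; 0\le k\le n,\; \bar x,\bar y \in X^{\otimes k}\,\bigr\}
\end{equation*}
(with $X^{\otimes 0}=\C$), so that $\T_X^{\beta}=\overline{\bigcup_n \B_n}$. I would prove by induction on $n$ that $\pi|_{\B_n}$ is injective. For $n=0$ we have $\B_0=\rho_\infty(\C)$; if $\pi(\rho_\infty(c))=\rho(c)=0$ then the condition $I_{(\rho,t)}'=\{0\}$ (applied to $c$, since $\rho(c)=0\in\psi_t(\K(X))$) gives $c=0$. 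For the inductive step one uses that $\B_n$ is an extension of $\B_{n-1}$ by an ideal isomorphic (inside $\T_X$) to the closed span of the ``top degree'' rank-one operators, which can be identified with the $*$-algebra generated by $\psi_{t_\infty}\bigl(\K(X^{\otimes n})\bigr)$ modulo the lower levels. A standard argument (see Fowler--Muhly--Raeburn or Katsura) shows that the only obstruction to the inductive step is precisely the possibility that some non-zero element of $\rho_\infty(\C)$ is mapped by $\pi$ into the image of $\psi_t(\K(X))$, i.e.\ exactly the set $I_{(\rho,t)}'$. Since that ideal is trivial by hypothesis, the induction goes through.

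The main obstacle I expect is the inductive step in the fourth paragraph: bookkeeping the short exact sequence $\B_{n-1}\hookrightarrow \B_n\twoheadrightarrow \K(X^{\otimes n})$-like quotient and showing that the hypothesis $I'_{(\rho,t)}=\{0\}$ precisely rules out the only possible kernel of $\pi$ on $\B_n/\B_{n-1}$. Once this is verified at each level, $\pi$ is isometric on $\bigcup_n \B_n$, hence on its closure $\T_X^{\beta}$, and the averaging reduction above finishes the proof.
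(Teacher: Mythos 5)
The paper offers no proof of this statement to compare against: it is quoted verbatim from Katsura \cite[Theorem 6.2]{KatsuraJFA}. Your sketch reproduces the standard argument from the literature (Fowler--Raeburn, Katsura) and its skeleton is correct. The reduction to the fixed-point algebra via the faithful expectation $E$ is sound, with one small point worth making explicit: the paper's definition of ``admits a gauge action'' only asks for a family of $*$-endomorphisms $\{\gamma_z\}$, so the continuity of $z\mapsto\gamma_z(b)$ needed for the averaging integral must be checked; it follows by an $\epsilon/3$ argument since each $\gamma_z$ is contractive and acts by the scalar $z^{n-m}$ on the dense span of the monomials $t^n(\bar{x})t^m(\bar{y})^*$. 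The base case is handled exactly right: $0\in\psi_t(\K(X))$, so $\ker\rho\subseteq I'_{(\rho,t)}=\{0\}$, whence $\rho$ is injective, $t$ is isometric, and the induced maps on $\K(X^{\otimes n})$ are injective for every $n$.

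The one place where your proposal is a pointer rather than a proof is the inductive step, and the word ``precisely'' there is doing all the work. In $\T_X$ one has $\B_n=\B_{n-1}\oplus\K_n$ (Banach space direct sum), where $\K_n$ is the closed span of the top-degree monomials $t_\infty^n(\bar{x})t_\infty^n(\bar{y})^*$, an ideal of $\B_n$; what must be shown is that $\pi(\B_{n-1})\cap\pi(\K_n)=\{0\}$, i.e.\ that no nonzero element of $\B_{n-1}$ --- not merely of $\rho_\infty(\C)$ --- is carried by $\pi$ into the image of $\K(X^{\otimes n})$. The hypothesis $I'_{(\rho,t)}=\{0\}$ only compares level $0$ with level $1$, so one needs a genuine lemma propagating it to all levels. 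This is done by compressing: applying $t(\bar{z})^*(\,\cdot\,)t(\bar{w})$ lowers the degree by one and reduces the level-$n$ statement to level $n-1$ by induction, and the residual piece killed by all such compressions is handled by the $\ca$-identity $\rho(cc^*)=\rho(c)\rho(c)^*$ together with $\ker\rho\subseteq I'_{(\rho,t)}$. With that lemma supplied your induction closes; without it the sketch is an accurate table of contents for Katsura's proof rather than a proof.
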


It is worth giving an example of a concrete Toeplitz representation of $(X, \C, \phi_X)$ that will help make some of the results that follow more transparent. For that we need to define first the concept of the stabilized tensor product between $\ca$-correspondences.

Let $(X, \C, \phi_X)$ and $(Y, \C, \phi_Y)$ be $\ca$-correspondences. The \emph{interior} or \emph{stabilized tensor product}, denoted by $X \otimes Y$, is the quotient of the vector space tensor product $X \otimes_{\alg} Y$ by the subspace generated by the elements of the form
\begin{align*}
x c \otimes y - x \otimes \phi_Y(c) y, \quad x\in X, y\in Y, c
\in \C.
\end{align*}
It becomes a pre-Hilbert $\C$-module when equipped with
\begin{align*}
 (x \otimes y)c&\equiv x \otimes (y c ),\\
\sca{x_1\otimes y_1, x_2\otimes y_2}&\equiv \sca{y_1,
\phi_Y(\sca{x_1,x_2})y_2},
\end{align*}
where $x,x_1,x_2 \in X$, $y, y_1, y_2 \in Y$ and $c \in \C$. For $S\in \L(X)$ we define $S\otimes \id_Y \in \L(X\otimes Y)$ as the mapping $$x \otimes y \mapsto S(x)\otimes y, \quad x \in X, y \in Y.$$
Hence $X \otimes Y$ becomes a $\ca$-correspondence by defining $\phi_{X\otimes Y}(c) \equiv \phi_X(c) \otimes \id_Y$, $ c \in \C$.

The \emph{Fock space} $\F_{X}$ over the correspondence $(X, \C, \phi_X)$ is the interior direct sum of the $X^{\otimes n}\equiv X^{\otimes n-1} \otimes X$, $n \in \bbN$, with the structure of a direct sum of $\ca$-correspondences over $\C$,
\[
\F_{X}= \C \oplus X \oplus X^{\otimes 2} \oplus \dots .
\]
Given $x \in X$, the (left) creation operator $t'_{\infty}(x) \in \L(\F_{X})$ is defined as
\[
t'_{\infty}(x)( c, \zeta_{1}, \zeta_{2}, \dots ) = (0, x c, x
\otimes \zeta_1, x \otimes \zeta_2, \dots),
\]
where $c \in \C$ and $\zeta_n \in X^{\otimes n}$, for all $n \in \bbN$.
 For any $c \in \C$, we define
 $$\rho'_{\infty}(c) = L_c \oplus \phi_X(c) \oplus (\oplus_{n=1}^{\infty} \phi_X(c) \otimes \id_{n}).$$
 It is easy to verify that $( \rho'_{\infty}, t'_{\infty})$ is a Toeplitz representation of $(X, \C)$ which is called the \emph{Fock representation} of $(X, \C)$. It follows from Theorem~\ref{GIUThm} that the representation $\rho'_{\infty}\rtimes t'_{\infty}  \colon \T_X \rightarrow \L(\F_{X})$ is actually faithful.

Given a $\ca$-correspondence $(X, \C, \phi_X)$, there is a natural non-selfadjoint subalgebra of $\T_X$ that plays an important role in this monograph.

\begin{definition}
The \emph{tensor algebra} $\T_{X}^+$ of a $\ca$-correspondence
$(X,\C,\phi_X)$ is the norm-closed subalgebra of $\T_X$ generated by
all elements of the form $\rho_{\infty}(c), t_{\infty}(x)$, $c \in \C$, $x \in X$.
\end{definition}

It is worth mentioning here that $\T^+_{X}$ also sits naturally inside the Cuntz-Pimsner algebra $\O_{X}$ associated with the $\ca$-correspondence $X$. This follows from work in \cite{FMR, KatsoulisKribsJFA, MS} which we now describe.

If $(X, \C, \phi_X)$ is a $\ca$-correspondence, then let $$J_X\equiv \ker\phi_X^{\perp}\cap \phi_X^{-1}(\K(X)).$$ A representation $(\rho, t)$ of $(X, \C,\phi_X)$ is said to be \textit{covariant} iff $\psi_t ( \phi_X(c)) = \rho (c)$, for all $c \in J_X$. The universal $\ca$-algebra for ``all" covariant representations of $(X, \C ,\phi_X)$ is the Cuntz-Pimsner algebra $\O_X$. The algebra $\O_X$ contains (a faithful copy of) $\C$ and (a unitarily equivalent) copy of $X$. Katsoulis and Kribs \cite[Lemma 3.5]{KatsoulisKribsJFA} have shown that the non-selfadjoint algebra of $\O_X$ generated by these copies of $\C$ and $X$ is completely isometrically isomorphic to $\T_X^+$. Furthermore, $\cenv(\T_X^+)\simeq \O_X$. See \cite{KatsoulisKribsJFA, MS} for more details.

The tensor algebras for $\ca$-correspondences were pioneered by Muhly and Solel in \cite{MS}. They form a broad class of non-selfadjoint operator algebras which includes as special cases Peters' semicrossed products \cite{Pet}, Popescu's non-commutative disc algebras \cite{Pop}, the tensor algebras of graphs (introduced in \cite{MS} and further studied in \cite{KaKr}) and the tensor algebras for multivariable dynamics \cite{DavKatMem}, to mention but a few.

Due to its universality, the Cuntz-Pimsner-Toeplitz $\ca$-algebra $\T_X$ admits a gauge action $\{ \psi_z\}_{z \in \bbT}$ that leaves $\rho_{\infty}(\C)$ elementwise invariant and ``twists" each  $t_{\infty}(x)$, $x \in X$, by a unimodular scalar $z \in \bbT$, that is $\psi_z(t_{\infty}(x))= z t_{\infty}(x)$, $x \in X$. Using this action, and reiterating a familiar trick with the Fejer kernel, one can verify that each element $a \in \T_X^+$ admits a Fourier series expansion
\begin{equation} \label{Cesaro}
a = \rho_{\infty}(c) +\sum_{n=1}^{\infty} \,  t_{\infty}(x_n), \quad c \in \C, \, x_n \in X^{\otimes n}, n=1,2, \dots,
\end{equation}
where the summability is in the Cesaro sense.

One of the immediate consequences of (\ref{Cesaro}) is that the diagonal of $\T_X^+$ equals $\C$, i.e., $\T_X^+ \cap (\T_X^+)^*= \rho_{\infty}(\C)$. Another consequence now follows.

 If $(X, \C,\phi_X)$ is a $\ca$-correspondence and $\rho$ a bounded multiplicative linear functional on $\C$, then $\fM_{\rho}$ will denote the collection of all bounded multiplicative linear functionals on $\T_X^+$, whose restriction on $\C$ agrees with $\rho$.

\begin{proposition} \label{extension}
Let $(X, \C,\phi_X)$ be a $\ca$-correspondence and $\rho$ is a  bounded multiplicative linear functional on $\C$. If $\fM_{\rho}$ is as above, then $\fM_{\rho}$ is either a singleton or it is at least the size of the continuum.
\end{proposition}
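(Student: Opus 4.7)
The plan is to exploit the Fourier series expansion (\ref{Cesaro}) together with the restriction of the gauge action $\{\psi_z\}_{z \in \bbT}$ to $\T_X^+$ in order to manufacture continuum many characters whenever $\fM_\rho$ is not a singleton.

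First, for each $\theta \in \fM_\rho$ I introduce the linear functional $L_\theta : X \to \bbC$ given by $L_\theta(x) := \theta(t_\infty(x))$, and I show that $\theta$ is completely determined by $\rho$ and $L_\theta$. Multiplicativity gives $\theta(t_\infty(x_1)\cdots t_\infty(x_n)) = L_\theta(x_1)\cdots L_\theta(x_n)$, and since $t_\infty(x_1)\cdots t_\infty(x_n) = t_\infty(x_1\otimes\cdots\otimes x_n)$, linearity and continuity pin down $\theta$ on all of $t_\infty(X^{\otimes n})$ for every $n$. Applying the continuous functional $\theta$ to the Cesaro partial sums of the expansion (\ref{Cesaro}) of an arbitrary $a \in \T_X^+$ then recovers $\theta(a)$ from the data $(\rho, L_\theta)$. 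In particular two distinct elements of $\fM_\rho$ must differ on $t_\infty(X)$.

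Second, I observe that each gauge automorphism $\psi_z$ of $\T_X$ restricts to an automorphism of $\T_X^+$, because it fixes $\rho_\infty(\C)$ elementwise and scales $t_\infty(X)$ by $z$. Consequently, for every $\theta \in \fM_\rho$ and every $z \in \bbT$, the composition $\theta \circ \psi_z$ lies again in $\fM_\rho$, with $L_{\theta\circ\psi_z} = z\, L_\theta$.

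With these ingredients in hand the dichotomy is immediate. Assume $\fM_\rho$ contains two distinct characters $\theta_1, \theta_2$. By the first step, $L_{\theta_1} \neq L_{\theta_2}$ on $X$, so there exists $x_0 \in X$ at which these functionals take different values; in particular one of them is non-zero, and after relabelling I may assume $L_{\theta_1}(x_0) \neq 0$. Then $\{\theta_1 \circ \psi_z : z \in \bbT\} \subseteq \fM_\rho$ and the values at $t_\infty(x_0)$ trace out the circle $\{zL_{\theta_1}(x_0) : z \in \bbT\}$, so the assignment $z \mapsto \theta_1 \circ \psi_z$ is injective. Hence $|\fM_\rho| \geq |\bbT| = 2^{\aleph_0}$. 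The main technical point is the first step: justifying that $\theta$ is recoverable from $L_\theta$ through (\ref{Cesaro}), which depends crucially on the Cesaro (rather than norm-absolute) summability of the Fourier expansion and on the continuity of $\theta$ to exchange limits with evaluation.
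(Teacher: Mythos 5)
Your dichotomy argument is correct and is essentially the paper's: the paper likewise observes that if two distinct elements of $\fM_{\rho}$ exist then one of them, say $\rho_1$, cannot annihilate $t_{\infty}(X)$, and then the orbit $\rho_1\circ\psi_z$, $z\in\bbT$, yields continuum many elements of $\fM_{\rho}$. (Your first step is more elaborate than necessary: since $\T_X^+$ is by definition the closed algebra generated by $\rho_{\infty}(\C)$ and $t_{\infty}(X)$, two continuous multiplicative functionals that agree on these generators agree everywhere, with no need to invoke the Cesaro summability of (\ref{Cesaro}); your route through the Fourier expansion is valid but heavier than needed.)

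There is, however, a gap: you never show that $\fM_{\rho}\neq\emptyset$. The statement asserts that $\fM_{\rho}$ is \emph{either a singleton or of size at least the continuum}; both alternatives exclude the empty set, so nonemptiness is part of the claim, and your argument only yields the weaker trichotomy ``empty, singleton, or continuum.'' The paper closes this by averaging the gauge action to obtain the conditional expectation $\Phi$ of (\ref{condexpO}) onto the fixed-point algebra; restricted to $\T_X^+$ this map is multiplicative and projects onto $\rho_{\infty}(\C)$, so $\rho\circ\Phi$ is an element of $\fM_{\rho}$. The point is that ``extend $\rho$ by zero on $t_{\infty}(X)$'' must be verified to define a bounded multiplicative functional on all of $\T_X^+$, and the expectation is precisely what accomplishes that. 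Adding this observation completes your proof.
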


\begin{proof}
Due to the gauge action $\{ \psi_z\}_{z \in \bbT}$ discussed above, $\T_X$ admits an expectation
\begin{equation} \label{condexpO}
\Phi: \T_X \longrightarrow \T_{X}^{\operatorname{fix}}: a \longmapsto \frac{1}{2 \pi}\int \psi_t(a) dt
\end{equation}
onto the fixed point algebra of $\{ \psi_z\}_{z \in \bbT}$. When restricted on $\T_X^+$, the expectation $\Phi$ is multiplicative and projects onto $\rho_{\infty}(\C)$.

If $\rho$ is a  bounded multiplicative linear functional on $\C$, then $\rho\circ \Phi \in \fM_{\rho}$. Hence $\fM_{\rho} \neq \emptyset$. If $\rho_1, \rho_2 \in \fM_{\rho}$ are distinct, then at least one of them, say $\rho_1$, does not annihilate $X$. But then, $\rho_1 \circ\ \psi_z$, $z \in \bbT$, are all distinct elements of $\fM_{\rho}$ and the conclusion follows.
\end{proof}

\section{Crossed products of $\ca$-algebras} \label{abelian}

The crossed product of an operator algebra will be formally defined in the next chapter. Nevertheless we collect here various known results regarding crossed products of $\ca$-algebras to be used throughout the monograph. Our main references are \cite{BO, Will}; we follow closely \cite{Will} in terms of notation.

Let $\G$ be a discrete amenable group, let $\C$ be a $\ca$-algebra and let $\alpha: \G \rightarrow \Aut \C$ be a representation. Since $\G$ is amenable, both the full crossed product $\C \rtimes_{\alpha} \G$ and the reduced $\C \rtimes_{\alpha}^r \G$ coincide. On $\C \rtimes_{\alpha} \G$ there is a well-defined faithful expectation $\Phi_e$ projecting on $\C \subseteq \C \rtimes_{\alpha} \G$, which satisfies
  \[
  \Phi_e\big(\sum _{g\in \G}  c_gU_g\big)= c_e
  \]
  for any finite sum of the form $ \sum _{g\in \G}  c_gU_g$, where $U_g$ are the universal unitaries in the multiplier algebra $M(\C \rtimes_{\alpha} \G)$ implementing the action of $\alpha_g$, $g \in \G$. 
  
  If $S \in \C \rtimes_{\alpha} \G$, then the Fourier coefficients $\{ \Phi_g (S)\}_{g \in \G}$ of $S$ are defined by the formula $\Phi_g (S)\equiv \Phi_e (SU_g^*)$, $g \in \G$. It is easy to see that if $\{ S_n\}_n$ is a sequence of polynomials in $ \C \rtimes_{\alpha} \G$ converging to $S$, then $\lim_n \Phi_g (S_n) = \Phi_g(S)$, $\forall g \in \G$.
  
  Since the group $\G$ is amenable, it contains a \textit{Folner net}, i.e., a net $\{ F_i\}_{i \in \bbI}$ of finite subsets of $\G$ so that
  \[
 \lim_{i \in \bbI} \frac{|gF_i \cap F_i |}{|F_i|} = 1, \quad \forall g \in \G.
  \]
This allows us to deduce a Cesaro type approximation for any $  S \in \C \rtimes_{\alpha} \G$ using polynomials with coefficients ranging over $\{ \Phi_g (S)\}_{g \in \G}$.

\begin{proposition} \label{Ceasaroapprox}
Let $(\C, \G , \alpha)$ be as above and let $ S \in \C \rtimes_{\alpha} \G$. Then given $\epsilon > 0$ there exists a finite set $F_{\epsilon} \subseteq \G$ so that
\[
\Big \| S - \sum_{g \in \G} \frac{|gF_{\epsilon} \cap F_{\epsilon}|}{|F_{\epsilon}|} \Phi_g (S) U_g\Big\| \leq \epsilon.
\]
In particular, if $\Phi_g (S) =0$, $\forall g \in \G$, then $S=0$.
\end{proposition}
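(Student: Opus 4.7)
The plan is to realize the finite sum in the displayed inequality as the image of $S$ under a Fejer-type unital completely positive map $M_F : \C \rtimes_\alpha \G \to \C \rtimes_\alpha \G$ built from the Folner set $F$. First I would, for any finite $F \subseteq \G$, define $\phi_F(g) = |gF \cap F|/|F|$ and note, via the computation $\phi_F(g) = \langle \lambda(g) \xi_F, \xi_F \rangle$ with $\xi_F = |F|^{-1/2} \chi_F \in \ell^2(\G)$ and $\lambda$ the left regular representation, that $\phi_F$ is a normalized positive definite function on $\G$ whose support is contained in the finite set $FF^{-1}$.

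Next I would construct $M_F$ by a Stinespring-type dilation. Fix any faithful representation $\pi : \C \rtimes_\alpha \G \to B(\H)$. The assignments $\tilde\pi(c) = \pi(c) \otimes I_{\ell^2(\G)}$ for $c \in \C$ and $\tilde\pi(U_g) = \pi(U_g) \otimes \lambda(g)$ are easily checked to satisfy the covariance relation, and so integrate to a representation $\tilde\pi$ of $\C \rtimes_\alpha \G$ on $\H \otimes \ell^2(\G)$. The isometry $V : \H \to \H \otimes \ell^2(\G)$, $V\eta = \eta \otimes \xi_F$, then satisfies $V^* \tilde\pi(cU_g) V = \phi_F(g) \pi(cU_g)$ on generators, so $\Ad_V \circ \tilde\pi$ descends to a unique UCP map $M_F$ on $\C \rtimes_\alpha \G$ acting by $M_F(cU_g) = \phi_F(g) c U_g$. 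Since $\phi_F$ is finitely supported, $M_F$ sends every element to a polynomial; combined with $\|M_F\| \leq 1$ and the continuity of $\Phi_g$, this yields the identity $\Phi_g(M_F(S)) = \phi_F(g) \Phi_g(S)$ for every $S$, so
\[
M_F(S) = \sum_{g \in \G} \phi_F(g) \Phi_g(S) U_g
\]
is exactly the finite sum appearing in the statement.

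To conclude, given $S$ and $\epsilon > 0$ I would choose a polynomial $T = \sum_{g \in E} c_g U_g$ with $\|S - T\| < \epsilon/3$, and then exploit the Folner condition (which gives $\phi_{F_i}(g) \to 1$ pointwise, hence uniformly on the finite set $E$) to pick a Folner element $F_\epsilon$ satisfying $\|T - M_{F_\epsilon}(T)\| < \epsilon/3$. The triangle inequality together with $\|M_{F_\epsilon}\| \leq 1$ then yields $\|S - M_{F_\epsilon}(S)\| < \epsilon$, which is precisely the displayed estimate. The ``in particular'' assertion follows at once: if every $\Phi_g(S)$ vanishes, then $M_F(S) = 0$ for every finite $F$, forcing $\|S\| \leq \epsilon$ for all $\epsilon > 0$.

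The main technical hurdle is the passage from polynomials to all of $\C \rtimes_\alpha \G$ when extending $M_F$, and verifying the Fourier-coefficient identity $\Phi_g \circ M_F = \phi_F(g)\,\Phi_g$ on the full crossed product; both rest on the positive-definiteness of $\phi_F$ and the explicit dilation above. Once these are in hand, the remainder is a routine Fejer-style approximation using the Folner net.
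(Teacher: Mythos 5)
Your proof is correct and follows essentially the same route as the paper: a Schur-type multiplier $\Psi_F$ associated to the positive definite function $g \mapsto |gF\cap F|/|F|$, its complete contractivity, and a F\o lner/Fej\'er approximation combined with the continuity of the Fourier coefficients. The only difference is that the paper black-boxes the existence of the completely contractive multiplier by citing \cite[Lemma 4.2.3]{BO}, whereas you reprove that lemma from scratch via the Fell-absorption dilation $cU_g \mapsto \pi(cU_g)\otimes\lambda(g)$ compressed by $\eta\mapsto\eta\otimes\xi_F$ --- a correct and self-contained substitute.
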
 

\begin{proof}
By  \cite[Lemma 4.2.3]{BO}, for any finite set $F \subseteq \G$, the map
\begin{equation} \label{Ceasarosubst}
c_g U_g \longmapsto \frac{|gF \cap F|}{|F|}  c_gU_g, \quad c_g \in \C, g \in \G
\end{equation}
extends to a completely contractive map $\Psi_F$ on $\C \rtimes_{\alpha} \G$. 
If $S \in \C \rtimes_{\alpha} \G$, then the net $\{ \Psi_{F_i}(S)\}_{i \in \bbI}$ converges to $S$, where $\{ F_i \}_{i \in \bbI}$ is a Folner net for $\G$. Choose $F_{\epsilon}$ so that $\| S - \Psi_{F_{\epsilon}}(S)\| \leq \epsilon$. The conclusion follows now by applying $\Psi_{F_{\epsilon}}$ to any sequence $\{ S_n\}_n$ of polynomials in $\C \rtimes_{\alpha} \G$ converging to $S$.
\end{proof}

In the case where $\G$ is a discrete abelian group we can say something more. In that case the Pontryagin dual $\hat{\G}$ of $\G$, equipped with the compact-open topology is compact and therefore it admits a (normalized) Haar measure $d\gamma$. One can then verify that for an $S \in \C \rtimes_{\alpha} \G$ we have 
\begin{equation} \label{semisimpleeq}
\Phi_g (S)U_g = \int_{\hat{\G}} \hat{\alpha}_{\gamma} (S) \gamma(g) d\gamma, \quad  g \in \G,
\end{equation}
where $\hat{\G} \ni \gamma \mapsto \hat{\alpha}_{\gamma} \in \Aut{\C \rtimes_\alpha^r \G}$ is the dual action, i.e., $\hat{\alpha}_{\gamma} (c U_g)= \overline{\gamma(g)}c U_g$, $c \in \C$, $g \in \G$. 

Hence, if $\J \subseteq \C \rtimes_{\alpha} \G$ is a closed linear space which is left invariant by $\{\hat{\alpha}_{\gamma} \}_{\gamma \in \hat{\G}}$, then $\Phi_g (S)U_g \in \J$, for any $g \in \G$ and $S \in \J$.

\chapter{Definitions and Fundamental Results} \label{basic}
 In what follows, a \textit{dynamical system} $(\A, \G, \alpha)$ consists of an approximately unital operator algebra $\A$ and a locally compact (Hausdorff) group $\G$  acting continuously on $\A$ by completely isometric automorphisms, i.e., there exists a group representation $\alpha: \G \rightarrow \Aut\A$ which is continuous in the point-norm topology. (Here $\Aut \A$  denotes the collection of all completely isometric automorphisms of $\A$.) The group $\G$ is equipped with a left-invariant Haar measure $\mu$; the modular function of $\mu$ will be denoted as $\Delta$. Usually $\alpha(s)$, $s \in \G$, will be denoted as $\alpha_s$ and on occasion as $s$. 
  
  Now let $(\C , \G, \alpha)$ be a $\ca$-dynamical system and let $C_c(\G, \C)$ denote the continuous compactly supported functions from $\G$ into $\C$. Then $C_c(\G, \C)$ is a $*$-algebra in the usual way \cite[page 48]{Will}. In the sequel, if $c \in \C$ and $f\in C_c(\G)$ then $f \otimes c \in C_c(\G , \A)$ will denote the function $f \otimes c (s)= f(s)c$, $s \in \G$. Any covariant representation $(\pi, \phist, \H)$ of $(\C , \G, \alpha)$ induces a representation $\pi \rtimes \phist$ on $C_c(\G, \C)$, which is called the integrated form of $(\pi, \phist, \H)$ \cite[Proposition 2.23]{Will}. The full crossed product $\ca$-algebra $\C \cpf$ is the completion of $C_c(\G, \C)$ with respect to an appropriate supremum norm arising from all integrated covariant representations of $(\C , \G, \alpha)$. The reduced crossed product $\C\cpr$ is defined using the left regular representation for $\G$. See \cite{Will} for more details.

In the case of an arbitrary dynamical system $(\A , \G, \alpha)$, we appeal to the selfadjoint theory described above in order to define crossed product algebras. Here we have several options for defining a full or reduced crossed product, depending on the various choices of a $\ca$-cover for $\A$.

\begin{definition} \label{admit}
Let $(\A, \G, \alpha)$ be a dynamical system and let $(\C, j)$ be a $\ca$-cover of $\A$. Then $(\C, j)$ is said to be $\alpha$-admissible if there exists a group representation $\dot{\alpha}: \G \rightarrow \Aut(\C)$ which extends the representation
  \begin{equation} \label{detailona}
  \G \ni s \mapsto j\circ \alpha_s \circ j^{-1} \in \Aut (j(\A)).
  \end{equation} 
\end{definition}

Note that the representation $\dot{\alpha}$ of Definition~\ref{admit} is automatically continuous over a dense subalgebra of $\C$ and so an easy $\epsilon/3$ argument actually shows that $\dot{\alpha}: \G \rightarrow \Aut(\C)$ is a continuous group representation. In the sequel, since $\dot{\alpha}$ is uniquely determined by its action on $j(\A)$, both (\ref{detailona}) and its extension $\dot{\alpha}$ will be denoted by the same symbol $\alpha$.

\begin{definition}[Relative Crossed Product] \label{relativedefn}
Let $(\A, \G, \alpha)$ be a dynamical system and let $(\C, j)$ be an $\alpha$-admissible $\ca$-cover for $\A$. Then, $\A \rtimes_{\C, j, \alpha} \G$ and $\A \rtimes_{\C, j, \alpha}^r \G$ will denote the subalgebras of the crossed product $\ca$-algebras $\C \rtimes_{\alpha} \G$ and $\C \rtimes_{\alpha}^r \G$ respectively, which are generated by $C_c \big(\G , j(\A)\big) \subseteq C_c\big(\G , \C\big)$.
\end{definition}

One has to be a bit careful with Definition~\ref{relativedefn} when dealing with an \textit{abstract} operator algebra. It is common practice in operator algebra theory to denote a $\ca$-cover by the use of set theoretic inclusion. Nevertheless a $\ca$-cover for $\A$ is not just an inclusion of the form $A \subseteq \C$ but instead a pair $(\C, j)$, where $\C$ is a $\ca$-algebra, $j : \A \rightarrow \C$ is a complete isometry and $\C = \ca(j(\A))$. Furthermore, in the case of an $\alpha$-admissible $\ca$-cover,  it seems that the structure of the relative crossed product for $\A$ should depend on the nature of the embedding $j$ and one should keep that in mind when working with that crossed product. To put it differently, assume that $(\A, \G, \alpha)$ is a dynamical system and $(\C_i, j_i)$, $i=1,2$, are $\ca$-covers for $\A$. Further assume that the representations $\G \ni s \mapsto j_i \circ \alpha_s \circ j_i^{-1} \in \Aut (j_i(\A))$ extend to $*$-representations $\alpha_i : \G \rightarrow \Aut(\C_i)$, $i=1,2$. It is not at all obvious that whenever $\C_1 \simeq \C_2$ (or even $\C_1 = \C_2$), the $\ca$- dynamical systems $(\C_i, \G, \alpha_i)$ are conjugate nor that the corresponding crossed product algebras are isomorphic. Therefore the (admittedly) heavy notation $\A \rtimes_{\C, j, \alpha} \G$ and $\A \rtimes_{\C,j, \alpha}^r \G$ seems to be unavoidable.

We have already encountered the concept of equivalence between $\ca$-covers in Definition~\ref{firstequiv}. Indeed our view of $\cmax ( \A)$ and $\cenv( \A)$ in Chapter~\ref{prel} is essentially that of an equivalence class of $\ca$-covers and not just of a single element.

\begin{lemma} 
 Let $(\A, \G, \alpha)$ be a dynamical system and let $(\C_1, j_1)$ be an $\alpha$-admissible $\ca$-cover for $\A$. If $(\C_2, j_2)$ is another $\ca$-cover of $\A$ which is equivalent to $(\C_1 , j_1)$, then $(\C_2 , j_2)$ is also $\alpha$-admissible and so both representations $\G \ni s \mapsto j_i \circ \alpha_s \circ j_i^{-1} \in \Aut (j_i(\A))$ extend to $*$-representations $\alpha_i : \G \rightarrow \Aut(\C_i)$, $i=1,2$. . Furthermore
\[
\A \rtimes_{\C_1 , j_1, \alpha_1} \G \simeq  \A \rtimes_{\C_2, j_2, \alpha_2}\G\mbox{  and  }
\A \rtimes_{\C_1, j_1, \alpha_1}^r \G \simeq \A \rtimes_{\C_2, j_2, \alpha_2}^r\G
\]
via complete isometries that map generators to generators. 
\end{lemma}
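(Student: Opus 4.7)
The plan is to exploit the functoriality of $\ca$-dynamical crossed product constructions, using the fact that equivalence of $\ca$-covers provides a canonical $*$-isomorphism between them that fixes $\A$.

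First I would set up the transported action. By Definition~\ref{firstequiv}, equivalence of $(\C_1, j_1)$ and $(\C_2, j_2)$ yields a $*$-isomorphism $j: \C_1 \to \C_2$ with $j \circ j_1 = j_2$. Define
\[
\alpha_2 : \G \to \Aut(\C_2), \quad \alpha_{2,s} := j \circ \alpha_{1,s} \circ j^{-1}.
\]
This is a group homomorphism and, since $j$ is a $*$-isomorphism, it lands in $\Aut(\C_2)$. To verify it extends the map $s \mapsto j_2 \circ \alpha_s \circ j_2^{-1}$ on $j_2(\A)$, I use $j_2 = j \circ j_1$ to compute that on any $j_2(a) = j(j_1(a))$,
\[
\alpha_{2,s}(j_2(a)) = j\bigl(\alpha_{1,s}(j_1(a))\bigr) = j\bigl(j_1(\alpha_s(a))\bigr) = j_2(\alpha_s(a)),
\]
using that $(\C_1, j_1)$ is $\alpha$-admissible with extension $\alpha_1$. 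Point-norm continuity of $\alpha_2$ follows from that of $\alpha_1$ together with norm continuity of $j$ and $j^{-1}$; hence $(\C_2, j_2)$ is $\alpha$-admissible.

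Next I would pass to the crossed products. By construction, $j: \C_1 \to \C_2$ is a $\G$-equivariant $*$-isomorphism of $\ca$-dynamical systems $(\C_1, \G, \alpha_1) \to (\C_2, \G, \alpha_2)$. By the functoriality of the full and reduced $\ca$-crossed product constructions (see \cite{Will}), this induces $*$-isomorphisms
\[
\tilde{j}: \C_1 \rtimes_{\alpha_1} \G \xrightarrow{\simeq} \C_2 \rtimes_{\alpha_2} \G, \qquad \tilde{j}^r: \C_1 \rtimes^r_{\alpha_1} \G \xrightarrow{\simeq} \C_2 \rtimes^r_{\alpha_2} \G,
\]
which on $C_c(\G, \C_1)$ are given simply by $(\tilde{j}f)(s) = j(f(s))$.

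Finally I would restrict to the relative crossed products. Because $j \circ j_1 = j_2$, the map $f \mapsto j \circ f$ carries $C_c(\G, j_1(\A))$ onto $C_c(\G, j_2(\A))$ bijectively, sending the generator $\xi \otimes j_1(a)$ to $\xi \otimes j_2(a)$ for $\xi \in C_c(\G)$ and $a \in \A$. Hence $\tilde{j}$ (resp.\ $\tilde{j}^r$) restricts to a $*$-isomorphism of the closed subalgebras generated by these sets, which are precisely $\A \rtimes_{\C_1, j_1, \alpha_1} \G$ and $\A \rtimes_{\C_2, j_2, \alpha_2} \G$ (resp.\ their reduced analogues). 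As restrictions of $*$-isomorphisms they are complete isometries, and by the explicit formula they send generators to generators.

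There is no genuine obstacle here; the lemma is essentially a bookkeeping statement confirming that the (admittedly heavy) notation $\A \rtimes_{\C, j, \alpha} \G$ depends only on the equivalence class of the cover. The only point worth a moment's care is the verification that $\alpha_2$ is indeed point-norm continuous into $\Aut(\C_2)$, but this follows from point-norm continuity of $\alpha_1$ after an $\epsilon/3$ argument, exactly as noted in the paragraph following Definition~\ref{admit}.
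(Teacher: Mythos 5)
Your proposal is correct and follows essentially the same route as the paper: transport the action via the canonical $*$-isomorphism $j$ (making $j$ $\G$-equivariant), invoke functoriality of the full and reduced crossed products for equivariant isomorphisms, and observe that the induced isomorphism carries $C_c(\G, j_1(\A))$ onto $C_c(\G, j_2(\A))$ and hence restricts to the relative crossed products. The only cosmetic difference is that you define $\alpha_2$ explicitly as $j\circ\alpha_{1,s}\circ j^{-1}$ whereas the paper verifies equivariance of $j$ against the (unique) extension by a density argument; these amount to the same thing.
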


\begin{proof}
Let $j:\C_1 \rightarrow \C_2$ be a $*$-isomorphism so that the diagram (\ref{little triangle}) commutes. To see that $(\C_2, j_2)$ is $\alpha$-admissible simply notice that $j \circ \alpha_{1, s} \circ j^{-1}$ extends $j_2\circ \alpha_s \circ j_2^{-1}$ to a $*$-automorphism of $\C_2$, for any $s \in \G$. Hence $(\C_2 , j_2)$ is also $\alpha$-admissible.

Now note that $j$ is \textit{$\G$-equivariant}, i.e., it implements a conjugacy between the $\ca$-dynamical systems $(\C_1, \G, \alpha_1)$ and $(\C_2, \G, \alpha_2)$. Indeed, if $x = j_1(a)$, $a \in \A$, then
\begin{align*}
j\alpha_{1,s}(x)=jj_1\alpha_s(a)
=j_2\alpha_s(a) = \alpha_{2,s}j(x), \quad s \in \G
\end{align*}
and since $j_1(\A)$ generates $\C_1$ as a $\ca$-algebra, $j\alpha_{1, s}(x)=\alpha_{2, s}j(x)$, for all $x \in \C_1$.

The conjugacy $j: \C_1\rightarrow \C_2$ between $(\C_1, \G, \alpha_1)$ and $(\C_2, \G, \alpha_2)$ implies that the (full) crossed product $\ca$-algebras are $*$-isomorphic \cite[Proposition 2.48]{Will}. Furthermore this isomorphism \textit{maps generators to generators}, i.e., it maps $j_1f$ onto $jj_1f=j_2f$, for any $ f \in C_c(\G , \A)$. This establishes that $\A \rtimes_{\C_1 ,j_1, \alpha_1} \G$ and $\A \rtimes_{\C_2, j_2, \alpha_2}\G$ are completely isometrically isomorphic. A similar argument, using \cite[Lemma A.16]{Es} this time, establishes the isomorphism of the operator algebras $\A \rtimes_{\C_1, j_1, \alpha_1}^r \G$ and $\A \rtimes_{\C_2, j_2, \alpha_2}^r\G$.
\end{proof}

The previous lemma will allow us to adopt a notation lighter than the notation $\A \rtimes_{\C, j, \alpha} \G$ and $\A \rtimes_{\C,j, \alpha}^r \G$ at least in the case where the $\ca$-covers are coming either from the $\ca$-envelope or from the universal $\ca$-algebra of $\A$. Indeed

\begin{lemma} \label{delicate}
 Let $(\A, \G, \alpha)$ be a dynamical system and let $(\C_i, j_i)$ be $\ca$-covers for $\A$ with either $(\C_i, j_i) \simeq \cenv(\A)$, $i=1,2$, or $(\C_i, j_i) \simeq \cmax(\A)$, $i=1,2$. Then there exist continuous group representations $\alpha_i : \G \rightarrow \Aut(\C_i)$ which extend the representations
  \[
  \G \ni s \mapsto j_i \circ \alpha_s \circ j_i^{-1} \in \Aut (j_i(\A)), \quad i=1,2.
  \]
 Furthermore
 $\A \rtimes_{\C_1 , j_1, \alpha_1} \G \simeq  \A \rtimes_{\C_2, j_2, \alpha_2}\G$ and
$\A \rtimes_{\C_1, j_1, \alpha_1}^r \G \simeq \A \rtimes_{\C_2, j_2, \alpha_2}^r\G
$, via complete isometries that map generators to generators.
 \end{lemma}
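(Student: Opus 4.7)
The plan is to first establish $\alpha$-admissibility for $\cmax(\A)$ and $\cenv(\A)$ themselves, then transfer $\alpha$-admissibility to the given covers $(\C_i, j_i)$ via the preceding lemma, and finally invoke that same lemma to conclude the isomorphism of the (full and reduced) crossed products.

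For the $\cmax(\A)$ case, let $(\cmax(\A), j_{\max})$ denote the universal cover. Given $s \in \G$, the composition $j_{\max} \circ \alpha_s : \A \rightarrow \cmax(\A)$ is a completely contractive homomorphism into a $\ca$-algebra, so by the universal property it extends to a $*$-homomorphism $\tilde\alpha_s : \cmax(\A) \rightarrow \cmax(\A)$ with $\tilde\alpha_s \circ j_{\max} = j_{\max} \circ \alpha_s$. Applying the same construction to $\alpha_{s^{-1}}$ yields $\tilde\alpha_{s^{-1}}$; the composition $\tilde\alpha_s \circ \tilde\alpha_{s^{-1}}$ agrees with the identity on $j_{\max}(\A)$, hence on all of $\cmax(\A)$ by the uniqueness clause of the universal property (or simply because $j_{\max}(\A)$ generates $\cmax(\A)$ as a $\ca$-algebra). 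Thus $\tilde\alpha_s \in \Aut(\cmax(\A))$, and uniqueness of the extensions makes $s \mapsto \tilde\alpha_s$ a group homomorphism.

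For the $\cenv(\A)$ case, write $(\cenv(\A), j_e)$ for the $\ca$-envelope. Given $s \in \G$, the map $j_e \circ \alpha_{s^{-1}} : \A \rightarrow \cenv(\A)$ is completely isometric and its image generates $\cenv(\A)$, so $(\cenv(\A), j_e \circ \alpha_{s^{-1}})$ is itself a $\ca$-cover of $\A$. By the universal property of the $\ca$-envelope applied to this cover, there exists a $*$-epimorphism $\tilde\alpha_s : \cenv(\A) \rightarrow \cenv(\A)$ such that $\tilde\alpha_s(j_e(\alpha_{s^{-1}}(a))) = j_e(a)$ for all $a \in \A$; equivalently $\tilde\alpha_s \circ j_e = j_e \circ \alpha_s$. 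The same argument yields $\tilde\alpha_{s^{-1}}$, and as before the composition is the identity on $j_e(\A)$, so on $\cenv(\A)$. Hence $\tilde\alpha_s \in \Aut(\cenv(\A))$ extends $j_e \circ \alpha_s \circ j_e^{-1}$, and $s \mapsto \tilde\alpha_s$ is a group representation.

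Point-norm continuity of $s \mapsto \tilde\alpha_s$ on $j(\A)$ (for $j = j_{\max}$ or $j_e$) is immediate from the continuity of $\alpha$ and the isometry of $j$; it then passes to the generated $\ca$-algebra by the standard $\epsilon/3$ argument noted after Definition~\ref{admit}. Therefore $\cmax(\A)$ and $\cenv(\A)$ are both $\alpha$-admissible. Since equivalence of $\ca$-covers (Definition~\ref{firstequiv}) preserves $\alpha$-admissibility by the preceding lemma, each $(\C_i, j_i)$ is also $\alpha$-admissible, carrying an extended action $\alpha_i$. Finally, equivalence is transitive, so $(\C_1, j_1) \simeq (\C_2, j_2)$, and a single application of the preceding lemma delivers the desired complete isometric isomorphisms
\[
\A \rtimes_{\C_1, j_1, \alpha_1} \G \simeq \A \rtimes_{\C_2, j_2, \alpha_2} \G, \qquad \A \rtimes_{\C_1, j_1, \alpha_1}^r \G \simeq \A \rtimes_{\C_2, j_2, \alpha_2}^r \G,
\]
mapping generators to generators. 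The only mildly delicate point is the $\cenv$ step, where one must be careful to apply the universal property to the twisted cover $(\cenv(\A), j_e \circ \alpha_{s^{-1}})$ rather than to $\alpha_s$ directly, in order to get an extension in the correct direction.
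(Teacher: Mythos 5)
Your proof is correct and follows essentially the same route as the paper: extend each $\alpha_s$ to a $*$-automorphism of the cover via the relevant universal property (of $\cmax(\A)$, resp.\ of the $\ca$-envelope applied to the twisted cover $(\cenv(\A), j_e\circ\alpha_{s^{-1}})$), verify invertibility by composing with the extension of $\alpha_{s^{-1}}$ and using that $j(\A)$ generates, and then reduce the crossed-product isomorphisms to the preceding lemma via transitivity of equivalence of $\ca$-covers. The only difference is cosmetic: you spell out the $\cenv(\A)$ case in full, where the paper treats only $\cmax(\A)$ and appeals to ``similar arguments.''
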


 \begin{proof} We deal with $\cmax(\A)$ and the full crossed product. Similar arguments work in all other cases as well. 
 
 The proof essentially follows from the well-known fact that any completely  isometric automorphism $\beta$ of $\A$ extends to a $*$-automorphism $\rho$ of $\cmax(\A) \equiv (\cmax(\A), j)$. Indeed, the defining property of $\cmax (\A)$ implies the existence of a $*$-homomorphism $\rho: \cmax(\A) \rightarrow \cmax(\A)$ so that $\rho\circ j = j \circ \beta$. Similarly, there exists a $*$-homomorphism $\rho': \cmax(\A) \rightarrow \cmax(\A)$ so that $\rho' \circ j = j \circ \beta^{-1}$. Hence, if $x=j(a)$, $a \in \A$ we have
 \[
\rho\rho'(x)=  \rho\rho'j(a) =\rho j\beta^{-1}(a) = j \beta \beta^{-1}(a)= j(a)=x,
 \]
 i.e., $(\rho\circ \rho')_{\mid j(\A)}= \id_{\mid j(\A)}$, and since $\A$ generates $\cmax(\A)$ as a $\ca$-algebra, $\rho \circ \rho' = \id$. Similarly $\rho' \circ \rho = \id $ and so $\rho  \in \Aut \cmax(\A)$ with $\rho\circ j = j \circ \beta$ and so $\rho_{\mid j(\A)}= (j \circ \beta \circ j^{-1})_{\mid j (\A)}$.
 
The previous paragraph implies the existence of $*$-automorphisms $\alpha_{i, s}$, $s \in \G$, which extend the maps $j_i \circ \alpha_s\circ j_i^{-1}\mid_{ j_i (\A)}$, $i=1,2$. Since
  \[
  \G \ni s \mapsto j_i \circ \alpha_s \circ j_i^{-1} \in \Aut (j_i(\A)), \quad i=1,2 
    \]
are group representation, the same is true for $\G \ni s \mapsto \alpha_{i, s}$ over a dense subalgebra of $\C_i$. Therefore the $\alpha_i: \G \rightarrow \Aut\C_i$ are group representations.   \end{proof}

Because of the previous two lemmas we can now write $\A \rtimes_{\cmax(\A), \alpha} \G$, $\A \rtimes_{\cenv(\A), \alpha} \G$ and similarly for the associated reduced crossed products. It turns out that in specific situations there are more crossed products to be associated naturally with the system $( \A, \G, \alpha)$. This is truly a feature of the non-selfadjoint world.

Our next results establish basic properties for the crossed product to be used frequently in the rest of the monograph. Both results are easy to prove in the case where $\G$ is discrete but the general case requires some agility.

\begin{lemma} \label{cai}
Let $(\A, \G, \alpha)$ be a dynamical system and let $(\C , j)$ be an $\alpha$-admissible $\ca$-cover for $\A$. Then the algebras $\A\rtimes_{\C, j, \alpha} \G$ and $\A \rtimes^r_{\C, j, \alpha} \G$ are approximately unital.
\end{lemma}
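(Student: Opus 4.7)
The plan is to construct a contractive approximate identity inside $\A \rtimes_{\C,j,\alpha}\G$ (and simultaneously inside $\A \rtimes^r_{\C,j,\alpha}\G$) by adapting the standard $\ca$-crossed product construction so that everything stays in the non-selfadjoint subalgebra. Since both relative crossed products are by definition the closure of $C_c(\G, j(\A))$ in the ambient $\ca$-algebra, it suffices to exhibit contractive elements of $C_c(\G, j(\A))$ that act as an approximate identity on this dense subspace.

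First I would take $\{e_i\}_{i\in\bbI}$, a contractive approximate identity for $\A$; by \cite[Lemma 2.1.7]{BlLM} the net $\{j(e_i)\}$ is then a contractive approximate identity for $\C$. Let $\{V_\lambda\}_{\lambda \in \Lambda}$ be a neighborhood base at $e \in \G$ and, for each $\lambda$, pick $f_\lambda \in C_c(\G)^+$ supported in $V_\lambda$ with $\int f_\lambda \, d\mu = 1$. Form
\[
u_{(\lambda, i)} := f_\lambda \otimes j(e_i) \in C_c(\G, j(\A)),
\]
directed lexicographically so that $V_\lambda$ shrinks and $i$ increases. Contractivity in both the full and reduced norms follows from the $L^1$-bound $\|u_{(\lambda,i)}\| \leq \|u_{(\lambda,i)}\|_1 = 1$.

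For the approximate-identity property, by density and the uniform bound $\|u_{(\lambda, i)}\|\le 1$ it is enough to show $u_{(\lambda,i)} * g \to g$ for each $g \in C_c(\G, j(\A))$. Using $\int f_\lambda = 1$, the difference $(u_{(\lambda,i)}*g)(s) - g(s)$ equals
\[
\int f_\lambda(t)\bigl[j(e_i)\alpha_t(g(t^{-1}s)) - g(s)\bigr]\, d\mu(t),
\]
and the bracketed term decomposes as $j(e_i)\bigl[\alpha_t(g(t^{-1}s)) - g(s)\bigr] + \bigl[j(e_i)g(s) - g(s)\bigr]$. The first summand is controlled by the point-norm continuity of the extended action $\alpha: \G \to \Aut(\C)$ uniformly on $\supp g$ (which is compact), and becomes small once $V_\lambda$ is small; the second summand is controlled by the approximate-identity property of $\{j(e_i)\}$ acting on the compact set $g(\supp g)\subseteq \C$. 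This yields uniform convergence in $s$, with supports staying in a fixed compact set. That is convergence in the inductive limit topology on $C_c(\G, \C)$, which dominates both the full and reduced crossed product norms. The right-sided convergence $g * u_{(\lambda,i)} \to g$ is handled analogously (a change of variables brings in the modular function $\Delta$ but causes no essential trouble).

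The main difficulty is bookkeeping rather than conceptual: coordinating the two indices in the directed set so that both estimates can be made small simultaneously, and making sure the uniform continuity argument is run on compact subsets of $\C$ rather than $\A$. The latter is painless because, as noted just after Definition~\ref{admit}, $\alpha$-admissibility automatically yields point-norm continuity of the extended action on $\C$, so compactness of $\supp g$ and $g(\supp g)$ inside $\G$ and $\C$ respectively is all that is needed.
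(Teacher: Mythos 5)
Your construction is essentially the paper's: the paper also takes $e_i = w_i \otimes a_i$ with $w_i$ a normalized bump function supported in a shrinking compact neighborhood of $e\in\G$ and $\{a_i\}$ a contractive approximate identity for $\A$, verifies the left approximate-identity property on elementary tensors $z\otimes c$ in the $L^1$-norm (which dominates both crossed product norms), and your two-term decomposition of the integrand is just a slightly more explicit version of the paper's single estimate that $a_i\alpha_r(c)$ is eventually $\epsilon$-close to $c$ for $r\in\supp w_i$. The one place where the routes genuinely diverge is the right-sided property: you propose redoing the convolution estimate with a change of variables and the modular function, whereas the paper invokes \cite[Lemma 2.1.6]{BlLM}, which says that a contractive \emph{left} approximate identity of an operator algebra is automatically two-sided. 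That shortcut is worth knowing: it eliminates the modular-function bookkeeping entirely, and your "analogous" right-sided estimate, while salvageable, is not quite symmetric (after the substitution you must control $g(s)\alpha_{sr^{-1}}(j(e_i)) - g(s)$, which requires conjugating by $\alpha_s$ and using compactness of $\{\alpha_s^{-1}(g(s))\}$ rather than of $g(\supp g)$ itself). One small correction: the double index should carry the \emph{product} order on $\Lambda\times\bbI$, not the lexicographic order; with the lexicographic order, pairs $(\lambda,i)$ with $\lambda$ large but $i$ arbitrary are cofinal, and the approximate-identity estimate in $i$ would fail along such tails.
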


\begin{proof}
Consider the collection $\{\U_i \mid i \in \bbI\}$ of all compact neighborhoods of the identity $e \in G$, ordered by inverse set-theoretic inclusion and contained in a fixed compact set $K$. For each such neighborhood $\U_i$, choose a non-negative continuous function $w_i$ with $\supp w_i \subseteq \U_i$ and $\int w_i (s) d\mu(s)=1$.  

Set  $e_i \equiv w_i\otimes a_i$, $i \in \bbI$, where $\{ a_i\}_{i \in \bbI}$ is a contractive approximate identity for $\A$ (and therefore for $\C$). We claim that $\{ e_i\}_{i \in \bbI}$ is a left contractive approximate identity for $C_c(\G, \C)$ in the $L^1$-norm.

Indeed let $c \in \C$, $z \in C_c(\G)$ and fix an $\epsilon >0$. Then,
\[
\big(e_i(z\otimes c)\big)(s)=\int a_i\alpha_r(c)w_i(r)z(r^{-1}s)d \mu (r), \, \, \, s \in \G.
\]
Since the supports of the $w_i$ ``shrink" to $e \in \G$, we can choose the $i \in \bbI$ large enough so that the $a_i\alpha_r(c)$ are eventually $\epsilon$-close to $c$, for all $r \in \supp w_i$. Hence for such $i \in \bbI$ we have
\begin{equation} \label{eq:cai1}
\Big\| \big(e_i(z\otimes c)\big)(s) - \int c w_i(r)z(r^{-1}s) d \mu (r)\Big\| \leq \epsilon\|z\|_{\infty}
\end{equation}
for all $s \in \G$. Since left translations act continuously on $C_c(\G)$, we can also arrange for these $i \in \bbI$ to satisfy, $|z(r^{-1}s)-z(s)| \leq \epsilon$, for all $r \in \supp w_i$ and $s \in \G$. 
Hence,
\begin{equation} \label{eq:cai2}
\begin{split}
\Big\|  \int c w_i(r)z(r^{-1}s) d \mu (r) - \int c w_i(r)z(s) d \mu (r)\Big\| &\leq \epsilon\|c\|\int w_i(r)d \mu(r) \\
&=\epsilon \|c\|.
\end{split}
\end{equation}
However, $\int  c w_i(r)z(s) d \mu (r) = (z\otimes c )(s)$ and so (\ref{eq:cai1}) and (\ref{eq:cai2}) imply that 
\[
\Big\| \big(e_i(z\otimes c)\big)(s) - (z\otimes c )(s) \Big\| \leq \epsilon (\|z\|_{\infty} + \|c \|)
\]
for all $s \in G$ and sufficiently large $i \in \bbI$. From this it is easily seen that $\{ e_i\}_{i \in \bbI}$ is a left contractive approximate identity for $C_c(\G, \C)$ in the $L^1$-norm.

From the above it follows that $\{ e_i\}_{i \in \bbI}$ is a left contractive approximate identity for  $\C\rtimes_{\alpha} \G$ and $\C \cpr$. Hence by  \cite[Lemma 2.1.6]{BlLM} we have that $\{ e_i\}_{i \in \bbI}$ is also a right contractive identity and the conclusion follows.
\end{proof}

\begin{proposition}
Let $(\A, \G, \alpha)$ be a dynamical system and let $(\C , j)$ be an $\alpha$-admissible $\ca$-cover for $\A$. Then $\C \cpf$ is a $\ca$-cover for $\A\rtimes_{\C, j, \alpha} \G$ and $\C \cpr$ is a $\ca$-cover for $\A\rtimes_{\C, j, \alpha}^r \G$. 
\end{proposition}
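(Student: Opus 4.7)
The inclusion $\A \rtimes_{\C, j, \alpha} \G \hookrightarrow \C \cpf$ is tautologically completely isometric by Definition~\ref{relativedefn}, so what must be shown is that the generated $\ca$-algebra $\D := \ca_{\C \cpf}(\A \rtimes_{\C, j, \alpha} \G)$ equals $\C \cpf$. The argument for the reduced case is identical and I treat only the full one.

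I would work with the canonical covariant pair $(\iota_\C, \iota_\G)$ realizing the universal property of $\C \cpf$, under which $z \otimes c = \iota_\C(c)\iota_\G(z)$ with $\iota_\G(z) := \int z(s)\iota_\G(s)\, d\mu(s) \in M(\C \cpf)$ for $z \in C_c(\G)$ and $c \in \C$. Since elements $z \otimes c$ span a dense subspace of $C_c(\G, \C)$ in the inductive-limit topology, hence of $\C \cpf$, it suffices to show $z \otimes c \in \D$ for every such pair. Accordingly, introduce
\[
S := \{\, c \in \C : \iota_\C(c)\iota_\G(z) \in \D \text{ and } \iota_\G(z)\iota_\C(c) \in \D \text{ for all } z \in C_c(\G)\,\},
\]
and aim to prove $S$ is a $\ca$-subalgebra of $\C$ containing $j(\A)$; because $\C = \ca(j(\A))$, this forces $S = \C$ and finishes the proof.

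Containment $j(\A) \subseteq S$ is immediate: $\iota_\C(j(a))\iota_\G(z) = z \otimes j(a) \in C_c(\G, j(\A)) \subseteq \D$, while the covariance identity $\iota_\G(s)\iota_\C(c) = \iota_\C(\alpha_s(c))\iota_\G(s)$ rewrites $\iota_\G(z)\iota_\C(j(a))$ as the integrated form of $s \mapsto z(s)\, j(\alpha_s(a))$, again in $C_c(\G, j(\A)) \subseteq \D$. Norm-closure of $S$ is routine. Closure under the involution follows by writing $\iota_\C(c^*)\iota_\G(z) = (\iota_\G(z)^*\iota_\C(c))^*$ and $\iota_\G(z)\iota_\C(c^*) = (\iota_\C(c)\iota_\G(z)^*)^*$, using that $\iota_\G(z)^* = \iota_\G(z')$ for some $z' \in C_c(\G)$ depending on $z$ and $\Delta$, together with the fact that $\D$ is $*$-closed.

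The main obstacle is multiplicative closure. Given $c_1, c_2 \in S$, select a net $(w_i)$ of non-negative functions in $C_c(\G)$ with $\int w_i\, d\mu = 1$ and supports shrinking to $\{e\}$, exactly as in Lemma~\ref{cai}. Both $w_i \otimes c_1$ and $z \otimes c_2$ lie in $\D$, so the convolutions
\[
[(w_i \otimes c_1)(z \otimes c_2)](s) = \int w_i(r)\, z(r^{-1}s)\, c_1\, \alpha_r(c_2)\, d\mu(r)
\]
are in $\D$ for every $i$. Continuity of $z$ and of $\alpha$ combined with the localization of $w_i$ at $e$, argued precisely as in the proof of Lemma~\ref{cai}, yield convergence of these integrands uniformly on compact sets of $s$, with uniformly bounded supports, to $s \mapsto z(s)\, c_1 c_2$. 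Therefore $(w_i \otimes c_1)(z \otimes c_2) \to z \otimes c_1 c_2$ in the $L^1$-norm and hence in the $\C \cpf$-norm, giving $z \otimes c_1 c_2 \in \D$. The companion condition $\iota_\G(z)\iota_\C(c_1 c_2) \in \D$ is obtained by taking the adjoint of the already-verified first condition applied to $c_2^* c_1^*$ (legitimate because $c_1^*, c_2^* \in S$ by $*$-closure). This establishes that $S$ is a $\ca$-subalgebra of $\C$, hence $S = \C$ as required.
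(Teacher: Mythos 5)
Your proof is correct and rests on the same technical core as the paper's: reduce to elementary tensors via \cite[Lemma 1.87]{Will}, use the covariant pair $(i_{\C}, i_{\G})$ into the multiplier algebra, and exploit the shrinking-support approximate identity of Lemma~\ref{cai} to realize $z \otimes c_1c_2$ as a limit of products $(w_i \otimes c_1)(z \otimes c_2)$. The only difference is organizational: you package the argument as ``the set $S$ of admissible coefficients is a closed $*$-subalgebra containing $j(\A)$,'' whereas the paper grows the coefficient set by iterated left multiplication by $a$ and $a^*$ for $a \in \A$; both routes are valid and of comparable length.
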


\begin{proof} We verify the first claim only. Let $c \in \C$ and $ z \in \C_c(\G)$. We will show that if $z \otimes c \in \ca(\A \rtimes_{\C, j , \alpha} \G )$ then $z \otimes ac,  z \otimes a^*c\in \ca(\A \rtimes_{\C, j , \alpha} \G )$, for all $a \in \A$. This suffices to show that all elementary tensors in $\C_c(\G, \C)$ belong to $\ca(\A \rtimes_{\C, j , \alpha} \G )$ and the conclusion then follows from \cite[Lemma 1.87]{Will}.

Let $\{ e_i\}_{i \in \bbI}$ be the approximate identity of $\A\rtimes_{\C, j, \alpha} \G$  (Lemma~\ref{cai}) and let $(i_{\C}, i_{\G})$ be the  covariant homomorphism of $(\C, \G , \alpha)$ into $M\big(\C \rtimes_{\alpha} \G\big)$, appearing in \cite[Proposition 2.34]{Will}. Then
\[
z\otimes ac = \lim_i ( w_i\otimes aa_i)(z \otimes c) \in \ca(\A \rtimes_{\C, j , \alpha} \G ).
\]
On the other hand,
\[
z \otimes a^*c= \lim_i i_{\C}(a^*) e_i^*(z\otimes c) =\lim_i \big(e_ii_{\C}(a)\big)^*(z\otimes c).
\]
However, $e_i i_{\C}(a)(s)= w_i(s)a_i\alpha_r(a) \in \A$, for all $s \in \G$, and so $e_ii_{\C}(a) \in C_c(\G, \A)$. This implies that $z \otimes a^*c \in \ca(\A \rtimes_{\C, j , \alpha} \G )$ and the conclusion follows.
\end{proof}

The crossed product  $\A \rtimes_{\cmax(\A), \alpha} \G$ shares an important property which we describe in Proposition~\ref{Raeburnthm} below. But first we need a few definitions.

A \textit{covariant representation} of a dynamical system $(\A, \G, \alpha)$ is a triple $(\pi, \phist, \H)$ consisting of a Hilbert space $\H$, a strongly continuous unitary representation $\phist: G \rightarrow B(\H)$ and a non-degenerate, completely contractive representation  $\pi: \A \rightarrow B(\H)$ satisfying
\begin{equation*}
 \phist(s) \pi(a)=  \pi(\alpha_s(a)) \phist(s), \mbox{ for all } s \in G , a \in \A.
\end{equation*}
If we insist that the dimension of $\H$ is at most $\card(\A \times \G)$ then the collection of all covariant representations forms a set. (This is a crude requirement that can be refined further; for instance if $\A$ is separable and $\G$ is countable we can simply ask for $\H$ to be separable.) Nevertheless the direct sum of all covariant representations on a Hilbert space of dimension at most $\card(\A \rtimes \G)$ forms a representation $(\pi_{\infty}, \phist_{\infty}, \H_{\infty})$ that we call the universal covariant representation for $(\A, \G, \alpha)$.
A special class of covariant representations for $(\A, \G, \alpha)$ arises from the left regular representation $\lambda : \G \rightarrow B(L^2(\G, \mu))$. If $\pi:\A \rightarrow \H$ is a completely contractive representation of $\A$ then on the Hilbert space $L^2(\G, \H)\simeq \H\otimes L^2(\G)$, we define
\[
\bar{\pi} : \A \longrightarrow B(L^2(\G, \H)) \, \, \A \ni a \longrightarrow \bar{\pi}(a)
\]
with $\bar{\pi}(a)h(s)\equiv \pi\big(\alpha^{-1}_s(a)\big)\big(h(s)\big)$, $s \in \G$, $h \in L^2(\G, \H)$
and
\[
\lambda_{\H} : \G \longrightarrow B(\H \otimes L^2(\G)); \, \, \G \ni s \longrightarrow 1\otimes \lambda(s).
\]
A representation $( \overline{\pi}, \lambda_{\H})$ of the above form will be called a regular covariant representation for $(\A, \G, \alpha)$.

Our next result identifies a universal property of $\A \rtimes_{\cmax(\A), \alpha} \G$ and lends support to our subsequent Definition~\ref{fulldefn}.

\begin{proposition} \label{Raeburnthm}
 Let $(\A, \G, \alpha)$ be a dynamical system. Then
\begin{itemize}
\item[(i)] there exists a completely isometric non-degenerate covariant homomorphism $(i_{\A}, i_{\G})$ of $(\A, \G , \alpha)$ into $M\big(\A \rtimes_{\cmax(\A), \alpha} \G\big)$,
\item[(ii)] given a non-degenerate covariant representation $(\pi, \phist, \H)$ of $(\A, \G, \alpha)$, there is a non-degenerate representation $\pi \rtimes \phist$ of $\A \rtimes_{\cmax(\A), \alpha} \G$ such that $\pi = (\overline{\pi \rtimes \phist})\circ i_{\A}$ and $\phist = (\overline{\pi \rtimes \phist})\circ i_{\G}$, and,
\item[(iii)] $\A \rtimes_{\cmax(\A), \alpha} \G = \overline{\Span}\{ i_{\A}(a)\tilde{\i}_{\G}(z) \mid a \in \A, z \in C_c(\G) \},$ 
\end{itemize}
where 
\begin{equation} \label{iintegral}
\tilde{\i}_{\G}(z) \equiv \int_{\G} z(s) i_{\G}(s) d \mu (s), \quad \mbox{for all } z \in C_c(\G).
\end{equation}
\end{proposition}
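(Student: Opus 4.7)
The plan is to reduce everything to the selfadjoint Raeburn theorem \cite[Proposition 2.34]{Will} applied to the $\ca$-dynamical system $(\cmax(\A), \G, \alpha)$, and then restrict to the non-selfadjoint subalgebra $\A \rtimes_{\cmax(\A), \alpha} \G \subseteq \cmax(\A) \rtimes_\alpha \G$.

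For (i), I would begin with the faithful non-degenerate covariant homomorphism $(i_{\cmax(\A)}, i_\G)$ of $(\cmax(\A), \G, \alpha)$ into $M(\cmax(\A) \rtimes_\alpha \G)$ produced by Raeburn, and define $i_\A \equiv i_{\cmax(\A)} \circ j$, where $j : \A \hookrightarrow \cmax(\A)$ is the canonical complete isometry; then $i_\A$ is completely isometric as a composition of a complete isometry with a faithful $*$-homomorphism. The real work is to show that $i_\A(a)$ and $\tilde{\i}_\G(z)$ actually multiply $\A \rtimes_{\cmax(\A), \alpha} \G$ into itself, not merely the larger $\ca$-algebra. For $i_\A(a)$ this follows from the explicit formulas $(i_{\cmax(\A)}(c) f)(s) = c f(s)$ and $(f \cdot i_{\cmax(\A)}(c))(s) = f(s) \alpha_s(c)$ with $c = j(a)$, which preserve $C_c(\G, j(\A))$ since $\A$ is $\alpha$-invariant. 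For $\tilde{\i}_\G(z)$, Proposition \ref{prop integral} applies once one observes that $s \mapsto z(s) i_\G(s)$ is strictly continuous with compact support and that the corresponding left/right actions (essentially translation composed with $\alpha_s$) preserve $C_c(\G, j(\A))$. Multiplier-nondegeneracy of $i_\A$ then follows from Lemma \ref{cai}: the approximate unit $e_i = w_i \otimes a_i$ built there coincides with $i_\A(a_i) \tilde{\i}_\G(w_i)$, yielding density of both $[i_\A(\A) \cdot (\A \rtimes_{\cmax(\A), \alpha} \G)]$ and its right-hand analogue. Non-degeneracy of $i_\G$ is immediate from $i_\G(e)=1$.

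For (ii), I would extend the completely contractive homomorphism $\pi : \A \rightarrow B(\H)$ to a $*$-representation $\pi_{\max} : \cmax(\A) \rightarrow B(\H)$ by the universal property of $\cmax(\A)$. The covariance identity $\phist(s) \pi_{\max}(c) = \pi_{\max}(\alpha_s(c)) \phist(s)$ holds on $j(\A)$ by hypothesis, and a short $*$-closure and density argument extends it to all of $\cmax(\A)$; thus $(\pi_{\max}, \phist, \H)$ is a covariant representation of $(\cmax(\A), \G, \alpha)$. The selfadjoint Raeburn theorem integrates it to a non-degenerate $*$-representation $\pi_{\max} \rtimes \phist$ of $\cmax(\A) \rtimes_\alpha \G$ with $\pi_{\max} = \overline{\pi_{\max} \rtimes \phist} \circ i_{\cmax(\A)}$ and $\phist = \overline{\pi_{\max} \rtimes \phist} \circ i_\G$, and $\pi \rtimes \phist$ is its restriction to $\A \rtimes_{\cmax(\A), \alpha} \G$; the required intertwining identities are inherited by composing with $j$. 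Non-degeneracy of $\pi \rtimes \phist$ is transferred from the approximate unit of Lemma \ref{cai}.

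For (iii), the selfadjoint Raeburn theorem produces the identity $i_{\cmax(\A)}(c) \tilde{\i}_\G(z) = z \otimes c$ as an element of $C_c(\G, \cmax(\A)) \subseteq \cmax(\A) \rtimes_\alpha \G$; specializing to $c = j(a)$ yields $i_\A(a) \tilde{\i}_\G(z) = z \otimes j(a)$. Finite linear combinations of such elementary tensors are dense in $C_c(\G, j(\A))$ in the inductive limit topology, and therefore in the crossed product norm, so their closed linear span equals the closure of $C_c(\G, j(\A))$, which by definition is $\A \rtimes_{\cmax(\A), \alpha} \G$. The main obstacle is the bookkeeping in (i), where one must carefully distinguish $M(\A \rtimes_{\cmax(\A), \alpha} \G)$ from the larger $M(\cmax(\A) \rtimes_\alpha \G)$ and verify that the Raeburn multipliers genuinely restrict; once this is settled, the rest is essentially a mechanical translation via the universal property of $\cmax(\A)$.
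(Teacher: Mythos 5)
Your proposal is correct and follows essentially the same route as the paper: both reduce to the selfadjoint Raeburn machinery for $(\cmax(\A),\G,\alpha)$ from \cite{Will}, check via the explicit multiplier formulas that $(i_{\C},i_{\G})$ restricts to $M(\A\rtimes_{\cmax(\A),\alpha}\G)$, extend a covariant representation of $\A$ to one of $\cmax(\A)$ by the universal property plus a density/adjoint argument, and obtain (iii) from the identity $i_{\C}(c)\tilde{\i}_{\G}(z)=z\otimes c$ together with density of elementary tensors. Your extra detail on multiplier-nondegeneracy via the approximate unit of Lemma~\ref{cai} and on interpreting $\tilde{\i}_{\G}(z)$ through Proposition~\ref{prop integral} matches what the paper does implicitly.
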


\begin{proof}
Let $\C$ stand for $\cmax(\A)$. Before embarking with the proof note that the presence of a contractive approximate identity for $\A \cpf$ implies 
\begin{equation} \label{twomult}
M(\A \rtimes_{\C , \alpha} \G) \subseteq M \big(\C \cpf\big).
\end{equation}
Furthermore, the integral (\ref{iintegral}) is understood as in Proposition~\ref{prop integral}.

For $\C \cpf$ such a covariant representation $(i_{\C}, i_{\G})$ of $(\C, \G , \alpha)$ into $M\big( \C \cpf\big)$ exists by \cite[Proposition 2.34]{Will}. We will show that the same pair $(i_{\C}, i_{\G})$ restricted on $\A$ works for $\A \rtimes_{\C , \alpha} \G$ as well.

By \cite[Proposition 2.34]{Will},
\[
i_{\C} (c)f(s) = cf(s) \quad \mbox{ and } i_{\G}(t)f(s)=\alpha_{t}(f(t^{-1}s)),
\]
for all $f \in C_c(\G, \C)$ and $c \in \C$. From this, it is immediate that $(i_{\C}, i_{\G})$ maps $(\A, \G, \alpha)$ into $M ( \A \rtimes_{\C, \alpha} \G)$. Furthermore, $i_{\A}$ is non-degenerate because $\A \subseteq \C$ is approximate unital and $i_{\C}$ is non-degenerate. Hence (i) follows.

Corollary 2.36 in \cite{Will} shows that $i_{\C}(c)\tilde{\i}_{\G}(z) = z\otimes c$, $z \in C_c(\G)$, $ c \in \C$. This implies (iii).

It only remains to verify (ii). If $(\pi, \phist)$ is a non-degenerate covariant representation of $(\A, \G , \alpha)$, then there exists a non-degenerate $*$-representation $\rho$ of $\C$ so that $\rho \circ j = \pi$, where $j : \A \rightarrow \C$ is the canonical inclusion. But then 
\[
\begin{aligned}
\phist(s)\rho\big( j(a)\big) &= \phist(s)\pi(a)=\pi\big( (\alpha_s(a)\big)u(s) \\
&= \rho\Big( j \circ \alpha_s \circ j^{-1}\big(j(a)\big)\Big)\phist(s),
\end{aligned}
\]
for all $a \in \A$, and since $\C$ is generated by $\A$, the pair $(\rho, \phist)$ is a covariant representation for $(\C, \G , \alpha)$. Proposition 2.39 in \cite{Will} implies now the existence of a representation $\rho \rtimes \phist$, which satisfies the analogous properties of (ii) for $\C  \cpf$. If we set $\pi \rtimes \phist \equiv (\rho \rtimes \phist)_{\mid \A \rtimes_{\C, \alpha}\G}$, the conclusion follows. 
\end{proof}

The previous proposition shows that any covariant representation $( \pi , u)$ for $(\A, \G, \phi)$ ``integrates'' in a very precise sense to a completely contractive representation $\pi \rtimes u$ of  $\A \rtimes_{\cmax(\A), \alpha} \G$. Indeed, $\pi \rtimes u$ is given by the familiar formula
\[
(\pi \rtimes u)(f) = \int \pi\big( f(s)\big)u(s)d\mu (s), \quad f \in C_c(\G, \A).
\]
Our next result shows that this class of representations exhausts all the completely contractive representations of $\A \rtimes_{\cmax(\A), \alpha} \G$.

\begin{proposition} \label{classifyrepns}
Let  $(\A, \G, \phi)$ be a dynamical system and let $$\phi: \A \rtimes_{\cmax(\A), \alpha} \G \longrightarrow B(\H)$$ be a non-degenerate completely contractive representation. Then there exists a non-degenerate covariant representation $(\pi, u, \H)$ of $(\A, \G, \phi)$ so that $\phi = \pi\rtimes u$.
\end{proposition}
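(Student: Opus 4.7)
The plan is to build $(\pi,u)$ by lifting $\phi$ to the multiplier algebra and then pulling back along the canonical morphisms furnished by Proposition~\ref{Raeburnthm}. Since $\phi$ is non-degenerate and its domain is approximately unital by Lemma~\ref{cai}, Remark~\ref{multiplier nondeg} applies and $\phi$ extends to a unital, completely contractive, strictly continuous map
\begin{equation*}
\overline{\phi}\colon M\bigl(\A \rtimes_{\cmax(\A),\alpha}\G\bigr)\longrightarrow B(\H).
\end{equation*}
The defining identity $\overline{\phi}(m)\phi(a)=\phi(ma)$, coupled with non-degeneracy of $\phi$, forces $\overline{\phi}$ to be multiplicative; unpacking $\overline{\phi}(m_1m_2)\phi(a)=\phi(m_1m_2 a)=\overline{\phi}(m_1)\phi(m_2 a)=\overline{\phi}(m_1)\overline{\phi}(m_2)\phi(a)$ and using density of $\phi(\A\rtimes_{\cmax(\A),\alpha}\G)\H$ in $\H$ gives it. Define $\pi:=\overline{\phi}\circ i_{\A}$ and $u(s):=\overline{\phi}\bigl(i_{\G}(s)\bigr)$ using Proposition~\ref{Raeburnthm}(i).

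Next I would verify that $(\pi,u,\H)$ is a non-degenerate covariant representation. Complete contractivity of $\pi$ is immediate from composition; non-degeneracy follows because $i_{\A}(\A)\cdot\bigl(\A \rtimes_{\cmax(\A),\alpha}\G\bigr)$ contains every $z\otimes (ac)$ with $z\in C_c(\G)$, $a,c\in\A$, and Cohen factorization makes these span a dense subset of $\A \rtimes_{\cmax(\A),\alpha}\G$. For $u$, multiplicativity of $\overline{\phi}$ yields $u(st)=u(s)u(t)$ and $u(s)u(s^{-1})=u(s^{-1})u(s)=\overline{\phi}(1)=I$; each $u(s)$ is then a contraction on $\H$ admitting a contractive two-sided inverse, hence a surjective isometry, i.e.\ a unitary. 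Strong continuity of $u$ follows from strict continuity of $i_{\G}$ (valid on $M(\cmax(\A)\cpf)$ by \cite[Prop.~2.34]{Will}, and hence also on the smaller multiplier algebra via (\ref{twomult})) combined with strict continuity of $\overline{\phi}$. The covariance relation $i_{\G}(s)i_{\A}(a)=i_{\A}(\alpha_s(a))i_{\G}(s)$ in $M(\A \rtimes_{\cmax(\A),\alpha}\G)$ passes through $\overline{\phi}$ to give $u(s)\pi(a)=\pi(\alpha_s(a))u(s)$.

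To conclude I would verify $\phi=\pi\rtimes u$ on the dense subalgebra $C_c(\G,\A)$. By Proposition~\ref{Raeburnthm}(iii), together with the identity $i_{\A}(c)\tilde{\i}_{\G}(z)=z\otimes c$ used in its proof, an elementary tensor $z\otimes c$ with $c\in\A$ equals $\int z(s)\,i_{\A}(c)i_{\G}(s)\,d\mu(s)$ inside $M(\A \rtimes_{\cmax(\A),\alpha}\G)$. Applying $\overline{\phi}$ and pulling it inside the integral via Proposition~\ref{prop integral} gives
\begin{equation*}
\phi(z\otimes c)=\int z(s)\,\pi(c)u(s)\,d\mu(s)=(\pi\rtimes u)(z\otimes c),
\end{equation*}
and linearity, density of such tensors in $C_c(\G,\A)$, and continuity extend the equality to all of $\A \rtimes_{\cmax(\A),\alpha}\G$.

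The step I expect to be the main obstacle is the unitarity of $u(s)$: a unital completely contractive map from an arbitrary operator algebra need not send invertibles to unitaries, so the argument depends essentially on the Hilbert space geometry, namely that a contraction on $\H$ possessing a contractive two-sided inverse must be unitary. Once this hurdle is cleared, the rest is essentially bookkeeping that transfers Williams's selfadjoint account through Propositions~\ref{Raeburnthm} and \ref{prop integral}.
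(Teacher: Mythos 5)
Your proposal is correct and follows essentially the same route as the paper: extend $\phi$ to $\overline{\phi}$ on the multiplier algebra via Remark~\ref{multiplier nondeg} and \cite[Proposition 2.6.12]{BlLM}, set $\pi=\overline{\phi}\circ i_{\A}$ and $u=\overline{\phi}\circ i_{\G}$, observe that each $u(s)$ is a contraction with contractive inverse and hence unitary, and verify $\phi=\pi\rtimes u$ by pulling $\overline{\phi}$ through the integral using Proposition~\ref{prop integral}. The only cosmetic differences are that the paper establishes non-degeneracy of $\pi$ via strict convergence of $\{i_{\A}(a_i)\}$ rather than Cohen factorization, and checks the final identity directly on $C_c(\G,\A)$ rather than on elementary tensors.
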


\begin{proof}
Since $\A \rtimes_{\cmax(\A), \alpha} \G$ is approximately unital, the representation $\phi$ is multiplier-nondegenerate, when viewing $B(\H)$ as the multiplier algebra of the compact operators (Remark~\ref{multiplier nondeg}). Let $\overline{\phi} : M(\A \rtimes_{\cmax(\A), \alpha} \G)  \rightarrow B(\H)$ be the canonical (unital) extension of $\phi$ by \cite[Proposition 2.6.12]{BlLM}. We set 
\[
\begin{split}
\pi(a) &= \overline{\phi}\big(i_{\A}(a)\big), \,\,\, a \in \A,\\
u(s) &= \overline{\phi}\big(i_{\G}(s)\big), \, \, \, s \in \G,
\end{split}
\]
where $(i_{\A}, i_{\G})$ is the covariant representation of $(\A, \G , \alpha)$ into $M\big(\A \rtimes_{\cmax(\A), \alpha} \G\big)$ appearing in Proposition~\ref{Raeburnthm}. 

Now notice that $(\pi, u)$ is a covariant representation of $(\A, \G, \phi)$. Indeed, for every $s \in \G$, $u(s) \in \B(\H)$ is a contraction with inverse the contraction $u(s^{-1})$, hence a unitary. Furthermore the map $s \mapsto u(s)$ is strictly continuous as the composition of two such maps. Finally $\pi$ is non-degenerate. Indeed $i_{\A}$ is non-degenerate so if $\{ a_i\}_{i \in \bbI}$ is a contractive approximate unit for $\A$ then $\{i_{\A}( a_i)\}_{i \in \bbI}$ is a contractive approximate unit for $\A \rtimes_{\cmax(\A), \alpha} \G$, i.e., it converges strictly to $I \in M\big(\A \rtimes_{\cmax(\A), \alpha} \G\big)$. Since $\overline{\phi}$ is strictly continuous, we obtain that $\{ \pi(a_i)\}_{i \in \bbI}$ converges strictly (and so strongly) to $I \in B(\H)$. Hence the non-degeneracy of $\pi$.

By Proposition~\ref{Raeburnthm} we obtain the representation $\pi\rtimes u$ that integrates $(\pi , u)$ and satisfies the conclusions of that result.

If $f \in C_c(\G, \A)$, then 
\begin{alignat*}{3}
 \phantom{X}&(\pi\rtimes u )(f) &&= \int \pi \big( f(s)\big)u(s) d \mu(s)&&\\
 &                       &&=\int \overline{\phi}\Big(i_{\A}\big(f(s)\big)\Big) \overline{\phi}\big(i_{\G}(s)\big) d \mu(s)&& \\
 &                  &&=\int \overline{\phi}\Big(i_{\A}\big(f(s)\big)i_{\G}(s)\Big) d \mu(s)&& \\
 &                  &&=\overline{\phi}\Big(\int i_{\A}\big(f(s)\big)i_{\G}(s) d \mu(s)\Big)&& \mbox{(by Proposition \ref{prop integral})}\\
 &                  &&=\phi (f) && \mbox{(by \cite[Corollary 2.36]{Will})}
  \end{alignat*}
  and the conclusion follows.
\end{proof}

We have gathered enough evidence for us now to justify the following definition.

\begin{definition}[Full Crossed Product] \label{fulldefn}
If $(\A, \G, \alpha)$ is a dynamical system then 
\[
\A \rtimes_{\alpha} \G \equiv  \A \rtimes_{\cmax(\A), \alpha} \G
\]
\end{definition}

In the case where $\A$ is a $\ca$-algebra, the algebra $\A \rtimes_{\alpha}\G$ is nothing else but the full crossed product $\ca$-algebra of $(\A, \G, \alpha)$. In the general case of an operator algebra $\A$, one might be tempted to define $\A \cpf$ as the relative crossed product $\A \rtimes_{\cenv(\A), \alpha} \G$, by virtue of the fact that $\cenv( \A)$ is a more tractable (perhaps more popular) object than  $\cmax (\A)$. Even though in the case where $\A$ is selfadjoint, $\A \rtimes_{\cenv(\A), \alpha} \G$ also reduces to the usual full crossed product, in the non-selfadjoint case it is not clear at all that $\A \rtimes_{\cenv(\A), \alpha} \G$ satisfies the universal properties that $\A\rtimes_{\cmax(\A), \alpha}\G$ does. Of course, any covariant representation of $( \cenv(\A), \G, \alpha)$ extends some covariant representation  of $(\A, \G, \alpha)$. The problem is that the converse may not be true, i.e., a covariant representation of $(\A, \G, \alpha)$ does not necessarily extend to a covariant representation of $( \cenv(\A), \G, \alpha)$, as it happens with $\cmax(\A)$. As it turns out, the identification $\A \cpf \simeq \A \rtimes_{\cenv(\A), \alpha} \G$ is a major open problem in this monograph, which is resolved in the case where $\G$ is amenable or when $\A$ is Dirichlet.

To make the previous paragraph more precise, let us show now that the properties of $\A\cpf$, as identified in Proposition~\ref{Raeburnthm}, actually characterize the crossed product as the universal object for covariant representations of the dynamical system $(\A, \G, \alpha)$. In the case where $\A$ is a $\ca$-algebra, this was done by Raeburn in \cite{Raeb}. Below we prove it for arbitrary operator algebras, borrowing from the ideas of \cite{Raeb} and \cite[Theorem 2.61]{Will}. 

\begin{theorem} \label{full Raeb}
Let $(\A, \G , \alpha)$ be a dynamical system. Assume that $\B$ is an approximately unital operator algebra such that
\begin{itemize}
\item[(i)] there exists a completely isometric non-degenerate covariant representation $(j_{\A}, j_{\G})$ of $(\A, \G , \alpha)$ into $M(\B)$,
\item[(ii)] given a non-degenerate covariant representation $(\pi, \phist, \H)$ of $(\A, \G, \alpha)$, there is a completely contractive, non-degenerate representation $L: \B \rightarrow B(\H)$ such that $\pi = \bar{L}\circ j_{\A}$ and $\phist = \bar{L}\circ j_{\G}$, and,
\item[(iii)] $\B = \overline{\Span}\{ j_{\A}(a)\tilde{\j}_{\G}(z) \mid a \in \A, z \in C_c(\G) \},$ 
\end{itemize}
where 
\begin{equation*} 
\tilde{\j}_{\G}(z) \equiv \int_{\G} z(s) j_{\G}(s) d \mu (s), \quad \mbox{for all } z \in C_c(\G).
\end{equation*}
Then there exists a completely isometric isomorphism $\rho: B \rightarrow \A \cpf$ such that 
\begin{equation} \label{twostooges}
\bar{\rho} \circ j_{\A} = i_{\A} \mbox{ and } \bar{\rho} \circ j_{\G} = i_{\G}
\end{equation}
where $(i_{\A}, i_{\G})$ is the covariant representation of $(\A, \G, \alpha)$ appearing in Proposition~\ref{Raeburnthm}.
\end{theorem}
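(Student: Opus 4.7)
The plan is to build $\rho$ by a ``back-and-forth'' argument between faithful representations of $\A\cpf$ and of $\B$. First I would fix a faithful non-degenerate completely isometric representation $\sigma\colon \A\cpf \rightarrow B(\H)$. By Proposition~\ref{classifyrepns} there is a non-degenerate covariant representation $(\pi,u,\H)$ of $(\A,\G,\alpha)$ with $\sigma=\pi\rtimes u$. Applying hypothesis~(ii) of $\B$ to this covariant pair yields a non-degenerate completely contractive representation $L\colon \B\rightarrow B(\H)$ satisfying $\pi = \bar L\circ j_{\A}$ and $u = \bar L\circ j_{\G}$.

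Next I would check that $L$ actually takes values in $\sigma(\A\cpf)$. By hypothesis~(iii) it suffices to verify this on the generators $j_{\A}(a)\tilde{\j}_{\G}(z)$. Using the multiplier extension $\bar L$, together with Proposition~\ref{prop integral} (specifically formula~(\ref{barL})), one computes
\[
L\bigl(j_{\A}(a)\tilde{\j}_{\G}(z)\bigr) = \pi(a)\int z(s) u(s) \, d\mu(s) = \sigma\bigl(i_{\A}(a)\tilde{\i}_{\G}(z)\bigr),
\]
where the last equality follows from the analogous computation in $\A\cpf$ together with \cite[Corollary~2.36]{Will}. Define $\rho \equiv \sigma^{-1}\circ L\colon \B\rightarrow \A\cpf$; this is completely contractive since $\sigma$ is a complete isometry and $L$ is completely contractive. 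Surjectivity of $\rho$ follows at once, because its range contains every generator $i_{\A}(a)\tilde{\i}_{\G}(z)$, and these span $\A\cpf$ by Proposition~\ref{Raeburnthm}(iii).

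To show that $\rho$ is a complete isometry I would now run the symmetric construction. Fix a faithful non-degenerate completely isometric representation $T\colon\B\rightarrow B(\K)$. Then $(\bar T\circ j_{\A},\bar T\circ j_{\G})$ is a non-degenerate covariant representation of $(\A,\G,\alpha)$, and Proposition~\ref{Raeburnthm}(ii) produces a completely contractive representation $M\colon \A\cpf\rightarrow B(\K)$ with $\bar T\circ j_{\A} = \bar M\circ i_{\A}$ and $\bar T\circ j_{\G} = \bar M\circ i_{\G}$. A calculation identical in spirit to the one above, again relying on Proposition~\ref{prop integral}, shows $M\bigl(\rho(j_{\A}(a)\tilde{\j}_{\G}(z))\bigr) = T(j_{\A}(a)\tilde{\j}_{\G}(z))$; hypothesis~(iii) and continuity extend this to $M\circ\rho = T$ on all of $\B$. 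Because $T$ is completely isometric, so is $\rho$.

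For the compatibility identities~(\ref{twostooges}), the equality $L=\sigma\circ\rho$ passes to multipliers as $\bar L = \bar\sigma\circ\bar\rho$, hence $\bar\sigma\circ\bar\rho\circ j_{\A} = \bar L\circ j_{\A} = \pi = \bar\sigma\circ i_{\A}$; since $\A\cpf$ is an essential ideal in $M(\A\cpf)$ and $\sigma$ is faithful, $\bar\sigma$ is injective on $M(\A\cpf)$, so $\bar\rho\circ j_{\A} = i_{\A}$, and likewise $\bar\rho\circ j_{\G}=i_{\G}$. The main obstacle I foresee is purely bookkeeping: one has to keep careful track of the various multiplier-algebra extensions $\bar L$, $\bar T$, $\bar\sigma$, $\bar\rho$, interchange them with the $M(\A)$-valued integrals defining $\tilde{\i}_{\G}$ and $\tilde{\j}_{\G}$, and verify at each step that the relevant morphisms are multiplier-nondegenerate so that Proposition~\ref{prop integral} applies --- the algebraic skeleton of the argument is straightforward universality.
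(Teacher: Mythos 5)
Your argument is correct and follows essentially the same route as the paper: both proofs establish the complete isometry by the same back-and-forth between covariant representations of $(\A,\G,\alpha)$ realized through hypothesis (ii) and Proposition~\ref{Raeburnthm}(ii), with the key computation $L\big(j_{\A}(a)\tilde{\j}_{\G}(z)\big)=\sigma\big(i_{\A}(a)\tilde{\i}_{\G}(z)\big)$ resting on Proposition~\ref{prop integral}. The only (cosmetic) divergence is in verifying (\ref{twostooges}): you appeal to uniqueness of multiplier extensions and injectivity of $\bar\sigma$, whereas the paper computes $\tilde{\i}_{\G}(z)i_{\G}(s)=\tilde{\i}_{\G}(w)$ explicitly and invokes essentiality of $\A\cpf$ as a left ideal of $M(\A\cpf)$ — both are valid.
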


\begin{proof}
We will show that the map 
\begin{equation} \label{rhodefn}
\B \ni \sum_{k} j_{\A}(a_k)\tilde{\j}_{\G}(z_k) \longrightarrow \sum_{k} i_{\A}(a_k)\tilde{\i}_{\G}(z_k) \in \A \cpf ,
\end{equation}
where $a_k \in \A$, $z_k \in C_c(\G)$, is a well-defined map, which is a complete isometry and therefore extends to the desired isomorphism $\rho: B \rightarrow \A \cpf$.

Let $\phi: \A\cpf \rightarrow B(\H)$ be a completely isometric non-degenerate representation and let $\overline{\phi}: M(\A \cpf) \rightarrow B(\H)$ its canonical extension. Let
\[
\begin{split}
\pi(a) &= \overline{\phi}\big(i_{\A}(a)\big), \,\,a \in \A,\\
u(s) &= \overline{\phi}\big(i_{\G}(s)\big), \, \,\,s \in \G.
\end{split}
\]
Then for any $a \in \A$ and $z \in C_c(\G)$ we have
\[
\begin{split}
L\big( j_{\A}(a)\tilde{\j}_{\G}(z)\big) &= \bar{L}\big( j_{\A}(a)\big)\bar{L} \big(\tilde{\j}_{\G}(z)\big) = \bar{L}\big(j_{\A}(a) \big) \int z(s) \bar{L}\big(j_{\G}(s)\big) d \mu(s)\\
                       &=\pi(a) \int z(s) u(s)d \mu(s) \\
                       &=  \overline{\phi}\big(i_{\A}(a)\big) \int z(s)  \overline{\phi}\big(i_{\G}(s)\big) d \mu(s)\\
                       &= \phi\big( i_{\A}(a)\tilde{\i}(z)\big).
                       \end{split}
                       \]
 Since $\phi$ is a complete isometry, the above shows that (\ref{rhodefn}) is a well-defined map which is a complete contraction. By reversing the roles of $\A \cpf$  and $\B$ in the above arguments, we obtain that (\ref{rhodefn}) is a complete isometry, as desired.
 
 It remains to verify (\ref{twostooges}). We indicate how to do this with the second identity and we leave the first for the reader. 
 
Fix a $z \in \C_c(\G)$ and $s \in \G$. An easy calculation using (\ref{iintegral}) reveals that $\tilde{\i}_{\G}(z)i_{\G}(s)= \tilde{\i}_{\G}(w)$, where $w \in C_c(\G)$ with $w(r) = \Delta(s^{-1})z(rs^{-1})$, $ r \in \G$. A similar calculation shows that $\tilde{\j}_{\G}(z)j_{\G}(s)= \tilde{\j}_{\G}(w)$ as well. Hence for any $a \in \A$ we have
\[
\begin{split}
\rho\big(j_{\A}(a)\tilde{\j}(z)\big)\bar{\rho}\big(j_{\G}(s)\big)&= \rho\big((j_{\A}(a)\tilde{\j}_{\G}(w)\big)\\&= i_{\A}(a)\tilde{\i}_{\G}(w) = i_{\A}(a)\tilde{\i}_{\G}(z)i_{\G}(s)\\
                        &=\rho\big(j_{\A}(a)\tilde{\j}(z)\big)i_{\G}(s).
                        \end{split}
 \]
Since the linear span of elements of the form $\rho\big(j_{\A}(a)\tilde{\j}(z)\big)$, $a \in \A$, $z \in C_c(\G)$, is dense in $\A \cpf$  and $\A \cpf$ is essential as a left ideal of $M(\A \cpf)$, we have $\bar{\rho}(j_{\G}(s))= i_{\G}(s)$, as promised. 
\end{proof}

We need the following

\begin{lemma} \label{soelementary}
Let $\A$ be a unital operator algebra and let $(C, j)$ be a $\ca$-cover for $\A$. Let $\alpha \in \Aut \C$ be a completely isometric automorphism satisfying $\alpha\big(j(\A)\big)=j( \A)$. If $\J_{\A} \subseteq \C$ denotes the Shilov ideal of $\A$, then $\alpha(\J_{\A})= \J_{\A}$. 
\end{lemma}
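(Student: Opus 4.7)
The plan is to use the maximality property that defines the Shilov ideal and exploit the $\alpha$-invariance of $j(\A)$ to show that $\alpha(\J_\A)$ itself satisfies the defining property, whence $\alpha(\J_\A)\subseteq \J_\A$; the reverse inclusion will come from applying the same argument to $\alpha^{-1}$.

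First I would note that $\alpha\in\Aut\C$ is a completely isometric algebra automorphism of the $\ca$-algebra $\C$, hence a $*$-automorphism. In particular $\alpha(\J_\A)$ is a closed two-sided $*$-ideal of $\C$. Letting $q:\C\to\C/\J_\A$ and $q':\C\to\C/\alpha(\J_\A)$ be the quotient maps, the automorphism $\alpha$ descends to a $*$-isomorphism $\bar\alpha:\C/\J_\A\to\C/\alpha(\J_\A)$ satisfying $\bar\alpha\circ q=q'\circ\alpha$.

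The core step is to verify that $q'$ restricted to $j(\A)$ is completely isometric. Fix $n\in\bbN$ and $[j(a_{ij})]\in M_n(j(\A))$. Because $\alpha(j(\A))=j(\A)$, we also have $\alpha^{-1}(j(\A))=j(\A)$, so each $\alpha^{-1}(j(a_{ij}))$ lies in $j(\A)$. Using $q'=\bar\alpha\circ q\circ \alpha^{-1}$, the fact that $\bar\alpha$ is a $*$-isomorphism (hence a complete isometry), the defining property of $\J_\A$ applied to the matrix $[\alpha^{-1}(j(a_{ij}))]\in M_n(j(\A))$, and the fact that $\alpha^{-1}$ is completely isometric, I would compute
\begin{equation*}
\bigl\|[q'(j(a_{ij}))]\bigr\|=\bigl\|[q(\alpha^{-1}(j(a_{ij})))]\bigr\|=\bigl\|[\alpha^{-1}(j(a_{ij}))]\bigr\|=\bigl\|[j(a_{ij})]\bigr\|.
\end{equation*}
Thus $\alpha(\J_\A)$ is an ideal of $\C$ with the property that the restriction of the quotient map to $j(\A)$ is completely isometric, and by the maximality characterization of $\J_\A$ we conclude $\alpha(\J_\A)\subseteq\J_\A$.

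Finally, the hypotheses are symmetric in $\alpha$ and $\alpha^{-1}$ (since $\alpha^{-1}\in\Aut\C$ is also a completely isometric automorphism with $\alpha^{-1}(j(\A))=j(\A)$), so the identical argument applied to $\alpha^{-1}$ gives $\alpha^{-1}(\J_\A)\subseteq\J_\A$, i.e., $\J_\A\subseteq\alpha(\J_\A)$. Combining the two inclusions gives $\alpha(\J_\A)=\J_\A$. There is no real obstacle here beyond cleanly setting up the induced isomorphism $\bar\alpha$ between the two quotients; once that is in hand, the argument is essentially a one-line manipulation combined with maximality.
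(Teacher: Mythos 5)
Your proof is correct and follows essentially the same route as the paper's: both argue that $\alpha(\J_\A)$ is a boundary ideal (so that Hamana's maximality characterization gives $\alpha(\J_\A)\subseteq\J_\A$) and then apply the same reasoning to $\alpha^{-1}$ to get the reverse inclusion. The only difference is that you spell out, via the induced isomorphism $\bar\alpha$ of quotients, the step the paper dismisses with ``clearly.''
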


\begin{proof}
It is a result of Hamana~\cite{Hamana} (see \cite[Theorem 5.9]{Katssurvey} for a ``modern" proof) that if $\J \subseteq \C$ is any ideal so that the natural quotient map $q: \C \rightarrow \C /\J$ is completely isometric on $j(\A)$, then $\J \subseteq \J_{\A}$. Since $\alpha(\J_{\A})$ clearly satisfies this property, we have $\alpha(\J_{\A}) \subseteq \J_{\A}$. Similarly, $\alpha^{-1}(\J_{\A}) \subseteq \J_{\A}$ and so 
\[
\J_{\A}=\alpha\big( \alpha^{-1}(\J_{\A})\big)\subseteq \alpha(\J_{\A}) \subseteq \J_{\A},
\]
as desired.
\end{proof}

Our next result is a key step in the proof of Theorem~\ref{r=f}. In the proof, we make an essential use of the theory of maximal dilations of Dritschel and McCullough~\cite{DrMc}. The reader familiar with the earlier work of Kakariadis and Katsoulis will recognize the influence of \cite[Proposition 2.3]{KakKatJFA1} in the proof below.

\begin{lemma}\label{cis}
Let $(\A, \G, \alpha)$ be a unital dynamical system and let $(\C, j)$ be an $\alpha$-admissible $\ca$-cover for $\A$. If $\J_{\A} \subseteq \C$ denotes the Shilov ideal of $\A$,  then
\[
\A \rtimes_{\C , j, \alpha}^r \G \simeq \A/\J_{\A} \rtimes_{\C /\J_{\A}, \, q\circ j , \, \alpha}^r \G
\]
via a complete isometry that maps generators to generators.
\end{lemma}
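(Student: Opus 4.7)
The plan is as follows. First I would verify that the right-hand side is well-defined. By Lemma~\ref{soelementary}, $\alpha_s(\J_\A)=\J_\A$ for every $s \in \G$, so the action descends to a continuous action (still denoted $\alpha$) on $\C/\J_\A$, making $(\C/\J_\A, q\circ j)$ an $\alpha$-admissible $\ca$-cover of $\A$ (note that $q \circ j$ is completely isometric, by definition of the Shilov ideal). This renders $\A/\J_\A \rtimes^r_{\C/\J_\A, q\circ j, \alpha} \G$ a well-defined subalgebra of $\C/\J_\A \cpr$.

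Next I would construct the natural completely contractive surjection. For any faithful $*$-representation $\rho:\C/\J_\A \to B(\H)$, the composition $\rho\circ q: \C \to B(\H)$ is a $*$-representation of $\C$, and a direct computation gives $\overline{\rho \circ q} = \bar\rho \circ q$ as representations on $L^2(\G,\H)$. Integrating, the regular covariant representation $(\overline{\rho\circ q}, \lambda_\H)$ yields a $*$-homomorphism $\C\cpr \to B(L^2(\G,\H))$ that factors as $(\bar\rho \rtimes \lambda_\H)\circ \tilde q$, where $\bar\rho \rtimes \lambda_\H$ is the faithful regular representation of $\C/\J_\A \cpr$ and $\tilde q: \C \cpr \to \C/\J_\A \cpr$ is the induced $*$-homomorphism. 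Restricted to $\A\rtimes^r_{\C,j,\alpha}\G$, the map $\tilde q$ sends $C_c(\G, j(\A))$ onto $C_c(\G, q\circ j(\A))$, defining a completely contractive surjection onto $\A/\J_\A \rtimes^r_{\C/\J_\A, q\circ j,\alpha}\G$ that maps generators to generators.

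The main obstacle is to show this restriction is a complete isometry; equivalently, that $\|f\|_{M_n(\C\cpr)} \le \|q\circ f\|_{M_n(\C/\J_\A\cpr)}$ for every $f \in M_n(C_c(\G,j(\A)))$. Here the Dritschel--McCullough theorem is essential. Fix a faithful $*$-representation $\sigma: \C \to B(\K)$, so that $\bar\sigma \rtimes \lambda_\K$ is faithful on $\C \cpr$. The restriction $\sigma \circ j: \A \to B(\K)$ is completely isometric but generally not maximal; by \cite{DrMc} it admits a maximal dilation $\hat\tau: \A \to B(\hat\K)$ with $\K \subseteq \hat\K$, and by maximality $\hat\tau$ extends to a $*$-representation $\hat\rho: \cenv(\A)\simeq \C/\J_\A \to B(\hat\K)$ satisfying $\hat\rho\circ q\circ j=\hat\tau$. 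Using that $\lambda_{\hat\K}(s)$ preserves $L^2(\G,\K)$ for every $s \in \G$ and the dilation identity $P_\K \hat\rho(q(j(a)))|_\K = \sigma(j(a))$ for $a \in \A$, one checks that for $f \in C_c(\G, j(\A))$
\[
(\bar\sigma \rtimes \lambda_\K)(f) = P_{L^2(\G,\K)}\big(\overline{\hat\rho\circ q}\rtimes \lambda_{\hat\K}\big)(f)\big|_{L^2(\G,\K)},
\]
and the same identity holds entrywise in $M_n$. Therefore
\[
\|f\|_{M_n(\C\cpr)} \le \big\|\overline{\hat\rho\circ q}\rtimes \lambda_{\hat\K}(f)\big\| = \big\|\bar{\hat\rho}\rtimes \lambda_{\hat\K}(q\circ f)\big\| \le \|q\circ f\|_{M_n(\C/\J_\A\cpr)}.
\]
Combined with the automatic reverse inequality (since $\tilde q$ is a $*$-homomorphism), this establishes complete isometry on $C_c(\G, j(\A))$ and hence on its closure. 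The hard part is precisely the above compression identity: one must convert the information lost when passing from $\C$ to $\C/\J_\A$ into a controlled dilation at the level of regular covariant representations, which is exactly what the Dritschel--McCullough maximal dilation (and the trick of \cite[Proposition 2.3]{KakKatJFA1}) provides.
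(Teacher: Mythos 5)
Your proposal is correct and follows essentially the same route as the paper: the crux in both is to take a faithful representation of $\C$, observe that its restriction to $\A$ induces a completely isometric map of $\A/\J_{\A}$, apply the Dritschel--McCullough maximal dilation to obtain a $*$-representation of $\cenv(\A)\simeq\C/\J_{\A}$, and then use the compression identity for regular covariant representations (via $P_{\H\otimes L^2(\G)}=P_{\H}\otimes I$ and the invariance of $L^2(\G,\H)$ under $\lambda_{\K}$) to get the hard norm inequality. Your explicit treatment of the easy direction via the descended $*$-homomorphism $\tilde q$ on reduced crossed products is a slightly more detailed version of the paper's ``reversing the roles'' remark, but the argument is the same.
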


\begin{proof}
Notice that by its maximality, the Shilov ideal $\J_{\A}$ is left invariant by the automorphisms $\alpha_s$, $s \in \G$.  Therefore we have a continuous representation $\alpha: \G \rightarrow \Aut\big(\C \slash \J_{\A}\big)$ and the crossed product $\A/\J_{\A} \rtimes_{\C /\J_{\A}, \alpha}^r \G$ is meaningful. 

The statement of the lemma asserts that the association
 \begin{equation} \label{contraction2}
 \C\slash \J_{\A}  \rtimes_{\alpha} \G \ni \sum _{i} z_i\otimes (a_i+ \J_{\A})\longmapsto \sum _{i} z_i \otimes a_i  \in \C \rtimes_{\alpha} \G, 
 \end{equation}
where $a_i\in \A$, $ z_i\in C_c( \G)$, is a well-defined map that extends to a complete isometry. (Note that the map $\A\slash \J_{\A} \ni a +\J_{\A} \mapsto a \in \A$ is a well-defined complete isometry.)

Let $\pi$ be a faithful representation of $\C$ on a Hilbert space $\H$ and let $(\bar{\pi}, \lambda_{\H})$ be the associated regular covariant representation of $(\C, \G, \alpha)$.
Consider the completely isometric map
\[
\phi:\A/\J_{\A}  \longrightarrow \B(\H): a + \J_{\A} \longmapsto \pi(a), \quad A\in \A.
\]
According to the Dritschel and McCullough result \cite[Theorem 1.2]{DrMc}, there is a
maximal dilation $(\Phi, \K)$ of $\phi$ which extends uniquely to a
representation of $\C/\J_{\A} $ such that
\[
P_{\H}\Phi(a+\J_{\A} )|_{\H}=\phi(a+\J_{\A} )=\pi(a),
\]
for all $a \in A$. Since $P_{\H\otimes L^2(\G)}=
P_{\H}\otimes I$, we have that
\[
P_{\H \otimes L^2(\G)}
\bar{\Phi}(a+\J_{\A} ))|_{\H\otimes L^2(\G)}=\bar{\pi}(a+\J_{\A} ),
\]
for all $a \in \A$. Also, $\lambda_{\K}(s)|_{\H\otimes L^2(\G)}=\lambda_{\H}(s)$, $s \in \G$, and so
\begin{align*}
\big \| \bar{\pi}\rtimes \lambda_{\H} &\big( \sum_i z_i \otimes a_i\big)\big \| = \big \|\sum_i \, \bar{\pi}(a_i) \int z_i(s)\lambda_{\H}(s)d\mu (s) \big\| \\
&= \Big\|P_{\H\otimes L^2(\G)}\left(\sum_i\,
\bar{\Phi}(a_i + \J_{\A})\int z_i(s)\lambda_{\K}(s) d\mu(s) \right)|_{\H\otimes L^2(\G)}\Big\|\\
& \leq \big\| \sum_i\,
\bar{\Phi}(a_i + \J_{\A})\int z_i(s)\lambda_{\K}(s) d\mu(s) \big\| \\
&= \big \| \bar{\Phi}\rtimes \lambda_{\K} \big( \sum_i z_i \otimes (a_i +\J_{\A}) \big)\big \| 
\end{align*}
The same is also true for all the matrix norms. Since the covariant representation $(\bar{\pi}, \lambda_{\H}, \H\otimes L^2(\G))$ norms  $\C \rtimes_{\alpha}^r \G$, the map in (\ref{contraction2}) is well defined and completely contractive.
By reversing the roles between $\A$ and $\A/\J(\A)$ in
the previous arguments, we can also prove that (\ref{contraction2}) is actually an isometry,
and the conclusion follows.
\end{proof}

The previous lemma applies only to unital dynamical systems. In order to take advantage of it in the general case, we require the following.

\begin{lemma} \label{transfer}
Let $(\A, \G, \alpha)$ be a dynamical system and assume that $\A$ does not have a unit.  Let $(\C, j)$ be an $\alpha$-admissible $\ca$-cover for $\A$. Then the operator algebras generated by $$C_c(\G, \A)\subseteq \A^1 \rtimes_{\C^1, j_1, \alpha} \G$$ and $$C_c(\G, \A)\subseteq \A^1 \rtimes_{\C^1, j_1, \alpha}^r \G$$ are isomorphic to $\A \rtimes_{\C, j , \alpha}\G$ and $\A \rtimes_{\C, j , \alpha}^r\G$ respectively via complete isometries that map generators to generators.
\end{lemma}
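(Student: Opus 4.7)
The strategy is to realize $\C \rtimes_{\alpha}\G$ (and its reduced counterpart) as an ideal of $\C^1 \rtimes_{\alpha}\G$, and then observe that the two copies of $C_c(\G,\A)$ generating the algebras on either side of the desired isomorphism are matched by this ideal inclusion. First I would set up the ambient $\ca$-dynamical system: form the unitization $\C^1$ together with the canonical extension $j_1 : \A^1 \rightarrow \C^1$ of $j$, so that $(\C^1,j_1)$ is a $\ca$-cover of $\A^1$ in which $\C$ sits as an ideal. The action $\alpha$ on $\C$ extends to $\C^1$ by fixing the adjoined unit, and this extension is continuous, $\C$-invariant, and admissible for the unitized system $(\A^1,\G,\alpha)$.

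The main step is to establish the isometric embeddings
\[
\C \rtimes_{\alpha}\G \hookrightarrow \C^1 \rtimes_{\alpha}\G \quad \text{and} \quad \C \rtimes_{\alpha}^r \G \hookrightarrow \C^1 \rtimes_{\alpha}^r \G,
\]
with each image a (closed, $\alpha$-invariant) ideal. For the full crossed product, this is standard functoriality: a non-degenerate covariant representation of $(\C,\G,\alpha)$ extends canonically to one of $(\C^1,\G,\alpha)$ via the unital extension of $\pi$ using \cite[Proposition 2.6.12]{BlLM}, and conversely every covariant representation of $(\C^1,\G,\alpha)$ restricts to a non-degenerate one of $(\C,\G,\alpha)$ (because $\C \subseteq \C^1$ is an essential ideal), so the universal norms on $C_c(\G,\C) \subseteq C_c(\G,\C^1)$ coincide. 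For the reduced crossed product, I would fix a faithful non-degenerate representation of $\C$, extend it canonically to a faithful unital representation of $\C^1$, and verify that the associated regular covariant representations faithfully realise both reduced crossed products, so that the inclusion is isometric by construction.

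With these ideal inclusions in hand, the subalgebra of $\A^1 \rtimes_{\C^1,j_1,\alpha}\G \subseteq \C^1 \rtimes_{\alpha}\G$ generated by $C_c(\G,\A) \subseteq C_c(\G,\C) \subseteq \C \rtimes_{\alpha}\G$ coincides with the subalgebra of $\C \rtimes_{\alpha}\G$ generated by $C_c(\G,\A)$, which is by definition $\A \rtimes_{\C,j,\alpha}\G$. The restriction of the ambient ideal inclusion provides the desired complete isometry and manifestly sends generators to generators. The argument for the reduced crossed product is identical once the reduced analogue of the ideal inclusion is in place.

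The main obstacle is the ideal embedding claim in the second paragraph; it is the place where one must treat uniformly the subcases where $\C$ may or may not already be unital, and where the reduced crossed product requires some care because it is not functorial in general but is functorial for inclusions of ideals via regular covariant representations. The remainder of the argument is essentially a bookkeeping observation matching generators.
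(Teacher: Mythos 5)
Your proposal is correct and follows essentially the same route as the paper: the paper likewise embeds $\C\cpf$ isometrically into $\C^1\cpf$ by extending covariant representations of $(\C,\G,\alpha)$ to $(\C^1,\G,\alpha)$ (citing \cite[Lemma 2.27]{Will}), handles the reduced case by observing that the regular representation induced by the unitization $\pi_1$ of a faithful $\pi$ agrees with that of $\pi$ on $C_c(\G,\C)$, and then matches generators. The only quibble is your parenthetical that a covariant representation of $(\C^1,\G,\alpha)$ restricts to a \emph{non-degenerate} one of $(\C,\G,\alpha)$ --- it may kill $\C$ --- but cutting down to the essential subspace $[\pi(\C)\H]$, which is invariant under $u(\G)$ by covariance, repairs this without affecting the norm estimate.
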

\begin{proof}
We address first the case of the full crossed product. Since any covariant representation $(\pi , U)$ of $(\C, \G, \alpha)$ extends to a covariant representation of $(\C^1, \G, \alpha)$, we have from \cite[Lemma 2.27]{Will} that the map 
\[
\C \cpf \supset C_c(\G, \C) \ni f \longmapsto f \in C_c(\G, \C^1) \subseteq \C^1\cpf
\]
is an isometry that extends to a $*$-injection of $\C\cpf$ into $\C^1\cpf$ that maps generators to generators. In particular, this injection maps $\A \rtimes_{\C, j , \alpha}\G$ onto the subalgebra generated by $C_c(\G, \A)\subseteq \A^1 \rtimes_{\C^1, j_1, \alpha} \G$.

 The case of the reduced crossed product follows from the fact that if $\pi:\C \rightarrow B(\H)$ is a faithful $*$-representation, then
 \[
 \overline{\, \pi_1}\rtimes\lambda_{\H}\mid_{C_c(\G, \C)}= \overline{\pi}\rtimes\lambda_{\H}\mid_{C_c(\G, \C)},
 \]
 where $\pi_1$ is the unitization of $\pi$.
\end{proof}

The following is one of the main results of this chapter and generalizes a classical result from the theory of crossed product $\ca$-algebras to the theory of arbitrary operator algebras. It shows that in the case of an amenable group $\G$, the crossed product  is a unique object. In particular, it allows us to identify $\A \rtimes_{\cmax(\A), \alpha} \G$ with $\A \rtimes_{\cenv(\A), \alpha} \G$ in a canonical way.  

\begin{theorem} \label{r=f}
 Let $(\A, \G, \alpha)$ be a dynamical system with $\G$ amenable and let $(\C, j)$ be an $\alpha$-admissible $\ca$-cover for $\A$. Then
 \[
 \A \rtimes_{\alpha} \G  \simeq  \A \rtimes_{\C , j, \alpha}\G \simeq  \A \rtimes_{\C , j, \alpha}^r \G
 \]
 via a complete isometry that maps generators to generators.
 \end{theorem}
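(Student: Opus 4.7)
The plan is to reduce to the unital case, then exploit two facts: amenability of $\G$ collapses full and reduced crossed products at the $\ca$-level, and Lemma~\ref{cis} funnels every relative reduced crossed product through $\cenv(\A)$.

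First I would dispose of the non-unital case. If $\A$ has no unit, extend $\alpha$ to $\A^1$ by fixing the adjoined identity; then both $(\C^1, j^1)$ and $(\cmax(\A)^1, j^1)$ are $\alpha$-admissible $\ca$-covers of $\A^1$. Granting the unital case, those two produce isomorphic crossed products via a generator-preserving complete isometry of $\A^1 \rtimes_{\C^1, j^1, \alpha} \G$ with $\A^1 \rtimes_{\cmax(\A)^1, j^1, \alpha} \G$. Restricting this isomorphism to the subalgebras generated by $C_c(\G, \A)$ and invoking Lemma~\ref{transfer} transfers it to $\A \rtimes_{\C, j, \alpha} \G \simeq \A \rtimes_\alpha \G$, and similarly for the reduced versions. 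So we may assume $\A$ is unital.

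Now assume $\A$ is unital. Since $\G$ is amenable, the standard fact $\D \rtimes_\alpha \G = \D \rtimes_\alpha^r \G$ for any $\ca$-dynamical system $(\D,\G,\alpha)$ forces the subalgebras generated by $C_c\bigl(\G, j(\A)\bigr)$ in the full and reduced envelopes to coincide; in particular,
\[
\A \rtimes_{\C, j, \alpha} \G = \A \rtimes_{\C, j, \alpha}^r \G \quad \text{and} \quad \A \rtimes_\alpha \G = \A \rtimes_{\cmax(\A), \alpha}^r \G.
\]
Apply Lemma~\ref{cis} to each of the covers $\C$ and $\cmax(\A)$. In both cases the quotient by the respective Shilov ideal is, by the $\cenv$ discussion in Chapter~\ref{prel}, equivalent (in the sense of Definition~\ref{firstequiv}) to $\cenv(\A)$, and the lemma preceding Lemma~\ref{delicate} guarantees that equivalent admissible covers yield isomorphic relative crossed products via complete isometries that map generators to generators. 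Chaining these gives
\[
\A \rtimes_\alpha \G = \A \rtimes_{\cmax(\A), \alpha}^r \G \simeq \A \rtimes_{\cenv(\A), \alpha}^r \G \simeq \A \rtimes_{\C, j, \alpha}^r \G = \A \rtimes_{\C, j, \alpha} \G,
\]
which is the desired conclusion.

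The real work is therefore all hidden in Lemma~\ref{cis}, whose proof leans on the Dritschel--McCullough existence of maximal dilations; once that lemma is available, the theorem is essentially a diagram chase together with amenability. The only genuine bookkeeping hazard is the non-unital reduction, where one must be careful because $\cmax(\A)^1$ need not coincide with $\cmax(\A^1)$; this is sidestepped by using $\cmax(\A)^1$ merely as \emph{some} $\alpha$-admissible cover of $\A^1$ and appealing to the unital case (which treats all admissible covers uniformly), rather than attempting to identify it with $\cmax(\A^1)$.
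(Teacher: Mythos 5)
Your proposal is correct and follows essentially the same route as the paper's proof: amenability to identify full and reduced relative crossed products, Lemma~\ref{cis} to funnel everything through the Shilov quotient $\cenv(\A)$, the equivalence-of-covers lemmas to compare the two quotients, and Lemma~\ref{transfer} for the non-unital reduction. Your explicit remark that $\cmax(\A)^1$ need only be used as \emph{some} $\alpha$-admissible cover of $\A^1$ (rather than identified with $\cmax(\A^1)$) is a point the paper glosses over, and it is handled correctly.
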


 \begin{proof}
We begin with the case where $(\A, \G, \alpha)$ is a unital dynamical system. With the understanding that the symbol $\simeq$ stands for a complete isometry that sends generators to generators we have
 \[
  \A \rtimes_{\C , j, \alpha}\G \simeq  \A \rtimes_{\C , j, \alpha}^r \G
  \]
  because $\G$ is amenable. 
  
  \noindent On the other hand
 \begin{alignat*}{3}
 &\A \rtimes_{\C, j, \alpha}^r\G &&\simeq \A \rtimes_{\C \slash \J_{\A},\, q\circ j, \, \alpha}^r \G \quad \quad&& \mbox{(by Lemma \ref{cis})}\\
  \mbox{Also \phantom{XXXX}}&   &&   && \\
 &\A \rtimes_{\alpha} \G &&\simeq \A \rtimes_{\cmax(\A), j, \alpha} \G \quad\quad &&\mbox{(by definition)}\\
 &                       &&\simeq \A \rtimes_{\cmax(\A), j, \alpha}^r \G \quad \quad&&\mbox{(since $\G$ is amenable)}\\
 &                  &&\simeq \A \rtimes_{\cmax (\A) \slash \J_{\A},\, q\circ j, \, \alpha}^r \G \quad \quad&& \mbox{(by Lemma \ref{cis})}
  \end{alignat*}
  However both $\ca$-covers $\big( \C \slash \J_{\A} , q\circ j \big)$ and $\big( \cmax(\A) \slash \J_{\A}, q\circ j\big)$ give $\cenv(\A)$ and so Lemma \ref{delicate} implies $\A \rtimes_{\alpha} \G \simeq  
  \A \rtimes_{\C , j, \alpha}^r \G $, as desired.
  
  In the general case notice that from the above we have 
   \[
 \A^1 \rtimes_{\alpha} \G  \simeq  \A^1 \rtimes_{\C^1 , j, \alpha}\G \simeq  \A^1 \rtimes_{\C^1 , j, \alpha}^r \G
 \]
 via complete isometries that maps generators to generators. In particular these isometries map surjectively the operator algebras generated by $C_c(\G, \A)$ inside the crossed products appearing above. The conclusion follows now from Lemma~\ref{transfer}.
  \end{proof}
 
Of course, Theorem~\ref{r=f} does much more than just provide an isomorphism between relative (full) crossed products. It also allows us to utilize regular covariant representations for $(\cenv( \A), \G , \alpha)$ in order to norm the crossed product. Indeed

\begin{corollary} \label{norming}
Let $(\A, \G, \alpha)$ be a dynamical system and assume that $\G$ is amenable. If $\pi: \C \rightarrow B(\H)$ is a faithful non-degenerate $*$-representation of $\cenv(\A)$ then $\bar{\pi}\rtimes \lambda_{\H}$ is a completely isometric representation of $\A \rtimes_{\alpha} \G$.
\end{corollary}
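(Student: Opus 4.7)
The plan is to combine Theorem~\ref{r=f} with the standard fact that a faithful non-degenerate $*$-representation of a $\ca$-algebra induces, via the left regular construction, a faithful representation of the reduced crossed product. First I would invoke Theorem~\ref{r=f} with the $\alpha$-admissible $\ca$-cover $(\cenv(\A), j)$ to obtain
\[
\A \rtimes_{\alpha} \G \;\simeq\; \A \rtimes_{\cenv(\A),\, j,\,\alpha}^{r} \G
\]
via a complete isometry that sends generators to generators. By Definition~\ref{relativedefn}, the right-hand side sits completely isometrically inside $\cenv(\A) \rtimes_{\alpha}^{r} \G$.

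Next I would recall that for a $\ca$-dynamical system the reduced crossed product is, by construction, normed by any regular covariant representation arising from a faithful non-degenerate $*$-representation of the coefficient algebra (see \cite[Chapter 7]{Will}). Thus, given a faithful non-degenerate $\pi : \cenv(\A) \to B(\H)$, the integrated form $\bar{\pi} \rtimes \lambda_{\H}$ is a faithful (hence completely isometric) representation of $\cenv(\A) \rtimes_{\alpha}^{r} \G$. Restricting this completely isometric representation to the subalgebra $\A \rtimes_{\cenv(\A),\, j,\,\alpha}^{r} \G$ yields a completely isometric representation of the latter.

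Finally, I would chase these identifications through the isomorphism from Theorem~\ref{r=f}, observing that on generators of the form $z \otimes a$ with $a \in \A$ and $z \in C_c(\G)$ (which span a dense subset by Lemma~\ref{cai} and Proposition~\ref{Raeburnthm}(iii)), the map coincides with $\bar{\pi} \rtimes \lambda_{\H}$. Since the isomorphism of Theorem~\ref{r=f} is a complete isometry on generators and extends by continuity, $\bar{\pi} \rtimes \lambda_{\H}$ is completely isometric on $\A \rtimes_{\alpha} \G$. There is no real obstacle here: the result is essentially a bookkeeping corollary of Theorem~\ref{r=f} together with the well-known norming property of regular representations for reduced $\ca$-crossed products; the only care needed is to verify that the complete isometries in Theorem~\ref{r=f} genuinely transport the representation $\bar{\pi}\rtimes\lambda_{\H}$ to itself under the identification of generators.
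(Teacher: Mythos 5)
Your proposal is correct and follows essentially the same route as the paper: Theorem~\ref{r=f} identifies $\A \rtimes_{\alpha} \G$ with the relative reduced crossed product inside $\cenv(\A) \rtimes_{\alpha}^r \G$, and one then restricts the faithful regular representation $\bar{\pi}\rtimes\lambda_{\H}$. The only cosmetic difference is that the paper invokes amenability a second time to regard $\bar{\pi}\rtimes\lambda_{\H}$ as faithful on the full crossed product $\cenv(\A)\rtimes_{\alpha}\G$, whereas you use the faithfulness of the regular representation on the reduced crossed product directly; both are standard and the arguments are interchangeable.
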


\begin{proof} Since $\G$ is amenable, $\bar{\pi}\rtimes \lambda_{\H}$ is a faithful representation of $\cenv (\A) \rtimes_{\alpha} \G$, where $\alpha$ is the unique extension of $\G \ni s \mapsto \alpha_s \in \Aut (\A)$. By the previous results
$$\A \rtimes_{\alpha} \G \simeq \A \rtimes_{\alpha}^r \G\simeq \A \rtimes_{\cenv (\A) , \alpha}^r \G \subseteq \cenv (\A) \rtimes_{\alpha} \G$$
and the conclusion follows
\end{proof}
 
 Part of the proof of Theorem~\ref{r=f} establishes the fact that all relative reduced crossed products coincide with each other, even for non-amenable $\G$. Stated formally
 
 \begin{corollary} \label{allrcoincide}
 Let   $(\A, \G, \alpha)$ be a dynamical system, with $\G$ an arbitrary locally compact group, and let $(\C , j)$ be an $\alpha$-admissible $\ca$-cover for $\A$. Then,
\[
\A \rtimes_{\C, j, \alpha}^r \G  \simeq \A\rtimes_{\cenv(\A), \alpha}^r \G \simeq  \A\rtimes_{\cmax(\A), \alpha}^r \G
\]
 via complete isometries that map generators to generators.
 \end{corollary}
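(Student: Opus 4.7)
The plan is to extract the relevant piece of the proof of Theorem~\ref{r=f} and observe that it never actually used amenability. Note that Lemma~\ref{cis} is stated and proved for an arbitrary $\alpha$-admissible $\ca$-cover $(\C, j)$ of a unital dynamical system and its proof (using the Dritschel–McCullough maximal dilation theorem applied to regular covariant representations) does not invoke amenability at any stage. It is this feature that drives the entire argument.

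First I would reduce to the unital case. If $\A$ is non-unital, Lemma~\ref{transfer} identifies $\A \rtimes_{\C, j, \alpha}^r \G$ with the subalgebra of $\A^1 \rtimes_{\C^1, j_1, \alpha}^r \G$ generated by $C_c(\G, \A)$, and similarly for the $\cenv(\A)$- and $\cmax(\A)$-relative reduced crossed products (since $\cenv(\A^1)$ and $\cmax(\A^1)$ are the unitizations of $\cenv(\A)$ and $\cmax(\A)$, respectively). So it suffices to work with $\A^1$.

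Assume now that $(\A, \G, \alpha)$ is unital. Applying Lemma~\ref{cis} to the given $\alpha$-admissible $\ca$-cover $(\C, j)$ yields
\[
\A \rtimes_{\C, j, \alpha}^r \G \;\simeq\; \A/\J_{\A} \rtimes_{\C/\J_{\A},\, q \circ j,\, \alpha}^r \G
\]
via a complete isometry mapping generators to generators. Since $(\C/\J_{\A}, q \circ j)$ is equivalent to $\cenv(\A)$, Lemma~\ref{delicate} identifies the right-hand side with $\A \rtimes_{\cenv(\A), \alpha}^r \G$. Applying the same argument to the two specific $\ca$-covers $\cenv(\A)$ (for which the Shilov ideal is trivial, so Lemma~\ref{cis} gives the identity) and $\cmax(\A)$, we obtain
\[
\A \rtimes_{\cmax(\A), \alpha}^r \G \;\simeq\; \A \rtimes_{\cmax(\A)/\J_{\A},\, q,\, \alpha}^r \G \;\simeq\; \A \rtimes_{\cenv(\A), \alpha}^r \G,
\]
again via Lemma~\ref{delicate} using that $(\cmax(\A)/\J_{\A}, q)$ is another representative of $\cenv(\A)$. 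Chaining these isomorphisms produces the desired identifications.

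The main (minor) obstacle is bookkeeping the Shilov ideal under the various $\alpha$-admissible covers: one must check that $\J_{\A}$ is genuinely $\alpha$-invariant so that $\C/\J_{\A}$ inherits a group action and the relative reduced crossed product on the right makes sense. This is exactly Lemma~\ref{soelementary}, which handles the unital case. Once this is in hand, no further subtlety arises, because Lemma~\ref{cis} does all the heavy lifting and makes no use of amenability.
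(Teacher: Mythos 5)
Your proposal is correct and follows essentially the same route as the paper: the paper's proof of Corollary~\ref{allrcoincide} reduces the non-unital case to the unital one via Lemma~\ref{transfer}, and in the unital case invokes Lemma~\ref{cis} (whose maximal-dilation argument indeed never uses amenability) exactly as in the proof of Theorem~\ref{r=f}, identifying both quotient covers with $\cenv(\A)$ via Lemma~\ref{delicate}. The only cosmetic difference is that the identification of equivalent $\ca$-covers is really handled by the unnamed lemma preceding Lemma~\ref{delicate}, but this does not affect the argument.
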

 
 \begin{proof}
If $\A$ is unital than the result follows from Lemma~\ref{cis} as in the proof of Theorem~\ref{r=f}. If $\A$ is non-unital, then 
\[
\A^1 \rtimes_{\C^1, j, \alpha}^r \G  \simeq \A^1\rtimes_{\cenv(\A)^1, \alpha}^r \G \simeq  \A^1\rtimes_{\cmax(\A)^1, \alpha}^r \G
\]
 via complete isometries that map generators to generators. These isometries map surjectively the operator algebras generated by $C_c(\G, \A)$ inside the reduced crossed products appearing above. The conclusion follows again from Lemma~\ref{transfer}.
 \end{proof}
 
 In light of Corollary~\ref{allrcoincide} we give the following definition.
 
\begin{definition}[Reduced Crossed Product] \label{reddefn}
If $(\A, \G, \alpha)$ is a dynamical system then the reduced crossed product of $( \A, \G , \alpha)$ is the operator algebra
\[
A \rtimes_{\alpha}^r \G \equiv \A\rtimes_{\cenv(\A), \alpha}^r \G 
\]
\end{definition}

\begin{remark}
(i) Since $\A\rtimes_{\cenv(\A), \alpha}^r \G \simeq  \A\rtimes_{\cmax(\A), \alpha}^r \G$, it follows that any regular covariant representation of $(\A , \G , \alpha)$ integrates to a continuous representation of $\A \rtimes_{\alpha}^r \G$. One can actually view $\A \cpr$ as the universal object for the regular covariant representations of $(\A, \G, \alpha)$.

(ii) If $(\A , \G , \alpha)$ is a $\ca$-dynamical system then it is well known that any regular covariant representation $(\overline{\pi}, \lambda_{\H})$ integrates to a faithful representation of $\A \cpr$, provided that $\pi$ is faithful. This remains true for arbitrary dynamical systems under the additional requirement that $\pi$ is a maximal, completely isometric map for $\A$. (Note that for a $\ca$-algebra $\A$, any faithful $*$-representation is automatically maximal and completely isometric.)
\end{remark}

We will now use the theory we have developed so far to obtain von Neumann type inequalities, where the role of the disc algebra is being played now by the crossed product $\A \rtimes_{\alpha} \G$. First we obtain a covariant version of a theorem of Naimark and Sz.-Nagy that applies to arbitrary operator algebras.

Let $\G$ be a group and let $\phi: \G \rightarrow B(\H)$. We say that $\phi$ is \textit{completely positive definite} if for every finite set of elements $s_1, s_2, \dots, s_n$ of $\G$, the operator matrix $(\phi(s_{i}^{-1}s_j))_{ij}$ is positive; if $\phi(e)=I$ then $\phi$ is said to be \textit{unital}.

Note that for a completely positive definite map $\phi: \G \rightarrow B(\H)$, the matrix $\left(\begin{smallmatrix} \phi(e)&\phi(s)\\\phi(s^{-1})&\phi(e)\end{smallmatrix}\right)$, $s \in \G$, is automatically positive and so 
\begin{equation} \label{Naimarktrivial}
\phi(s)^* = \phi(s^{-1}), \mbox{ for all } s \in \G.
\end{equation}

We need the following

\begin{lemma} \label{elementary}
Let $A, B \in B(\H)$, $B\geq 0$, be commuting operators. Then
\[
|\sca{ABx, x}|\leq \|A\|\sca{Bx , x},
\]
for any $x \in \H$.
\end{lemma}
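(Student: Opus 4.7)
The plan is to reduce the inequality to a straightforward application of Cauchy--Schwarz by splitting $B$ into its positive square root. Since $B \geq 0$, the continuous functional calculus produces $B^{1/2} \in B(\H)$ with $B^{1/2} \geq 0$ and $(B^{1/2})^2 = B$. Because $A$ commutes with $B$, and $B^{1/2}$ lies in the norm-closure of polynomials in $B$, we get that $A$ commutes with $B^{1/2}$ as well.

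With this in hand, I would rewrite
\[
\langle ABx, x\rangle = \langle AB^{1/2}B^{1/2}x, x\rangle = \langle AB^{1/2}x, B^{1/2}x\rangle,
\]
using the selfadjointness of $B^{1/2}$ in the second equality. Now apply the Cauchy--Schwarz inequality and the operator norm bound:
\[
|\langle AB^{1/2}x, B^{1/2}x\rangle| \leq \|AB^{1/2}x\|\,\|B^{1/2}x\| \leq \|A\|\,\|B^{1/2}x\|^2 = \|A\|\,\langle Bx, x\rangle,
\]
where the final equality again uses selfadjointness of $B^{1/2}$.

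I do not anticipate a serious obstacle here; the only subtlety worth being careful about is justifying that $A$ commutes with $B^{1/2}$, which is a standard consequence of the functional calculus applied to the commutant of $A$. The whole argument is short and self-contained, with no hidden technicalities.
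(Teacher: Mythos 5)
Your proof is correct and follows essentially the same route as the paper: both arguments factor $AB=B^{1/2}AB^{1/2}$ using the commutativity of $A$ with $B^{1/2}$ (justified via the functional calculus) and then apply Cauchy--Schwarz to $\langle AB^{1/2}x,\,B^{1/2}x\rangle$; the paper merely phrases the estimate with squares, bounding $|\sca{ABx,x}|^2$ by $\sca{B^{1/2}A^*AB^{1/2}x,x}\sca{Bx,x}\le\|A\|^2\sca{Bx,x}^2$. One small nitpick: your second equality uses the commutation $AB^{1/2}=B^{1/2}A$ as well as selfadjointness of $B^{1/2}$, not selfadjointness alone, but since you established that commutation at the outset the argument is complete.
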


\begin{proof}
Note that,
\begin{align*}
|\sca{ABx, x}|^2&= \left|\sca{B^{1/2}A B^{1/2}x, x}\right|^2 =\left|\sca{AB^{1/2}x, B^{1/2}x}\right|^2 \\
&\leq \sca{B^{1/2}A^*A B^{1/2}x, x}\sca{B x, x}\\
&\leq\|A\|^2\sca{Bx,x}^2
\end{align*}
as desired
\end{proof}

In the case where $\A$ is a $\ca$-algebra, the following result was established by McAsey and Muhly in \cite[Proposition 4.2]{McM}. In the generality appearing below, the result is new and its proof requires new arguments.

\begin{theorem} \label{Naimark}
Let $\A$ be a unital operator algebra, let $\G$ be a group and let $(\A, \G, \alpha)$ be a dynamical system. Let $\phi: \G \rightarrow B(\H)$ be a unital, strongly continuous and completely positive definite map and let $\rho: \A \rightarrow B(\H)$ be a unital completely contractive map satisfying
\begin{equation} \label{covariance}
 \phi(s) \rho(a)=  \rho(\alpha_s(a)) \phi(s), \mbox{ for all } s \in \G , a \in \A.
\end{equation}
Then there exists a Hilbert space $\K\supset \H$, a strongly continuous unitary representation
 $\hat{\phi}: \G \rightarrow B(\K)$ and a completely contractive map  $\hat{\rho}: \A \rightarrow B(\K)$ so that
 \[
\rho(a)= P \hat{\rho}(a)\mid_{P}, \quad \phi(s)= P \hat{\phi}(s)\mid_{P},
\]
 and
 \[
 \hat{\phi}(s)\hat{\rho}(a)= \hat{\rho}(\alpha_s(a))\hat{\phi}(s) , a \in \A, s \in \G,
\]
where $P$ is the orthogonal projection on $\H$. Furthermore, $\H$ reduces $\hat{\rho}(\A)$. In the case where $\rho$ is multiplicative, $\hat{\rho}$ is multiplicative as well.
\end{theorem}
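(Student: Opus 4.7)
The plan is to simultaneously construct a Naimark/Sz.-Nagy style dilation of $\phi$ and carry $\rho$ along as a diagonal multiplication operator on the dilation space, using the covariance identity (\ref{covariance}) to reconcile the two. First I would form the space $\K_0$ of finitely supported functions $f \colon \G \to \H$ equipped with the sesquilinear form
\[
\sca{f, g} := \sum_{s,t} \sca{\phi(t^{-1}s)f(s), g(t)}_{\H};
\]
complete positive definiteness of $\phi$ makes this form positive semidefinite, and the Hausdorff completion furnishes a Hilbert space $\K$. Since $\phi(e) = I$, the map $h \mapsto f_{e,h}$ (the function equal to $h$ at $e$ and zero elsewhere) is an isometric embedding of $\H$ into $\K$. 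I would then define $\hat\phi(r)f(s) := f(r^{-1}s)$, which a direct change of variables shows is a unitary group representation whose compression to $\H$ recovers $\phi$, and $\hat\rho(a)f(s) := \rho(\alpha_{s^{-1}}(a))f(s)$, which restricts to $\rho$ on $\H$ and satisfies the covariance $\hat\phi(r)\hat\rho(a) = \hat\rho(\alpha_r(a))\hat\phi(r)$ by a one-line verification.

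The main obstacle I expect is the complete contractivity of $\hat\rho$. Restricted to the $F$-supported subspace for a finite $F \subseteq \G$, the inner product is represented by the positive operator matrix $M_F = (\phi(t^{-1}s))_{s,t\in F}$ on $\H^F$, while $\hat\rho(a)$ acts by the diagonal matrix $D_F = \diag(\rho(\alpha_{s^{-1}}(a)))_{s\in F}$. The covariance identity (\ref{covariance}), applied entry by entry, yields
\[
\phi(t^{-1}s)\rho(\alpha_{s^{-1}}(a)) = \rho(\alpha_{t^{-1}}(a))\phi(t^{-1}s),
\]
which says exactly that $M_F D_F = D_F M_F$; consequently $M_F$ also commutes with $D_F^*$ and therefore with $D_F^* D_F$. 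Lemma~\ref{elementary}, applied to the commuting positives $D_F^* D_F$ and $M_F$, then gives
\[
\|\hat\rho(a)f\|_{\K}^2 = \sca{D_F^* D_F M_F f, f}_{\H^F} \le \|D_F^* D_F\|\,\sca{M_F f, f}_{\H^F} \le \|a\|^2 \|f\|_{\K}^2,
\]
using that $\rho$ is contractive and $\alpha_{s^{-1}}$ completely isometric. Replacing $\A$ by $M_n(\A)$, $\H$ by $\H^n$ and $\phi$ by $\phi \otimes I_n$ throughout, the same argument verbatim delivers complete contractivity.

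The remaining properties are direct verifications. For $\H$ reducing $\hat\rho(\A)$: invariance is built into the definition, and a short computation using (\ref{covariance}) specialized at the $e$-component produces the identity $\sca{\hat\rho(a)g, f_{e,h}}_{\K} = \sca{g, f_{e,\rho(a)^* h}}_{\K}$, which forces $\hat\rho(a)g \perp \H$ whenever $g \perp \H$. Strong continuity of $\hat\phi$ follows from the strong continuity of $\phi$ at $e$, because on elementary vectors $\|\hat\phi(r)f_{s_0,h} - f_{s_0,h}\|^2$ expands into a scalar combination of matrix coefficients of $\phi$ that tends to $\sca{h,h}$ as $r \to e$, and a uniform boundedness argument propagates continuity to all of $\K$. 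Finally, when $\rho$ is multiplicative, the pointwise product formula $\hat\rho(a)\hat\rho(b)f(s) = \rho(\alpha_{s^{-1}}(a)\alpha_{s^{-1}}(b))f(s) = \rho(\alpha_{s^{-1}}(ab))f(s) = \hat\rho(ab)f(s)$ makes $\hat\rho$ multiplicative as well.
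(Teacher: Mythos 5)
Your proposal is correct, and the core of it — the Paulsen-style dilation space built from $c_{00}(\G,\H)$, the left-translation unitary $\hat\phi$, the ``diagonal'' operator $\hat\rho(a)$ twisted by $\alpha_{s^{-1}}$, and the appeal to Lemma~\ref{elementary} via the commutation of the Gram matrix with the block-diagonal matrix — coincides with the paper's proof. You diverge in two places, both to your advantage. First, for the norm estimate you invoke complete contractivity of $\rho\circ\alpha_{s^{-1}}$ directly to bound the block-diagonal matrix, where the paper routes through the Schwarz inequality for the completely positive extension of $\rho$; your version is leaner and, combined with the observation that the contractivity estimate on $\H^F$ simultaneously handles well-definedness on the quotient by the null space, collapses two of the paper's verifications into one. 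Second, and more substantially, the paper obtains the reducing property of $\H$ by first extending the whole system to the operator system $\S(\A)=\overline{\A+\A^*}\subseteq\cenv(\A)$ (this is where unitality of $\A$ and of $\rho$ is used, as the remark following the theorem points out) and then arguing that a completely positive $\hat\rho$ with invariant $\H$ must reduce it; you instead prove invariance of $\H^\perp$ by the direct computation $\sca{\hat\rho(a)g, f_{e,h}}=\sca{g, f_{e,\rho(a)^*h}}$, which follows from the covariance identity in the form $\phi(s)\rho(\alpha_{s^{-1}}(a))=\rho(a)\phi(s)$. This keeps the entire argument inside $\A$, avoids the Arveson extension to $\cenv(\A)$ altogether, and would in principle let one drop the unitality hypotheses; the only cost is that you do not get for free that $\hat\rho$ extends to a completely positive map on an operator system, which the paper's route provides as a byproduct. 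Your remaining verifications (covariance of the dilation, strong continuity of $\hat\phi$ from the matrix coefficients $\sca{\phi(s_0^{-1}rs_0)h,h}\to\sca{h,h}$, and multiplicativity) are the same routine computations the paper leaves to the reader.
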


\begin{proof} Since $\G$ acts completely isometrically on $\A$, this action extends to $\cenv(\A)$. Similarly, since $\rho$ is unital, it extends to a completely positive map on $\cenv(\A)$. We reserve the same symbols for these extensions. Note that these extensions do not satisfy (\ref{covariance}), but their restrictions on the operator system $\S(\A) \equiv \overline{\A + \A^*}$ do. Indeed, in (\ref{covariance}) replace $s$ with $s^{-1}$, $a$ with $\alpha_s(a)$ and use (\ref{Naimarktrivial}) to obtain $\phi(s)^*\rho(\alpha_s(a))=\rho(a)\phi(s)^*$
and by taking adjoints 
\[
\rho(\alpha_s(a^*))\phi(s)=\phi(s)\rho(a^*), \mbox{ for all } s \in \G, a \in \A,
\]
as desired. For the rest of the proof we concentrate on that system.

 We start by adopting the ideas of \cite[Theorem 4.8]{Paulsen} in our context. Consider the vector space $c_{00}(\G, \H)$ of finitely supported functions from $\G$ to $\H$ and define a bilinear function on this space by
\[
\big< \sum_{s'} h'_{s'}\chi_{s'} , \sum_s h_s\chi_s \big> = \sum_{s', s}\sca{\phi(s^{-1}s')h'_{s'}, h_s}.
\]
As in the proof of \cite[Theorem 4.8]{Paulsen}, we observe that $\sca{h, h}\geq 0$ and that the set $\N = \{h \in c_{00}(\G, \H)\mid \sca{h, h} = 0\}$ is a subspace of $c_{00}(\G, \H)$. We let $\K$ be the completion of $c_{00}(\G, \H)\slash \N $ with respect to the induced inner product and we identify $\H$ as a subspace of $\K$, via the isometry $V$ that satisfies $h \mapsto h\chi_e$.

Let $\hat{\phi}: \G \rightarrow B(\K)$ be left translation, i.e.,
\[
(\hat{\phi}(s)h)(s')= h(s^{-1}s').
\]
It is easy to see that $\hat{\phi}$ is a unitary representation and $\phi(s) = V^*\hat{\phi}V$. Since $V$ is an isometry, we simply write $\hat{\phi}(s)= P_\H \phi(s)\mid_{\H}$.

Defining $\hat{\rho}$ and verifying its properties requires more care. If $a \in \S(\A)$ then we define
\[
\hat{\rho}(a)\big(\sum_s h_s\chi_s + \N\big)=  \sum_s \rho\big(\alpha_s^{-1}(a)\big)h_s\chi_s + \N
\]
We need to verify that $\hat{\rho}$ is well defined. Assume that $\sum_{l=1}^{m} h_l\chi_{s_l} \in \N$, i.e.,
\[
\sca{Bh, h}=0
\]
where
\[
h = (h_1, h_2, \dots , h_m)^{T} \in \H^m \quad \mbox{ and } \quad B=(\phi(s_{k}^{-1}s_l))_{kl}.
\]
Now if
\[
C= \left( \begin{array}{cccc}
\rho(\alpha_{s_1}^{-1}(a)) & 0 & \dots & 0\\
0& \rho(\alpha_{s_2}^{-1}(a)) & \dots &0\\
\vdots & \vdots & \ddots &\vdots \\
0 &0 & \dots &\rho(\alpha_{s_m}^{-1}(a))\\
\end{array} \right)
\]
then the covariance condition (\ref{covariance}) implies that $B$ and $C$ commute. Hence
\begin{align*}
\big< \sum_{l=1}^m \rho\big(\alpha_{s_l}^{-1}(a)\big)&h_l\chi_{s_l} , \sum_{k=1}^m \rho\big(\alpha_{s_k}^{-1}(a)\big)h_k \chi_{s_k}  \big> \\
&=\sca{C^*BC h, h} =\sca{B^{1/2}C^*CB^{1/2} h, h} \\
&\leq \| C\|^2 \sca{Bh, h} = 0,
\end{align*}
as desired.

We now verify that $\hat{\rho}$ is completely contractive; this will require an application of Schwarz's inequality. Let $(a_{ij})_{ij}\in M_r(\A)$ be a contraction and we are to verify that $\left(\hat{\rho}(a_{ij})\right)_{ij}$ is also a contraction.

Start by noticing that if $s_1, s_2, \dots s_m \in \G$,
\[
A= \left( \begin{array}{cccc}
\big[  \rho\big(\alpha_{s_1}^{-1}(a_{ij}
)\big) \big]_{ij}& 0 & \dots & 0\\
0& \big[\rho\big(\alpha_{s_2}^{-1}(a_{ij}
)\big) \big]_{ij} & \dots &0\\
\vdots & \vdots & \ddots &\vdots \\
0 &0 & \dots &\big[\rho\big(\alpha_{s_m}^{-1}(a_{ij}
)\big) \big]_{ij}\\
\end{array} \right)
\]
is in $M_{mr}(\rho(\A))$
and $B= \big[ \phi(s_k^{-1}s_l)I_r\big]_{kl} \in M_{mr}(B(\H))$, then (\ref{covariance}) implies that $A$ and  $B$ commute. Furthermore, since $\rho \circ \alpha_{s_l}$ is completely contractive, an application of Schwarz's inequality implies
\begin{align*}
 \big[\rho\big(\alpha_{s_l}^{-1}(a_{ij})\big) \big]_{ij}^* \big[\rho\big(\alpha_{s_l}^{-1}(a_{ij})\big) \big]_{ij} &\leq( \rho \circ \alpha_{s_l}^{-1}) ^{(r)}\left(\big[(a_{ij}) \big]_{ij}^*\big[(a_{ij}) \big]_{ij}\right) \\
&\leq ( \rho \circ \alpha_{s_l}^{-1}) ^{(r)}(I_r) = I_r
\end{align*}
and so $A^*A\leq I_{mr}$, i.e., A is a contraction.

Now let $h = (h_1 +\N , h_2 + \N , \dots h_r +\N)^T \in \Big(c_{00}(\G, \H)\slash \N\Big)^r$ with $h_i= \sum_{k=1}^m h_{i k }\chi_{s_k}$. We calculate
\begin{align*}
\Big< [\hat{\rho}(a_{ij})]_{ij}h, h  \Big> &=\sum_{i, j =1}^{r}\big< \hat{\rho}(a_{ij})(h_j +\N), (h_i + \N)\big>\\
&=\sum_{i,j=1}^{r}\sum_{k,l=1}^{m}\Big< \rho\big(\alpha_{s_k}^{-1}(a_{ij})\big)\phi\big(s_k^{-1}s_l\big)h_{jl},h_{ik}\Big>\\
&=\big< ABx, x\big>,
\end{align*}
where $x = (x_1, x_2, \dots , x_m)^T$ with $x_l= (h_{1 l }, h_{2 l}, \dots h_{r l})^T$, $l=1,2, \dots m$. An application of Lemma~\ref{elementary} shows now that
\begin{align*}
\big| \big< [\hat{\rho}(a_{ij})]_{ij}h, h  \big> \big|&= |\left< AB x, x \right> |  \leq \|A\| \left<Bx , x \right> \\                               & \leq  \left<Bx , x \right> =  \left<  h, h \right>
 \end{align*}
             and so $\left(\hat{\rho}(a_{ij})\right)_{ij}$ is a contraction, as desired. Hence $\hat{\rho}$ is completely contractive (and so completely positive).

             It remains to verify that $\hat{\rho} (\A)$ reduces $\H$; here lies the reason why we extended the original dynamical system on $\S(\A)$. It is clear that $\hat{\rho}(\S(\A))$ leaves $\H$ invariant and since $\hat{\rho}$ is completely positive, $\H$ reduces $\hat{\rho}(\S(\A))$.
(If we had chosen to define $\hat{\rho}$ only on $\A$, then we would only have that $\hat{\rho }(\A)$ leaves $\H$ invariant.) \end{proof}

Note that in the proof of the above theorem, the only reason why we ask for $\A$ to be unital is to guarantee that the unital completely contractive map $\rho$ extends to a completely positive map on $\cenv(\A)$. If $\rho$ is assumed to be multiplicative, such an extension exists without that requirement, because of Meyer's result \cite[Corollary 3.3]{Mey}. This is implicitly used below in obtaining the promised von Neumann inequality.

\begin{corollary} \label{vonNeumann}
Let $(\A, \G, \alpha)$ be a unital dynamical system and assume that $\G$ is a locally compact amenable group. Let $\phi: \G \rightarrow B(\H)$ be a unital, strongly continuous and completely positive definite map and let $\rho: \A \rightarrow B(\H)$ be a completely contractive representation satisfying
\begin{equation} \label{covarianceNeumann}
 \phi(s) \rho(a)=  \rho\big(\alpha_s(a)\big) \phi(s), \mbox{ for all } s \in \G , a \in \A.
\end{equation}
Then, for any $f \in C_c(\G, \A)$, we have 
\begin{equation} \label{cntnsvonNeum}
\Big\| \int \rho\big(f(s)\big)\phi(s) d\mu(s) \Big\| \leq \Big\| \int \bar{\pi}\big(f(s)\big) \lambda_{\H}(s)d\mu(s)  \Big\|,
\end{equation}
where $\pi : \cenv(\A) \rightarrow B(\H)$ is a faithful $*$-representation and $(\bar{\pi}, \lambda_{\H})$ the associated regular covariant representation of $( \cenv(\A), \G, \alpha)$.
\end{corollary}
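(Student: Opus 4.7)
The plan is to dilate the covariant pair $(\rho, \phi)$ via the covariant Naimark theorem and then exploit amenability of $\G$ to compare the integrated form with any regular covariant representation of $(\cenv(\A), \G, \alpha)$. Since the covariance condition (\ref{covarianceNeumann}) is exactly the hypothesis (\ref{covariance}) of Theorem~\ref{Naimark}, and since $\rho$ is a unital (because $\A$ is unital and $\rho$ is non-degenerate) completely contractive homomorphism, that theorem produces a Hilbert space $\K \supseteq \H$, a strongly continuous unitary representation $\hat{\phi}: \G \to B(\K)$, and a completely contractive map $\hat{\rho}: \A \to B(\K)$ which, crucially, inherits multiplicativity from $\rho$ and satisfies the exact covariance relation $\hat{\phi}(s)\hat{\rho}(a) = \hat{\rho}(\alpha_s(a))\hat{\phi}(s)$. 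Writing $P$ for the projection onto $\H$, we moreover have $\rho(a) = P\hat{\rho}(a)|_{\H}$, $\phi(s) = P\hat{\phi}(s)|_{\H}$, and $\H$ reduces $\hat{\rho}(\A)$.

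Next, the triple $(\hat{\rho}, \hat{\phi}, \K)$ is a non-degenerate covariant representation of $(\A, \G, \alpha)$, so by Proposition~\ref{Raeburnthm}(ii) it integrates to a (completely contractive) representation $\hat{\rho}\rtimes\hat{\phi}$ of the full crossed product $\A \rtimes_\alpha \G$. Because $\H$ reduces $\hat{\rho}(\A)$, the identities $P\hat{\rho}(a)|_{\H} = \rho(a)$ and $P\hat{\phi}(s)|_{\H} = \phi(s)$ multiply: $P\hat{\rho}(a)\hat{\phi}(s)|_{\H} = \rho(a)\phi(s)$. Integrating against $f \in C_c(\G, \A)$ and using the defining property (\ref{integralsense}) of the multiplier-valued integral gives
\begin{equation*}
\int \rho\big(f(s)\big)\phi(s)\,d\mu(s) = P\,\big(\hat{\rho}\rtimes\hat{\phi}\big)(f)\big|_{\H},
\end{equation*}
so that
\begin{equation*}
\Big\| \int \rho\big(f(s)\big)\phi(s)\,d\mu(s) \Big\| \leq \big\|\big(\hat{\rho}\rtimes\hat{\phi}\big)(f)\big\| \leq \|f\|_{\A \rtimes_\alpha \G}.
\end{equation*}

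Finally, we invoke amenability: by Corollary~\ref{norming}, the regular covariant representation $\bar{\pi}\rtimes \lambda_{\H}$ associated with the faithful $*$-representation $\pi$ of $\cenv(\A)$ is completely isometric on $\A \rtimes_\alpha \G$. Hence $\|f\|_{\A \rtimes_\alpha \G} = \|(\bar{\pi}\rtimes\lambda_{\H})(f)\|$, which combined with the previous inequality yields exactly (\ref{cntnsvonNeum}). The only genuinely delicate step is the first one, namely securing a dilation of $(\rho, \phi)$ in which $\H$ reduces the dilated homomorphism $\hat{\rho}(\A)$ (not merely being invariant); this is precisely what the extension trick to $\S(\A)$ in Theorem~\ref{Naimark} buys us, and without it the compression identity in the second paragraph would fail.
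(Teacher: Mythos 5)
Your proposal is correct and follows essentially the same route as the paper: dilate $(\rho,\phi)$ via Theorem~\ref{Naimark}, bound the compressed integral by the full crossed product norm of $f$, and then use amenability (the paper cites Theorem~\ref{r=f} directly where you cite its consequence Corollary~\ref{norming}) to identify that norm with the one computed in the regular covariant representation. Your extra care about $\H$ reducing $\hat{\rho}(\A)$ — needed so that the compression of $\hat{\rho}(a)\hat{\phi}(s)$ factors as $\rho(a)\phi(s)$ — is a point the paper leaves implicit but is exactly right.
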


\begin{proof} By Theorem~\ref{Naimark}, there exists a Hilbert space $\K\supseteq \H$ and a covariant representation representation $(\hat{\rho}, \hat{\phi})$  of  $(\A, \G, \alpha)$, whose compression on $\H$ gives $(\rho, \phi)$. Hence 
\[
\Big\| \int \rho\big(f(s)\big)\phi(s) d\mu(s) \Big\| \leq \Big\| \int \hat{\rho}\big(f(s)\big)\hat{\phi}(s) d\mu(s)  \Big\|.
\]
On the other hand, the representation $(\hat{\rho}, \hat{\phi})$ extends to a covariant representation of the dynamical system $(\cmax(\A), \G, \alpha)$. (See the last paragraph of the proof of Proposition \ref{Raeburnthm}). Hence,
\[
 \Big\| \int \hat{\rho}\big(f(s)\big)\hat{\phi}(s) d\mu(s)  \Big\| \leq\|f\| _{\cmax(\A)\cpf}.
\]
Theorem~\ref{r=f} shows however that  on $C_c(\G, \A)$ all relative crossed product norms coincide. In particular
\[
\|f\| _{\cmax(\A)\cpf} = \| f \|_{\cenv(\A) \cpr}
\]
and the conclusion follows.
\end{proof}

\begin{remark} (i) Corollary~\ref{vonNeumann} achieves its most pleasing form in the case where $\G$ is discrete, as in that case (\ref{cntnsvonNeum}) becomes an inequality involving finite sums instead of integrals, i.e., 
\[
\Big\| \sum_{s} \rho(a_s)\phi(s) \Big\| \leq \Big\| \sum_s\bar{\pi}(a_s) \lambda_{\H}(s) \Big\|,
\]
where $a_s \in \A$ and $s$ ranges over a finite subset of $\G$.

(ii) We have defined $(\pi , u, \H)$ to be a covariant representation of $( \A, \G, \alpha)$ provided that 
\[
 \phist(s) \pi(a)=  \pi(\alpha_s(a)) \phist(s), \mbox{ for all } s \in \G , a \in \A.
\]
This is of course equivalent to 
\[
 \pi(\alpha_s(a))  =  \phist(s) \pi(a)u^*(s), \mbox{ for all } s \in \G , a \in \A.
\]
It is important to note that  there we have no analogue of Theorem~\ref{Naimark} nor Corollary~\ref{vonNeumann} for the second set of covariance relations.
\end{remark}

The reader that has followed us this far should recognize now why we choose to define the crossed product $\A \rtimes_{\alpha} \G$ as a universal object with regards to \textit{arbitrary} representations of $\A$ (Definition \ref{fulldefn}). It is true that had we chosen to work only with the relative crossed product $\A \rtimes_{\cenv(\A),\alpha} \G$, we would not need to work so hard with the various relative crossed products, including $\A \rtimes_{\cmax (\A), \alpha} \G$. However, since the ``allowable" representations of $\A$ would have been only the $\cenv(\A)$-extendable ones, the von Neumann inequality of Corollary ~\ref{vonNeumann} would have been unattainable. This added flexibility in our definition for $\A \rtimes_{\alpha} \G$ is truly invaluable.

Corollary~\ref{vonNeumann} also raises the question whether $\cenv(\A)\cpf$ is the ``best choice" in our von Neumann inequality. In other words, we wonder what is the $\ca$-envelope of $\A \rtimes_{\alpha} \G$ and $\A \rtimes_{\alpha}^r \G$. Clearly, Lemma~\ref{cis} implies that $\cenv (\A \rtimes_{\alpha}^r \G)$ is a quotient of $\cenv(\A) \rtimes_{\alpha}^r \G$ but beyond that, we don't know too much. This is going to be a recurrent theme in this monograph. It turns out that even in special cases, the problem of identifying the $\ca$-envelope of the crossed product is intimately related to problems in $\ca$-algebra theory which are currently open, such as the Hao-Ng isomorphism problem. We will have to say more about that later in this monograph.

For the moment, we deal with the case where $\G$ is an abelian group and $\A$ is a unital operator algebra. The case where $\G$ is discrete follows easily from the work we have done so far and from the ideas of either \cite{KakKatJFA1} in the $\bbZ$ case or more directly from \cite[Theorem 3.3]{DFK}, by choosing $P = \G$, $\tilde{\alpha} = \alpha$ and transposing the covariance relations. In the generality appearing below, the result is new and paves the way for exploring non-selfadjoint versions of Takai duality.

 \begin{theorem} \label{abelianenv}
 Let $(\A, \G, \alpha)$ be a dynamical system. If $\G$ is an abelian locally compact group, then
 \[
 \cenv(\A \rtimes_{\alpha} \G) = \cenv(\A) \rtimes_{\alpha} \G.
 \]
 \end{theorem}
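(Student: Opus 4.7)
The plan is to show that the Shilov ideal $\J$ of $\A \rtimes_\alpha \G$ inside $\cenv(\A) \rtimes_\alpha \G$ is zero. Since $\G$ is abelian hence amenable, Theorem~\ref{r=f} identifies $\A \rtimes_\alpha \G$ completely isometrically with $\A \rtimes_{\cenv(\A),\alpha} \G \subseteq \cenv(\A) \rtimes_\alpha \G$, so $\cenv(\A) \rtimes_\alpha \G$ is a legitimate C*-cover of $\A \rtimes_\alpha \G$ and it will suffice to show $\J = 0$. The dual action $\hat{\alpha}: \hat{\G} \to \Aut(\cenv(\A) \rtimes_\alpha \G)$ preserves $\A \rtimes_\alpha \G$ as a set, since on $f \in C_c(\G,\A)$ it merely multiplies values by unimodular scalars. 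By Lemma~\ref{soelementary}, $\hat{\alpha}_\gamma(\J) = \J$ for every $\gamma \in \hat{\G}$.

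Next I invoke the classification of $\hat{\alpha}$-invariant closed ideals in a crossed product by an abelian group: every such ideal is of the form $I \rtimes_\alpha \G$ for a unique $\alpha$-invariant closed ideal $I$ of $\cenv(\A)$. For discrete $\G$ this is entirely internal to Section~\ref{abelian}: setting $I \equiv \J \cap \cenv(\A)$, formula~(\ref{semisimpleeq}) together with the closedness and dual-invariance of $\J$ shows $\Phi_g(S) \in I$ for every $S \in \J$ and $g \in \G$, whence $\J \subseteq I \rtimes_\alpha \G$ by the Cesaro approximation of Proposition~\ref{Ceasaroapprox}; the reverse inclusion is obvious. For general locally compact abelian $\G$ one appeals to the continuous analogue from the standard literature. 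Thus $\J = I \rtimes_\alpha \G$ and the task becomes showing $I = 0$.

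Let $q: \cenv(\A) \rtimes_\alpha \G \to (\cenv(\A)/I) \rtimes_\alpha \G$ denote the quotient map and $q_0: \cenv(\A) \to \cenv(\A)/I$ the quotient on the base. By the defining property of the Shilov ideal, $q$ is completely isometric on $\A \rtimes_\alpha \G$. Choose faithful non-degenerate $*$-representations $\pi: \cenv(\A) \to B(\H)$ and $\rho: \cenv(\A)/I \to B(\H')$. Applying Corollary~\ref{norming} to both C*-covers yields, for each $a \in \A$ and $z \in C_c(\G)$,
\[
 \| \bar{\pi}(a)\,\tilde{\lambda}_{\H}(z) \| = \| z \otimes a \|_{\A \rtimes_\alpha \G} = \| \bar{\rho}(q_0(a))\,\tilde{\lambda}_{\H'}(z) \|.
\]
Now pick a positive approximate identity $\{z_\lambda\} \subset C_c(\G)$ with $\int z_\lambda\, d\mu = 1$ and supports shrinking to the identity of $\G$; then $\tilde{\lambda}_{\H}(z_\lambda)$ and $\tilde{\lambda}_{\H'}(z_\lambda)$ are contractions converging strongly to the identity operator. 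The elementary principle that $TS_\lambda \to T$ strongly with $\|TS_\lambda\| \leq \|T\|$ forces $\|TS_\lambda\| \to \|T\|$, applied here, gives $\|\bar{\pi}(a)\tilde{\lambda}_{\H}(z_\lambda)\| \to \|\bar{\pi}(a)\| = \|a\|$ and $\|\bar{\rho}(q_0(a))\tilde{\lambda}_{\H'}(z_\lambda)\| \to \|q_0(a)\|$. Hence $\|a\| = \|q_0(a)\|$; the matrix-valued version of the same argument upgrades this to complete isometry of $q_0|_\A$. Consequently $I$ lies in the Shilov ideal of $\A$ in $\cenv(\A)$, which is zero, so $I = 0$ and $\J = 0$.

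The principal obstacle is the classification of $\hat{\alpha}$-invariant ideals in the general locally compact abelian case, which requires the standard (but technical) spectral theory of crossed products; the discrete case is handled by the Fourier-coefficient tools already developed in Section~\ref{abelian}. The norming/approximate-identity argument then cleanly reduces the vanishing of $I$ to the universal property of $\cenv(\A)$.
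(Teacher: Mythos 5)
Your proof is correct, and its skeleton coincides with the paper's: both arguments reduce the theorem to showing that a Shilov-type boundary ideal vanishes, both use invariance under the dual action together with the Gootman--Lazar/Nielsen classification of $\hat{\alpha}$-invariant ideals to write that ideal as $I\cpf$ for an $\alpha$-invariant ideal $I\subseteq\cenv(\A)$, and both finish by showing that $I$ is a boundary ideal for $\A$ in $\cenv(\A)$, hence zero. Where you genuinely diverge is in the last step and the bookkeeping around it. The paper passes to the unitizations $(\A\cpf)^1\subseteq(\cenv(\A)\cpf)^1$ --- its Shilov-ideal machinery, including Lemma~\ref{soelementary}, is only developed for unital algebras, and $\A\cpf$ is typically non-unital --- then uses \cite[Lemma 3.6]{KatsoulisKribsJFA} to intersect back down to $\cenv(\A)\cpf$, and kills $I$ by the elementary estimate $\|a-j\|\geq\sup_i\|L_{a-j}(e_i)\|\geq\sup_i\|ae_i\|=\|a\|$ using the approximate identity of Lemma~\ref{cai}; notably that inequality does not invoke amenability. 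You instead work with the non-unital Shilov ideal directly (legitimate, but strictly outside the framework the paper sets up, so you should either unitize or record the approximately unital version of Hamana's maximality result before quoting Lemma~\ref{soelementary}), and you kill $I$ by norming both $\A\cpf$ and its quotient with regular representations and letting an approximate identity of $C_c(\G)$ recover $\|a\|$ and $\|q_0(a)\|$ from the crossed-product norms. This is clean and arguably softer, but it uses amenability twice (faithfulness of $\bar{\pi}\rtimes\lambda_{\H}$ and of $\bar{\rho}\rtimes\lambda_{\H'}$ on the respective \emph{full} crossed products); also, your appeal to Corollary~\ref{norming} for the quotient is a slight abuse, since $\cenv(\A)/I$ is not known to be a $\ca$-envelope of anything --- what you actually need, and what is true, is that for amenable $\G$ the regular representation induced by any faithful representation is faithful on the full crossed product. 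Both routes are valid; the paper's final step is more self-contained, yours trades the left-multiplication computation for a representation-theoretic limit argument.
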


 \begin{proof}
 Let $\C$ denote the $\ca$-envelope of $\A$. Let $e_i$, $i \in \bbI$, be the common contractive approximate identity of $\A \cpf$ and $\C \cpf$, as in Lemma~\ref{cai}. 
 
 By way of contradiction assume that $\{ 0 \} \neq \fJ \subseteq \C \cpf$ is the Shilov ideal for $\A \cpf$.  Since $\fJ$ is invariant by the dual action $\hat{\alpha}$, \cite[Corollary 2.2]{GootL} (or \cite[Corollary 3.4 (i)]{Nielsen} for non-separable systems) implies the existence of an $\alpha$-invariant ideal $\J \subseteq \C$ so that 
 \[
\fJ = \J \cpf.
 \]
 Now note that $\J \subseteq M ( \C \cpf)$ and furthermore, 
 \begin{equation} \label {Jstuff}
 \J (\C \cpf) \subseteq \J \cpf \subseteq \fJ.
 \end{equation} 
 If $L_x \in B(\C \cpf)$, $ x \in M(\C\cpf)$, stands for the left multiplication operator, then for arbitrary $a \in \A$, $ j \in \J$  we have
  \begin{alignat*}{2}
 &\| a -j\|&&\geq \sup_i \|L_{a -j}(e_i) \| =\sup_i \| ae_i -je_i\|\\
 &                       &&\geq \sup_i \|ae_i\| \quad  \mbox{(by (\ref{Jstuff}) and because $\fJ$ is a boundary ideal)}\\
 &                  &&= \sup_i \|L_{ae_i}\| = \sup \{ \|ae_ix\|\mid x \in \C\cpf, \|x\|=1,  i \in \bbI \}  \\
 &                  &&= \| L_{a}\|= \|a \| ,
  \end{alignat*}
  where $\{ e_i\}_{i \in \bbI}$ is the contractive approximate unit of $\A \cpf$ appearing in Lemma~\ref{cai}. A matricial variation of the above argument shows that $$\|a-j\|\geq\|a\|,$$ for arbitrary $a \in M_n(\A)$ and $j \in M_n(\J)$.
Therefore it follows that $\J \subseteq \C$ is a boundary ideal for $\A$. Since $\C = \cenv(\A)$, we obtain $\J = \{0\}$. But this implies that $\fJ  = \{ 0 \}$, which is a contradiction. 
 \end{proof}
 
 \begin{remark} Certainly the extension of \cite[Corollary 2.2]{GootL} to non-separable systems does not require the heavy machinery of coactions, as it happens in \cite[Corollary 3.4 (i)]{Nielsen}. Nevertheless we have not able to locate an appropriate reference in the literature that does not involve coactions. A  standard reference, even for the case of a non-separable system, seems to be \cite[Theorem 3.4]{GootL2}. However that result too involves the use of coactions. Furthermore, even though the proof of \cite[Theorem 3.4]{GootL2} works for arbitrary dynamical systems, the authors of \cite[Theorem 3.4]{GootL2}  make the blanket assumption that all $\ca$-dynamical systems appearing in their monograph are separable. 
 \end{remark}
 
 In Chapter~\ref{Hao} we will use the above theorem in order to give a proof of the Hao-Ng Theorem~\cite{HN} for locally compact abelian groups.
 
\chapter[Takai Duality]{Maximal C$^*$-covers, Iterated Crossed Products and Takai Duality} \label{Takai}

Even though most of the non-selfadjoint operator algebras currently under investigation are actually unital, we have gone to great lengths to build a theory of crossed products that encompasses non-unital algebras as well. There is a good reason for that and this becomes apparent in this chapter. Both the context of an iterated crossed product and the non-selfadjoint Takai duality presented here would be meaningless had we not incorporated non-unital algebras in our theory.

We begin with an important identity.

\begin{theorem} \label{cmaxthm}
Let  $(\A, \G, \alpha)$ be a dynamical system. Then
\begin{equation*} 
\cmax \big(\A \cpf\big)\simeq \cmax (\A)\cpf .
\end{equation*}
\end{theorem}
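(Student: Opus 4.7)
The plan is to identify $\cmax(\A)\cpf$ (equipped with the natural inclusion $\A\cpf = \A \rtimes_{\cmax(\A), \alpha} \G \subseteq \cmax(\A)\cpf$) as a $\ca$-cover of $\A\cpf$ that enjoys the universal property defining $\cmax(\A\cpf)$. The first half is already provided by the proposition preceding Proposition~\ref{Raeburnthm}, which states that $\C\cpf$ is a $\ca$-cover of $\A \rtimes_{\C, j, \alpha}\G$ for any $\alpha$-admissible $\ca$-cover $(\C, j)$ of $\A$; applied to $\C = \cmax(\A)$, this gives that $\cmax(\A)\cpf$ is a $\ca$-cover of $\A\cpf$. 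So the task reduces to verifying that every completely contractive nondegenerate representation of $\A\cpf$ extends to a $*$-representation of $\cmax(\A)\cpf$.

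To that end, let $\pi : \A\cpf \to B(\H)$ be a nondegenerate completely contractive representation. By Proposition~\ref{classifyrepns}, $\pi = \sigma \rtimes u$ for a unique nondegenerate covariant representation $(\sigma, u, \H)$ of $(\A, \G, \alpha)$. Since $\sigma : \A \to B(\H)$ is a nondegenerate completely contractive representation, the universal property of $\cmax(\A)$ yields a (nondegenerate) $*$-homomorphism $\hat{\sigma} : \cmax(\A) \to B(\H)$ with $\hat{\sigma}\circ j = \sigma$.

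The crux of the argument is to promote $(\hat{\sigma}, u)$ to a covariant representation of the $\ca$-dynamical system $(\cmax(\A), \G, \alpha)$. The covariance identity $u(s)\hat{\sigma}(c) = \hat{\sigma}(\alpha_s(c))u(s)$ holds for $c \in j(\A)$ by the covariance of $(\sigma, u)$; taking adjoints and using that $\alpha_s$ is a $*$-automorphism of $\cmax(\A)$ preserving $j(\A)^*$, one obtains the same identity on $j(\A)^*$. Since $\hat{\sigma}$ is multiplicative and $\|\cdot\|$-continuous and $\alpha_s$ is a $*$-automorphism, the set of $c \in \cmax(\A)$ satisfying the covariance identity is a norm-closed $*$-subalgebra; hence it is all of $\cmax(\A) = \ca(j(\A))$.

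With covariance established, Proposition~\ref{Raeburnthm} applied to the $\ca$-dynamical system $(\cmax(\A), \G, \alpha)$ produces an integrated $*$-representation $\hat{\sigma} \rtimes u : \cmax(\A)\cpf \to B(\H)$. For $f \in C_c(\G, \A)$ one computes directly
\[
(\hat{\sigma} \rtimes u)(f) = \int \hat{\sigma}(f(s))u(s)\,d\mu(s) = \int \sigma(f(s))u(s)\,d\mu(s) = (\sigma \rtimes u)(f) = \pi(f),
\]
so by density $\hat{\sigma} \rtimes u$ extends $\pi$ on $\A\cpf$. This verifies the universal property of $\cmax(\A\cpf)$ and therefore yields the desired $*$-isomorphism. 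The only mild obstacle is the bookkeeping in the covariance-extension step; after that, the result is a clean assembly of the two universal properties (of $\cmax(\A)$ on the algebra side, and of the $\ca$-algebra crossed product on the dynamical side) bridged by the representation theorem for $\A\cpf$.
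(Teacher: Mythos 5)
Your proposal is correct and follows essentially the same route as the paper's own proof: decompose a completely contractive representation of $\A\cpf$ via Proposition~\ref{classifyrepns} into a covariant pair $(\sigma,u)$, extend $\sigma$ to $\cmax(\A)$ by its universal property, verify covariance of the extension by taking adjoints (and propagating to the generated $*$-algebra), and integrate to a $*$-representation of $\cmax(\A)\cpf$ extending the original one. The only cosmetic difference is that the paper also records the standard reduction from possibly degenerate to non-degenerate representations by restricting to $[\phi(\A)\H]$, a step your argument quietly assumes.
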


\begin{proof}
Let $\phi: \A \cpf  \rightarrow B(\H)$ be a completely contractive (perhaps degenerate) representation. Since $\phi(\A)$ is approximately unital, the subspace $  [ \phi(\A)(\H) ]$ reduces $\phi(\A)$. Therefore, by restricting on $  [ \phi(\A)(\H) ]$, we may assume that $\phi$ is actually non-degenerate.  (See \cite[Lemma 2.1.9]{BlLM} for more details.)

By Proposition~\ref{classifyrepns}, there exists a covariant representation $(\pi, u, \H)$ of $(\A, \G, \phi)$ so that $\phi = \pi\rtimes u$. Extend $\pi$ to a $\ca$-representation $\hat{\pi}: \cmax(\A) \rightarrow B(\H)$. 

We claim that $(\hat{\pi}, u, \H)$ is a covariant representation of $(\cmax (\A), \G, \phi)$. By taking adjoints in the covariance equation
 \[
\phist(s^{-1}) \pi(a)=  \pi(\alpha_s^{-1}(a)) \phist(s^{-1})
\]
and then setting $a=\alpha_s(b)$, we obtain $\phist(s) \pi(b)^*=  \pi(\alpha_s(b))^* \phist(s)$, i.e.,
\[
\tilde{\pi}(b^*)\phist(s)= \phist(s)\tilde{\pi}(\alpha_s(b)^*)= \phist(s)\tilde{\pi}(\alpha_s(b^*)) ,
\]
and the conclusion follows.
Furthermore the $\ca$-representation
\[
\hat{\pi}\rtimes u : \cmax (\A)\cpf \rightarrow B(\H)
\]
extends $\phi = \pi \rtimes u$. 

This shows that $\cmax (\A)\cpf$ satisfies the universal property for $\cmax \big(\A \cpf\big)$ and the conclusion follows.
\end{proof}

Let $\A$ be an operator algebra. Let $K, H$  be locally compact groups and consider continuous actions $\beta: K \rightarrow \Aut \A$ and $\delta : H \rightarrow \Aut( \A \rtimes_{\beta} K)$. The iterated crossed product $(\A \rtimes _{\beta}K)\rtimes _{\delta}H$ can be described as follows.

By Lemma~\ref{delicate} both $\beta$ and $\delta$ extend to actions $\beta: K \rightarrow \Aut \cmax(\A)$ and $\delta : H \rightarrow \Aut\big( \cmax(\A \rtimes_{\beta} K)\big)$ respectively, denoted by the same symbols for convenience. Now, Theorem~\ref{cmaxthm} shows that 
\[
\cmax(\A \rtimes_{\beta} K) \simeq \big( \cmax(\A) \rtimes_{\beta} K, j \big),
\]
where $j : \A\rtimes _{\beta}K \rightarrow \cmax(\A ) \rtimes_{\beta} K $ is the canonical map arising from the ``inclusion" $\A \subseteq \cmax(\A)$. Therefore we may identify  $(\A \rtimes _{\beta}K)\rtimes _{\delta}H$ with the norm closed subalgebra of $\big(\cmax(\A )\rtimes _{\beta}K\big)\rtimes _{\delta}H$ generated by $C_c(H, \A \rtimes_\beta K) \subseteq \big(\cmax(\A )\rtimes _{\beta}K\big)\rtimes _{\delta}H$.

In the case where both $K$ and $H$ are abelian there is a more convenient description of the iterated crossed product.

\begin{proposition} \label{doesntmatter}
Let $\A$ be an operator algebra. Let $K, H$  be locally compact abelian groups and consider continuous actions $\beta: K \rightarrow \Aut \A$ and $\delta : H \rightarrow \Aut( \A \rtimes_{\beta} K)$. Then the iterated crossed product  $(\A \rtimes _{\beta}K)\rtimes _{\delta}H$ is canonically and completely isometrically isomorphic with the norm closed subalgebra of $\big(\cenv(\A )\rtimes _{\beta}K\big)\rtimes _{\delta}H$ generated by $C_c(H, \A \rtimes_\beta K) \subseteq \big(\cenv(\A )\rtimes _{\beta}K\big)\rtimes _{\delta}H$.
\end{proposition}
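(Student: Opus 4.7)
The plan is to reduce the statement to the amenability result of Theorem~\ref{r=f} combined with the envelope identification of Theorem~\ref{abelianenv}, both of which are available under our hypotheses. The idea is that the description already given in the paragraphs preceding the proposition places the iterated crossed product inside $\big(\cmax(\A)\rtimes_{\beta}K\big)\rtimes_{\delta}H$, and we wish to transfer this concrete realization from the maximal $\ca$-cover down to the envelope.

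First I would observe that $H$ is abelian and hence amenable, so Theorem~\ref{r=f} applies to the dynamical system $(\A\rtimes_{\beta}K,\,H,\,\delta)$: every $\delta$-admissible $\ca$-cover of $\A\rtimes_{\beta}K$ yields the \emph{same} full crossed product, via a completely isometric isomorphism sending generators to generators. In particular the definition $(\A\rtimes_{\beta}K)\rtimes_{\delta}H=(\A\rtimes_{\beta}K)\rtimes_{\cmax(\A\rtimes_{\beta}K),\,\delta}H$ coincides with the relative crossed product coming from $\cenv(\A\rtimes_{\beta}K)$, provided the latter is $\delta$-admissible; and this admissibility is automatic, because Lemma~\ref{delicate} extends any completely isometric automorphism of $\A\rtimes_{\beta}K$ to a $*$-automorphism of $\cenv(\A\rtimes_{\beta}K)$, and uniqueness on the dense subalgebra upgrades this to a continuous group representation of $H$.

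Next I would invoke Theorem~\ref{abelianenv}: since $\A$ is unital and $K$ is a locally compact abelian group, we have an equivalence of $\ca$-covers
\[
\cenv(\A\rtimes_{\beta}K)\;\simeq\;\cenv(\A)\rtimes_{\beta}K.
\]
This equivalence pairs the two canonical embeddings of $\A\rtimes_{\beta}K$ into the respective $\ca$-algebras, and again by Lemma~\ref{delicate} the extended $\delta$-actions on both sides are intertwined by the $*$-isomorphism implementing the equivalence. The lemma sitting between Definition~\ref{relativedefn} and Lemma~\ref{delicate}, which says that equivalent $\alpha$-admissible $\ca$-covers yield completely isometrically isomorphic relative crossed products (with generators mapping to generators), therefore gives
\[
(\A\rtimes_{\beta}K)\rtimes_{\cenv(\A\rtimes_{\beta}K),\,\delta}H\;\simeq\;(\A\rtimes_{\beta}K)\rtimes_{\cenv(\A)\rtimes_{\beta}K,\,\delta}H,
\]
and the right-hand side is by definition the norm-closed subalgebra of $\big(\cenv(\A)\rtimes_{\beta}K\big)\rtimes_{\delta}H$ generated by $C_c(H,\A\rtimes_{\beta}K)$. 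Concatenating the two isomorphisms produces the required identification.

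The only delicate point — and what I would treat as the main obstacle — is making sure that all the isomorphisms in the chain above are \emph{canonical}, i.e., they do fix $C_c(H,\A\rtimes_{\beta}K)$ pointwise (up to the obvious identifications) rather than just abstractly match generators. For Theorem~\ref{r=f} this is explicit in its statement; for the passage through Theorem~\ref{abelianenv} it requires verifying that the $*$-isomorphism $\cenv(\A\rtimes_{\beta}K)\to\cenv(\A)\rtimes_{\beta}K$ restricts to the identity on the canonical copy of $\A\rtimes_{\beta}K$, which is built into the proof of Theorem~\ref{abelianenv} since the equivalence there comes from the universal property of the $\ca$-envelope applied to the embedding $\A\rtimes_{\beta}K\hookrightarrow\cenv(\A)\rtimes_{\beta}K$. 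Once this bookkeeping is done, the chain of isomorphisms yields the canonical completely isometric identification claimed.
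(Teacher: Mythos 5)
Your proposal is correct and follows essentially the same route as the paper: the paper's proof is exactly the two-step chain of applying Theorem~\ref{r=f} (using amenability of the abelian group $H$) to replace $\cmax(\A\rtimes_{\beta}K)$ by $\cenv(\A\rtimes_{\beta}K)$, and then Theorem~\ref{abelianenv} to identify $\cenv(\A\rtimes_{\beta}K)$ with $\cenv(\A)\rtimes_{\beta}K$ as $\ca$-covers. Your additional bookkeeping on admissibility and canonicality is consistent with (and slightly more explicit than) what the paper leaves implicit.
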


\begin{proof}
By Theorem~\ref{r=f}, we have 
\[
(\A \rtimes _{\beta}K)\rtimes _{\delta}H \simeq (\A \rtimes _{\beta}K)\rtimes _{\cenv(\A\rtimes _{\beta}K), \delta} H.
\]
However, Theorem~\ref{abelianenv} shows that 
\[
\cenv(\A\rtimes _{\beta}K )\simeq \big( \cenv(\A)\rtimes _{\beta}K, j\big),
\]
where $j : \A\rtimes _{\beta}K \rightarrow \cenv(\A) \rtimes_{\beta} K $ is the canonical map arising from the ``inclusion" $\A \subseteq \cenv(\A)$. This implies the desired identification.
\end{proof} 

Before embarking with the Takai duality, we need a technical result. Let $\C$ be a $\ca$-algebra, $H, K$ locally compact groups and $\beta: K \rightarrow \C$, $\delta: H \rightarrow \C \rtimes_{\beta} K$ continuous actions. Then we can view $C_c(H \times K, \C)$ as a dense subspace of the iterated crossed product
\[
\big( \C \rtimes_{\beta} K\big)\rtimes_{\delta} H
\]
by associating to a ``kernel" $F \in C_c(H \times K, \C)$, the function $\lambda_{F} \in C_c(H, \C \rtimes_{\beta}K)$ defined by
\begin{equation} \label{defn;lambda}
\lambda_F(h)(k)\equiv F(h, k), \,\, h \in H, k \in K.
\end{equation}
Assuming a compatibility condition for $\delta$, one can show (see \cite[p. 191]{Will}) that actually the subspace 
\[
\{ \lambda_F\mid F \in  C_c(H \times K, \C)\}
\]forms a $*$-subalgebra of the iterated crossed product. The compatibility condition requires that $C_c(K, \C) \subseteq \C \rtimes_{\beta} K$ is invariant for $\delta$, and that 
\begin{equation} \label{compatible}
(h, h', k) \mapsto \delta_h\big( \lambda_F(h')\big)(k)
\end{equation}
is continuous with compact support in $h'$ and $k$. (For instance, if $\supp \delta(\lambda_F(h))\subseteq \supp\lambda_F(h)$, for all $h \in H$, then  (\ref{compatible}) is satisfied.) Actually one can show that for functions $\lambda_{F_i} \in C_c(H, \C \rtimes_{\beta}K)$, $i=1,2$, we have
\begin{equation} \label{product formula} 
\big(\lambda_{F_1}\lambda_{F_2}\big)(h', k') = \int_{H}\int_{K} \lambda_{F_1}(h)(k)\beta_{k}\Big(\delta_h\big(\lambda_{F_2}(h^{-1}h')\big)(k^{-1}k')\Big) d \mu_H d\mu_K
\end{equation}
How does this transfer to non-selfadjoint algebras? Assume now that the systems $(\A, K, \beta)$ and $(\A\rtimes_{\beta} K , H, \delta)$ are as in the beginning of this chapter and let $\C = \cmax(\A)$. Assume further that the compatibility condition is satisfied by $\delta$, regarding both its action on $C_c(K, \C) \subseteq \C \rtimes_{\beta} K$ and on $C_c(K, \A) \subseteq \C \rtimes_{\beta} K$.\footnote{In this case we simply require that $C_c(K, \A) \subseteq \C \rtimes_{\beta} K$ is invariant for $\delta$.}

\begin{lemma} \label{dense Takai}
If $\A$, $\C$, $\beta$ and $\delta$ are as in the paragraph above, then the set 
\begin{equation} \label{Takai subalg}
\{ \lambda_F\mid F \in  C_c(H \times K, \A)\} 
\end{equation}
forms a dense subalgebra of the iterated crossed product $(\A \rtimes _{\beta}K)\rtimes _{\delta}H$. 
\end{lemma}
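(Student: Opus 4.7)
The plan has two parts: show that $\Lambda := \{\lambda_F \mid F \in C_c(H \times K, \A)\}$ is closed under multiplication, and then show it is norm-dense in $(\A \rtimes_\beta K)\rtimes_\delta H$.

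For the algebra property, given $F_1, F_2 \in C_c(H \times K, \A)$, I would apply the product formula~(\ref{product formula}) directly. For fixed $(h', k')$, the integrand
\[
F_1(h, k)\, \beta_k\!\Big(\delta_h\big(\lambda_{F_2}(h^{-1}h')\big)(k^{-1}k')\Big)
\]
takes values in $\A$: the compatibility assumption gives $\delta_h(\lambda_{F_2}(h^{-1}h')) \in C_c(K, \A)$, its evaluation at $k^{-1}k'$ lies in $\A$, $\beta_k$ preserves $\A$, and the product with $F_1(h,k) \in \A$ stays in $\A$. Bochner integration of a continuous, compactly supported, $\A$-valued integrand stays in the closed subspace $\A$, so the integral $F_3(h', k') := (\lambda_{F_1}\lambda_{F_2})(h',k')$ lies in $\A$. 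Joint continuity of $F_3$ in $(h', k')$ and a common compact support follow from the joint-continuity and compact-support clauses of~(\ref{compatible}), composed with the continuous substitutions $h' \mapsto h^{-1}h'$, $k' \mapsto k^{-1}k'$, and combined with the compact support of $F_1$ in $(h,k)$. Thus $F_3 \in C_c(H\times K, \A)$ and $\lambda_{F_1}\lambda_{F_2} = \lambda_{F_3} \in \Lambda$.

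For density, I would invoke the standard $L^1$-majorization
\[
\|g\|_{(\C \rtimes_\beta K)\rtimes_\delta H} \;\leq\; \int_H \|g(h)\|_{\C \rtimes_\beta K}\, d\mu_H(h), \quad g \in C_c\big(H, \C \rtimes_\beta K\big).
\]
Since $(\A \rtimes_\beta K)\rtimes_\delta H$ is the closure of the convolution subalgebra $C_c(H, \A \rtimes_\beta K)$, it suffices to $L^1$-approximate each such $g$ by some $\lambda_F$ with $F \in C_c(H \times K, \A)$. Fix $g$ with support in a compact set $L \subseteq H$, enclose $L$ in an open precompact set $L' \supseteq L$, and set $\eta := \epsilon/(4\mu_H(L'))$ for a prescribed $\epsilon > 0$. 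By compactness of $g(\overline{L'})$ in $\A \rtimes_\beta K$, select $T_1, \dots, T_n$ forming an $\eta$-net (including $T_n = 0$), and by density of $C_c(K, \A)$ in $\A \rtimes_\beta K$ choose $f_1, \dots, f_n \in C_c(K, \A)$ with $\|T_i - f_i\| < \eta$ (take $f_n = 0$). The open sets $V_i := \{h \in L' : \|g(h) - T_i\| < \eta\}$ cover $\overline{L'}$; let $\{z_i\} \subseteq C_c(L')$ be a subordinate partition of unity and define $F(h, k) := \sum_i z_i(h) f_i(k)$. Then $F \in C_c(H \times K, \A)$, and a routine triangle-inequality estimate gives $\|g(h) - \lambda_F(h)\|_{\C \rtimes_\beta K} < 2\eta$ for $h \in L'$, with both $g$ and $\lambda_F$ vanishing off $L'$. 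Hence $\|g - \lambda_F\|_{L^1} \leq 2\eta \cdot \mu_H(L') = \epsilon/2$.

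The main obstacle I anticipate is verifying the joint continuity and uniform compact support of $F_3$ in the algebra step, since the substitutions $h' \mapsto h^{-1}h'$ and $k' \mapsto k^{-1}k'$ must be carefully reconciled with the compatibility hypothesis~(\ref{compatible}), and the resulting support in $(h', k')$ must be explicitly controlled by the supports of $F_1$, $F_2$, and the image of $F_2$ under $\delta$. The density half is routine once the $L^1$-majorization and a standard partition-of-unity construction are granted.
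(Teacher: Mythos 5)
Your proposal is correct and follows essentially the same two-step plan as the paper: closure under multiplication via the product formula (\ref{product formula}) with an $\A$-valuedness check on the integrand, and then density of the product-type kernels. The paper's density step is even shorter than yours --- it simply notes that kernels $F(h,k)=az(h)w(k)$ satisfy $\lambda_F=(a\otimes w)\otimes z$ and that such elementary tensors are already total in $(\A\rtimes_{\beta}K)\rtimes_{\delta}H$ (by \cite[Lemma 1.87]{Will} applied twice), a fact your partition-of-unity and $L^1$-majorization argument reproves from scratch.
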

\begin{proof} Indeed, (\ref{product formula}) shows that the set in (\ref{Takai subalg}) is a subalgebra of $(\A \rtimes _{\beta}K)\rtimes _{\delta}H$. The density follows from the fact that kernels of the form 
\[
F(h, k) =az(h)w(k), \,\, a \in \A, z \in C_c(H), w \in C_c(K)
\] give $\lambda_F= (a\otimes w) \otimes z$ and such elements form a total subset of $(\A \rtimes _{\beta}K)\rtimes _{\delta}H$. 
\end{proof}

A particular case of an iterated crossed product comes from the dual action of a locally compact abelian group $\G$ on the crossed product $\A \cpf$. Here we have a dynamical system $(\A, \G, \alpha)$, with $\G$ abelian, and we let  $K= \G$, $\beta = \alpha$, $H = \hat{\G}$ and $\delta = \hat{\alpha}$. The dual action $\hat{\alpha}$ is defined on $C_c(\G, \A)$ by $\hat{\alpha}_{\gamma}(f)(s)= \overline{\gamma(s)}f(s)$, $f \in C_c(\G, \A)$, $\gamma \in \hat{\G}$. (By Theorem~\ref{r=f}, it does not matter whether we consider $C_c(\G, \A)$ as a subalgebra of $\A\cpf$ or any other relative crossed product.)  

Let $\rho: \G \rightarrow B\big( L^2(\G, \mu)\big)$ be the right-regular representation of $\G$ and consider the dynamical system $\Ad \rho \colon \G \rightarrow \K\big( L^2(\G)\big)$ where $(\Ad \rho)_s \equiv \rho(s)T \rho(s^{-1})$, $s \in \G$. Then for the dynamical system $(\A, \alpha , \G)$, we can form the tensor product dynamical system $\alpha \otimes \Ad \rho \colon \G \rightarrow \Aut \big( \A \otimes  \K\big( L^2(\G)\big)$.

For $\ca$-algebras, the following is known as the Takai duality Theorem \cite{Takai}. We establish its validity for crossed products of arbitrary operator algebras.

\begin{theorem}[Takai duality] \label{Takai duality}
 Let $(\A , \G, \alpha)$ be a dynamical system with $\G$ a locally compact abelian group. Then 
\begin{equation} \label{Takai formula}
\big( \A \cpf\big)\rtimes_{\hat{\alpha}}\hat{\G} \simeq \A \otimes \K \big( L^2 (\G)\big),
\end{equation}
where $\K \big( L^2 (\G)\big)$ denotes the compact operators on $ L^2 (\G)$ and $\A \otimes \K \big( L^2 (\G)\big)$ is the subalgebra of $\cenv(\A) \otimes \K \big( L^2 (\G)\big)$ generated by the appropriate elementary tensors.

Furthermore, the complete isomorphism 
\[
\Phi \colon \
\big( \A \cpf\big)\rtimes_{\hat{\alpha}}\hat{\G} \longrightarrow \A \otimes \K \big( L^2 (\G)\big),
\]
which implements \textup{(\ref{Takai formula})} can be chosen to be equivariant for the double dual action $\hat{\hat{\alpha}} \colon \G \rightarrow \Aut \big((\A \cpf\big)\rtimes_{\hat{\alpha}}\hat{\G}\big) $ and the action $\alpha \otimes \Ad \rho \colon\G \rightarrow \Aut \big( \A \otimes  \K\big( L^2(\G)\big)$.
\end{theorem}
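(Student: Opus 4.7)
The approach is to derive the statement from the classical $\ca$-algebraic Takai duality theorem applied to the dynamical system $(\cenv(\A), \G, \alpha)$, by restricting the ambient $*$-isomorphism to the appropriate non-selfadjoint subalgebras.

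First I would extend the dual action $\hat{\alpha}$ from $\A \cpf$ to $\cenv(\A)\cpf$. This is legitimate: Theorem~\ref{abelianenv} gives $\cenv(\A \cpf) \simeq \cenv(\A)\cpf$, and Lemma~\ref{delicate} produces the unique extension, denoted again by $\hat{\alpha}$. In the unital case, Proposition~\ref{doesntmatter} then identifies $(\A \cpf)\rtimes_{\hat{\alpha}}\hat{\G}$ completely isometrically with the norm closed subalgebra of the iterated $\ca$-crossed product $(\cenv(\A)\cpf)\rtimes_{\hat{\alpha}}\hat{\G}$ generated by $C_c(\hat{\G}, \A \cpf)$; the non-unital case reduces to the unital one by unitization together with Lemma~\ref{transfer}. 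The dual action satisfies the compatibility hypothesis of Lemma~\ref{dense Takai}, because $\hat{\alpha}_{\gamma}$ acts on $C_c(\G, \A) \subseteq \cmax(\A)\cpf$ by the scalar modulation $f(s)\mapsto \overline{\gamma(s)}f(s)$, which visibly preserves $\A$-valuedness. Consequently, $(\A \cpf)\rtimes_{\hat{\alpha}}\hat{\G}$ has dense subalgebra $\{\lambda_F : F \in C_c(\hat{\G}\times\G, \A)\}$.

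Next I would invoke the classical Takai duality theorem for $(\cenv(\A), \G, \alpha)$ from \cite{Will}, which supplies a $*$-isomorphism
\[
\Phi_0 : (\cenv(\A)\cpf)\rtimes_{\hat{\alpha}}\hat{\G} \longrightarrow \cenv(\A) \otimes \K(L^2(\G))
\]
equivariant for the double dual action $\hat{\hat{\alpha}}$ and $\alpha\otimes\Ad\rho$. Set $\Phi \equiv \Phi_0\!\mid_{(\A\cpf)\rtimes_{\hat{\alpha}}\hat{\G}}$. Automatically $\Phi$ is a complete isometric homomorphism, and equivariance for $\Phi$ is inherited from that of $\Phi_0$, as both actions preserve the respective non-selfadjoint subalgebras on each side (the double dual leaves $\A$-valued kernels invariant by the observation above, and $\alpha\otimes\Ad\rho$ preserves elementary tensors from $\A \odot \K(L^2(\G))$).

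The remaining task, which I expect to be the main technical obstacle, is to verify that the image of $\Phi$ coincides with $\A \otimes \K(L^2(\G))$ as defined in the statement. For this I would use the explicit realization of $\Phi_0$: on $\lambda_F$ with $F \in C_c(\hat{\G}\times\G, \cenv(\A))$, $\Phi_0(\lambda_F)$ is the integral kernel operator on $L^2(\G, \cenv(\A))$ whose kernel is obtained from $F$ by Fourier transform in the $\hat{\G}$ variable and an affine change of variables in $\G$. When $F$ takes values in $\A$, the resulting kernel is compactly supported and $\A$-valued, so a partition of unity argument presents $\Phi_0(\lambda_F)$ as a norm limit of finite sums $\sum_i a_i \otimes k_i$ with $a_i \in \A$ and $k_i$ of finite rank in $\K(L^2(\G))$; hence $\Phi_0(\lambda_F) \in \A \otimes \K(L^2(\G))$. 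The reverse inclusion follows by running the same computation backwards: every rank-one operator $k$ built from two $C_c(\G)$ vectors can be realized as the output of some $F \in C_c(\hat{\G}\times \G, \A)$ by inverse Fourier transform, so $a\otimes k$ lies in $\Phi((\A\cpf)\rtimes_{\hat{\alpha}}\hat{\G})$ for every $a \in \A$. Since such tensors are total in $\A \otimes \K(L^2(\G))$, surjectivity of $\Phi$ onto $\A \otimes \K(L^2(\G))$ follows, completing the proof.
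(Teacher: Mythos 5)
Your proposal is correct in substance and follows the same basic strategy as the paper: take the selfadjoint Takai isomorphism from \cite{Will}, check that it and the two group actions preserve the relevant non-selfadjoint subalgebras, and conclude by restriction. The one genuine difference is the choice of ambient $\ca$-cover. You route everything through $\cenv(\A)$, invoking Theorem~\ref{abelianenv} and Proposition~\ref{doesntmatter} to realize $(\A\cpf)\rtimes_{\hat{\alpha}}\hat{\G}$ inside $(\cenv(\A)\cpf)\rtimes_{\hat{\alpha}}\hat{\G}$; the paper instead works inside $\cmax(\A)$, using only Theorem~\ref{cmaxthm}, and factors the Takai isomorphism as $\Phi_4\circ\Phi_3\circ\Phi_2\circ\Phi_1$ so that the invariance of the $\A$-valued kernels can be read off one elementary map at a time (via Lemma~\ref{dense Takai}) rather than from a single global kernel formula. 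The $\cmax$ route buys two things: it applies verbatim to non-unital $\A$, whereas Theorem~\ref{abelianenv} and Proposition~\ref{doesntmatter} are stated only for unital systems, so your unitization-plus-Lemma~\ref{transfer} reduction is an extra layer you must actually carry out (including checking that the unital-case isomorphism restricts correctly on both sides, which amounts to redoing the same kernel-tracking); and it keeps the argument independent of the $\ca$-envelope identification, which elsewhere in the paper is a delicate open issue. Your approach buys a target algebra that literally sits inside $\cenv(\A)\otimes\K(L^2(\G))$ as in the statement, avoiding the (harmless, since $\K$ is nuclear) passage from $\cmax(\A)\otimes\K$ to $\cenv(\A)\otimes\K$. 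Your surjectivity argument via rank-one kernels built from $C_c(\G)$ vectors is the Stone--von Neumann step of the paper in disguise, and is fine.
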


\begin{proof} The proof follows verbatim the plan laid down by Williams in \cite[Theorem 7.1]{Will}. What we need to do here is to keep track of where our non-selfadjoint operator algebra is mapped under the various maps appearing in Williams' proof. (For the record, Williams attributes his proof to Raeburn \cite{Raeb}, with an extra contribution by S. Echterhoff.)

Let $\C \equiv \cmax(\A)$. In \cite[Lemma 7.2]{Will}, it is shown that there exists an isomorphism 
\[
\Phi_1: \big( \C \cpf\big)\rtimes_{\hat{\alpha}}\hat{\G} \longrightarrow \big( \C\rtimes_{\id} \hat{\G}\big)\rtimes_{\hat{\id}^{-1}\otimes \alpha } \G.
\]
 Here $\C\rtimes_{\id} \hat{\G} \simeq  \ca(\hat{\G}) \otimes \C $ and the action $\hat{\id}^{-1}\otimes \alpha $ of $\G$ (which also satisfies the compatibility condition) is given by 
\[
(\hat{\id}^{-1}\otimes \alpha)_s(f)(\gamma) =\gamma(s) \alpha_s\big(f(\gamma)\big),
\]
where $f \in C_c(\hat{\G},  \C)$, $ s \in \G$ and $\gamma \in \hat{\G}$. Actually, $\Phi_1$ is constructed so that on kernels $F \in C_c(\hat{\G} \times \G, \C)$ it acts as
	\begin{equation} \label{Phi1action}
\Phi_1(F)(s, \gamma)= \gamma(s)F(\gamma, s), \,\, s \in \G, \gamma \in \hat{\G},
\end{equation}
in the sense that $\Phi_1(\lambda_F) = \lambda_{\Phi_1(F)}$.
Therefore $\Phi_1$ maps the linear space 
\begin{equation} \label{domain}
\{ \lambda_F \mid F \in C_c(\hat{\G}\times \G, \A)\} \subseteq \big( \A \cpf\big)\rtimes_{\hat{\alpha}}\hat{\G}
\end{equation}
onto the linear space 
\begin{equation} \label{range}
\{ \lambda_F \mid F \in C_c(\G \times \hat{\G}, \A)\} \subseteq  \big( \A\rtimes_{\id} \hat{\G}\big)\rtimes_{\hat{\id}^{-1}\otimes \alpha } \G.
\end{equation}
Recall that both $\hat{\alpha}$ and $\hat{\id}^{-1}\otimes \alpha$ satisfy the compatibility condition and so two applications of Lemma~\ref{dense Takai} show that the algebras appearing on the left side of (\ref{domain}) and (\ref{range}) are dense in the algebras appearing in the right sides of these relations. Hence we have a completely isometric surjection
\begin{equation} \label{Phi1}
\widetilde{\Phi}_1: \big( \A \cpf\big)\rtimes_{\hat{\alpha}}\hat{\G} \longrightarrow \big( \A\rtimes_{\id} \hat{\G}\big)\rtimes_{\hat{\id}^{-1}\otimes \alpha } \G.
\end{equation}

In \cite[Lemma 7.3]{Will} it is shown that there exists isomorphism 
\[
\Phi_2: \big( \C\rtimes_{\id} \hat{\G}\big)\rtimes_{\hat{\id}^{-1}\otimes \alpha } \G \longrightarrow C_0(\G, \C) \rtimes_{\lt \otimes \alpha} \G
\]
Here $(\lt \otimes \alpha)_s(f)(r)= \alpha_s\big( f(s^{-1}r)\big)$, $ f \in C_0(\G, \C)\simeq C_0(\G) \otimes \C$. By its construction, $\Phi_2$ satisfies
\[
(c \otimes \phi ) \otimes z \xmapsto{\phantom{xxx}\Phi_2\phantom{xxx}} (c \otimes \hat{\phi} ) \otimes z,
\]
where $\phi \in C_c(\hat{\G})$, $z \in C_c(\G)$ and $\hat{\phi}$ denotes the Fourier transform of $\phi$. Clearly $\Phi_2$ maps $\big( \A\rtimes_{\id} \hat{\G}\big)\rtimes_{\hat{\id}^{-1}\otimes \alpha } \G $ onto $C_0(\G, \A) \rtimes_{\lt \otimes \alpha} \G$ and so we have a complete isomorphism
\begin{equation} \label{Phi2}
\widetilde{\Phi}_2: \big( \A\rtimes_{\id} \hat{\G}\big)\rtimes_{\hat{\id}^{-1}\otimes \alpha } \G \longrightarrow C_0(\G, \A) \rtimes_{\lt \otimes \alpha} \G
\end{equation}
Now \cite[Lemma 7.4]{Will} provides an isomorphism 
\[
\Phi_3: C_0(\G, \C) \rtimes_{\lt \otimes \alpha} \G
 \longrightarrow C_0(\G, \C) \rtimes_{\lt \otimes \id} \G, 
\]
which satisfies 
\[
\Phi_3\big((a\otimes z)\otimes w\big)=\phi_3(a\otimes z)\otimes w,
\] 
where $z, w \in \C_c(\G)$ and $\phi_3 (a\otimes z)(s)= \alpha_s^{-1}(a)z(s)$, $s \in \G$. Clearly we have a complete isometry
\begin{equation} \label{Phi3}
\widetilde{\Phi}_3: C_0(\G, \A) \rtimes_{\lt \otimes \alpha} \G
 \longrightarrow C_0(\G, \A) \rtimes_{\lt \otimes \id} \G, 
\end{equation}
Combining (\ref{Phi1}), (\ref{Phi2}) and (\ref{Phi3}), we obtain 
\begin{equation} \label{just}
\big( \A \cpf\big)\rtimes_{\hat{\alpha}}\hat{\G}  \simeq C_0(\G, \A) \rtimes_{\lt \otimes \id} \G
\end{equation}
via the complete isometry $\widetilde{\Phi}_3\circ\widetilde{\Phi}_2 \circ\widetilde{\Phi}_1$. However
\[
\begin{split}
\phantom{x} C_0(\G, \C) \rtimes_{\lt \otimes \id} \G &\simeq \big(  C_0(\G) \otimes \C \big)  \rtimes_{\lt \otimes \id} \G \\
&\simeq \big( C_0(\G)  \rtimes_{\lt } \G\big) \otimes \C \\
&\simeq \K\big(L^2(\G)\big)  \otimes \C 
\end{split}
\]
by the Stone-von Neumann Theorem \cite[Theorem 4.24]{Will}. Now these isomorphisms preserve $\A$-valued functions, i.e., 
\[
C_0(\G, \A) \rtimes_{\lt \otimes \id} \G \simeq  \A \otimes K\big(L^2(\G)\big) .
\]
This combined with (\ref{just}) completes the proof of the first paragraph of the theorem.

In \cite[Lemma 7.5]{Will} it is shown that there exists an equivariant isomorphism $\phi_4$ from 
\[
\big(C_0(\G)\rtimes_{\lt}\G , \G , \rt\otimes \id\big) \mbox{ onto } \big(\K(L^2(\G)), \G, \Ad \rho\big)
\]
and so we have an equivariant isomorphism $\phi_4 \otimes \id$ from 
\[
\Big(\big(C_0(\G)\rtimes_{\lt}\G\big)\otimes \C , \G , (\rt\otimes \id)\otimes \alpha \Big) \mbox{ onto } \Big( \C \otimes \K\big(L^2(\G)\big) , \G,\alpha \otimes \Ad \rho  \Big),
\]
where $\C = \cenv(\A)$. Therefore \cite[Lemma 2.75]{Will} implies the existence of  an equivariant isomorphism $\Phi_4$ from 
\[
\big( C_0(\G, \C)\rtimes_{\lt \otimes \id}\G , \G , (\rt\otimes \alpha) \otimes \id \big) \mbox{ onto } \big(\C \otimes \K(L^2(\G)), \G, \alpha \otimes \Ad \rho  \big).
\]
Note that $\Phi_4$ preserves the non-selfadjoint subalgebras $C_0(\G, \A)\rtimes_{\lt \otimes \id}\G$ and $\A \otimes \K(L^2(\G))$ and so it establishes an equivariant isomorphism $\widetilde{\Phi}_4$ for the non-selfadjoint dynamical systems
\[
\big( C_0(\G, \A)\rtimes_{\lt \otimes \id}\G , \G , (\rt\otimes \alpha) \otimes \id \big) \mbox{ and } \big(\A \otimes \K(L^2(\G)), \G, \alpha \otimes \Ad \rho  \big).
\]

Finally let $\Phi= \Phi_3\circ \Phi_2 \circ \Phi_1$, where $\Phi_1, \Phi_2 , \Phi_3$ are as earlier in the proof. In the proof of \cite[Theorem 7.1]{Will}, it is shown that $\Phi$ establishes an equivariant isomorphism from 
\[
\big( ( \C \cpf)\rtimes_{\hat{\alpha}}\hat{\G}, \G, \hat{\hat{\alpha}}\big) \mbox{ onto } \big( C_0(\G, \C)\rtimes_{\lt \otimes \id}\G , \G , (\rt\otimes \alpha) \otimes \id \big)
\]
and so $\widetilde{\Phi}\equiv  \widetilde{\Phi}_3\circ  \widetilde{\Phi}_2 \circ  \widetilde{\Phi}_1$ establishes an equivariant isomorphism from 
\[
\big( ( \A \cpf)\rtimes_{\hat{\alpha}}\hat{\G}, \G, \hat{\hat{\alpha}}\big) \mbox{ onto } \big( C_0(\G, \A)\rtimes_{\lt \otimes \id}\G , \G , (\rt\otimes \alpha) \otimes \id \big).
\]
Composing $ \widetilde{\Phi}_4\circ  \widetilde{\Phi}$ gives the desired equivariant isomorphism from 
\[
\big( ( \A \cpf)\rtimes_{\hat{\alpha}}\hat{\G}, \G, \hat{\hat{\alpha}}\big) \mbox{ onto }  \big(\A \otimes \K(L^2(\G)), \G, \alpha \otimes \Ad \rho  \big).
\]
This completes the proof of the second paragraph of the theorem.
\end{proof}

\chapter{Crossed Products and the Dirichlet Property} \label{Dirichlet Sect}

A far more illuminating, but prohibitively longer title for this monograph should be ``Dirichlet algebras, tensor algebras and the crossed product of an operator algebra by a locally compact group". Indeed the initial motivation for this monograph came from our desire to understand when a Dirichlet operator algebra fails to be the tensor algebra of a $\ca$-correspondence. In principle, examples of such algebras should abound but remarkably, up until the recent paper of Kakariadis \cite{Kak}, none was mentioned in the literature. In this monograph we manage to come up with many additional examples (see Theorem ~\ref{crossedtensor}) and the apparatus for proliferating such examples is the crossed product of an operator algebra. In this chapter we produce the first such class of examples, with additional ones to come in later chapters. (See Theorem~\ref{infinite div}.)

Actually, we do even more here. In \cite{DKDoc} Davidson and Katsoulis introduced the class of semi-Dirichlet algebras. The semi-Dirichlet property is a property satisfied by all tensor algebras and the premise of \cite{DKDoc} is that this is the actual property that allows for such a successful dilation and representation theory for the tensor algebras. Indeed in \cite{DKDoc} the authors verified that claim by recasting many of the tensor algebra results in the generality of semi-Dirichlet algebras. What was not clear in \cite{DKDoc} was whether there exist ``natural" examples of  semi-Dirichlet algebras beyond the classes of tensor and Dirichlet algebras. It turns out that the crossed product is the right tool for generating new examples of semi-Dirichlet algebras from old ones, as Theorem~\ref{generatingSemiDir} indicates. By also gaining a good understanding on Dirichlet algebras and their crossed products (Theorems~\ref{Dirichletenvr} and \ref{Dirichletenv}) we are able to answer a related question of Ken Davidson: we produce the first examples of semi-Dirichlet algebras which are neither Dirichlet algebras nor tensor algebras (Theorem \ref{neither}).

\begin{definition}
Let $\B$ be an operator algebra and let $\cenv(\B) \simeq (\C, j)$.  Then $\B$ is said to be \textit{Dirichlet} iff $$\C = \overline{j(\B) +j(\B)^*}\equiv \S(\B).$$
\end{definition}

Many of the applications of the crossed product in this monograph involve Dirichlet operator algebras. Our first priority is to show that whenever $\A$ is Dirichlet, $\A \rtimes_{\alpha} \G$ and $\A \rtimes_{\alpha}^r \G$ are Dirichlet and also calculate the $\ca$-envelope in that important case. 

First we need the following lemma which gives a workable test for verifying the Dirichlet property.
Its proof usually follows as an application of a result of Effros and Ruan \cite[Proposition 3.1]{ER}, which asserts that completely isometric unital surjections between unital  operator algebras are always multiplicative. Below we give a new proof that avoids \cite[Proposition 3.1]{ER} but uses instead the existence of maximal dilations.

 \begin{lemma} \label{useful}
  Let $\B$ be an operator algebra contained in a $\ca$-algebra $\C$ and assume that $\S(\B) = \C$. Then, $\cenv(\B)\simeq (C, j)$, where $j:\B \rightarrow \C$ denotes the inclusion map.
  \end{lemma}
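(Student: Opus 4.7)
My plan is to avoid the Effros--Ruan result by combining the Dritschel--McCullough maximal dilation theorem with the universal property of the $\ca$-envelope, exploiting the Dirichlet hypothesis through a Kadison--Schwarz equality argument.

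First I would reduce to the unital case. If $\B$ is non-unital, then by the Blecher--Le Merdy result cited earlier in the paper $\C = \ca(\B)$ is also non-unital, so the unitization $\C^1$ is proper and $\B^1\subseteq \C^1$. The hypothesis $\S(\B)=\C$ yields
\[
\S(\B^1) = \overline{\B + \B^* + \mathbb{C}\cdot 1} = \C + \mathbb{C}\cdot 1 = \C^1,
\]
so the unital case would give $\cenv(\B^1)\simeq (\C^1, j^1)$, and then the paper's description of the $\ca$-envelope of a non-unital algebra through its unitization immediately yields $\cenv(\B)\simeq (\C,j)$.

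For unital $\B\subseteq \C$, I would fix a faithful unital non-degenerate representation $\C\subseteq B(\H)$ and let $\pi:\B\to B(\H)$ denote the inclusion. By Dritschel--McCullough, $\pi$ admits a maximal multiplicative dilation $\rho:\B\to B(\K)$ on some $\K\supseteq\H$ with $P_\H\rho(\cdot)|_\H=\pi$, and $(\ca(\rho(\B)),\rho)\simeq \cenv(\B)$. The universal property of $\cenv(\B)$ applied to the $\ca$-cover $(\C,j)$ now produces a $*$-epimorphism
\[
\Psi:\C\longrightarrow \ca(\rho(\B))\subseteq B(\K), \qquad \Psi(b)=\rho(b)\ \text{for all }b\in\B,
\]
which, being a $*$-homomorphism, also satisfies $\Psi(b^*)=\rho(b)^*$.

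The key step is then to verify that $\Psi$ is injective. I would consider the compression $\phi(c):=P_\H\Psi(c)|_\H$, a u.c.p. map $\C\to B(\H)$. On $\B$ we have $\phi(b)=P_\H\rho(b)|_\H=\pi(b)=b$, and on $\B^*$ we have $\phi(b^*)=P_\H\rho(b)^*|_\H=b^*$. By linearity, continuity, and the Dirichlet hypothesis $\C=\overline{\B+\B^*}$, the map $\phi$ is the identity on $\C$ (viewed inside $B(\H)$). Consequently $\phi(c^*c)=c^*c=\phi(c)^*\phi(c)$, i.e., equality holds in the Kadison--Schwarz inequality for every $c\in\C$; a standard block-matrix calculation with respect to $\K=\H\oplus\H^\perp$ then forces $(I-P_\H)\Psi(c)P_\H=0$, and applying the same argument to $c^*$ shows that $\H$ is in fact a reducing subspace for $\Psi(\C)$. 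Hence $\Psi(c)|_\H=\phi(c)=c$ for every $c\in\C$, so $\Psi(c)=0$ implies $c=0$. Therefore $\Psi$ is a $*$-isomorphism and $(\C,j)\simeq(\ca(\rho(\B)),\rho)\simeq\cenv(\B)$.

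The main obstacle is producing $\Psi$ as a genuine $*$-homomorphism on all of $\C$ extending $\rho$; this is exactly what the universal property of $\cenv(\B)$ combined with the Dritschel--McCullough identification $\cenv(\B)\simeq \ca(\rho(\B))$ delivers, and without either ingredient one would only get a completely positive extension, for which the desired multiplicativity (and hence the Kadison--Schwarz-equality step) would collapse.
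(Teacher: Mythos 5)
Your proposal is correct and follows essentially the same route as the paper: take a maximal dilation of the inclusion $\B\subseteq\C\subseteq B(\H)$, obtain a $*$-epimorphism $\Psi:\C\to\cenv(\B)$ fixing $\B$, and deduce injectivity by compressing back to $\H$ and using that the compression is the identity on the dense subspace $\B+\B^*$. The only differences are cosmetic: the paper gets its map $\rho_*$ from the unique (multiplicative) extension property of the maximal dilation rather than from the universal property of $\cenv(\B)$, and your Kadison--Schwarz/reducing-subspace step is superfluous, since once $P_\H\Psi(c)|_\H=c$ for all $c\in\C$, the implication $\Psi(c)=0\Rightarrow c=0$ is immediate.
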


  \begin{proof}
 Assume first that $\B$ is unital and let $\C$ act on a Hilbert space $\H$. Consider the diagram
\[
\xymatrix{\C \ar[r]^{\rho_{*}\phantom{LL}}&B(\K)\ \ar[d]^{c}\\
\B \ar[r]_{j} \ar[ru]^{\rho}  \ar[u]^{j} & \C}
\]
where $\rho$ is a maximal dilation of $j$ on a Hilbert space $\K \supset \H$, $c: B(\K)\rightarrow \H$ is the compression on $\H$ and $\rho_*$ is the extension of $\rho$ to a $*$-homomorphism on $\C$ so that the above diagram commutes.

Since $\rho$ is a maximal dilation of the complete isometry $j: \B \rightarrow \C$, we have that $$\cenv (\B) = \ca(\rho (\B) )= \ca(\rho_{*} (B)) = \rho_*(\C).$$ Therefore it suffices to show that $\rho_{*}$  is a complete isometry, i.e., it is injective.

Assume that $\rho_{*}\big( j(b_1) + j(b_2)^*\big)=0$. Then
\begin{align*}
j(b_1) + j(b_2)^*&= c\big(\rho(b_1)\big) +c\big(\rho(b_2)\big)^*=c\big(\rho(b_1)\big) +c\big(\rho(b_2)^*\big) \\
&=c\big(\rho(b_1)+\rho(b_2)^*\big)= c\Big(\rho_*(j(b_1))+\rho_*(j(b_2))^*\Big) \\
&=c\Big(\rho_*\big(j(b_1)+j(b_2)^*\big)\Big)=0
\end{align*}
as desired.

If $\B$ does not have a unit, then let $j_1: \B^1 \rightarrow \C^1$ be the unitization of the inclusion map. Clearly the pair $(\C^1, j_1)$ satisfies the requirements of the lemma for the unital algebra $\B^1$ and so $\cenv(\B^1) = (\C^1, j_1)$. Since ${j_1}\mid_{\B}=j$ and $\ca(j_1(\B))= \C$, we conclude that $\cenv(\B) = (\C, j)$. 
\end{proof}

First we deal with the reduced crossed product.

\begin{theorem} \label{Dirichletenvr}
 Let $(\A, \G, \alpha)$ be a dynamical system and assume that $\A$ is a Dirichlet operator algebra. Then $\A \rtimes_{\alpha}^r \G$ is a Dirichlet operator algebra and
 \[
 \cenv(\A \rtimes_{\alpha}^r \G) = \cenv(\A) \rtimes_{\alpha}^r \G.
 \]
 \end{theorem}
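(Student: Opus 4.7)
The plan is to show the single assertion that $\S(\A \rtimes_\alpha^r \G) = \cenv(\A) \rtimes_\alpha^r \G$, from which both conclusions follow at once via Lemma~\ref{useful}. By Corollary~\ref{allrcoincide} we already have a canonical completely isometric inclusion $\A \rtimes_\alpha^r \G \subseteq \cenv(\A) \rtimes_\alpha^r \G$, and the latter is a $\ca$-algebra. Once the selfadjoint span of $\A \rtimes_\alpha^r \G$ is shown to be dense in this $\ca$-algebra, Lemma~\ref{useful} immediately identifies it with $\cenv(\A \rtimes_\alpha^r \G)$ and witnesses the Dirichlet property.

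For the density, it suffices to approximate every elementary tensor $f \otimes c$ with $f \in C_c(\G)$ and $c \in \cenv(\A)$ by elements of $\A \rtimes_\alpha^r \G + (\A \rtimes_\alpha^r \G)^*$, since such tensors span a dense subspace of $\cenv(\A) \rtimes_\alpha^r \G$ (and the crossed product norm is dominated by the $L^1$-norm on $C_c(\G, \cenv(\A))$). The Dirichlet hypothesis yields $c = \lim_n (a_n + b_n^*)$ with $a_n, b_n \in \A$, so
\[
f \otimes c = \lim_n \big(f \otimes a_n \; + \; f \otimes b_n^*\big),
\]
with convergence uniform on the common support of $f$, hence in $L^1$ and therefore in the crossed product norm. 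The first summand $f \otimes a_n$ lies in $C_c(\G, \A) \subseteq \A \rtimes_\alpha^r \G$ by inspection.

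For the second summand, I would use the involution formula $h^*(s) = \Delta(s^{-1})\alpha_s(h(s^{-1})^*)$ on $C_c(\G, \cenv(\A))$. Setting $h(s) = f(s) b_n^*$, a direct calculation gives
\[
h^*(s) \;=\; \Delta(s^{-1})\, \overline{f(s^{-1})}\; \alpha_s(b_n),
\]
which is a continuous, compactly supported, $\A$-valued function because $\A$ is $\alpha$-invariant. Hence $h^* \in C_c(\G, \A) \subseteq \A \rtimes_\alpha^r \G$, so $f \otimes b_n^* = h = (h^*)^* \in (\A \rtimes_\alpha^r \G)^*$. This completes the density step and therefore the proof. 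The only real obstacle is the bookkeeping around the modular function in the involution; the saving feature is that $\alpha$-invariance of $\A$ keeps $h^*$ inside $C_c(\G, \A)$, so no further subtleties arise.
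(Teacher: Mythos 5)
Your proposal is correct and follows exactly the paper's route: embed $\A \rtimes_\alpha^r \G$ into $\cenv(\A)\rtimes_\alpha^r\G$ via Corollary~\ref{allrcoincide} (equivalently, Definition~\ref{reddefn}), show $\S(\A \rtimes_\alpha^r \G)$ exhausts that $\ca$-algebra, and invoke Lemma~\ref{useful}. The paper dismisses the density step as ``easily seen''; your computation with the involution formula $h^*(s)=\Delta(s^{-1})\alpha_s(h(s^{-1})^*)$ is precisely the correct way to fill it in.
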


 \begin{proof}
 From Definition~\ref{reddefn} we have
 \[
 \A \rtimes_{\alpha}^r \G \equiv \A \rtimes_{\cenv (\A ) , \alpha}^r \G
 \subseteq \cenv (\A ) \rtimes_{\alpha}^r \G
 \]
 Furthermore, since the elementary tensors are dense in $\C_c(\G, \A)$, it is easily seen that
 \[
 \S \big( \A \rtimes_{\cenv (\A ) , \alpha}^r \G \big) \simeq \cenv (\A ) \rtimes_{\alpha}^r \G.
 \]
 Hence the conclusion follows from Lemma~\ref{useful}.
 \end{proof}

 The case of the full crossed product of a Dirichlet operator algebra requires more work.

In what follows, if $(\A, \G, \alpha)$ is a dynamical system and $\A \subseteq \S\subseteq \cenv(\A)$ a unital operator system left invariant by the action of $\G$, then a covariant representation of $(\S, \G, \alpha)$ consists of a Hilbert space $\H$, a unitary representation $\phist: \G \rightarrow B(\H)$ and a completely contractive map  $\pi: \S \rightarrow B(\H)$ satisfying
$ \phist(s) \pi(a)=  \pi(\alpha_s(a)) \phist(s)$, $\mbox{ for all } s \in \G , a \in \S$.

 \begin{lemma} \label{forward}
 Let $(\A, \G, \alpha)$ be a unital dynamical system and let $(\S(\A), \G, \alpha)$ be the restriction of the natural extension $( \cenv(\A), \G, \alpha)$ on $\S(\A) = \overline{\A + \A^*} \subseteq \cenv(\A)$. Then any covariant representation $(\pi, \phist, \H)$ of $(\A, \G, \alpha)$ admits an extension to a covariant representation $(\tilde{\pi}, \phist, \H)$ of $(\S(\A), \G, \alpha)$.
 \end{lemma}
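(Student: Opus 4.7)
The plan is to extend $\pi$ on the dense subspace $\A + \A^{*} \subseteq \S(\A)$ by the natural formula $\tpi(a + b^{*}) = \pi(a) + \pi(b)^{*}$, and then to verify that the pair $(\tpi, \phist, \H)$ continues to satisfy the covariance identity on $\S(\A)$. Since we leave the unitary representation $\phist$ unchanged, the crux is showing both that $\tpi$ is well-defined and completely contractive on $\S(\A)$, and that the covariance relation propagates from $\A$ to $\A^{*}$.

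First I would handle well-definedness and complete contractivity. Since $(\A, \G, \alpha)$ is unital, $\pi \colon \A \to B(\H)$ is a unital completely contractive homomorphism of a unital operator algebra. By Arveson's extension theorem (e.g.\ via a maximal dilation in the sense of \cite{DrMc} followed by Stinespring on $\cenv(\A)$ and compression back to $\H$), there exists a unital completely positive extension $\pi' \colon \cenv(\A) \to B(\H)$ with $\pi'|_{\A} = \pi$. For $x = a + b^{*} \in \A + \A^{*}$ one has $\pi'(x) = \pi(a) + \pi'(b)^{*} = \pi(a) + \pi(b)^{*}$, which depends only on $x$; hence the formula $\tpi(a + b^{*}) = \pi(a) + \pi(b)^{*}$ is unambiguous on $\A + \A^{*}$. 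Setting $\tpi := \pi'|_{\S(\A)}$ then defines a unital completely positive (in particular completely contractive) extension of $\pi$ to $\S(\A)$.

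Next I would check covariance. For $a \in \A$ the identity $\phist(s)\tpi(a) = \tpi(\alpha_{s}(a))\phist(s)$ is exactly the hypothesis on $(\pi, \phist)$. For $a^{*} \in \A^{*}$, taking adjoints of $\phist(s)\pi(a) = \pi(\alpha_{s}(a))\phist(s)$ gives $\pi(a)^{*}\phist(s)^{*} = \phist(s)^{*}\pi(\alpha_{s}(a))^{*}$, and multiplying on the left and right by the unitary $\phist(s)$ yields
\[
\phist(s)\pi(a)^{*} = \pi(\alpha_{s}(a))^{*}\phist(s).
\]
Because $\alpha_{s}$ on $\cenv(\A)$ is a $*$-automorphism that leaves $\S(\A)$ invariant, $\alpha_{s}(a^{*}) = \alpha_{s}(a)^{*}$, and since $\tpi(a^{*}) = \pi(a)^{*}$, this reads $\phist(s)\tpi(a^{*}) = \tpi(\alpha_{s}(a^{*}))\phist(s)$. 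By linearity the covariance identity holds on $\A + \A^{*}$, and then by norm continuity of $\tpi$, $\phist$, and $\alpha_{s}$ it extends to all of $\S(\A)$, completing the proof.

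There is no serious obstacle here; the only care needed is in confirming that the ambient $\alpha$-action on $\cenv(\A)$ (granted by Lemma~\ref{delicate}) is indeed $*$-preserving and stabilizes $\S(\A)$, both of which are immediate since $\alpha_{s}$ is a $*$-automorphism of $\cenv(\A)$ extending $\alpha_{s}$ on $\A$.
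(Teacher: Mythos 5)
Your argument is correct and follows essentially the same route as the paper: the same formula $\tilde{\pi}(a+b^*)=\pi(a)+\pi(b)^*$, and the same adjoint manipulation of the covariance identity (the paper substitutes $s\mapsto s^{-1}$ and $a=\alpha_s(b)$, which is algebraically equivalent to your conjugation by the unitary $\phist(s)$). The only difference is cosmetic: for well-definedness and complete contractivity the paper simply cites \cite[Proposition 3.5]{Paulsen}, whereas you route through a unital completely positive extension of $\pi$ to all of $\cenv(\A)$ via a maximal dilation, which is heavier machinery than needed but valid.
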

 \begin {proof} By \cite[Proposition 3.5]{Paulsen} the map
 \[
 \tilde{\pi} :  \A + \A^* \longrightarrow B(\H);\,\, a+b^*\longmapsto \pi(a)+\pi(b)^*, \quad a, b\in \A
 \]
 is well defined and extends to a completely contractive map on $\S(\A)$. By taking adjoints in the covariance equation
 \[
\phist(s^{-1}) \pi(a)=  \pi(\alpha_s^{-1}(a)) \phist(s^{-1})
\]
and then setting $a=\alpha_s(b)$, we obtain $\phist(s) \pi(b)^*=  \pi(\alpha_s(b))^* \phist(s)$, i.e.,
\[
\tilde{\pi}(b^*)\phist(s)= \phist(s)\tilde{\pi}(\alpha_s(b)^*)= \phist(s)\tilde{\pi}(\alpha_s(b^*)) ,
\]
and the conclusion follows.
 \end{proof}

 \begin{theorem} \label{Dirichletenv}
 Let $(\A, \G, \alpha)$ be a dynamical system and assume that $\A$ is a Dirichlet operator algebra. Then $\A \rtimes_{\alpha} \G$ is a Dirichlet operator algebra and
 \[
 \cenv(\A \rtimes_{\alpha} \G) \simeq \cenv(\A) \rtimes_{\alpha} \G.
 \]
 Furthermore, $\A \rtimes_{\alpha} \G \simeq \A \rtimes_{\cenv(\A), \alpha} \G$.
 \end{theorem}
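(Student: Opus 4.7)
The plan is to reduce the theorem to two separate claims: first, that the relative crossed product $\A \rtimes_{\cenv(\A), \alpha} \G$ already is Dirichlet with $\ca$-envelope $\cenv(\A) \cpf$, and second, that the canonical surjection $\A \cpf \to \A \rtimes_{\cenv(\A), \alpha} \G$ is a complete isometry. Chaining these two claims together yields all three assertions of the theorem.

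For the first claim I would argue exactly as in Theorem~\ref{Dirichletenvr}: since the linear span of elementary tensors $z \otimes a$ with $z \in C_c(\G)$ and $a \in \A$ is dense in $C_c(\G, \A)$, and since the involution on $\cenv(\A) \cpf$ acts on such tensors by the formula $(z \otimes a)^*(s) = \Delta(s^{-1}) \overline{z(s^{-1})} \alpha_s(a^*)$, the Dirichlet hypothesis $\overline{\A + \A^*} = \cenv(\A)$ forces $\overline{C_c(\G, \A) + C_c(\G, \A)^*}$ to be dense in $C_c(\G, \cenv(\A))$ and hence in $\cenv(\A) \cpf$. Thus $\S\bigl(\A \rtimes_{\cenv(\A), \alpha} \G\bigr) = \cenv(\A) \cpf$, and Lemma~\ref{useful} gives the $\ca$-envelope identification.

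For the second claim I would show that for every completely contractive covariant representation $(\pi, u, \H)$ of $(\A, \G, \alpha)$, the integrated form $\pi \rtimes u$ is dominated in norm by the norm on $\A \rtimes_{\cenv(\A), \alpha} \G$. By Lemma~\ref{forward} combined with the Dirichlet property, $\pi$ extends to a unital completely contractive (hence UCP) covariant map $\tilde\pi \colon \cenv(\A) \to B(\H)$ with the same unitary $u$. I would then perform an \emph{equivariant Stinespring dilation}: apply ordinary Stinespring to $\tilde\pi$ to obtain a $*$-representation $\sigma \colon \cenv(\A) \to B(\K)$ with $\K \supseteq \H$, $\tilde\pi = P_\H \sigma(\cdot)|_\H$, and $\overline{\sigma(\cenv(\A))\H} = \K$; then define $v(s)$ on the dense subspace $\sigma(\cenv(\A))\H$ by $v(s)\bigl(\sigma(c)\xi\bigr) = \sigma(\alpha_s(c))u(s)\xi$. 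A direct calculation using the covariance of $(\tilde\pi, u)$ together with $\tilde\pi(c^*c) \geq \tilde\pi(c)^*\tilde\pi(c)$ shows $v(s)$ is well-defined and isometric, hence extends to a unitary on $\K$; standard checks show $v$ is a strongly continuous unitary representation with $v(s)\sigma(c) = \sigma(\alpha_s(c))v(s)$ and $v|_\H = u$. Then $\sigma \rtimes v$ is a genuine $*$-representation of $\cenv(\A) \cpf$ which compresses on $\H$ to $\pi \rtimes u$, giving $\|(\pi \rtimes u)(f)\| \leq \|(\sigma \rtimes v)(f)\| \leq \|f\|_{\A \rtimes_{\cenv(\A), \alpha} \G}$ for $f \in C_c(\G, \A)$, uniformly in matrix amplifications. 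Combined with the canonical quotient map $\A \cpf \twoheadrightarrow \A \rtimes_{\cenv(\A), \alpha} \G$ coming from $\cmax(\A) \twoheadrightarrow \cenv(\A)$, this yields the desired complete isometry.

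The main obstacle I expect is the equivariant Stinespring construction, in particular verifying that $v(s)$ is well-defined and isometric on the dense subspace and extends to a unitary. The well-definedness and isometry both hinge on a single computation that uses the covariance identity $u(s)\tilde\pi(c) = \tilde\pi(\alpha_s(c))u(s)$ to move $u(s)$ past $\tilde\pi$, the fact that $\H$ is coinvariant under $\sigma$ by Stinespring, and the multiplicativity $\sigma(c_j^*c_i) = \sigma(c_j)^*\sigma(c_i)$. Everything else in the proof is routine bookkeeping, and once the dilated pair $(\sigma, v, \K)$ is produced, the conclusion $\A \cpf \simeq \A \rtimes_{\cenv(\A), \alpha} \G$ follows together with the Dirichlet property and the identification of $\cenv(\A \cpf)$.
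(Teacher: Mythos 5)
Your proposal is correct and follows essentially the same route as the paper: both proofs extend the covariant representation $(\pi,u)$ to a unital completely positive covariant map on $\S(\A)=\cenv(\A)$ via Lemma~\ref{forward}, perform an equivariant Stinespring dilation (your $v(s)\sigma(c)\xi=\sigma(\alpha_s(c))u(s)\xi$ on $\sigma(\cenv(\A))\H$ is exactly the paper's $\hat{\phist}(s)(c\otimes\xi)=\alpha_s(c)\otimes u(s)\xi$ on $\cenv(\A)\otimes\H/\N$), deduce that the $\cmax(\A)$- and $\cenv(\A)$-relative norms agree on $C_c(\G,\A)$, and finish with Lemma~\ref{useful}. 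The only point you omit is the reduction of the non-unital case to the unital one (Lemma~\ref{forward} and the extension to $\S(\A)$ require a unit), which the paper handles by unitizing and invoking Lemma~\ref{transfer}.
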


 \begin{proof}
 We deal first with the unital case. We will show that the map
 \begin{equation} \label{contraction}
 \cenv(\A) \rtimes_{\alpha} \G \ni f \longmapsto f \in \cmax (A ) \rtimes_{\alpha} \G, \quad f \in C_c(\G, \A)
 \end{equation}
is a complete contraction (and therefore a complete isometry). Hence $\A \rtimes_{\alpha} \G$ embeds completely isometrically in $ \cenv(\A) \rtimes_{\alpha} \G$ via a map that maps generators to generators. Lemma~\ref{useful} then implies the conclusion.

  Let $(\pi, \phist, \H)$ be a covariant representation of $(\A, \G, \alpha)$. By the previous lemma, it admits an extension to a covariant representation $(\tilde{\pi}, \phist, \H)$ of $(\S(\A)=\cenv(\A), \G, \alpha)$. Note however that the map $\tilde{\pi}$ may not be multiplicative.

 We now claim that $(\tilde{\pi}, \phist, \H)$ admits a covariant Stinespring dilation, $(\hat{\pi}, \hat{\phist}, \K)$, so that $\hat{\phist}(\G)$ reduces $\H$.

The process for constructing that dilation is standard \cite{Jorg, PaulsExt}. Indeed start with the algebraic tensor product $\cenv(\A) \otimes \H$ with the positive semi-definite bilinear form coming from setting
\[
\left< a\otimes x, b \otimes y \right>= \sca{\tilde{\pi}(b^*a)x,y}
\]
for $a,b \in \A$ and $x,y \in \H$. If $\N = \{f \in \cenv(\A)\otimes \H\mid \sca{f, f} = 0\}$ then $\K_0 \equiv \cenv(\A) \otimes \H\slash \N$ becomes a pre Hilbert space, whose completion $\K$ is of dimension less than $\card(\A \times \G)$. The original Hilbert space is identified as a subspace of $\K$ via the isometry $\H \ni x \mapsto 1\otimes x \in \K$; let $P$ be the orthogonal projection onto (that copy of) $\H$

On $\K_0$ we define maps $\hat{\pi}(a) $, $a \in \A$, and $\hat{\phist}(s)$ by \[
\hat{\pi}(a)\Big(\sum a_i \otimes x_i\Big) = \sum (a a_i)\otimes x_i
 \]
  and
  \[
  \hat{\phist}(s)\Big(\sum a_i \otimes x_i\Big)=\sum \alpha_s(a_i) \otimes \phist(s)x_i
\]
respectively. We leave it to the reader to verify that $\hat{\pi}$ is well defined and bounded; this is done as in \cite[page 45]{Paulsen}. Note that $\hat{\phist}(\G)$ leaves $ \H\subseteq \K$ invariant and so $P$ commutes with $\hat{\phist}(\G)$. Furthermore if $a, b \in \A$ and $x , y \in \H$, then the calculation
\begin{align*}
\sca{ \hat{\phist}(s)\big(a\otimes x\big) , \hat{\phist}(s)\big(b \otimes y\big)}&= \sca{\alpha_s(a)\otimes \phist(s)x , \alpha_s(b)\otimes \phist(s) y}\\
&= \sca{ \tilde{\pi}\big(\alpha_s(b^*a)\big)\phist(s)x, \phist(s)y } \\
&=\sca{\tilde{\pi}(b^*a)x, y}= \sca{a\otimes x , b \otimes y}
\end{align*}
shows that $\hat{\phist}(s)$ is an isometry with inverse $\hat{\phist}(s^{-1})$, $s \in \G$, and thus a unitary. The strong continuity of $s \mapsto \hat{\phist}(s)$ is easy to verify.

Returning to (\ref{contraction}), given $f \in C_c(\G, \A)$, we have
\begin{align*}
\big\| (\pi\rtimes\phist) \big( f \big) \big\|&=\Big\|\int \pi\big(f(s)\big)\phist(s) d\mu(s) \Big\| \\
&= \Big \|\int P\hat{\pi}\big(f(s)\big)P\hat{\phist}(s)P d\mu(s)\Big \|\\
&=\Big\|\int P\hat{\pi}\big(f(s)\big)\hat{\phist}(s)P d \mu (s)\Big\| \\
& = \Big\|P \Big( \int \hat{\pi}\big(f(s)\big)\hat{\phist}(s) d \mu (s)\Big) P\Big\| \\
& \leq \big\| (\hat{\pi}\rtimes\hat{\phist})(f) \big\| \leq \|f\|
\end{align*}
where the last norm is calculated in $\cenv(\A) \cpf$. Since the covariant representation $(\pi, \phist, \H)$ of $(\A, \G, \alpha)$ is arbitrary, the map in (\ref{contraction}) is a contraction. A similar calculation holds at the matricial level and the conclusion follows.

Assume now that $\A$ is not unital. Since its unitization $\A^1$ is Dirichlet, the unital case above applies thus showing that $$\A^1 \rtimes_{\alpha} \G\equiv \A^1 \rtimes_{\cmax(\A)^1, \alpha} \G \simeq \A^1 \rtimes_{\cenv(\A)^1, \alpha} \G.$$ An application of Lemma~\ref{transfer} shows now that 
\[
\A \rtimes_{\cmax(\A), \alpha} \G \simeq \A \rtimes_{\cenv(\A), \alpha} \G .
\]
From this it is immediate that 
\[
 \cenv(\A \rtimes_{\alpha} \G) \simeq \cenv\big(\A \rtimes_{\cenv(\A), \alpha} \G\big) \simeq \cenv(\A) \rtimes_{\alpha} \G.
 \]
 since $\A \rtimes_{\cenv(\A), \alpha} \G \subseteq \cenv(\A)\cpf$ is Dirichlet.
 \end{proof}

In \cite{DKDoc}, Davidson and Katsoulis introduced a new class of operator algebras.

\begin{definition}
Let $\B$ be an operator algebra and let $\cenv(\B) \simeq (\C, j)$.  Then $\B$ is said to be \textit{semi-Dirichlet} iff $$j(\B)^*j(\B) \subseteq  \overline{j(\B) +j(\B)^*}\equiv \S\big(j(\B)\big).$$
\end{definition}

The name is justified by the fact that both $\B$ and $\B^*$ are semi-Dirichlet if and only if $\B$ is Dirichlet  \cite[Proposition 4.2]{DKDoc}.
As in the Dirichlet case, where $\S(\B)$ being a C$^*$-algebra implied that $\B$ was Dirichlet, we remove the necessity of working in the C$^*$-envelope.

\begin{lemma}
Let $\B$ be an operator algebra and let $(\C, j)$ be a $\ca$-cover of $\B$. If 
\[
j(\B)^*j(\B) \subseteq \S\big(j(\B)\big) \subseteq \C,
\]
then $\B$ is semi-Dirichlet.
\end{lemma}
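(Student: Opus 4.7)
The plan is to leverage the universal property of the $\ca$-envelope: any $\ca$-cover surjects onto $\cenv(\B)$ via a $*$-homomorphism that intertwines the canonical embeddings. Concretely, let $\cenv(\B) \simeq (\C_e, j_e)$. By the defining property of the $\ca$-envelope recalled in Chapter~\ref{prel}, there exists a $*$-epimorphism $\phi: \C \rightarrow \C_e$ satisfying $\phi \circ j = j_e$. (In the non-unital case, one first invokes this property for $\B^1$ and restricts.) I will simply push the hypothesis forward through $\phi$.

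Since $\phi$ is a $*$-homomorphism, it is in particular linear and continuous, hence
\[
\phi\bigl(\S(j(\B))\bigr) = \phi\bigl(\overline{j(\B) + j(\B)^*}\bigr) \subseteq \overline{\phi(j(\B)) + \phi(j(\B))^*} = \overline{j_e(\B) + j_e(\B)^*} = \S(j_e(\B)).
\]
Likewise,
\[
\phi\bigl(j(\B)^* j(\B)\bigr) = \phi(j(\B))^* \phi(j(\B)) = j_e(\B)^* j_e(\B).
\]
Applying $\phi$ to both sides of the assumed inclusion $j(\B)^* j(\B) \subseteq \S(j(\B))$ and combining the two displays above yields
\[
j_e(\B)^* j_e(\B) \subseteq \S(j_e(\B)),
\]
which is precisely the semi-Dirichlet property for $\B$.

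There is no substantive obstacle here; the proof is a direct application of the universal property of $\cenv(\B)$ together with the fact that $*$-homomorphisms respect adjoints and closed linear spans. The only minor point to watch is that the definition of semi-Dirichlet is formulated against the $\ca$-envelope, so one must invoke the existence of the canonical quotient $\phi$ rather than work within $\C$ itself; this is the content of the classical Shilov-ideal characterization of $\cenv(\B)$ already cited in Chapter~\ref{prel}.
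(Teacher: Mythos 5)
Your proof is correct and follows essentially the same route as the paper's: both apply the canonical $*$-epimorphism $\phi:\C\rightarrow\cenv(\B)$ guaranteed by the universal property of the $\ca$-envelope and push the inclusion $j(\B)^*j(\B)\subseteq\S(j(\B))$ forward, using that $\phi$ respects adjoints and (by continuity) closed linear spans. You simply spell out the intermediate steps slightly more explicitly than the paper does.
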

\begin{proof}
Identify $\B$ with $j(\B)$  and let $\phi : \C \rightarrow \cenv(\B)$ be the surjective  $*$-homomorphism that maps $\B$ completely isometrically. It is immediate that
\[
\phi(\B)^*\phi(\B) \subseteq \phi(\S(\B)) \subseteq \S(\phi(\B)) \subseteq \cenv(\B).
\] 
Therefore, $\B$ is semi-Dirichlet.
\end{proof}

\begin{theorem} \label{generatingSemiDir}
 Let $(\A, \G, \alpha)$ be a unital dynamical system. If $\A$ is a semi-Dirichlet operator algebra then so is $\A \rtimes_\alpha^r \G$.
 \end{theorem}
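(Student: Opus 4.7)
The plan is to invoke the preceding lemma with the $\ca$-cover $\cenv(\A)\rtimes_\alpha^r\G$ of $\A\rtimes_\alpha^r\G$, so I would only need to establish the containment $(\A\rtimes_\alpha^r\G)^*(\A\rtimes_\alpha^r\G)\subseteq\S(\A\rtimes_\alpha^r\G)$ inside $\cenv(\A)\rtimes_\alpha^r\G$, where $\S(\A\rtimes_\alpha^r\G)=\overline{(\A\rtimes_\alpha^r\G)+(\A\rtimes_\alpha^r\G)^*}$. Since $C_c(\G,\A)$ is a dense subalgebra of $\A\rtimes_\alpha^r\G$ and $\S(\A\rtimes_\alpha^r\G)$ is norm closed, this reduces to showing that $f^*g\in\S(\A\rtimes_\alpha^r\G)$ for all $f,g\in C_c(\G,\A)$.

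A direct computation with the involution $f^*(s)=\Delta(s^{-1})\alpha_s(f(s^{-1})^*)$ and convolution gives
\begin{equation*}
(f^*g)(t)=\int\Delta(s^{-1})\alpha_s\bigl(f(s^{-1})^*g(s^{-1}t)\bigr)\,d\mu(s).
\end{equation*}
Since $\A$ is semi-Dirichlet, $f(s^{-1})^*g(s^{-1}t)\in\S(\A)$; since each $\alpha_s$ preserves $\S(\A)$ (it preserves $\A$ and $\A^*$ inside $\cenv(\A)$); and since $\S(\A)$ is norm closed, the integral itself lies in $\S(\A)$. Hence the convolution product $f^*g$ is actually an element of $C_c(\G,\S(\A))$, and it remains to prove the inclusion $C_c(\G,\S(\A))\subseteq\S(\A\rtimes_\alpha^r\G)$.

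The crucial point, which I expect to be the main technical obstacle, is that for $\phi\in C_c(\G)$ and $d\in\A$ the ``pointwise adjoint'' function $s\mapsto\phi(s)d^*$ (viewed as an element of $\cenv(\A)\rtimes_\alpha^r\G$) already belongs to $(\A\rtimes_\alpha^r\G)^*$. Explicitly, setting $e(u)=\Delta(u^{-1})\overline{\phi(u^{-1})}\alpha_u(d)$ produces an element $e\in C_c(\G,\A)$ (using $\alpha$-invariance of $\A$ and continuity of $\Delta$, $\alpha$, $\phi$), and a short calculation verifies that $e^*(s)=\phi(s)d^*$ as desired. Combining this with the obvious fact that $\phi\otimes c\in\A\rtimes_\alpha^r\G$ for $c\in\A$, and using the bound $\|\phi\otimes x\|_{\mathrm{red}}\leq\|\phi\|_1\|x\|$ to pass to norm limits, one obtains $\phi\otimes a\in\S(\A\rtimes_\alpha^r\G)$ for every $a\in\S(\A)$.

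To finish, given $h\in C_c(\G,\S(\A))$ with compact support $K$, I would approximate $h$ by finite sums of such elementary tensors via a partition of unity: for $\epsilon>0$, use regularity of Haar measure to select a precompact open $V\supseteq K$ with $\mu(V\setminus K)<\epsilon$, cover $K$ by finitely many $U_i\subseteq V$ on each of which $h$ oscillates by less than $\epsilon$, take $a_i\in h(U_i)$, and form $\sum_i\phi_i\otimes a_i$ for a partition of unity $\{\phi_i\}$ subordinate to $\{U_i\}$ with $\sum_i\phi_i=1$ on $K$. This sum lies in $\S(\A\rtimes_\alpha^r\G)$ by the previous paragraph, and approximates $h$ in the $L^1$-norm with error of order $\epsilon(\mu(K)+\|h\|_\infty)$; since the reduced crossed product norm is dominated by the $L^1$-norm, the closure $h\in\S(\A\rtimes_\alpha^r\G)$ follows, which completes the proof.
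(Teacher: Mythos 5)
Your proof is correct, and it reaches the conclusion by a route that is organized differently from the one in the monograph. Both arguments reduce, via the preceding lemma on $\ca$-covers, to showing that $f^*g\in\S(\A\cpr)$ for $f,g$ in the dense subalgebra $C_c(\G,\A)$ of $\A\rtimes^r_{\cenv(\A),\alpha}\G$, and both ultimately rest on the same convolution computation together with the approximation $a^*b=\lim_n(c_n^*+d_n)$ supplied by the semi-Dirichlet hypothesis. The paper works only with elementary tensors $z\otimes a$, $w\otimes b$ and exhibits explicit approximants
\[
f_n=(z\otimes c_n)^*(w_0\otimes 1)^*+(z_0\otimes 1)(w\otimes d_n)\in C_c(\G,\A)^*+C_c(\G,\A),
\]
which are shown to converge to $(z\otimes a)^*(w\otimes b)$ in the inductive limit topology; note that this construction uses the unit of $\A$ to form $z_0\otimes 1$ and $w_0\otimes 1$. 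You instead factor the argument through the intermediate containment $C_c(\G,\S(\A))\subseteq\S(\A\cpr)$, whose only nontrivial ingredient is your observation that the pointwise adjoint $\phi\otimes d^*$, $d\in\A$, is literally $e^*$ for the element $e(u)=\Delta(u^{-1})\overline{\phi(u^{-1})}\alpha_u(d)$ of $C_c(\G,\A)$ --- a computation that checks out against the involution $f^*(s)=\Delta(s^{-1})\alpha_s\big(f(s^{-1})^*\big)$ --- followed by the standard partition-of-unity density of elementary tensors in the $L^1$-norm, which dominates the reduced crossed product norm. This buys you two things: a reusable lemma that treats arbitrary $f,g\in C_c(\G,\A)$ at once rather than elementary tensors plus a linearity and density step, and an argument that nowhere invokes the unit of $\A$, so it applies verbatim to approximately unital systems (compare the corollary following the theorem, where the paper asserts the analogous statement for $\A\rtimes_{\cenv(\A),\alpha}\G$ ``with a proof similar to'' this one). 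The paper's version is more self-contained at the level of a single displayed approximation, but as written it is tied to the unital hypothesis.
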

 \begin{proof} 
 
 Let $z, w \in C_c(\G)$ with $\supp z =K$ and $\supp w = L$.  Let $a, b\in \A\subseteq \cenv(\A)$. Since $\A$ is semi-Dirichlet, there exist sequences $\{c_n \}_{n=1}^{\infty}$, $\{ d_n \}_{n=1}^{\infty}$ in $\A$ so that 
 \[
 a ^*b=\lim_n (c_n^*+d_n)
 \]
 Let 
 \[
 f_n \equiv  (z\otimes c_n)^*  (w_0\otimes 1)^* +(z_0\otimes 1)  (w\otimes d_n), \quad n \in \bbN,
 \]
 where,
 \begin{align*}
 z_0(s)&=\Delta(s^{-1})\overline{z(s^{-1})}\\
 w_0(s)&=\Delta(s)\overline{w(s^{-1})}, \quad \quad s \in \G.
 \end{align*}
 Clearly $f_n \in C_c(\G, \A)^* + C_c(\G, \A)$. We will show that $\{ f_n \}_{n =1}^{\infty}$ approximates $(z\otimes a)^*(w\otimes b)$ in the crossed product norm. 
 
 Note that
 \begin{align*} 
\big((z\otimes a)^* (w\otimes b)\big)(s)&=\int \Delta(r^{-1})\overline{z(r^{-1})}\alpha_{r}(a^*)w(r^{-1}s)\alpha_{r}(b)d\mu(r) \\
& =\int \Delta(r^{-1})\overline{z(r^{-1})}w(r^{-1}s)\alpha_{r}(a^*b)d\mu(r).
   \end{align*}
   On the other hand,
   \begin{equation}
   f_n(s) = \int \Delta(r^{-1}) \overline{z(r^{-1})}w(r^{-1}s)\alpha_{r}(c_n^* +d_n)d\mu(r)
   \end{equation}
   and so
    \[
 \big\|f_n(s) - \big((z\otimes a)^*(w\otimes b)\big)(s)\big\| \leq \| c_n^* +d_n - a^*b\| \|z\|_{\infty}\|w\|_{\infty}\mu(K^{-1}),
 \]
  for any $s \in \G$. Furthermore, $\supp f_n \subseteq   K^{-1}L$, $n \in \bbN$, which is a compact set. Hence, $\{f_n\}_{n=1}^{\infty}$ converges to $(z\otimes a)^*(w\otimes b)$ in the inductive limit topology \cite[Remark 1.86]{Will} and so in the $L^{1}$-norm. This suffices to prove the desired approximation.
  
 We have shown that  
 \[
(z\otimes a)^*  (w\otimes b) \in \overline{C_c(\G, \A)^* + C_c(\G, \A)}.
 \]
  Similarly, 
 \[
 \Big(\sum_{i=1}^n z_i\otimes a_i\Big)^*\Big(\sum_{j=1}^m w_j\otimes b_j\Big) \in \overline{C_c(\G, \A)^* + C_c(\G, \A)}.
 \]
Since the linear span of the elementary tensors is dense in $C_c(\G, \A)$ \cite[Lemma 1.87]{Will} we have
\[
\big(\A \rtimes_{\cenv(\A), \alpha}^r \G\big)^*\big(\A \rtimes_{\cenv(\A), \alpha}^r \G\big) \subset \S(\A \rtimes_{\cenv(\A), \alpha}^r \G).
\]
By the previous lemma, $\A \rtimes_\alpha^r \G$ is semi-Dirichlet.
  \end{proof}

Outside of the amenable case it is not known whether the full crossed product preserves the semi-Dirichlet property. Nevertheless, the following holds for arbitrary locally compact groups, with a proof similar to that of Theorem~\ref{generatingSemiDir}.

\begin{corollary}
Let $(\A, \G, \alpha)$ be a unital dynamical system. If $\A$ is a semi-Dirichlet operator algebra then so is $\A \rtimes_{\cenv (\A), \alpha} \G$.
\end{corollary}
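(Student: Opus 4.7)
The plan is to mimic the proof of Theorem~\ref{generatingSemiDir} verbatim, with the only substantive change being a replacement of the reduced-norm control by the full-norm control. Recall that an earlier proposition in this chapter establishes that $\cenv(\A) \rtimes_\alpha \G$ is a $\ca$-cover of $\A \rtimes_{\cenv(\A), \alpha} \G$, so it suffices to verify the inclusion
\[
\big(\A \rtimes_{\cenv(\A), \alpha} \G\big)^*\big(\A \rtimes_{\cenv(\A), \alpha} \G\big) \subseteq \S\big(\A \rtimes_{\cenv(\A), \alpha} \G\big)
\]
inside $\cenv(\A) \rtimes_\alpha \G$, and then invoke the lemma stated just before Theorem~\ref{generatingSemiDir}.

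First I would take elementary tensors $z\otimes a, w\otimes b$ with $a,b\in \A$ and $z,w\in C_c(\G)$. Using the semi-Dirichlet property of $\A$ inside $\cenv(\A)$, write $a^*b=\lim_n (c_n^*+d_n)$ with $c_n,d_n\in \A$. I would then define, exactly as in the reduced case,
\[
f_n \equiv (z\otimes c_n)^*(w_0\otimes 1)^* + (z_0\otimes 1)(w\otimes d_n),
\]
with $z_0(s)=\Delta(s^{-1})\overline{z(s^{-1})}$ and $w_0(s)=\Delta(s)\overline{w(s^{-1})}$, so that $f_n\in C_c(\G,\A)^*+C_c(\G,\A)$. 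The same convolution computation as in Theorem~\ref{generatingSemiDir} shows that $f_n$ and $(z\otimes a)^*(w\otimes b)$ are supported in a common compact set $K^{-1}L$ and satisfy a uniform pointwise estimate $\|f_n(s)-((z\otimes a)^*(w\otimes b))(s)\|\leq \|c_n^*+d_n-a^*b\|\,\|z\|_\infty\|w\|_\infty\mu(K^{-1})$, so the convergence is in the inductive limit topology and hence in the $L^1$-norm on $C_c(\G,\cenv(\A))$.

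The one point where the argument diverges from Theorem~\ref{generatingSemiDir} is the passage from $L^1$-convergence to convergence in the crossed product. In the reduced case this is automatic through the regular representation; here I would instead invoke the standard fact that the full crossed product norm on $C_c(\G,\cenv(\A))$ is dominated by the $L^1$-norm (this is how $\cenv(\A)\rtimes_\alpha \G$ is constructed as a completion in the first place, and it applies equally well to any $\alpha$-admissible $\ca$-cover). Consequently $f_n\to (z\otimes a)^*(w\otimes b)$ in $\cenv(\A)\rtimes_\alpha \G$, and this limit lies in $\S\big(\A\rtimes_{\cenv(\A),\alpha}\G\big)$.

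Finally, since linear combinations of elementary tensors are dense in $C_c(\G,\A)$ (by \cite[Lemma 1.87]{Will}) and $C_c(\G,\A)$ generates $\A\rtimes_{\cenv(\A),\alpha}\G$, the inclusion above propagates to arbitrary products, and the lemma preceding Theorem~\ref{generatingSemiDir} delivers the semi-Dirichlet property. I do not anticipate a real obstacle here: the entire proof is a transcription of the reduced-crossed-product argument once the trivial inequality $\|\,\cdot\,\|_{\rtimes}\leq \|\,\cdot\,\|_1$ is observed. The reason one must fix the $\ca$-cover to be $\cenv(\A)$ (rather than the full crossed product $\A\cpf\simeq \A\rtimes_{\cmax(\A),\alpha}\G$) is precisely that outside the amenable case we do not have the identification $\A\cpf \simeq \A\rtimes_{\cenv(\A),\alpha}\G$ at our disposal, which is also why the hypothesis is stated as in the corollary.
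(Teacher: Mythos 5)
Your proof is correct and is exactly the argument the paper intends: the monograph itself only remarks that this corollary holds ``with a proof similar to that of Theorem~\ref{generatingSemiDir}'', and your transcription---the identical algebraic computation and inductive-limit approximation in $C_c(\G,\cenv(\A))$, with the observation that the full crossed product norm is dominated by the $L^1$-norm replacing the reduced-norm control---is precisely that similar proof. The appeals to the $\ca$-cover proposition and to the lemma preceding Theorem~\ref{generatingSemiDir} are the right ones, and there is no gap.
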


We have built enough machinery now to present our first applications. It was an open question in \cite{DKDoc} whether all semi-Dirichlet algebras are tensor algebras of $\ca$-correspondences. Apparently, any Dirichlet algebra that fails to be a tensor algebra would serve as a counterexample to the question of Davidson and Katsoulis but no such examples were available at that time. It was Kakariadis in \cite{Kak} that produced the first example of a Dirichlet operator algebra which is not \textit{completely isometrically isomorphic} to the tensor algebra of a $\ca$-correspondence.

In what follows we use the crossed product of operator algebras to produce new examples of Dirichlet and semi-Dirichlet algebras which are not tensor algebras. Actually our algebras are not isomorphic to tensor algebras even by \textit{isometric} isomorphisms, thus improving Kakariadis' result. These are our first non-trivial examples of crossed products of operator algebras, with more to follow in later chapters. But first we have to resolve a subtle issue regarding the diagonal of a crossed product.

\begin{definition} If $\A$ is an operator algebra then the diagonal of $\A$ is the largest $\ca$-algebra contained in $\A$.
\end{definition}

 If $\A$ is contained in a $\ca$-algebra $\C$, then the diagonal of $\A$ is simply equal to $\A \cap  \A^*\subseteq \C$. We retain that notation for the diagonal of $\A$, without making any reference to the containing $\ca$-algebra $\C$.

\begin {proposition} \label{diagonal descr}
 Let $(\A, \G, \alpha)$ be a dynamical system and assume that $\G$ is a discrete amenable group. Then,
 \begin{equation} \label{diagonal}
\A \rtimes_{\alpha} \G \cap\big( \A \rtimes_{\alpha} \G \big)^*
= \ca\Big( \Big \{ \sum_g a_g U_g \mid\ a_g \in \A \cap \A^* , g \in \G \Big\}\Big).
\end{equation}
\end{proposition}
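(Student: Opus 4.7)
My plan is to handle the two inclusions separately. For the easy inclusion $\supseteq$, I would observe that if $a_g \in \A \cap \A^*$ then the polynomial $\sum_g a_g U_g$ lies in $C_c(\G, \A) \subseteq \A \rtimes_\alpha \G$, while its adjoint equals $\sum_g \alpha_{g^{-1}}(a_g^*) U_{g^{-1}}$, again a polynomial of the same form because the extension of $\alpha_{g^{-1}}$ to $\cmax(\A)$ (provided by Lemma~\ref{delicate}) is a $*$-automorphism, so it preserves $\A$ and $\A^*$ and hence $\A \cap \A^*$. Thus such polynomials all sit in the self-adjoint, norm-closed subalgebra $\A \rtimes_\alpha \G \cap (\A \rtimes_\alpha \G)^*$, and so does the $\ca$-algebra they generate.

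For the reverse inclusion, I would fix $S \in \A \rtimes_\alpha \G \cap (\A \rtimes_\alpha \G)^*$. Amenability of $\G$ together with Theorem~\ref{r=f} identifies $\A \rtimes_\alpha \G$ with a subalgebra of $\cmax(\A) \rtimes_\alpha \G$, which lets me apply the Fourier apparatus of Section~\ref{abelian}. Since $S$ is a norm-limit of polynomials with coefficients in $\A$ and each $\Phi_g$ is continuous, $\Phi_g(S) \in \A$ for every $g \in \G$. A direct calculation on polynomials gives the identity $\Phi_g(T^*) = \alpha_g(\Phi_{g^{-1}}(T)^*)$, which extends to $S$ by continuity. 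Since $S^* \in \A \rtimes_\alpha \G$ as well, $\Phi_g(S^*) \in \A$, so $\alpha_g(\Phi_{g^{-1}}(S)^*) \in \A$, and applying $\alpha_{g^{-1}}$ yields $\Phi_g(S)^* \in \A$. Therefore $\Phi_g(S) \in \A \cap \A^*$ for every $g$.

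Finally, I would invoke Proposition~\ref{Ceasaroapprox}: for each $\epsilon > 0$ there is a finite F\o{}lner set $F_\epsilon \subseteq \G$ so that the (finite) sum
\[
T_\epsilon \equiv \sum_{g \in \G} \frac{|gF_\epsilon \cap F_\epsilon|}{|F_\epsilon|}\, \Phi_g(S)\, U_g
\]
is within $\epsilon$ of $S$ in norm. Each coefficient belongs to $\A \cap \A^*$, so $T_\epsilon$ sits in the $\ca$-algebra on the right-hand side of (\ref{diagonal}), which is norm-closed by construction; letting $\epsilon \to 0$ places $S$ there as well, finishing the argument. I do not anticipate a genuine obstacle: the one point to get right is the compatibility of the Fourier-coefficient formula with taking adjoints, together with the fact that $\alpha$ extends to $*$-automorphisms on the ambient $\ca$-cover, so that the diagonal $\A \cap \A^*$ is preserved under the action.
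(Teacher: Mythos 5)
Your argument is correct and follows essentially the same route as the paper's proof: the $\supseteq$ inclusion via the monomials $a_gU_g$ with $a_g\in\A\cap\A^*$, and the $\subseteq$ inclusion by showing $\Phi_g(S)\in\A\cap\A^*$ through approximation by polynomials of $S$ and $S^*$ and then invoking the Ces\`aro approximation of Proposition~\ref{Ceasaroapprox}. Your explicit identity $\Phi_g(T^*)=\alpha_g\big(\Phi_{g^{-1}}(T)^*\big)$ is exactly the detail the paper leaves implicit in its phrase ``approximating either $X$ or $X^*$.''
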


\begin{proof}
Consider $\A \rtimes_{\alpha} \G$ as a subset of $\cenv(\A) \rtimes_{\alpha} \G$; clearly the set $\A \rtimes_{\alpha} \G \cap (\A \rtimes_{\alpha} \G )^*$ contains all monomials $a_gU_g$, $a_g \in \A$, $g \in \G$, where $U_g \in M(\A \rtimes_{\alpha} \G)$ are the universal unitaries implementing the action of $\alpha$ on $\A$. Hence the inclusion $\supseteq$ in (\ref{diagonal}) is obvious.

Conversely let $X \in \A \rtimes_{\alpha} \G \cap (\A \rtimes_{\alpha} \G )^*$. Using an approximation argument involving finite polynomials in  $ \A \rtimes_{\alpha} \G$ approximating either $X$ or $X^*$, we see that $\Phi_g(X) \in \A \cap \A^*$, $g \in \G,$
where $\{ \Phi_g (X)\}_{g \in \G}$ denotes the Fourier coefficients of $X \in \cenv(\A) \rtimes_{\alpha} \G$. By Proposition~\ref{Ceasaroapprox}, $X$ can be approximated by finite polynomials with coefficients in $\{ \Phi_g (X)\}_{g \in \G}$ and $\{ U_g\}_{g \in \G}$, which completes the proof.
\end{proof}

It is not known to us whether an analogue of Proposition~\ref{diagonal descr} holds for the diagonal of $\A \rtimes_{\alpha} \G$, when $\G$ is not necessarily discrete and amenable.
  
  Recall that the non trivial conformal homeomorphisms of the unit disc $\bbD$ are classified as either \textit{elliptic, parabolic} or \textit{hyperbolic} depending on the nature of their fixed points. An elliptic conformal homeomorphism has only one fixed point in the interior of $\bbD$; such maps are conjugate via a M\"obius transformation to a rotation. The hyperbolic transformations have two fixed points which are both located on the boundary of $\bbD$. The parabolic transformations have only one fixed point located on the boundary of $\bbD$. (See \cite[Section 2.3]{CM} or \cite{Burckel} for a more detailed exposition and proof of these facts.)

\begin{theorem} \label{crossedtensor}
Let $\G$ be a discrete amenable group and let $\alpha: \G \rightarrow \Aut \big( \bbA(\bbD)\big)$ be a non-trivial representation. Assume that the common fixed points of the M\"obius transformations associated with $\{ \alpha_g\}_{g \in \G}$ do not form a singleton. Then $\bbA(\bbD) \rtimes_{\alpha} \G$ is a Dirichlet algebra which is not isometrically isomorphic to the tensor algebra of any $\ca$-correspondence.
\end{theorem}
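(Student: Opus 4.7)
The plan splits naturally into two parts. For the Dirichlet assertion I would recall that $\bbA(\bbD)$ is itself Dirichlet: its $\ca$-envelope is $C(\bbT)$ and $\bbA(\bbD) + \bbA(\bbD)^*$ is uniformly dense there since trigonometric polynomials are dense by Stone--Weierstrass. Theorem~\ref{Dirichletenv} then immediately yields that $\bbA(\bbD) \rtimes_\alpha \G$ is Dirichlet with $\ca$-envelope $C(\bbT) \rtimes_\alpha \G$.

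For the second assertion I would argue via the Gelfand spectrum. Identify each $\alpha_g$ with precomposition by a M\"obius transformation $\phi_g \in \Aut(\bbD)$ and let $F \subseteq \overline{\bbD}$ be the common fixed-point set of $\{\phi_g\}_{g \in \G}$; since a non-identity M\"obius map has at most two fixed points in $\overline{\bbD}$ and $\alpha$ is non-trivial, $|F| \leq 2$, and together with the hypothesis this forces $|F| \in \{0, 2\}$. Any bounded character $\psi$ of $\bbA(\bbD) \rtimes_\alpha \G$ restricts via covariance to an $\alpha$-invariant character of $\bbA(\bbD)$, necessarily evaluation at some $z_0 \in F$, while $\psi(U_g)$ must be a unimodular scalar $\chi(g)$ defining a character $\chi \in \hat\G$; conversely every pair $(z_0, \chi) \in F \times \hat\G$ integrates to a character, so the spectrum of the crossed product equals $F \times \hat\G$. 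By Proposition~\ref{diagonal descr} combined with $\bbA(\bbD) \cap \bbA(\bbD)^* = \bbC$, the diagonal of the crossed product is $\ca(\{U_g\}_{g \in \G}) \simeq C^*(\G)$, with spectrum $\hat\G$; the restriction map is the projection onto the second coordinate, so every character of the diagonal has exactly $|F|$ extensions to the crossed product.

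Assume for contradiction that $\bbA(\bbD) \rtimes_\alpha \G$ is isometrically isomorphic to a tensor algebra $\T^+_X$ of some $\ca$-correspondence $(X, \C, \phi_X)$. The isomorphism carries the diagonal of the crossed product onto the diagonal $\rho_\infty(\C) \simeq \C$ of $\T^+_X$, so the fiber structure transfers: every character of $\C$ is the restriction of exactly $|F|$ characters of $\T^+_X$. But Proposition~\ref{extension} asserts that each such fiber is non-empty and is either a singleton or of cardinality at least the continuum; since $|F| \in \{0, 2\}$ is neither, we reach the desired contradiction. The main technical hurdle I foresee is the diagonal-preservation step: this is automatic for completely isometric isomorphisms by functoriality of the $\ca$-envelope, and in our Dirichlet setting the isometric isomorphism can be extended to a $*$-isomorphism of the $\ca$-envelopes because $\bbA(\bbD) \rtimes_\alpha \G + (\bbA(\bbD) \rtimes_\alpha \G)^*$ is dense in $C(\bbT) \rtimes_\alpha \G$.
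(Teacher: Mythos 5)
Your overall strategy is the one the paper uses: the Dirichlet assertion follows from Theorem~\ref{Dirichletenv} since $\bbA(\bbD)$ is Dirichlet, and the obstruction to being a tensor algebra is obtained by counting, over a fixed character of the diagonal, the multiplicative linear functionals of the crossed product (exactly $|F|\in\{0,2\}$ of them, by the covariance relation and the fixed-point hypothesis) and playing this against Proposition~\ref{extension}, which forces every such fiber in a tensor algebra to be a singleton or of size at least the continuum. Your identification of the diagonal via Proposition~\ref{diagonal descr} and of the character fibers is correct.

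The genuine gap is in the step you yourself flag as the main hurdle: why does a merely \emph{isometric} isomorphism $\sigma:\bbA(\bbD)\rtimes_\alpha\G\to\T_X^+$ carry the diagonal onto the diagonal? Your proposed justification --- that $\sigma$ extends to a $*$-isomorphism of $\ca$-envelopes because $\bbA(\bbD)\rtimes_\alpha\G+(\bbA(\bbD)\rtimes_\alpha\G)^*$ is dense in $C(\bbT)\rtimes_\alpha\G$ --- does not work. Density of $\A+\A^*$ in $\cenv(\A)$ lets you \emph{define} a candidate map $a+b^*\mapsto\sigma(a)+\sigma(b)^*$, but without knowing that $\sigma$ is completely isometric (or at least completely positive in an appropriate sense) you cannot conclude that this map is well defined, bounded, or multiplicative on the generated $\ca$-algebra; moreover the target $\T_X^+$ is not known to be Dirichlet, so even a successful extension to the operator system $\S(\A)$ would not land in $\cenv(\T_X^+)$ in any controlled way. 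The functoriality of the $\ca$-envelope that you invoke is a statement about \emph{completely} isometric isomorphisms, and the whole point of the theorem is to rule out the weaker isometric ones. The correct tool, and the one the paper uses, is the theorem of Arazy and Solel (\cite[Corollary 2.10]{AS}; see also \cite[Proposition 3.1]{DavKatAn}) that any isometric isomorphism between operator algebras maps the diagonal onto the diagonal; this is a nontrivial Banach--Stone type result and cannot be replaced by the density argument you sketch. Once that citation is in place, the rest of your argument goes through exactly as in the paper.
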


\begin{proof}
By way of contradiction assume that there exists isometric isomorphism $\sigma: \bbA(\bbD) \rtimes_{\alpha} \G \rightarrow \T_X^+$, for some $\ca$-correspondence $(X, C)$.

By Proposition~\ref{diagonal descr} we have
\begin{equation} \label{ADdiagonal}
\bbA(\bbD) \rtimes_{\alpha} \G \cap\big( \bbA(\bbD) \rtimes_{\alpha} \G \big)^*
= \ca\Big( \Big \{ \sum_g s_g U_g \mid\ s_g \in \bbC , g \in \G \Big\}\Big)\simeq\ca(\G),
\end{equation}
where $U_g$ are the universal unitaries in $\bbA(\bbD) \rtimes_{\alpha} \G$.

By its universality,  $\ca(\G)$ admits a (non-zero) multiplicative linear functional $\rho$. Let $\fM_{\rho}$ be the collection of all (necessarily contractive) multiplicative linear functionals on $\bbA(\bbD) \rtimes_{\alpha} \G$ whose restriction on $\ca(\G)$ agrees with $\rho$.

\vspace{.05in}
\noindent \textit{Claim:} Either $\fM_{\rho} = \emptyset$ or $\fM_{\rho}$ contains exactly two elements.

\vspace{.05in}
\noindent Indeed any multiplicative form $\rho'$ on $\bbA(\bbD) \rtimes_{\alpha} \G$ is determined by its action on $\bbA(\bbD)$ and $\{U_g\}_{g \in \G}$. If it so happens that $\rho' \in \fM_{\rho}$, then (\ref{ADdiagonal}) implies that $\rho'$ is only determined by its action on $\bbA(\bbD)$ and therefore by its value on $f_0(z)=z$, $z \in \bbT$. If $\rho'(f_0) = z_0$, then the covariance relation $U_{g}f_0 = (f_0\circ\alpha_{g})U_{g}$ implies
\begin{align*}
\rho'(U_{g})z_0 & =\rho'(U_{g})\rho'(f_0)=\rho'(U_{g}f_0)\\
                    &=\rho'\big( (f_0\circ\alpha_{g})U_{g}\big)=\big(f_0\circ \alpha_{g}\big)(z_0)\rho(U_{g})\\
                    &=\alpha_{g}(z_0)\rho'(U_{g})
\end{align*}
for all $g \in \G$. Since $\rho'(U_{g}) \neq 0$, we obtain $z_0=\alpha_{g}(z_0)$ and so $z_0$ is a fixed point for all $\alpha_{g}$, $g \in \G$. If such points do not exist, then $\fM_{\rho} = \emptyset$. Otherwise, our assumptions imply that there exist exactly two common fixed points. Hence there are exactly two choices for $\rho'$, which both materialize by the universality of $\bbA(\bbD) \rtimes_{\alpha} \G$ (Proposition~\ref{Raeburnthm}). Hence $| \fM_{\rho} |=2$, as desired.

 \vspace{.05in}

 Corollary~2.10 in \cite{AS} (see also \cite[Proposition 3.1]{DavKatAn}) implies that the isomorphism $\sigma$ maps the diagonal of $\bbA(\bbD) \rtimes_{\alpha} \G$ onto the diagonal of $\T_X^+$. Hence the induced isomorphism $\sigma^*$ onto the spaces of multiplicative linear functionals satisfies $\sigma^*(\fM_{\rho}) = \fM_{\hat{\rho}}$ for some multiplicative linear functional  $\hat{\rho}$ on $C$. By the Claim above $|\fM_{\hat{\rho}}|$ is either $0$ or $2$. But this contradicts Proposition~\ref{extension} and the conclusion follows.
\end{proof}

As we saw in the proof of Theorem~\ref{crossedtensor}, under the assumptions of that theorem there are two choices for the common fixed points of $\{ \alpha_g\}_{g \in \G}$: either there are no such points or otherwise they form a two-point set. Let us show that both choices do materialize under an amenable action.

\begin{remark} (i) Let $\G = \bbZ$, let $\alpha$ be a hyperbolic M\"obius transformation of the disc and let $\alpha_n=\alpha^{(n)}$, $n \in \bbZ$. In that case the common fixed points form a two-point set. 

\vspace{.05in}

(ii) Let $z_1, z_2 \in \bbT$ be distinct points and consider two M\"obius transformations $\alpha_1, \alpha_2$ of the unit disc $\bbD$. Choose $\alpha_1$ so that it fixes both $z_1, z_2$ without being the identity self map on $\bbD$. Choose $\alpha_2$ so that it intertwines $z_1$ and $z_2$. Clearly the group $\G$ generated by these transformations has no common fixed points. However, the set $\{z_1, z_2\}$ is invariant by both generators and so $\G$ is amenable by \cite[Theorem]{Nebbia}. Choose $\alpha : \G \rightarrow \Aut \big((\bbA ( \bbD )\big)$ to be the identity representation.
\end{remark}

In particular the above remark implies that whenever $\alpha$ is a non-trivial hyperbolic automorphism of $\bbA(\bbD)$, then $\bbA(\bbD) \rtimes_{\alpha}\bbZ$ is not a tensor algebra. It is instructive to observe that in the case where $\alpha$ is elliptic then $\bbA(\bbD) \rtimes_{\alpha}\bbZ \simeq C(\bbT) \times_{\alpha} \bbZ^+$, which is indeed a tensor algebra. We will have more to say about this later in the monograph. 

We can now extend the previous result into a multivariable context. Recall, for $d\geq 2$, the non-commutative disk algebra $\mathfrak A_d$ is the universal unital operator algebra generated by a row contraction $[T_1 \cdots T_d]$ \cite{Pop}.  The maximal ideal space is $M(\mathfrak A_d) \simeq \overline{\bbB}_d$ and so every automorphism $\varphi$ of $\mathfrak A_d$ induces an automorphism $\varphi^*$ of $\overline{\bbB}_d$ by composition $\varphi^*(\rho) = \rho\circ\varphi$. It is established in \cite{DavPit, Pop2} that the isometric automorphisms of $\mathfrak A_d$ are in bijective correspondence with $\Aut(\bbB_d)$  which turn out to be unitarily implemented and thus completely isometric automorphisms.

In the same way as the disk there are automorphisms of $\overline\bbB_d$ that fix exactly two points, see \cite[Example 2.3.2]{Rudin}. Therefore, in exactly the same way as the proof of Theorem \ref{crossedtensor}, we can now produce semi-Dirichlet algebras that are not isometrically isomorphic to a tensor algebra of any C$^*$-correspondence,  thus providing new examples for the theory in \cite{DKDoc}, not covered by the tensor algebra literature.

\begin{theorem} \label{ncdiscexample}
Let $\G$ be an amenable discrete group and let $\alpha : \G \rightarrow \Aut \big( \mathfrak A_d\big)$ be a representation. Assume that the common fixed points of the transformations associated with $\{ \alpha_g\}_{g \in \G}$ form a finite set which is not a singleton. Then $\mathfrak A_d \rtimes_\alpha \G$ is a semi-Dirichlet algebra which is not isomorphic to the tensor algebra of any C$^*$-correspondence.
\end{theorem}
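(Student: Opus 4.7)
The plan is to mirror the architecture of the proof of Theorem~\ref{crossedtensor}, substituting the role of $\bbA(\bbD)$ with $\mathfrak{A}_d$ and of $\bbT \subseteq \overline{\bbD}$ with $\overline{\bbB}_d$ as the character space of the algebra.

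First I would establish the semi-Dirichlet property. Since $\mathfrak{A}_d$ is the tensor algebra of the $\bbC$-correspondence $\bbC^d$, it is semi-Dirichlet. Because $\G$ is amenable, Theorem~\ref{r=f} identifies $\mathfrak{A}_d \rtimes_\alpha \G$ with $\mathfrak{A}_d \rtimes_\alpha^r \G$, and Theorem~\ref{generatingSemiDir} then guarantees that this crossed product is semi-Dirichlet.

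For the main assertion, I would argue by contradiction: suppose there is an isometric isomorphism $\sigma \colon \mathfrak{A}_d \rtimes_\alpha \G \rightarrow \T_X^+$ for some $\ca$-correspondence $(X, C)$. Since $\mathfrak{A}_d \cap \mathfrak{A}_d^{*} = \bbC I$ inside $\cenv(\mathfrak{A}_d) = \mathcal{O}_d$ (the only elements simultaneously ``analytic'' and ``coanalytic'' in the Cuntz algebra are scalars), Proposition~\ref{diagonal descr} yields
\[
\mathfrak{A}_d \rtimes_\alpha \G \cap \big(\mathfrak{A}_d \rtimes_\alpha \G\big)^{*} \simeq \ca(\G).
\]
Pick any character $\rho$ of $\ca(\G)$ and let $\fM_\rho$ be the set of characters of $\mathfrak{A}_d \rtimes_\alpha \G$ restricting to $\rho$. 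A character $\rho' \in \fM_\rho$ is determined by the point $z_0 \in \overline{\bbB}_d$ corresponding to $\rho'|_{\mathfrak{A}_d}$ (since the maximal ideal space of $\mathfrak{A}_d$ is $\overline{\bbB}_d$) together with the already-fixed values $\rho'(U_g) = \rho(U_g)$. The covariance relation $U_g a = \alpha_g(a) U_g$ gives, as in the proof of Theorem~\ref{crossedtensor}, $\rho'(a) = \rho'(\alpha_g(a))$ for all $a \in \mathfrak{A}_d$, so $z_0$ must be a common fixed point of the induced transformations $\{\alpha_g^{*}\}_{g \in \G}$ on $\overline{\bbB}_d$. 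Conversely, any such common fixed point $z_0$ gives rise to an element of $\fM_\rho$ through the universal property of the full crossed product (Proposition~\ref{Raeburnthm}). Consequently $\fM_\rho$ is in bijection with the common fixed point set, which by hypothesis is finite and of cardinality at least $2$.

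Next I would invoke the fact that completely isometric (and, as in \cite[Corollary~2.10]{AS} and \cite[Proposition 3.1]{DavKatAn}, already isometric) isomorphisms between this class of operator algebras carry diagonals to diagonals. Hence $\sigma$ descends to an isomorphism of diagonals, and the induced dual map on character spaces satisfies $\sigma^{*}(\fM_\rho) = \fM_{\hat{\rho}}$ for a suitable character $\hat{\rho}$ of the diagonal $C$ of $\T_X^+$. But Proposition~\ref{extension} forces $|\fM_{\hat{\rho}}|$ to be either $1$ or at least the continuum, contradicting the fact that $|\fM_\rho|$ is a finite integer strictly greater than $1$. This contradiction delivers the theorem.

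The main obstacle I anticipate is the clean transfer of the ``diagonal-preserving'' step to the non-commutative setting: one needs to verify that isometric (as opposed to completely isometric) isomorphisms between $\mathfrak{A}_d \rtimes_\alpha \G$ and a tensor algebra respect the diagonals. In the classical disc case this was handled by \cite[Corollary~2.10]{AS}, and a parallel argument should apply here, but this is the subtlety most in need of careful checking. Everything else is a direct parallel of the proof of Theorem~\ref{crossedtensor}, with $\overline{\bbB}_d$ and $\Aut(\bbB_d)$ taking the place of $\overline{\bbD}$ and the M\"obius group.
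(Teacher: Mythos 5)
Your proposal is correct and follows essentially the same route as the paper, which explicitly derives Theorem~\ref{ncdiscexample} by repeating the proof of Theorem~\ref{crossedtensor} with $\overline{\bbB}_d$ and $\Aut(\bbB_d)$ in place of $\overline{\bbD}$ and the M\"obius group: the diagonal is computed via Proposition~\ref{diagonal descr} to be $\ca(\G)$, the fibre $\fM_{\rho}$ is identified with the (finite, non-singleton) common fixed point set using the covariance relation and Proposition~\ref{Raeburnthm}, and the contradiction comes from Proposition~\ref{extension} after transporting diagonals via \cite[Corollary 2.10]{AS} and \cite[Proposition 3.1]{DavKatAn}. The diagonal-preservation step you flag is handled in the paper by exactly those citations, so no further argument is needed.
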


In the case where $\G$ is abelian, we can say something more definitive about $\mathfrak A_d \rtimes_\alpha \G$. Indeed in that case, Theorem~\ref{abelianenv} shows that $\cenv (\mathfrak A_d \rtimes_\alpha \G) \simeq \O_d \cpf$. It is easy to see now that $\mathfrak A_d \rtimes_\alpha \G$ is not a Dirichlet algebra, thus showing that $\mathfrak A_d \rtimes_\alpha \G$ is a semi-Dirichlet algebra which is neither a tensor algebra nor a Dirichlet algebra. This answers a question of Ken Davidson that was communicated to both authors on several occasions. Stated formally

\begin{corollary} \label{neither}
There exist semi-Dirichlet algebras which are neither Dirichlet nor isometrically isomorphic to the tensor algebra of any $\ca$-correspondence.
\end{corollary}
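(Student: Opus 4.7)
The plan is to explicitly exhibit such an algebra by specializing Theorem~\ref{ncdiscexample} to an abelian group and then using Theorem~\ref{abelianenv} together with the gauge expectation on $\O_d$. Fix $d \geq 2$ and let $\G$ be a discrete amenable abelian group (for concreteness, $\G = \bbZ$) together with a representation $\alpha: \G \to \Aut \mathfrak{A}_d$ whose associated automorphisms of $\overline{\bbB}_d$ share a common fixed-point set that is nonempty, finite, and not a singleton. Such actions exist: by \cite[Example 2.3.2]{Rudin} one can pick an automorphism of $\bbB_d$ fixing exactly two boundary points, and take $\alpha$ to be generated by the corresponding completely isometric automorphism of $\mathfrak A_d$. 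Theorem~\ref{ncdiscexample} then tells us that $\mathfrak A_d \rtimes_\alpha \G$ is semi-Dirichlet and is not isometrically isomorphic to the tensor algebra of any $\ca$-correspondence.

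It remains to verify that $\mathfrak A_d \rtimes_\alpha \G$ is not Dirichlet. Since $\G$ is abelian, Theorem~\ref{abelianenv} gives
\[
\cenv\big(\mathfrak A_d \rtimes_\alpha \G\big) \simeq \cenv(\mathfrak A_d)\rtimes_\alpha \G \simeq \O_d \rtimes_\alpha \G.
\]
Were $\mathfrak A_d \rtimes_\alpha \G$ Dirichlet, the space $\mathfrak A_d \rtimes_\alpha \G + (\mathfrak A_d \rtimes_\alpha \G)^*$ would be norm dense in $\O_d \rtimes_\alpha \G$. Since $\G$ is discrete amenable, the faithful canonical expectation $\Phi_e: \O_d \rtimes_\alpha \G \to \O_d$ of Section~\ref{abelian} satisfies $\Phi_e(\mathfrak A_d \rtimes_\alpha \G) \subseteq \mathfrak A_d$ (test on polynomials in $C_c(\G,\mathfrak A_d)$ and pass to limits using continuity of $\Phi_e$). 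Applying $\Phi_e$ to the purported density statement would force $\mathfrak A_d + \mathfrak A_d^*$ to be norm dense in $\O_d$, i.e., $\mathfrak A_d$ itself would be a Dirichlet subalgebra of $\O_d \simeq \cenv(\mathfrak A_d)$.

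To rule this last possibility out, invoke the gauge action $\{\gamma_z\}_{z\in \bbT}$ on $\O_d$, whose fixed-point algebra is the UHF core $\O_d^{\mathrm{fix}} \simeq M_{d^\infty}$, and the associated faithful expectation $E = \tfrac{1}{2\pi}\int \gamma_z(\cdot)\, dz: \O_d \to \O_d^{\mathrm{fix}}$. On $\mathfrak A_d$ this expectation projects onto $\bbC \cdot 1$ (only the degree-zero Fourier coefficient survives, and in $\mathfrak A_d$ that coefficient is a scalar), and similarly $E(\mathfrak A_d^*) = \bbC \cdot 1$. Hence $E(\mathfrak A_d + \mathfrak A_d^*) = \bbC \cdot 1$, which is nowhere dense in the infinite-dimensional UHF algebra $M_{d^\infty}$ — contradicting the density of $\mathfrak A_d + \mathfrak A_d^*$ in $\O_d$.

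The only nontrivial step is the last one, and the key idea there is that passing to the gauge fixed-point algebra collapses $\mathfrak A_d + \mathfrak A_d^*$ down to scalars while leaving $\O_d$ with its full UHF core; the two composed expectations ($\Phi_e$ for the crossed product, then $E$ for the Cuntz structure) reduce the global Dirichlet question to a dimension obstruction at the scalar level.
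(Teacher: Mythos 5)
Your proposal is correct and follows essentially the same route as the paper: specialize Theorem~\ref{ncdiscexample} to an abelian (hence amenable) group, invoke Theorem~\ref{abelianenv} to identify $\cenv\big(\mathfrak A_d \rtimes_\alpha \G\big)$ with $\O_d \rtimes_\alpha \G$, and then rule out the Dirichlet property. The paper dismisses that last point with ``it is easy to see''; your two-expectation argument (first the canonical expectation $\Phi_e$ onto $\O_d$, which reduces the question to whether $\mathfrak A_d$ is Dirichlet in $\O_d$, then the gauge expectation onto the UHF core, which collapses $\mathfrak A_d + \mathfrak A_d^*$ to $\bbC\cdot 1$) is a correct and complete way to supply the omitted verification.
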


\chapter{Crossed Products and Semisimplicity} \label{semis}

In this chapter we consider the semisimplicity of crossed products by locally compact abelian groups. Recall from Theorem \ref{r=f} that there is a unique crossed product for such groups. 

We begin by reminding the reader of the definition of the Jacobson Radical of a (not necessarily unital) ring.

\begin{definition}
Let $\R$ be a ring. The Jacobson radical $\Rad \R$ is defined as the interchapter of all maximal regular right ideals of $\R$. (A right ideal $\I \subseteq \R$ is regular if there exists $e \in \R$ such that $ex - x \in \I$,  for all $x \in \R$.)
\end{definition}

An element $x$ in a ring $R$ is called right quasi-regular if there exists $y \in \R$ such that $x+y+xy=0$. It can be shown that $ x \in \Rad \R$ if and only if $xy$ is right quasi-regular for all $y \in \R$. This is the same as $1+xy$ being right invertible in $\R^1$ for all $y \in \R$. 

In the case where $\R$ is a Banach algebra we have
\[
\begin{split}
\Rad \R &= \{ x \in \R \mid \lim_n \|(xy)^n\|^{1/n} = 0, \mbox{ for all } y \in \R\} \\
		&= \{ x \in \R \mid \lim_n \|(yx)^n\|^{1/n} = 0, \mbox{ for all } y \in \R\}.
		\end{split}
		\]

A ring $\R$ is called semisimple iff $\Rad \R =\{0\}$.	

The study of the various radicals is a central topic of investigation in Abstract Algebra and Banach Algebra theory. In Operator Algebras, the Jacobson radical  and the semisimplicity of operator algebras have been under investigation since the very beginnings of the theory. In his seminal paper \cite{Rin}, Ringrose characterized the radical of a nest algebra, a work that influenced many subsequent investigations in the area of reflexive operator algebras. Around the same time, Arveson and Josephson \cite{ArvJ} raised the question of when the semicrossed product of a commutative $\ca$-algebra by $\bbZ^+$ is semisimple. This problem received a good deal of attention as well  \cite{M, Pet, Pet2} and it was finally solved in 2001 by Donsig, Katavolos and Manoussos \cite{DKM}, building on earlier ideas of Donsig \cite{Don}.

In Theorem~\ref{firstsemisimple} we discover that the semisimplicity of an operator algebra is a property preserved under crossed products by discrete abelian groups. This provides a huge supply of semisimple operator algebras and also raises the question of whether or not the converse is true. In order to investigate this, we go back to a class of operator algebras that has been investigated quite extensively by Davidson, Donsig, Hopenwasser, Hudson, Katsoulis, Larson, Peters, Muhly, Pitts, Poon, Power, Solel and others: triangular approximately finite-dimensional (abbr. TAF) operator algebras \cite{DavKatAdv, Don, DonH, DonHJFA, Hu, LS, Pow}. This is one of the main topics of study of this chapter. Another is the semisimplicity of crossed products by compact abelian groups. This is done in Section~\ref{compact abelian semisimple} where we uncover some interesting behavior.

In a recent paper \cite{DFK}, Davidson Fuller and Kakariadis make a comprehensive study of semicrossed products of operator algebras by discrete abelian groups. It turns out that our ideas on the semisimplicity of crossed products by abelian groups are also applicable on semicrossed products as well. We devote a whole section on this topic at the end of this chapter. 

 \section{Crossed products by discrete abelian groups}
 
 We begin with the following motivating result.

\begin{theorem}\label{firstsemisimple}
Let $(\A, \G, \alpha)$ be a dynamical system with $\G$ a discrete abelian group. If $\A$ is semisimple then $\A \rtimes_\alpha \G$ is semisimple.
\end{theorem}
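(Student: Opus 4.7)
My plan is to exploit the dual action $\hat\alpha : \hat\G \to \Aut(\A \rtimes_\alpha \G)$ of the (compact) dual group $\hat\G$ together with the Fourier-coefficient machinery of Section~\ref{abelian}. Let $S \in \Rad(\A \rtimes_\alpha \G)$; I will argue that $S = 0$. Since $\G$ is discrete amenable (being abelian), Proposition~\ref{Ceasaroapprox} shows that $S$ is determined by its Fourier coefficients $\{\Phi_g(S)\}_{g \in \G}$, each of which lies in $\A$. Thus the task reduces to showing $\Phi_g(S) = 0$ for every $g \in \G$, and this is what I intend to establish.

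The key observation is that since each $\hat\alpha_\gamma$ is a (completely isometric) automorphism of $\A \rtimes_\alpha \G$, it preserves the Jacobson radical, hence $\hat\alpha_\gamma(S) \in \Rad(\A \rtimes_\alpha \G)$ for every $\gamma \in \hat\G$. By the strong continuity of $\hat\alpha$ and the fact that $\Rad(\A \rtimes_\alpha \G)$ is norm-closed, formula~(\ref{semisimpleeq}) applied in an appropriate $\ca$-cover yields
\begin{equation*}
\Phi_g(S) U_g \;=\; \int_{\hat\G}\hat\alpha_\gamma(S)\,\gamma(g)\,d\gamma \;\in\; \Rad(\A \rtimes_\alpha \G).
\end{equation*}
To convert this into a statement about $\Phi_g(S)$ itself, I will use that $\A \rtimes_\alpha \G$ is approximately unital (Lemma~\ref{cai}); an elementary approximate-identity argument then shows that every closed two-sided ideal $\J$ of an approximately unital operator algebra $B$ is automatically an ideal of $M(B)$, since $m x = \lim_i (m e_i) x \in \J$ for $m \in M(B)$ and $x \in \J$. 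Multiplying on the right by the multiplier $U_g^{*}$ therefore gives $\Phi_g(S) \in \Rad(\A \rtimes_\alpha \G)$, and since $\Phi_g(S) \in \A$ this places it in $\A \cap \Rad(\A \rtimes_\alpha \G)$.

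The final, and I expect most delicate, step is the descent $\A \cap \Rad(\A \rtimes_\alpha \G) \subseteq \Rad(\A)$. For this I will invoke the Banach-algebraic characterization $\Rad(B) = \{x \in B : r_B(bx) = 0 \text{ for all } b \in B\}$, where $r_B$ denotes the spectral radius. Given $x \in \A \cap \Rad(\A \rtimes_\alpha \G)$ and $a \in \A$, the product $ax$ lies in $\A$ and satisfies $r_{\A \rtimes_\alpha \G}(ax) = 0$; but the inclusion $\A \hookrightarrow \A \rtimes_\alpha \G$ is completely isometric, so the spectral radius of $ax$ computed in $\A$ coincides with that computed in $\A \rtimes_\alpha \G$. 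Hence $r_\A(ax) = 0$ for every $a \in \A$, and therefore $x \in \Rad(\A) = \{0\}$ by semisimplicity. Combined with the previous step this gives $\Phi_g(S) = 0$ for every $g \in \G$, and the Cesaro approximation of Proposition~\ref{Ceasaroapprox} forces $S = 0$. The main obstacle is precisely this descent through the isometric inclusion and the passage from $\Phi_g(S)U_g$ to $\Phi_g(S)$ via multipliers; once those two technical points are in place, the rest is an assembly of results already proved.
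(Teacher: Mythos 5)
Your proposal is correct and follows essentially the same route as the paper's proof: the dual action and formula~(\ref{semisimpleeq}) put $\Phi_g(S)U_g$ in the radical, an approximate-identity argument strips off $U_g$ (the paper multiplies by $\alpha_g^{-1}(e_i)U_g^*\in\A\rtimes_\alpha\G$ rather than invoking multipliers, but this is the same idea), and the descent $(\Rad \A\rtimes_\alpha\G)\cap\A\subseteq\Rad\A$ finishes via Proposition~\ref{Ceasaroapprox}. Your spelling out of that last inclusion via the norm-based spectral-radius formula is a detail the paper leaves implicit, and it is correct.
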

\begin{proof}
Assume that the crossed product is not semisimple and so there exists $0 \neq S \in \Rad \A \rtimes_\alpha \G$. Any automorphism of  $\A \rtimes_\alpha \G$ fixes the Jacobson radical, which is \textit{closed}, and so the discussion at the end of section~\ref{abelian} implies that $\Phi_g(S)U_g  \in \Rad \A \rtimes_\alpha \G$ for all $g\in \G$.

Now if $\{ e_i\}_{i \in \bbI}$ is a contractive approximate unit for $\A$, then
\[
\Phi_g(S) = \lim_i \big( \Phi_g(S) U_g \big)\alpha_g^{-1}(e_i)U_g^*,
\]
and so $\Phi_g(S) \in \Rad \A \rtimes_\alpha \G$, for all $g \in \G$. By Proposition \ref{Ceasaroapprox} since $S\neq 0$ there is a $g\in \G$ such that $\Phi_g(S) \neq 0$ and so 
 \[
0 \neq \Phi_g(S) \in (\Rad \A \rtimes_\alpha \G) \cap \A \subseteq \Rad \A.
\]
Therefore, $\A$ is not semisimple.
\end{proof}

Naturally, one asks whether the converse of the above result is true. This brings us to the study of crossed products and semisimplicity in the context of strongly maximal TAF algebras with regular $*$-extendable embeddings.  Studying this class alone will provide us with a good idea of the richness of the theory. As we will see, even very ``elementary" automorphisms, i.e., quasi-inner automorphisms, can be used to generate crossed product algebras with interesting properties. Below we give some pertinent definitions and a few instructive examples. We direct the reader to \cite{Pow} for a comprehensive treatment of non-selfadjoint AF algebras.

Let $\fA = \varinjlim (\fA_n, \rho_n)$ be a unital AF $\ca$-algebra via regular embeddings \cite[Section~\ref{Dirichlet Sect}.9]{Pow} and further assume that $\rho_n(\A_n ) \subseteq \A_{n+1}$, $n=1, 2, \dots, $ where $\A_n$ denotes the subalgebra of upper triangular matrices in $\fA_n$. The limit algebra $\A = \varinjlim (\A_n, \rho_n)$ is said to be a \textit{strongly maximal TAF algebra}. In the case of a strongly maximal TAF algebra $\A = \varinjlim (\A_n, \rho_n)$ the \textit{diagonal} $ \C \equiv \A\cap \A^*$ of $\A$ satisfies
$$\C = \varinjlim (\C_n, \rho_n), \mbox{ where }\C_n = \A_n \cap \A^*_n, \, n = 1, 2, \dots.$$
It is easy to see that the Shilov ideal of $\A = \varinjlim (\A_n, \rho_n) \subseteq  \fA$ is the zero ideal and so the $\ca$-algebra $\fA=\varinjlim (\fA_n, \rho_n)$ together with the inclusion map gives the $\ca$-envelope of $\A$.
 
\begin{definition} \label{standard}
Let $\{e_{ij}\}_{i,j=1}^{n}$ denote the usual matrix unit system of the algebra $M_n(\bbC)$ of $n \times n$ complex matrices. An embedding $\sigma:M_n(\bbC) \rightarrow M_{mn}(\bbC)$ is said to be \textit{standard} if it satisfies $\sigma(e_{ij})=\sum_{k=0}^{m-1} e_{i +kn, j+kn}$, for all $i,j$. That is, 
\[
\sigma(A) =  \underbrace{A \oplus \dots \oplus A}_{m} \in M_{mn}(\bbC), \ \forall A\in M_n(\bbC).
\]
\end{definition}

\begin{example} \label{firstSmithrepn}
Let $\A_{\sigma} = \varinjlim ( \A_n, \sigma_n)$ be a standard limit algebra, i.e., each $\A_n$ is isomorphic to the $k_n \times k_n$ upper triangular matrices $\T_{k_n} \subseteq M_{k_n}(\bbC)$ and $\sigma_n: M_{k_n}(\bbC)\rightarrow M_{k_{n+1}}(\bbC)$ are the standard embeddings. Let $\fA_{\sigma} = \cenv(\A_{\sigma})$ be the associated UHF $\ca$-algebra.

For each $ z \in \bbT$, we define an automorphism $\psi_z: \fA_{\sigma} \rightarrow \fA_{\sigma}$, which acts on matrix units as $\psi_z(e_{i j}^{n_k})= z^{j-i}e_{i j}^{n_k}$. Assume further that $z = e^{2\pi i\theta}$, with $\theta \in [0, 1)$ irrational. We denote the corresponding crossed product $\ca$-algebra as $\fA_{\sigma}\rtimes_{\theta} \bbZ$ and the associated non-selfadjoint algebras as $\A_{\sigma}\rtimes_{\theta} \bbZ^+$ and $\A_{\sigma}\rtimes_{\theta} \bbZ$. These are analogues of the familiar irrational rotation $\ca$-algebras and their non-selfadjoint counterparts.
\end{example}

Of course, there is nothing special in this discussion about the standard embedding. If $\fA_{\sigma} = \varinjlim ( \fA_n, \rho_n)$ is any other presentation of $\fA_{\sigma}$ via regular embeddings, then one has a commutative diagram
\[
\begin{CD}
\fA_1 @>\rho_1>> \fA_2 @>\rho_2>>\fA_3@>>>  \dots\phantom{....} @. \fA \\
@VVV @VVV @VVV  @. @VV \Psi V \\
\fA_1 @>\sigma_1>> \fA_2 @>\sigma_2>>\fA_3@>>>  \dots\phantom{....} @. \fA
\end{CD}
\]
where the vertical maps are conjugations by permutation unitaries. The composition $\Psi^{-1}\circ \psi_{z}\circ \Psi$ allows us to define now an automorphism on the non-selfadjoint algebra $\A= \varinjlim ( \A_n, \rho_n)$, that twists each matrix unit by a (not necessarily positive) power of $z = e^{2\pi i\theta}$. (This automorphism is actually an example of a \textit{quasi-inner} automorphism, i.e., an automorphism that maps $\A_n$ onto $\A_n$, for all $n \in \bbN$.)

By Theorem~\ref{Dirichletenv}, $\cenv(\A_{\sigma}\rtimes_{\theta} \bbZ)\simeq \fA_{\sigma}\rtimes_{\theta} \bbZ$. The K-theory of that $\ca$-algebra is easy to calculate and it demonstrates how far removed $\A_{\sigma}\rtimes_{\theta} \bbZ$ is from its TAF generator.

\begin{proposition}
Let $\fA = \varinjlim (\fA_n, \rho_n)$ be an AF $\ca$-algebra and $\psi: \fA \rightarrow \fA$ a quasi-inner automorphism. Then, $K_0(\fA \rtimes_{\psi}\bbZ) = K_0(\fA)$ and $K_1(\fA \rtimes_{\psi}\bbZ) \simeq K_0(\fA)$.
\end{proposition}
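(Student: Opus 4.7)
The plan is to apply the Pimsner--Voiculescu six-term exact sequence to the crossed product $\fA \rtimes_{\psi} \bbZ$. Recall that for any $\ca$-algebra $\fB$ with an automorphism $\beta$, this sequence reads
\[
\xymatrix{
K_0(\fB) \ar[r]^{\id - \beta_*} & K_0(\fB) \ar[r] & K_0(\fB \rtimes_{\beta} \bbZ) \ar[d] \\
K_1(\fB \rtimes_{\beta} \bbZ) \ar[u] & K_1(\fB) \ar[l] & K_1(\fB) \ar[l]_{\id - \beta_*}
}
\]
Since $\fA$ is AF, we have $K_1(\fA) = 0$, and the sequence collapses to
\[
0 \longrightarrow \operatorname{coker}(\id - \psi_*) \longrightarrow K_0(\fA \rtimes_{\psi} \bbZ) \longrightarrow 0 \longrightarrow 0 \longrightarrow K_1(\fA \rtimes_{\psi} \bbZ) \longrightarrow \ker(\id - \psi_*) \longrightarrow 0.
\]
Hence the entire computation reduces to identifying the map $\id - \psi_*$ on $K_0(\fA)$.

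The key step will be to show that $\psi_* = \id$ on $K_0(\fA)$. By definition, a quasi-inner automorphism satisfies $\psi(\fA_n) = \fA_n$ for every $n$; in the standard Power \cite{Pow} sense, this further means that $\psi_{\mid \fA_n}$ is inner, implemented by a unitary $u_n \in \fA_n$. Inner automorphisms of a $\ca$-algebra act trivially on $K_0$, so $(\psi_{\mid \fA_n})_* = \id$ on $K_0(\fA_n)$. Continuity of $K_0$ under inductive limits then gives $\psi_* = \id$ on $K_0(\fA) = \varinjlim K_0(\fA_n)$. Consequently $\id - \psi_* = 0$, so both $\ker(\id - \psi_*)$ and $\operatorname{coker}(\id - \psi_*)$ equal $K_0(\fA)$, and the six-term sequence gives
\[
K_0(\fA \rtimes_{\psi} \bbZ) \simeq K_0(\fA) \quad \text{and} \quad K_1(\fA \rtimes_{\psi} \bbZ) \simeq K_0(\fA).
\]

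The main obstacle, if any, is clarifying the meaning of \emph{quasi-inner} in this setting: one must invoke (or prove from the definition used) that $\psi_{\mid \fA_n}$ is not merely an automorphism but actually an inner automorphism of $\fA_n$. Once this is in hand, the triviality of $\psi_*$ on $K_0$ is automatic since inner automorphisms induce the identity on $K$-theory via the homotopy from $u_n$ to $1$ inside the unitary group of a matrix algebra (extended to a partial isometry argument in the summands). Everything else is a routine application of Pimsner--Voiculescu together with $K_1(\fA) = 0$ for AF algebras.
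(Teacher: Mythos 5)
Your proof is correct and follows essentially the same route as the paper: the Pimsner--Voiculescu sequence, the vanishing of $K_1(\fA)$ for AF algebras, and continuity of $K_0$ to reduce everything to showing $\psi_*=\id$ on each $K_0(\fA_n)$. Your extra step of justifying $\psi_*=\id$ by observing that a quasi-inner automorphism restricts to an \emph{inner} automorphism of each $\fA_n$ (implemented by a diagonal unitary, since $\psi$ multiplies matrix units by unimodular scalars) is a legitimate and welcome filling-in of a point the paper simply asserts.
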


\begin{proof} This follows from an application of the Pimsner-Voiculescu exact sequence
\begin{equation} \label{PV}
\begin{CD}
K_0(\fA) @>\id_* - \psi_*>> K_0 (\fA) @> i_*>> K_0 (\fA \rtimes_{\psi}\bbZ )\\
@AAA  @. @VV V \\
K_1(\fA \rtimes_{\psi}\bbZ ) @<< i_*< K_1(\fA) @<<\id_* - \psi_*< K_1 (\fA)
\end{CD}
\end{equation}
where $i: \fA \rightarrow \fA \rtimes_{\psi}\bbZ $ denotes the inclusion map. Since $\psi$ is quasi-inner, $\psi_*=\id_*$ on $K_i(\fA_n)$, $i=0,1$, $ n \in \bbN$. By the continuity of the $K_i$ functors (Theorem 6.3.2 and Proposition 8.2.7 in \cite{Rordam}), we obtain $\psi_*=\id_*$ on $K_i(\fA)$, $i=0,1$. Hence the upper $i_*$ is injective. Furthermore $K_1(\fA)=0$ \cite[Excercise 8.7]{Rordam} and so the right vertical map is the zero map. Therefore the upper $i_*$ is also surjective and so $K_0 (\fA) \simeq K_0(\fA\rtimes_{\psi} \bbZ)$. 

On the other hand the left side of (\ref{PV}) collapses to 
\begin{equation*}
\begin{CD}
0@>>>   K_1(\fA \rtimes_{\psi}\bbZ )   @>>>                           K_0(\fA) @>>> 0
\end{CD}
\end{equation*}
and so $K_1(\fA \rtimes_{\psi}\bbZ) \simeq K_0(\fA)$.
\end{proof}

By Kishimoto's Theorem \cite[Theorem 3.1]{Kis}, the $\ca$-algebra $\fA_{\sigma}\rtimes_{\theta} \bbZ$ is simple and therefore any of its representations is necessarily faithful. This allows us to give a good picture for $\A_{\sigma}\rtimes_{\theta} \bbZ$.

\begin{example}
Let $\A_{\sigma} = \varinjlim ( \A_n, \sigma_n)$, $\theta \in [0, 1]$ and $\A_{\sigma}\rtimes_{\theta} \bbZ$  be as in Example~\ref{firstSmithrepn}. 

Let $\{ e_n\}_{n \in \bbN}$ be an orthonormal basis for a Hilbert space $\H$. An operator $A\in B(\H)$ is said to be $k$-periodic if its matrix representation with respect to $\{ e_n\}_{n \in \bbN}$ consists of a $k \times k$-matrix which is repeated infinitely along the diagonal. The collection of all $k$-periodic matrices is denoted as $\A_k'$. Clearly the collection $\{ \A_{k_n}'\}_{n \in \bbN}$ is an increasing collection of finite dimensional factors that provides a faithful representation for $\fA_{\sigma}$.

Consider now the diagonal unitary operator $U_{\theta} \in B(\H)$ with $U_{\theta}e_n = e^{2\pi i \theta n} e_n$, $n \in \bbN$. Then the algebra generated by $\bigcup_{n \in \bbN} \A_{k_n}$ and $\{U_{\theta}^m\}_{m \in \bbZ}$ is isomorphic to $\A_{\sigma}\rtimes_{\theta} \bbZ$.

As we will see, the semisimplicity of $\A_{\sigma}\rtimes_{\theta} \bbZ$ is easy to establish. The same statement for $\A_{\sigma}\rtimes_{\theta} \bbZ^+$ requires more work.
\end{example}

The semisimplicity of strongly maximal TAF algebras was characterized by Donsig \cite[Theorem 4]{Don}. Donsig showed that a strongly maximal TAF algebra $\A$ is semisimple iff any matrix unit $e\in \A$ has a \textit{link}, i.e., $e\A e\neq\{0\}$ (Donsig's criterion). It is easy to see that any strongly maximal TAF algebra $\A = \varinjlim (\A_n, \rho_n)$ for which the standard embedding appears infinitely many times satisfies the above and is therefore semisimple.

\begin{definition}
Let $\A$ be a strongly maximal TAF algebra.
The dynamical system $(\A, \G, \alpha)$ is said to be \textit{linking} if for every matrix unit $e\in \A$ there exists a group element $g\in G$ such that $e\A \alpha_g(e) \neq \{0\}$.
\end{definition}

By Donsig's criterion if $\A$ is semisimple then $(\A, \G, \alpha)$ is linking. The following example shows that there are other linking dynamical systems.

\begin{example} \label{linking example}
Let $\A_n = \bbC \oplus \T_{2n}$ and define the embeddings $\rho_n : \A_n \rightarrow \A_{n+1}$ by 
\[
\rho_n(x \oplus A) \ = \ x \oplus \left[\begin{array}{ccc} x \\ & A \\ &&x \end{array}\right].
\]
Then $\A = \varinjlim \A_n$ is a strongly maximal TAF algebra that is not semisimple. Consider the following map $\psi:\A_n \rightarrow \A_{n+1}$ given by
\[
\psi(x \oplus A) \ =  \ x \oplus \left[\begin{array}{ccc} x \\ & x \\ && A \end{array}\right].
\]
You can see that  $\psi\circ\rho_n = \rho_{n+1}\circ\psi$ on $\A_n$ and so $\psi$ is a well-defined map on $\cup \A_n$. By considering that 
\[
\psi^{-1}(x \oplus A) \ = \ x \oplus \left[\begin{array}{ccc} A \\ & x \\ && x \end{array}\right]
\]
one gets $\psi\circ\psi^{-1} = \psi^{-1}\circ\psi = \rho_{n+1}\circ\rho_n$ on $\A_n$. Hence, $\psi$ extends to be an isometric automorphism of $\A$. Finally, for every $e_{i,j}^{2n} \in \A_n, i\neq j$ 
\[
e_{i,j}^{(2n)} \left[\begin{array}{ccc} 0_{2n}  \\ & 0_{2n} & e_{j,i}^{(2n)}\\ && 0_{2n} \end{array}\right] \psi^{(2n)}(e_{i,j}^{(2n)})
\]
\[
= \ \left[\begin{array}{ccc} 0_{2n} \\ & e_{i,j}^{(2n)}  \\ && 0_{2n} \end{array}\right]\left[\begin{array}{ccc} 0_{2n} \\ & 0_{2n} & e_{j,i}^{(2n)} \\ && 0_{2n} \end{array}\right]  \left[\begin{array}{ccc}  0_{2n} \\ & 0_{2n} \\ && e_{i,j}^{(2n)} \end{array}\right] 
\]
\[
= \left[\begin{array}{ccc} 0_{2n}  \\ & 0_{2n} & e_{i,j}^{(2n)}\\ && 0_{2n} \end{array}\right] .
\]
Therefore, $(\A, \bbZ, \psi)$ is a linking dynamical system.
 \end{example}

 \vspace{.05in}

The following theorem and the previous example establish that the converse of Theorem \ref{firstsemisimple} is not true in general.

\begin{theorem} \label{mainsemisimple}
 Let $\A$ be a strongly maximal TAF algebra and $\G$ a discrete abelian group. The dynamical system $(\A, \G, \alpha)$ is linking if and only if $\A \rtimes_\alpha G$ is semisimple. \end{theorem}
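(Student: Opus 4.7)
The forward implication (semisimplicity of $\A \rtimes_\alpha \G$ implies linking of the system) is the easy direction, which I would prove by contraposition. Suppose $(\A, \G, \alpha)$ is not linking, so there exists a matrix unit $e \in \A$ with $e\A\alpha_g(e) = \{0\}$ for every $g \in \G$. For any finitely supported polynomial $X = \sum_g a_g U_g \in \A \rtimes_\alpha \G$, the covariance identity $U_g a = \alpha_g(a) U_g$ gives
\[
(eX)^2 = \sum_{g,h} e a_g \alpha_g(e) \alpha_g(a_h) U_{gh} = 0,
\]
since each coefficient $e a_g \alpha_g(e)$ lies in $e\A\alpha_g(e) = \{0\}$. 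By density of such polynomials, $(eX)^2 = 0$ for every $X \in \A \rtimes_\alpha \G$, so $eX$ is nilpotent and hence quasi-regular for every $X$. Thus $0 \neq e \in \Rad(\A \rtimes_\alpha \G)$, contradicting semisimplicity.

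For the converse (linking $\Rightarrow$ semisimple), my plan is to first reduce the problem to an assertion about elements of $\A$ alone, via Fourier analysis. Let $S \in \Rad(\A \rtimes_\alpha \G)$. Because the dual $\hat\G$-action consists of isometric automorphisms, it preserves the closed ideal $\Rad(\A \rtimes_\alpha \G)$; combined with the integral formula at the end of Section \ref{abelian}, this forces $\Phi_g(S) U_g \in \Rad(\A \rtimes_\alpha \G)$ for every $g \in \G$. Right-multiplying by $\alpha_g^{-1}(e_i) U_g^*$ for a contractive approximate unit $\{e_i\}$ of $\A$ and passing to the limit yields $\Phi_g(S) \in \Rad(\A \rtimes_\alpha \G) \cap \A$. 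Appealing to Proposition \ref{Ceasaroapprox}, $S$ is determined by its Fourier coefficients, so it suffices to show $\Rad(\A \rtimes_\alpha \G) \cap \A = \{0\}$.

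To establish this, I would adapt Donsig's matrix-unit analysis \cite{Don}. Given $0 \neq a \in \A$, the canonical conditional expectations onto the AF levels provide, for $n$ sufficiently large, minimal diagonal projections $p, q \in \C_n$ with $paq = \lambda e$ for some matrix unit $e \in \A_n$ and scalar $\lambda \neq 0$. The linking hypothesis applied to $e$ supplies $h \in \G$ and $b \in \A$ with $e(bU_h)e = eb\alpha_h(e) U_h \neq 0$ inside $\A \rtimes_\alpha \G$, so the element $bU_h$ serves as a Donsig-style link for $e$ in the enlarged algebra. Iterating this link and coupling it with the compression $p(\cdot)q$, one builds $Y \in \A \rtimes_\alpha \G$ for which $aY$ has strictly positive spectral radius, contradicting $a \in \Rad(\A \rtimes_\alpha \G)$.

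The main obstacle lies in controlling the iteration step, where powers of $bU_h$ produce products $b\alpha_h(b)\alpha_h^2(b)\cdots\alpha_h^{n-1}(b)$ whose norms must be shown not to decay exponentially. Since $\alpha_h$ need not preserve the matrix-unit system of $\A$, the iterated images $\alpha_h^k(e)$ are only automorphic copies of $e$, not genuine matrix units. For a diagonal matrix unit $e = e_{ii}^{(n)}$ the identities $e^2 = e$ and $\alpha_h(e)^2 = \alpha_h(e)$ keep the orbit linked to itself throughout the iteration; the off-diagonal case would be reduced to the diagonal one by compressing with the source and range projections $ee^*$ and $e^*e$ (themselves matrix units in $\C_n$) and invoking regularity of the embeddings to transport the linking property to these projections.
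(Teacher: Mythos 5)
Your forward direction and your Fourier-theoretic reduction to showing $\Rad(\A \rtimes_\alpha \G) \cap \A = \{0\}$ are both correct and coincide with the paper's argument. (One small simplification the paper uses at the start of the converse: $\J = \A \cap \Rad(\A\rtimes_\alpha\G)$ is a closed ideal of $\A$, hence inductive by Power's theorem, hence generated by the matrix units it contains; this gives a matrix unit $e \in \J$ directly, without your compression $paq = \lambda e$.)

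The core of the converse, however, has a genuine gap. Your plan is to iterate a \emph{single} link $bU_h$ and control the norms of the products $b\,\alpha_h(b)\,\alpha_h^2(b)\cdots\alpha_h^{n-1}(b)$. This cannot work as stated, for two reasons. First, the linking hypothesis only supplies, for each matrix unit $f$, \emph{some} group element $g$ (depending on $f$) with $f\A\alpha_g(f) \neq \{0\}$; after one linking step the resulting element $e_2 = e\alpha_{g_1}(b_1 e)$ involves new, smaller summands of $e$, and there is no reason the same $g_1$ links $e_2$ to itself. The paper must therefore construct an entire sequence $g_1, g_2, \dots$ of group elements and partial isometries $b_1, b_2, \dots$, one per stage. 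Second, and more fundamentally, the situation in which powers of a single element $fU_g b$ remain uniformly bounded below in norm is precisely the degenerate sub-case the paper \emph{rules out} (its Claim 1): if the two summands of $e$ produced by a link coincide, then $(fU_gb)^n$ is a partial isometry times a unitary for every $n$, which already contradicts $f$ being in the radical. In the generic case the summands are distinct, and no single element's powers can be controlled. The paper's resolution is to pack all the links into one weighted element $eB = e\bigl(\sum_{i=1}^\infty 2^{-i}U_{h_i}b_i\bigr) \in \Rad$, expand $(eB)^{2^m-1}$ as a sum over multi-indices, identify one specific term equal to $2^{-p_k}e_{m+1}U_{g_1\cdots g_m}$ with $p_k \leq 2^{m+1}$, and then prove that distinct multi-indices yield terms with mutually orthogonal domains (via $b_ib_j^* = 0$ for $i \neq j$), so that the norm of the whole sum dominates the norm of that one term. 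This orthogonality machinery is the heart of the argument and is entirely absent from your sketch; without it there is no lower bound on $\|(eB)^{2^m-1}\|$ and hence no contradiction with quasinilpotence.

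A smaller but real error: your final paragraph proposes handling diagonal matrix units via $e^2 = e$. A diagonal matrix unit is an idempotent of norm one and therefore never quasinilpotent, so it can never lie in the radical; the matrix unit one extracts from $\J$ is necessarily off-diagonal (as the paper notes), and the whole argument must be carried out for nilpotent matrix units from the outset.
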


\begin{proof}
Assume that $(\A, \G, \alpha)$ is not linking. This means that there exists a matrix unit $e\in \A$ such that $e\A \alpha_g(e) = \{0\}$ for all $g\in \G$. For every $g\in \G$ and $a\in\A$ we have
\begin{align*}
(eaU_g)^2 &= eaU_geaU_g \\
& = ea \alpha_g(e)U_gaU_g \\
&= 0 U_gaU_g = 0.
\end{align*}
In the same way for any $g_1,\cdots, g_n\in \G$ and $a_1,\cdots, a_n\in \A$
\[
(e \sum_{i=1}^n a_i U_{g_i})^2 = 0.
\]
Therefore, $e \in \Rad \A\rtimes_\alpha \G$.

 \vspace{.12in}

Conversely, assume that $(\A, \G, \alpha)$ is linking. By way of contradiction, assume that $\Rad \A\rtimes_\alpha \G$ contains a non-zero element.
As in the proof of Theorem \ref{firstsemisimple} this implies that there is a nonzero element 
\[
a\in \A \cap \Rad \A \rtimes_\alpha \G \equiv \J.
\]
It is easy to see that $\J$ is a non-zero closed ideal of $\A$. By \cite[Theorem 4.7]{Pow}, $\J$ is inductive and so it is generated by the matrix units it contains. Hence there exists at least one non-diagonal matrix unit $e \in \J\cap \A_{r_1}$.

Start with $e_1=e$. Since $(\A, \G, \alpha)$ is linking, there exist $g_1 \in \G$, $b_1 \in \A_{r_2}$ matrix unit and summands $e_1^{r_2}, e_2^{r_2} \in \A_{r_2}$ of $e_1$, so that 
\begin{equation} \label{e2}
0\neq e\alpha_{g_1}(b_1 e) = e_1^{r_2}\alpha_{g_1}( b_1e_2^{r_2}) \equiv e_2 \in \Rad \A\rtimes_\alpha \G.
\end{equation}

 \vspace{.12in}
\noindent \textit{Claim 1:} The summands $e_1^{r_2}, e_2^{r_2} \in \A_{r_2}$ of $e_1$ are distinct.

\vspace{.1in}
\noindent 
Indeed if $f\equiv e_1^{r_2} = e_2^{r_2}$, then notice that (\ref{e2}) implies that the normalizing partial isometry $\alpha_{g_1}(b_1)$ maps inside the initial space of $f$ and $f$ maps inside the initial space of $b_1$. Hence $f\alpha_{g_1}\big(b_1f\alpha_{g_1}(b_1) \big) \neq 0$ and so 
\[
(fU_gb_1)^2 = \alpha_{g_1}\big(b_1f\alpha_{g_1}(b_1) \big)U_{g_1}^2 \neq 0.
\] 
Similarly $$f\alpha_{g_1}\Big(b_1f\alpha_{g_1}\big(b_1f\alpha_{g_1}(b_1) \big) \Big)\neq 0$$ and so $(fU_{g_1}b_1)^3\neq 0$. Continuing in this fashion we conclude that $(fU_{g_1}b_1)^n\neq 0$, for all $n \in \bbN$. However, $(fU_{g_1}b_1)^n$ is the product of a normalizing partial isometry with the unitary $U_{g_1}^n$ and so $\| (fU_{g_1}b_1)^n\|=1$, which contradicts the fact the $f \in \Rad \A\rtimes_\alpha \G$.

  \vspace{.12in}

Note that at this point we cannot conclude that $e_2$ is a matrix unit nor a sum of such units because of the generic nature of $\alpha_{g_1}$. However by multiplying $b_1$ with a suitable diagonal unitary, we may arrange so that $\alpha_{g_1}(b_1e_2^{r_2})$ is a sum of matrix units with orthogonal initial and final spaces, i.e., a pure normalizing partial isometry. Hence multiply both sides of (\ref{e2}) with a suitable diagonal unitary and so $e_2$ becomes a pure normalizing partial isometry by being a product of two such partial  isometries.
Note that $b_1$ in this situation is no longer a matrix unit but instead a normalizing partial isometry which may not be even pure.

Since $e_2 \in \Rad \A\rtimes_\alpha \G$, we can find now $g_2 \in \G$, a normalizing partial isometry $b_2 \in \A$ and summands $e_1^{r_3}, e_2^{r_3} \in \A_{r_3}$ of one of the matrix units in $e_2$ so that 
\[
0\neq e_2\alpha_{g_2}(b_2 e_2) = e_1^{r_3}\alpha_{g_2}(b_2 e_2^{r_3}) \equiv e_3 \in \Rad \A\rtimes_\alpha \G 
\]
is a sum of matrix units. Since $e_2 \in \Rad \A\rtimes_\alpha \G$ an argument identical to that of Claim 1 shows that the summands $e_1^{r_3}, e_2^{r_3}$ are distinct. And so on.

 Continuing in this fashion we obtain the sequences $\{ e_m\}_{m =1}^{\infty}$, $\{ e_1^{r_m}\}_{m =1}^{\infty}$, $\{ e_2^{r_m}\}_{m =1}^{\infty}$, $\{b_m\}_{m=1}^{\infty}$ and $\{g_m\}_{m=1}^{\infty}$ so that
\begin{equation} \label{em}
e_{m+1}= e_m\alpha_{g_m}(b_me_m) = e_1^{r_{m+1}}\alpha_{g_m}( b_me_2^{r_{m+1}}), m \in \bbN.
\end{equation}
Note that the sequences $\{ e_1^{r_m}\}_{m =1}^{\infty}$ and $\{ e_2^{r_m}\}_{m =1}^{\infty}$ are distinct term by term.
 
 In an analogy to the above construction to be understood shortly, we now construct a sequence $X_m \in  \A\rtimes_{\alpha} \G$, $m= 1, 2, \dots$, as follows
 \begin{align*}
 X_1 &=eU_{h_1}b_1 \\
 X_2&= \big(eU_{h_1}b_1\big)(e U_{h_2}b_2\big) \big(eU_{h_1}b_1\big) \\
 &\vdots \\
 X_m&=X_{m-1}\big(eU_{h_m}b_m\big) X_{m-1} , \quad m \in \bbN,
  \end{align*}
  where $h_1=g_1$ and $h_m=g_m(g_1g_2\dots g_{m-1})^{-1}$, for $m\geq2$.
  
 \vspace{.12in}
\noindent \textit{Claim 2:} $X_me=e_{m+1}U_{g_1g_2\dots g_m}, m \in \bbN$.

\vspace{.1in}
\noindent The claim is indeed true for $m=1$. Assuming its validity for $m-1$, we have that
\begin{align*}
X_m e&= \big(X_{m-1}e\big)U_{h_m}b_m \big( X_{m-1}e \big) \\
	&= e_mU_{g_1g_2\dots g_{m-1}}U_{h_m}b_m e_mU_{g_1g_2\dots g_{m-1}}\\
	&=e_mU_{g_m}b_m e_mU_{g_1g_2\dots g_{m-1}}\\
	&=e_m\alpha_{g_m}(b_me_m)U_{g_m}U_{g_1g_2\dots g_{m-1}} = e_{m+1}U_{g_1g_2\dots g_m}
\end{align*}
as desired.

  \vspace{.12in}
  
  Set
  \begin{equation} \label{Rad1}
  eB \equiv  e\left( \sum_{i=1}^\infty \frac{1}{2^i}U_{h_i}b_i\right)
 \in \Rad \A\rtimes_{\alpha} \G,
  \end{equation}
  where $h_1,h_2, \dots \in \G$ are as above.
  We will show that
  \begin{equation} \label{normestimate1}
  \| (eB)^{2^m-1} \| \geq 1/2^{2^{m+1}}, \quad m \in \bbN.
  \end{equation}
  This will imply that the spectral radius of $eB$ is
  \[
  \lim_{m\rightarrow \infty} \|(eB)\|^{1/2^m-1} \geq \lim_{m\rightarrow \infty} \left(\frac{1}{2^{2^{m+1}}}\right)^{1/2^m-1} = 1/4
  \]
  and so $eB$ is not quasinilpotent, thus contradicting (\ref{Rad1}).

   To establish (\ref{normestimate1}), fix an $m \in \bbN$ and note that $(eB)^{2^m-1}e$ can be written as an infinite sum of the form
\small \begin{equation} \label{summand1}
\begin{split}
  &\sum_{k  = (k_1, \dots   k_{2^m -1} )\in \bbN^{2^m -1}} \, \, \left(\prod_{l=1}^{2^m-1} \, \, \frac{e}{2^{k_l}}U_{h_{k_l}}b_{k_l}\right)e=\\
  &\phantom{xx} = \sum \, \, 2^{-p_k} (e U_{h_{k_1}}b_{k_1}) (e U_{h_{k_2}}b_{k_2}) \dots  (e U_{h_{k_{2^m -1}}}b_{k_{2^m -1 }})e\\
  &\phantom{xx} = \sum\, \, 2^{-p_k} e\alpha_{h_{k_1}}\big(b_{k_1}e\alpha_{h_{k_2}}\big(b_{k_2} \dots e\alpha_{h_{k_{2^m-1}}}\big(b_{k_{2^m-1}}e\big)\big)\big)U_{h_{k_1}\dots h_{k_{2^m-1}}}
   \end{split}
   \end{equation}
 where $p_k$ are suitable exponents. 
 
 Note that for a specific $k = (k_1, \dots   k_{2^m -1} )\in \bbN^{2^m -1}$ the corresponding summand in (\ref{summand1}) is 
\[
2^{-p_k}X_me= 2^{-p_k}e_{m+1}U_{g_1g_2\dots g_m}
\]
because of Claim 2 above. The complication we are facing now is that the terms appearing in (\ref{summand1}) are not necessarily \textit{positive} multiples of sums of matrix units. (This is the case for instance when the automorphisms $\alpha_g$ are actually gauge automorphisms.)  In order to bypass this problem and actually show that the norm of the sum in (\ref{summand1}) is as large as the norm of each one of its terms, we capitalize on our careful choice of the partial isometries $b_m$, $ m\in \bbN$. We need to establish the following two claims.
  
   \vspace{.12in}
\noindent \textit{Claim 3:} $b_ib_j^* = 0$ for $i\neq j$.

\vspace{.1in}
\noindent
  Note that
   \begin{equation} \label{nested}
e_1^{r_2} ( e_1^{r_2})^*\geq \dots \geq e_1^{r_i}( e_1^{r_i})^*\geq e_1^{r_{i+1}}( e_1^{r_{i+1}})^* \geq \dots
 \end{equation}
Since $b_i^*b_{i} \leq e_2^{r_{i+1}}(e_2^{r_{i+1}})^*$ and $e_2^{r_{i+1}}\neq e_1^{r_{i+1}}$ we have 
\[
b_i^*b_{i} \perp e_1^{r_{i+1}}( e_1^{r_{i+1}})^*
\] 
and so by (\ref{nested})
\begin{equation} \label{oneleg}
b_i^*b_{i} \perp e_1^{r_{i +l}}(e_1^{r_{i+l}})^*, \quad l = 1, 2, \dots.
\end{equation}
 On the other hand
 \[
 b_i^*b_{i} \leq e_2^{r_{i+1}}(e_2^{r_{i+1}})^* \leq e_ie_i^* \leq e_1^{r_i}(e_1^{r_i})^*
 \]
 and so  replacing $i$ with $i+l$ in the above, we obtain
 \begin{equation} \label{otherleg}
 b_{i+l}^*b_{i+l}  \leq e_1^{r_{i +l}}(e_1^{r_{i+l}})^*, \quad l = 1, 2, \dots.
  \end{equation}
  By (\ref{oneleg}) and (\ref{otherleg}), $b_{i+l}^*b_{i+l}  \perp  b_i^*b_{i} $, $l = 1,2, \dots$, which proves the claim.

   \vspace{.12in}
\noindent \textit{Claim 4:} Different choices for the index $k = (k_1, k_2, \dots k_{2^m-1}) $ produce terms in (\ref{summand1}) with orthogonal domains.

\vspace{.1in}
\noindent We will establish this for the case of two factors and will leave the details of the general case to the reader.

Indeed let
\[
X= eU_{h_{k_1}}b_{k_1} eU_{h_{k_2}}b_{k_2}e \ \ {\rm and} \ \ Y=  eU_{h_{l_1}}b_{l_1} eU_{h_{l_2}}b_{l_2}e
\]
 and assume that $XY^* \neq 0$. Since $b_k1$ is a normalizing partial isometry there exists a projection $p \in \A^* \cap \A$ so that $b_{k_2}ee^*= pb_{k_2}$. Then,
\begin{align*}
XY^*&=  eU_{h_{k_1}}b_{k_1} eU_{h_{k_2}}b_{k_2}ee^*b_{l_2}^*U_{h_{l_2}}^*e^*b_{l_1}^*U_{h_{l_1}}^*e^*\\
&=eU_{h_{k_1}}b_{k_1} eU_{h_{k_2}}pb_{k_2}b_{l_2}^*U_{h_{l_2}}^*e^*b_{l_1}^*U_{h_{l_1}}^*e^*.
\end{align*}
Since, $XY^*\neq 0$, Claim 3 implies that $k_2 = l_2$. Hence, 
\[
p'= eU_{g_{k_2}}pb_{k_2}b_{k_2}^*U_{g_{l_2}}^*e^* \in \A^* \cap \A
\] 
is a diagonal projection. Now there exists a projection $p'' \in \A^* \cap \A$ so that $b_{k_1}p'= p''b_{k_1}$. Hence
\begin{align*}
XY^* & = eb_{k_1}U_{g_{k_1}} p'U_{g_{l_1}}^*b_{l_1}^*e^*
\\ &= eU_{g_{k_1}}p''b_{k_1}b_{l_1}^*U_{g_{l_1}}^*e^*
\end{align*}
Another application of Claim 3 implies $k_1=l_1$, as desired.

  \vspace{.12in}

 Claim 4 shows now that $\| (eB)^{2^m-1}e\|$ is at least as large as the norm of each non-zero term in (\ref{summand1}). This shows now that 
\begin{equation} \label{one more}
\begin{split}
\| (eB)^{2^m-1}\|& \geq \| (eB)^{2^m-1}e\|\geq  \|a_{g_1g_2\dots g_n}U_{g_1g_2\dots g_n}\|\\
                   &\geq \|2^{-p_k}e_{m+1}U_{g_1g_2\dots g_{m}}\| \\
                      &= 2^{-p_k}.
                      \end{split}
\end{equation}
Note that the multi-index $k = (k_1, \dots   k_{2^m -1} )\in \bbN^{2^m -1}$ appearing above is given by
 
 \[ \begin{array}{c}
  k_{2^{m-1}} = m \\
  k_{2^{m-2}}= k_{3\cdot 2^{m-2}} = m-1\\
   k_{2^{m-3}}= k_{3\cdot2^{m-3}}= k_{5\cdot2^{m-3}}= k_{7\cdot2^{m-3}}=m-2\\
   \cdots \cdots \cdots \\
  k_1= k_3=k_5 = \dots = k_{2^m-1}=1,
  \end{array}
\]
and so
\[
p_{k}=m+2(m-1) +2^2(m-2)+\dots +2^{m-1} = 2^{m+1}-m -2 \]
by an easy inductive argument. Hence $p_k \leq 2^{m+1}$, $ m \in \bbN$. This fact, together with (\ref{one more}),  implies (\ref{normestimate1}) and leads to the the desired contradiction. Hence $\A\rtimes_{\alpha} \G$ is semisimple.
 \end{proof}
 
 \begin{remark} \label{promise}
 In order to show that the converse of Theorem~\ref{firstsemisimple} fails, it suffices to prove Theorem~\ref{mainsemisimple} only in the case where the automorphisms $\alpha_g$ map matrix units to sums of matrix units, as Example~\ref{linking example} clearly indicates. As it turns out, the proof of Theorem~\ref{mainsemisimple} simplifies considerably in that case. 
  \end{remark}

 If we specialize the automorphisms or the algebras in the previous result we do have the converse of Theorem \ref{firstsemisimple}.

 \begin{corollary}\label{quasi-inner}
 Let $\A$ be a strongly maximal TAF algebra and $\G$ a discrete abelian group acting on $\A$ by quasi-inner isometric automorphisms. $\A$ is semisimple if and only if $\A \rtimes_\alpha \G$ is semisimple.
 \end{corollary}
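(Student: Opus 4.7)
The forward direction is immediate: Theorem~\ref{firstsemisimple} already gives that $\A \rtimes_\alpha \G$ is semisimple whenever $\A$ is semisimple and $\G$ is discrete abelian, and no quasi-inner hypothesis is needed for that implication. The whole content of the corollary lies in the converse, and the plan is to use Theorem~\ref{mainsemisimple} to replace the semisimplicity of $\A \rtimes_\alpha \G$ by the linking condition on $(\A, \G, \alpha)$, and then show that in the presence of quasi-inner actions this linking condition collapses onto Donsig's criterion $e \A e \neq \{0\}$ for matrix units $e$.

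Concretely, assume $\A \rtimes_\alpha \G$ is semisimple. By Theorem~\ref{mainsemisimple} the system is linking, so for every matrix unit $e \in \A$ there is some $g \in \G$ with $e \A \alpha_g(e) \neq \{0\}$. Fix $e \in \A_n$. Since $\alpha_g$ is quasi-inner, it restricts to an isometric algebra automorphism of $\A_n \cong T_{k_n}$. The key structural step is to invoke the classification of isometric automorphisms of the finite-dimensional upper-triangular matrix algebras: every such automorphism is implemented by conjugation with a diagonal unitary of $\fA_n$. Hence $\alpha_g(e) = \lambda e$ for a unimodular scalar $\lambda$, so $e\A\alpha_g(e) = \lambda \cdot e \A e$, and the linking condition on $e$ becomes $e \A e \neq \{0\}$. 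Since this holds for every matrix unit in $\A$, Donsig's criterion applies and $\A$ is semisimple.

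The only real obstacle is the structural fact just used, namely that a quasi-inner isometric automorphism acts on each matrix unit by multiplication by a unimodular scalar. If it is not convenient to cite this directly from the TAF literature, I would supply a short argument: the diagonal masa $\C_n = \A_n \cap \A_n^*$ is the unique masa making $\A_n$ a nest subalgebra with the canonical order, hence it is preserved; the induced action on its finite spectrum is forced to be the identity because $\alpha_g$ must preserve the nest of standard invariant projections defining the upper-triangular order; normalizer considerations then reduce $\alpha_g|_{\A_n}$ to conjugation by a diagonal unitary. With this in hand, the proof becomes a direct chase of matrix units through Theorem~\ref{firstsemisimple} in one direction and Theorem~\ref{mainsemisimple} together with Donsig's criterion in the other.
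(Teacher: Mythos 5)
Your proof is correct and follows essentially the same route as the paper's: the paper's entire argument is the observation that a quasi-inner isometric automorphism multiplies each matrix unit by a unimodular scalar, so that linking reduces to Donsig's criterion, and then Theorem~\ref{mainsemisimple} (together with Theorem~\ref{firstsemisimple} for the forward direction) finishes the proof. Your added sketch justifying the unimodular-scalar fact via diagonal unitary implementation is a reasonable elaboration of what the paper simply asserts.
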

 \begin{proof}
 A quasi-inner automorphism acts on a matrix unit $e$ by multiplying $e$ with some unimodular scalar. By Donsig's criterion, this fact implies that $(\A, \G, \alpha)$ is linking if and only if $\A$ is semisimple.
 \end{proof}

 A strongly maximal TAF algebra $\A = \varinjlim (\A_n, \rho_n) $ is said to be a TUHF algebra if each one of the finite dimensional algebras $\A_n$ is isomorphic to the upper triangular matrices  $\T_{k_n} \subseteq M_{k_n}(\bbC)$, $n \in \bbN$.
 
 \begin{theorem}\label{TUHFsemisimple}
 Let $(\A, \G, \alpha)$ be a dynamical system with $\A$ a strongly maximal TUHF algebra and $\G$ a discrete abelian group. $\A$ is semisimple if and only if $\A \rtimes_\alpha \G$ is semisimple.
 \end{theorem}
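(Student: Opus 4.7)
The forward direction is immediate from Theorem~\ref{firstsemisimple}. For the converse, I would assume that $\A \rtimes_\alpha \G$ is semisimple and show that $\A$ itself must be semisimple.

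The first move is to apply Theorem~\ref{mainsemisimple}: since $\A \rtimes_\alpha \G$ is semisimple and $\A$ is a strongly maximal TAF algebra, the dynamical system $(\A, \G, \alpha)$ is linking. By Donsig's criterion, it then suffices to show that every matrix unit $e \in \A$ has a link, i.e.\ $e\A e \neq 0$. Diagonal matrix units are trivially linked, so suppose by contradiction that $e = e_{ij}^{(n)} \in \A_n \cong \T_{k_n}$ with $i < j$ and $e\A e = \{0\}$. The linking property furnishes some $g \in \G$ and an element $v \in \A$ with $e v \alpha_g(e) \neq 0$; my task is to convert this into a nonzero element of $e\A e$.

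The TUHF hypothesis enters precisely here. Because each $\A_m$ is a \emph{single} upper-triangular block $\T_{k_m}$ rather than a direct sum, the diagonal $\C = \A \cap \A^*$ is a UHF algebra whose spectrum carries a rigid, totally ordered fiber structure coming from the triangular ordering in each $\T_{k_m}$. Any isometric automorphism of $\A$ extends to a $*$-automorphism of the UHF envelope $\fA$, restricts to a trace-preserving automorphism of $\C$, and must preserve the triangular order on matrix units at each level. After passing to a sufficiently large $m \geq n$ (so that both $v$ and a suitable approximation of $\alpha_g(e)$ lie in $\A_m$), this forces $\alpha_g(e)$ to be described, up to diagonal unitary cocycles, by an order-compatible permutation of the matrix-unit indices appearing in the level-$m$ expansion $e = \sum_{s} e^{(m)}_{i_s, j_s}$; the key point is that the total order on the diagonal spectrum of a TUHF leaves no room for the index-scrambling that is possible in the general TAF setting (cf.~Example~\ref{linking example}).

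With that structural fact in hand, chasing the index conditions through the product $e v \alpha_g(e) \neq 0$ in $\A_m$ and using the upper-triangular constraints on the matrix units involved, I expect to produce a pair of indices $s \neq t$ in the level-$m$ expansion of $e$ satisfying $j_s \leq i_t$. But then $e^{(m)}_{j_s, i_t} \in \A_m$ is a legitimate upper-triangular matrix unit in $\A$, and
\[
e \cdot e^{(m)}_{j_s, i_t} \cdot e \; = \; e^{(m)}_{i_s, j_t} \;\neq\; 0,
\]
contradicting $e \A e = \{0\}$. The main obstacle is making the order-compatibility of $\alpha_g$ on the level-$m$ matrix-unit expansion rigorous; this is where the single-block TUHF structure is genuinely needed and where the argument diverges from the general TAF case in which the converse of Theorem~\ref{firstsemisimple} fails.
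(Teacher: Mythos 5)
Your reduction is the same as the paper's: the forward direction is Theorem~\ref{firstsemisimple}, and via Theorem~\ref{mainsemisimple} the converse reduces to showing that a linking TUHF system is semisimple. The gap is in how you propose to close that last step. You apply the linking property \emph{once}, obtaining $g\in\G$ with $e\A\alpha_g(e)\neq\{0\}$, and then assert that the ``order-compatibility'' of $\alpha_g$ forces a pair of indices $s\neq t$ in the level-$m$ expansion of $e$ with $j_s\leq i_t$. This does not follow. An isometric automorphism of a TUHF algebra does not preserve matrix units ``at each level''; it carries $\A_n$ into some $\A_m$ with $m$ large, and at that level it is only constrained by the ordered-partition data of the regular embeddings (this is the content of \cite{Ram}, invoked in the paper's Claim~2). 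Within those constraints $\alpha_g$ can genuinely shift the diagonal support of $e_1^{(n)}$ to the right, so that $e_n^{(n)}$ reaches $\alpha_g(e_1^{(n)})$ without reaching $e_1^{(n)}$ --- exactly the phenomenon of Example~\ref{linking example}. A single application of linking therefore yields no contradiction, and your claimed pair $(s,t)$ need not exist.

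What the paper actually does is quantitative and two-staged. The first application of linking (Claim~1) does not produce a link; it produces an intermediate index $1<k<n_1$ with $e_{n_1}^{(n_1)}\A e_k^{(n_1)}=e_k^{(n_1)}\A e_1^{(n_1)}=\{0\}$. Claim~2 then gives the crucial rigidity of the single-block TUHF structure: under any unital regular $*$-extendable embedding $\T_{n_1}\to\T_{n_2}$, the image of the diagonal unit $e_k^{(n_1)}$ has smallest index at most $(k-1)n_2/n_1+1$ and largest index at least $kn_2/n_1$. A \emph{second} application of linking, now to $e_{1,n_1}^{(n_1)}$, yields the chain $kn_2/n_1\leq k_{n_2/n_1}<l_1\leq j'_{n_2/n_1}<k_1'\leq(k-1)n_2/n_1+1$, which is impossible. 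Your sketch contains neither the intermediate index $k$, nor the spread estimate of Claim~2, nor the second application of linking, and these are precisely the ingredients that distinguish the TUHF case from the general TAF case where the converse fails. As written, the argument does not go through.
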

 \begin{proof}
 In light of Theorems \ref{firstsemisimple} and \ref{mainsemisimple} we only need to establish that $(\A, \G, \alpha)$ linking implies that $\A$ is semisimple. This is accomplished by careful bookkeeping of indices.
 
 Assume that $(\A, \G, \alpha)$ is a linking dynamical system with $\A$ not semisimple. By Donsig's criterion there is a matrix unit $e\in \T_n$ which cannot be linked in $\A$, i.e., $e\A e = \{0\}$. Therefore if $e_1^{(n)}, e_n^{(n)}$ are the first and last diagonal matrix units in $\T_n$, then $e_n^{(n)}\A e_1^{(n)} = \{0\}$ or otherwise by multiplying $e_n^{(n)}\A e_1^{(n)} $ from the left and right with appropriate matrix units in $\T_n$ we would get $e\A e \neq \{0\}$. Since $e_n^{(n)}\A e_1^{(n)} = \{0\}$ we get $e_{1,n}^{(n)}\A e_{1,n}^{(n)} = \{0\}$ and so $e_{1,n}^{(n)} \in \Rad \A$ because it generates a nilpotent ideal.
 
  \vspace{.12in}
\noindent \textit{Claim 1:} There exists an $n_1\in \bbN$ and an index $1 < k < n_1$ such that 
\[
e_{n_1}^{(n_1)}\A e_k^{(n_1)} \ = \ e_k^{(n_1)}\A e_1^{(n_1)} \ = \ \{0\}.
\]

\vspace{.1in} 
By linking there exists a $g_1 \in \G$ such that $e_{1,n}^{(n)}\A \alpha_{g_1}(e_{1,n}^{(n)}) \neq \{0\}$ which is the same as $e_n^{(n)}\A \alpha_{g_1}(e_1^{(n)}) \neq \{0\}$.
By inductivity there exists an $n_1\in \bbN$ such that $e_n^{(n)}\T_{n_1} \alpha_{g_1}(e_1^{(n)}) \neq \{0\}$ and $\alpha_{g_1}(\T_n) \subset \T_{n_1}$. Hence, 
\begin{align*}
e_1^{(n)} = \sum_{i=1}^{n_1/n} e_{j_i}^{(n_1)}, & & \alpha_{g_1}(e_1^{(n)}) = \sum_{i=1}^{n_1/n} e_{j_i'}^{(n_1)}, 
\\  e_n^{(n)} = \sum_{i=1}^{n_1/n} e_{l_i}^{(n_1)}, & \ \ \ \ \ \ \ \ \  \ \ {\rm and} \ &\alpha_{g_1}(e_n^{(n)}) = \sum_{i=1}^{n_1/n} e_{l_i'}^{(n_1)},
\end{align*}
where $1=j_1 < \cdots < j_{n_1/n}$, $1=j_1' < \cdots < j_{n_1/n}'$, $l_1 < \cdots < l_{n_1/n} = n_1$ and $l_1' < \cdots < l_{n_1/n}' = n_1$.
Now 
\[
e_n^{(n)}\A e_1^{(n)} = \{0\} \ \Rightarrow \ e_{l_1}^{(n_1)}\A e_1^{(n_1)} = \{0\}, \ \ {\rm and}
\]
\[
e_n^{(n)}\A e_1^{(n)} = \{0\} \ \Rightarrow \ \alpha_{g_1}(e_n^{(n)})\A \alpha_{g_1}(e_1^{(n)}) = \{0\} \ \Rightarrow \ e_{n_1}^{(n_1)}\A e_{j_{n_1/n}'}^{(n_1)} = \{0\}.
\]
As well, 
\[
e_n^{(n)}\T_{n_1} \alpha_{g_1}(e_1^{(n)}) \neq \{0\} \ \Rightarrow \ l_1 \leq j_{n_1/n}'.
\]
Finally, let $k = l_1$. We already have $e_k^{(n_1)}\A e_1^{(n_1)} = \{0\}$ and note that
\[
e_{n_1}^{(n_1)}\A e_{j_{n_1/n}'}^{(n_1)} = \{0\} \ \Rightarrow \ e_{n_1}^{(n_1)}\A e_{k}^{(n_1)} = e_{n_1}^{(n_1)}\A e_{l_1, j_{n_1/n}'}^{(n_1)}e_{j_{n_1/n}'}^{(n_1)} = \{0\}.
\]
Therefore, the claim is verified.

\vspace{.12in}
\noindent \textit{Claim 2:} Suppose $\rho : \T_{m_1} \rightarrow \T_{m_2}$ is a unital regular $*$-extendable embedding.
If $\rho(e_k^{(n_1)}) = \sum_{i=1}^{n_2/n_1} e_{k_i}^{(n_2)}$ with $k_1 < \cdots < k_{n_2/n_1}$ then $k_1 \leq (k-1) n_2/n_1 + 1$ and $k_{n_2/n_1} \geq k n_2/n_1$. 

\vspace{.1in} 
This follows from the ordered partition theory of \cite{Ram} due to the rigid structure of such embeddings.

\vspace{.12in} Let $n_1, k$ be those found in Claim 1.
By linking there exists $g_2\in \G$ such that $e_{1, n_1}^{(n_1)}\A \alpha_{g_2}(e_{1,n_1}^{(n_1)}) \neq \{0\}$. Thus, there exists $n_2\in \bbN$ such that
$e_{n_1}^{(n_1)}\T_{n_2}\alpha_{g_2}(e_1^{(n_1)}) \neq \{0\}$ and 
\[
\alpha_{g_2}(e_1^{(n_1)}) = \sum_{i=1}^{n_2/n_1} e_{j_i'}^{(n_2)} \ \ e_{n_1}^{(n_1)} = \sum_{i=1}^{n_2/n_1} e_{l_i}^{(n_2)}
\]
\[
e_k^{(n_1)} = \sum_{i=1}^{n_2/n_1} e_{k_i}^{(n_2)}, \ \ \alpha_{g_2}(e_k^{(n_1)}) = \sum_{i=1}^{n_2/n_1} e_{k_i'}^{(n_2)},
\]
where the indices are again in increasing order.
Now
\[
e_{n_1}^{(n_1)}\A e_k^{(n_1)} = \{0\}  \ \Rightarrow \ k_{n_2/n_1} < l_1, \ {\rm and}
\]
\[
e_k^{(n_1)}\A e_1^{(n_1)} = \{0\} \ \Rightarrow \ \alpha_{g_2}(e_k^{(n_1)})\A \alpha_{g_2}(e_1^{(n_1)}) = \{0\} \ \Rightarrow \ j_{n_2/n_1}' < k_1'.
\]
By $e_{n_1}^{(n_1)}\T_{n_2}\alpha_{g_2}(e_1^{(n_1)}) \neq \{0\}$, Claim 2 and the above inequalities we have that
\[
k n_2/n_1 \leq k_{n_2/n_1} < l_1 \leq j_{n_2/n_1}' < k_1' \leq (k-1)n_2/n_1 - 1,
\]
which is a contradiction.
Therefore, if $(\A, \G, \alpha)$ is linking then $\A$ is semisimple.
 \end{proof}
 \section{Crossed products by compact abelian groups} \label{compact abelian semisimple}
 
 Our previous results on the semimplicity of crossed products by discrete abelian groups raise the question of what happen in other cases. Here we address the semisimplicity of crossed products by compact abelian groups. Remarkably the situation reverses. The key ingredient in our study is non-selfadjoint Takai duality.
 
 We need the following.
 
 \begin{lemma} \label{tensorcompact}
Let $\A$ be an operator algebra and let $\K (\H)$ denote the compact operators acting on a Hilbert space $\H$. If $\A \otimes \K(\H)$ is semisimple, then $\A$ is semisimple.
\end{lemma}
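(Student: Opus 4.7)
The plan is to prove the contrapositive: if $a \in \Rad \A$ is non-zero then $a \otimes p \in \Rad(\A \otimes \K(\H))$ is also non-zero, where $p$ is a fixed rank-one projection in $\K(\H)$. Since $a \mapsto a \otimes p$ is an (isometric) injection, this will immediately give the result.

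Fix a unit vector $e_1 \in \H$, set $p = e_1 e_1^*$, and realise $\A \subseteq B(\K)$ so that $\A \otimes \K(\H) \subseteq B(\K \otimes \H)$. The key observation is that the isometry $V : \K \to \K \otimes \H$, $V\xi = \xi \otimes e_1$, satisfies $V^*V = 1_\K$, $VV^* = 1 \otimes p$, and factors $a \otimes p$ through $\A$ in the form
\[
a \otimes p = V a V^*, \quad a \in \A.
\]
Moreover, for any $z \in \A \otimes \K(\H)$, the compression $T := V^* z V$ lies in $\A$: this is clear on elementary tensors $b \otimes k$ (where $T = \langle ke_1, e_1\rangle b$) and extends by linearity and continuity because $\A$ is norm-closed.

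The heart of the argument is the algebraic identity, proved by an easy induction using $V^*V = 1_\K$,
\[
\big((a \otimes p)z\big)^m = V(aT)^{m-1}\, a V^* z, \quad m \geq 1,
\]
which yields the norm estimate $\|((a \otimes p)z)^m\|^{1/m} \leq \|(aT)^{m-1}\|^{1/m}\|a\|^{1/m}\|z\|^{1/m}$. Since $\Rad \A$ is a two-sided ideal, $aT \in \Rad \A$, so $\spr(aT)=0$, and consequently $\spr((a \otimes p)z) = 0$ for every $z \in \A \otimes \K(\H)$. By the standard Banach-algebra characterisation of the Jacobson radical recalled at the start of the chapter, this gives $a \otimes p \in \Rad(\A \otimes \K(\H))$.

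Now, if $\A \otimes \K(\H)$ is semisimple then $a \otimes p = 0$, which forces $a = 0$; hence $\Rad \A = \{0\}$. The only subtle point is ensuring that the compression $V^* z V$ genuinely lands in $\A$ for arbitrary $z \in \A \otimes \K(\H)$, but this is a routine density/continuity matter and is the only thing separating an algebraic formula on elementary tensors from the general statement.
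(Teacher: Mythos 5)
Your proof is correct and is essentially the paper's argument in contrapositive form: both compress by a rank-one projection in the $\K(\H)$ factor (your $V a V^* = a\otimes p$ versus the paper's $x\otimes e_{i_0 i_0}$ and $(I\otimes e_{i_0 i_0})A(I\otimes e_{i_0 i_0}) = a\otimes e_{i_0 i_0}$), reduce powers of the product to powers of an element of $\Rad\A$, and invoke the spectral-radius characterization of the radical. The only cosmetic difference is that you estimate $\|((a\otimes p)z)^m\|$ while the paper estimates $\|(AX)^n\|$, and the paper's stated characterization of $\Rad$ covers both orderings.
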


\begin{proof}
Let $\{ \xi_i \}_{i \in \bbI}$ be an orthonormal basis for $\H$ and let $e_{i j}$ be the rank one operator mapping $\xi_j$ to $\xi_i$, $ i, j \in \bbI$. Assume that $\A$ acts on some Hilbert space $\H'$. Then $\A \otimes \K(\H)$ is generated as an operator algebra by all elementary tensors of the form $a \otimes e_{ ij} \in B(\H' \otimes \H)$, $ a \in \A$, $ i, j \in \bbI$.

By way of contradiction, assume that $0 \neq x \in \Rad \A$. Fix an $i_0 \in \bbI$ and let $X \equiv x\otimes e_{i_0 i_0} \in \A \otimes \K(\H)$. An easy calculation shows that given $A \in \A \otimes\K(\H)$, there exists $a \in \A$ so that 
\[
a\otimes e_{i_0 i_0}  = (I \otimes e_{i_0 i_0}) A (I \otimes e_{i_0 i_0} )
\]
and so 
\begin{align*}
(AX)^n &= A(x\otimes  e_{i_0 i_0})  \Big( (I \otimes e_{i_0 i_0}) A (I \otimes e_{i_0 i_0} ) (x\otimes e_{i_0 i_0})\Big)^{n-1} \\
    &=A(x\otimes  e_{i_0 i_0})  \big( (ax)^{n-1}\otimes e_{i_0 i_0}\big)
\end{align*}
for all $n \in \bbN$.
Hence 
\[
\begin{split}
\lim_n \|(AX)^n\|^{1/n}&\leq \lim_n \| A(x\otimes  e_{i_0 i_0}) \|^{1/n} \limsup_n  \|(ax)^{n-1}\|^{1/n} \\
                                               &=\limsup_n  \|(ax)^{n}\|^{1/n} =0
                                               \end{split}
\]
because $x \in \Rad \A$. Hence $0 \neq X \in \Rad \A\otimes \K( \H)$, which is the desired contradiction.
\end{proof}

\begin{theorem} \label{Tsemis}
Let $(\A, \G , \alpha)$ be a dynamical system, with $\G$ a compact abelian group. If $\A \rtimes_{\alpha} \G$ is semisimple, then $\A$ is semisimple.
\end{theorem}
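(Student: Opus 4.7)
The plan is to combine non-selfadjoint Takai duality with the preservation result for discrete abelian crossed products (Theorem~\ref{firstsemisimple}) and the tensor-with-compacts lemma (Lemma~\ref{tensorcompact}) just proved. The strategy is to form the iterated crossed product by the dual group and identify it with something transparently built from $\A$.

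First, I would observe that since $\G$ is compact abelian, its Pontryagin dual $\hat{\G}$ is discrete abelian. Consequently the dual dynamical system $\big(\A \rtimes_{\alpha} \G,\, \hat{\G},\, \hat{\alpha}\big)$ falls exactly under the hypotheses of Theorem~\ref{firstsemisimple}: given that $\A \rtimes_{\alpha} \G$ is semisimple by assumption, the iterated crossed product
\[
\big(\A \rtimes_{\alpha} \G\big) \rtimes_{\hat{\alpha}} \hat{\G}
\]
is semisimple.

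Next, I would invoke the non-selfadjoint Takai duality (Theorem~\ref{Takai duality}) to identify this iterated crossed product with $\A \otimes \K\big(L^2(\G)\big)$ via a completely isometric isomorphism. Thus $\A \otimes \K\big(L^2(\G)\big)$ is semisimple.

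Finally, Lemma~\ref{tensorcompact} applied to the Hilbert space $\H = L^2(\G)$ immediately yields that $\A$ itself is semisimple, completing the proof. There is no real obstacle here beyond correctly chaining the three ingredients; the novelty is conceptual, namely that compactness of $\G$ forces $\hat{\G}$ to be discrete so that the forward direction of Theorem~\ref{firstsemisimple} can be applied to the \emph{dual} system, and then Takai duality converts semisimplicity of the dual iterated crossed product back into a statement about $\A$ (up to stabilization), which Lemma~\ref{tensorcompact} absorbs.
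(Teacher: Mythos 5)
Your argument is exactly the paper's proof: apply Theorem~\ref{firstsemisimple} to the dual system (noting $\hat{\G}$ is discrete abelian), use Takai duality to identify the iterated crossed product with $\A \otimes \K(L^2(\G))$, and finish with Lemma~\ref{tensorcompact}. Correct and identical in approach.
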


\begin{proof} Assume that $\A \rtimes_{\alpha} \G$ is semisimple. Then Theorem~\ref{firstsemisimple} implies that $\big(\A \rtimes_{\alpha}\G \big)\rtimes_{\hat{\alpha}} \hat{\G}$ is semisimple. By Takai duality, $\A \otimes \K \big( L^2(\G)\big)$ is semisimple and so by Lemma ~\ref{tensorcompact}, $\A$ is semisimple, as desired
\end{proof}

Let us see now that the converse of the above theorem is not necessarily true. Therefore, Theorem~\ref{firstsemisimple} does not extend beyond discrete abelian groups.

\begin{example} \label{Texam}
\textit{A dynamical system $(\B, \bbT , \beta)$, with $\B$ a semisimple operator algebra, for which $\B \rtimes_{\beta} \bbT$ is not semisimple.}

We will employ again our previous results and Takai duality. In Example~\ref{linking example} we saw a linking dynamical system $(\A, \bbZ, \alpha)$ for which $\A$ is not semisimple. Since $(\A, \bbZ, \alpha)$ is linking, we have by Theorem~\ref{mainsemisimple} that the algebra $\B \equiv \A\rtimes_{\alpha} \bbZ$ is semisimple. Let $\beta\equiv \hat{\alpha}$. Then,
\[
\B \rtimes_{\beta} \bbT =\big(\A \rtimes_{\alpha}\bbZ\big) \rtimes _{\hat{\alpha}}\bbT \simeq \A\otimes \K(\ell^2(\bbZ)),
\]
which is not semismple, by Lemma~\ref{tensorcompact}.
\end{example}
\section{More examples of crossed product Dirichlet algebras}

In Chapter~\ref{Dirichlet Sect} we promised additional examples of crossed products which are Dirichlet algebras and yet fail to be isometrically isomorphic to any tensor algebra. We remind the reader that the existence of such algebras was an open problem in \cite{DKDoc} that was only solved recently by Kakariadis \cite{Kak}. 

\begin{definition}
Let $\A=  \varinjlim (\A_n, \rho_n)$ be a strongly maximal TAF algebra and let $\A_0 \equiv \varinjlim (\Rad \A_n, \rho_n) \subseteq \A$. We say that $\A$ is fractal-like if $\A_0 = [\A_0 ^2\,]$.
\end{definition}

The familiar refinement and alternation limit algebras \cite{Pow} are examples of fractal-like limit algebras.

 \begin{theorem}  \label{infinite div}
Let $\A$ be a strongly maximal TAF algebra and let $\psi: \A \rightarrow \A$ be an isometric quasi-inner automorphism. If $\A$ is fractal-like, then $\A \rtimes_{\psi} \bbZ$ is a Dirichlet algebra which is not isometrically isomorphic to the tensor algebra of any $\ca$-correspondence.
\end{theorem}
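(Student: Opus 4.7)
The plan is first to establish the Dirichlet property of $\A \rtimes_\psi \bbZ$ and then to derive a contradiction from the hypothetical tensor-algebra structure by means of an ideal-theoretic invariant produced by the fractal-like hypothesis.

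Every strongly maximal TAF algebra is itself Dirichlet: at each finite level $\A_n + \A_n^* = \fA_n$, since any matrix decomposes as an upper triangular matrix plus the adjoint of an upper triangular matrix, so $\A + \A^*$ is dense in $\fA = \cenv(\A)$. Hence Theorem~\ref{Dirichletenv} yields at once that $\A \rtimes_\psi \bbZ$ is Dirichlet with $\cenv(\A \rtimes_\psi \bbZ) \simeq \fA \rtimes_\psi \bbZ$.

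For the non-tensor-algebra conclusion, I argue by contradiction: suppose $\Phi \colon \A \rtimes_\psi \bbZ \to \T_X^+$ is an isometric isomorphism. Being quasi-inner, $\psi$ multiplies each matrix unit by a unimodular scalar, hence fixes $\C = \A \cap \A^*$ pointwise and preserves $\A_0 = \varinjlim \Rad \A_n$. Let $\J \subseteq \A \rtimes_\psi \bbZ$ denote the closed two-sided ideal generated by $\A_0$. The covariance relation $U a U^{-1} = \psi(a)$ combined with $\psi(\A_0) = \A_0$ yields $\J = \overline{\sum_{n \in \bbZ} \A_0 U^n}$. The fractal-like identity $\A_0 = [\A_0^2]$ then propagates to $[\J^2] = \J$, since $(aU^n)(bU^m) = a \psi^n(b) U^{n+m} \in [\A_0^2] U^{n+m} = \A_0 U^{n+m}$ for all $a, b \in \A_0$. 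Moreover, the TAF diagonal expectation $E_\C \colon \A \to \C$ commutes with $\psi$ (because $\psi$ fixes $\C$ pointwise and scales off-diagonal matrix units, both compatible with the kernel of $E_\C$) and thus extends to a contractive expectation $\A \rtimes_\psi \bbZ \to \C \otimes C(\bbT)$. This expectation annihilates $\J$ while fixing diagonal projections, so $\J$ is a proper, closed, idempotent two-sided ideal.

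The main obstacle is to conclude by showing that no tensor algebra $\T_X^+$ can admit a proper closed two-sided idempotent ideal, so that $\Phi(\J)$ produces a contradiction. My plan here is to exploit the gauge-action expectation $E_X \colon \T_X^+ \to \rho_\infty(\C)$, which is multiplicative on $\T_X^+$ because multiplication strictly adds degrees under the gauge grading. Setting $I := \Phi(\J)$, one analyzes the Fourier components of $I$: in the gauge-invariant case, if $E_X(I) = 0$ then iterating $[I^2] \subseteq \{\text{degree} \geq 2k\}$ forces $I \subseteq \bigcap_n \{\text{degree} \geq 2^n\} = 0$, while $E_X(I) \neq 0$ combined with the analytic contractions $\gamma_w$ (for $|w|<1$) should force $I$ to contain an approximate unit, contradicting properness. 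Extending this analysis from gauge-invariant ideals to general closed ideals via averaging over the gauge group $\bbT$ is the crux of the argument and the step I expect to be most delicate.
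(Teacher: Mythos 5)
Your Dirichlet argument is fine and is exactly the paper's route (strongly maximal TAF algebras are Dirichlet at every finite level, then Theorem~\ref{Dirichletenv}). The second half, however, rests on a lemma that is false: tensor algebras \emph{do} admit nonzero proper closed two-sided idempotent ideals. Already for the simplest tensor algebra $A(\bbD)=\T^+_{(\bbC,\bbC)}$, the ideal $I=\{f\in A(\bbD): f(1)=0\}$ is nonzero, proper, closed and two-sided, it meets the diagonal $\bbC$ trivially, and by the Beurling--Rudin description of closed ideals of the disc algebra the closed ideal generated by $(z-1)^2$ is again $I$ (since $z-1$ is outer), so $\overline{[I^2]}=I$. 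The same example refutes the second branch of your dichotomy: here $E_X(I)=\bbC\neq 0$, yet $I$ is proper and contains no approximate unit. The first branch ($E_X(I)=0$ forces $I=0$ by degree doubling) is correct for \emph{gauge-invariant} ideals, but you have no way to establish $E_X(\Phi(\J))=0$: a closed ideal of $\T^+_X$ need not be gauge-invariant (translate the example above by $\gamma_z$), so the inclusion $E_X(I)\subseteq I\cap\C$ is unavailable; and although the generators $aU^n$ of $\J$ with $a\in\Rad\A_m$ are nilpotent, multiplicativity of $E_X$ only gives that $E_X(\Phi(aU^n))$ is a nilpotent element of $\C$, which need not vanish unless $\C$ has no nonzero nilpotents.

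That is exactly the ingredient you are missing, and it is where the paper's proof starts: since $\psi$ is quasi-inner it fixes $\A\cap\A^*$ elementwise, so the diagonal of $\A\rtimes_\psi\bbZ$ is a \emph{commutative} $\ca$-algebra; an isometric isomorphism onto $\T^+_X$ carries diagonal onto diagonal (Arazy--Solel, as already used in Theorem~\ref{crossedtensor}), forcing $\C$ to be commutative, and then the multiplicative expectation $E_X$ must annihilate every order-two nilpotent, in particular every off-diagonal matrix unit of $\A$. The paper then runs the degree-doubling argument not on an ideal but on a single off-diagonal matrix unit of minimal positive Fourier degree $j_0$: the fractal-like hypothesis writes it as a finite sum of products of two off-diagonal matrix units, each of Fourier degree at least $j_0$, whence $j_0\geq 2j_0$, a contradiction. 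With the commutativity step inserted, your ideal $\J$ could be made to work (linearity and continuity of $E_X\circ\Phi$ then kill it on the nilpotent generators, and the gauge-invariance issue disappears), but as written the argument cannot exclude ideals such as the one above.
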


\begin{proof} Note that
\[
(\A \rtimes_{\psi} \bbZ )\cap (\A \rtimes_{\psi} \bbZ)^* = \{ \sum_{i = -\infty}^{\infty} c_iU^i \mid c_i \in \ \A \cap \A^*, \, \, i \in \bbZ \}.
\]
Since $\psi$ is quasi-inner, $\A \cap \A^*$ is left elementwise invariant by $\psi$ and so $(\A \rtimes_{\psi} \bbZ )\cap (\A \rtimes_{\psi} \bbZ)^*$ is a commutative $\ca$-algebra.

The conclusion will follow if we verify that an operator algebra $\B$ containing a copy of a fractal-like TAF algebra $\A= \varinjlim (\A_n, \rho_n)$, cannot be isometrically isomorphic to the tensor algebra of a \textit{commutative} $\ca$-algebra $\C$.

By way of contradiction, assume that there exists a $\C$-bimodule $X$ so that each element $b \in \B$ admits a Fourier series $b =  c +\sum_{j=1}^{\infty} \,  \xi_j$, with $c \in \C$ and $\xi_j \in X^j$, $j=1,2, \dots$. Note that if $ e \in A_n$ is any off-diagonal matrix unit then the $\C$-coefficient in its Fourier series is equal to $0$, since such an $e$ is nilpotent of order $2$. Let $j_0$ be the smallest positive integer so that $e =  \sum_{j=j_0}^{\infty} \,  \xi_j$, for some off diagonal matrix unit $e$. However $e$ can be written as a finite sum of products of the form $e=e_1e_2$, where $e_1, e_2 \in \A$ are off-diagonal matrix units. But the minimality of $j_0$ implies that each product
$e_1e_2$ has a Fourier series starting from $2j_0$, which is a contradiction.
\end{proof}

It is worthwhile noticing that the above arguments also show that any fractal-like strongly maximal TAF algebra cannot be isomorphic to the tensor algebra of a $\ca$-correspondence. This theme has been further explored in \cite{KR}, where we characterize all triangular limit algebras that happen to be isometrically isomorphic to tensor algebras of $\ca$-correspondences.
 
 \section{Semicrossed products and semisimplicity}
 
 It is instructive to see what happens in the semicrossed product case. This can be taken as further evidence that the crossed product is perhaps a nicer non-selfadjoint object than the semicrossed product. 
 
 Let $(\A, \G, \alpha)$ be a dynamical system with $\G$ a discrete abelian group. Suppose $P$ is a positive spanning cone of $\G$, that is, $P$ is a unital semigroup such that $P \cap P^{-1} = \{1\}$ and $PP^{-1} = \G$, using multiplicative notation.
 
Define the (unitary) semicrossed product of the dynamical system \\ $(\A, P, \alpha)$ as 
\[
\A \times_\alpha P = \overline{\alg}\{aU_s : a\in \A, s\in P\}.
\]
This definition is left-right flipped from the usual one and would really be the definition for the unitary semicrossed product of $(\A, P^{-1}, \alpha)$. 
Another important note is that by \cite[Theorem 3.3.1]{DFK} this semicrossed product is completely isometrically isomorphic to the isometric semicrossed product.

There is no version of Theorem \ref{firstsemisimple} as it is no longer true in this context. To see this we again turn to strongly maximal TAF algebras.

\begin{definition}
Let $\A$ be a strongly maximal TAF algebra.
The dynamical system $(\A, P, \alpha)$ is said to be \textit{linking} if for every matrix unit $e\in \A$ and every $t\in P$ there exists an $s\in P$ such that $e\A \alpha_{st}\big(e\big) \neq \{0\}$.
\end{definition}

\begin{proposition}
Let $(\A, P, \alpha)$ be a dynamical system with $P$ totally ordered. If for every matrix unit $e\in \A$ there is an $s\in P\setminus \{1\}$ such that $e\A \alpha_{s}(e)\neq \{0\}$ then $(\A, P, \alpha)$ is linking.
\end{proposition}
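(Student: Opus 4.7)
The plan is to iterate the hypothesis in order to manufacture a strictly increasing sequence in $P \setminus \{1\}$ on which $e$ can be linked, and then to invoke the total order on $P$ to meet any prescribed $t$. The key observation is that linking is equivalent to requiring, for each matrix unit $e$, that the set $S_e := \{r \in P : e\A\alpha_r(e) \neq \{0\}\}$ be \emph{cofinal} in $(P, \leq)$: indeed, finding $s \in P$ with $st \in S_e$ is the same as finding $r \in S_e$ with $r \geq t$. The case $t = 1$ is immediate from the hypothesis, so the substance is in handling $t > 1$.

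First I would set up the iteration. Starting with $e_1 := e$, the hypothesis supplies $r_1 \in P \setminus \{1\}$ such that $e_1\A\alpha_{r_1}(e_1) \neq \{0\}$, so there is some $a_1 \in \A$ with $e a_1 \alpha_{r_1}(e) \neq 0$. Mimicking the technique from the proof of Theorem~\ref{mainsemisimple}, I would multiply $a_1$ by a suitable diagonal unitary so that $e a_1 \alpha_{r_1}(e)$ becomes a nonzero sum of matrix units with orthogonal initial and final spaces; at least one such matrix unit, call it $e_2$, has a nonzero coefficient in this decomposition. The element $e_2 \in \A$ is a matrix unit whose range projection lies beneath $ee^*$ and whose source projection lies beneath $\alpha_{r_1}(e^*e)$. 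Invoking the hypothesis for $e_2$ yields $r_2 \in P \setminus \{1\}$ with $e_2\A\alpha_{r_2}(e_2) \neq \{0\}$. Because $e_2$ is realized as $\lambda\, p\, e a_1 \alpha_{r_1}(e)\, q$ for a scalar $\lambda \neq 0$ and diagonal projections $p, q$, and because $\G$ is abelian so that $\alpha_{r_2}\alpha_{r_1} = \alpha_{r_2 r_1}$, the non-vanishing of $e_2 b \alpha_{r_2}(e_2)$ unwinds to a non-vanishing element of $e\A\alpha_{r_2 r_1}(e)$.

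Iterating this construction produces $r_1, r_2, \ldots \in P \setminus \{1\}$ such that $p_n := r_n r_{n-1} \cdots r_1 \in S_e$ for every $n$. Since $P$ is totally ordered with $1$ as its minimum, each $r_i > 1$ and hence $(p_n)$ is strictly increasing; given any $t \in P$, one then picks $n$ large enough so that $p_n \geq t$, and $s := p_n t^{-1} \in P$ solves $st = p_n \in S_e$, giving linking.

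The main obstacle will be the combinatorial bookkeeping in the iteration step: one must track how the matrix unit $e_2$, extracted as a fragment of $e a_1 \alpha_{r_1}(e)$, interacts with a further application of $\alpha_{r_2}$, and then verify that the resulting nonzero product genuinely lives in $e\A\alpha_{r_2 r_1}(e)$ rather than merely in the ambient corner $ee^*\A\alpha_{r_2 r_1}(e^*e)$. The difficulty is compounded by the fact that $\alpha_s$ does not in general preserve individual matrix units but only maps them to sums of matrix units, so the careful choice of diagonal unitaries---exactly as in the proof of Theorem~\ref{mainsemisimple}---is essential for extracting matrix units from the various products that arise at each stage of the iteration.
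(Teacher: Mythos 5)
Your argument is essentially the paper's: both iterate the hypothesis on the matrix unit extracted from $e\,a\,\alpha_{r_1}(e)$ to produce products $r_n\cdots r_1$ lying in $S_e=\{r : e\A\alpha_r(e)\neq\{0\}\}$, and then invoke discreteness and the total order on $P$ to find, for a given $t$, some $s$ with $st$ in this infinite increasing set. The paper streamlines the extraction step by simply citing inductivity of $e\A\alpha_{r_1}(e)$ to get a matrix unit $ef_1\alpha_{r_1}(e)$ directly, but your more careful bookkeeping with diagonal compressions reaches the same place.
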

\begin{proof}
Let $e\in \A$ be a matrix unit. By hypothesis there exists $s_1 \in P\setminus \{1\}$ such that $e \A \alpha_{s_1}(e) \neq \{0\}$. This is an inductive object, hence there exists $f_1\in \A$ such that $e f_1 \alpha_{s_1}(e)$ is a matrix unit. Again by the hypothesis, there exists $s_2\in P\setminus \{1\}$ such that
\[
\{0\} \ \neq \ e_1f_1\alpha_{s_1}(e_1)\A \alpha_{s_2}(e_1f_1\alpha_{s_1}(e_1)) \ \subset \ e_1\A \alpha_{s_2s_1}(e_1) \neq \{0\}.
\]
Repeating this argument implies that there are an infinite number of semigroup elements $s \in P$ such that $e \A \alpha_{s}(e) \neq \{0\}$. Therefore, for $t\in P$, discrete and totally ordered imply that there exists $s\in P$ such that $st$ is a semigroup element in this infinite set. Hence, $e \A \alpha_{st}(e) \neq \{0\}$.
\end{proof}

Note that if $\A$ is semisimple then $(\A, P, \alpha)$ is not necessarily linking. In particular, consider the following example.

\begin{example}
 Let
 \[
 \A_n = \bbC \oplus \T_2 \oplus \cdots \oplus \T_{2^{n-2}} \oplus \T_{2^{n-1}} \oplus \T_{2^{n-2}} \oplus \cdots \oplus \T_2 \oplus \bbC
 \]
  and define the embeddings $\rho_n: \A_n \rightarrow \A_{n+1}$ by
  \[
  \rho_1(A_1) = A_1 \oplus \left[\begin{array}{cc} A_1 \\ & A_1\end{array}\right] \oplus A_1 = A_1 \oplus (I_2 \otimes A_1) \oplus A_1
  \]
  and for $n\geq 2$
 \[
\rho_n\left(\bigoplus_{i=1}^{2n-1} A_i\right) \ = \ A_1 \oplus \left(\bigoplus_{i=1}^{2n-1} I_2\otimes A_i \right) \oplus A_{2n-1}.
 \]
   These embeddings are associated with the following Bratteli diagram

\begin{center}
\begin{tikzpicture}[node distance=0.5cm]

\node (Top) {1};
\node (midrow1) [below=of Top] {2};
\node (leftrow1) [left=of midrow1] {1};
\node (rightrow1) [right=of midrow1] {1};

\node (midrow2) [below=of midrow1] {4};
\node (leftmidrow2) [left=of midrow2] {2};
\node (rightmidrow2) [right=of midrow2] {2};
\node (leftrow2) [left=of leftmidrow2] {1};
\node (rightrow2) [right=of rightmidrow2] {1};

\node (midrow3) [below=of midrow2] {8};
\node (leftmidrow3) [left=of midrow3] {4};
\node (leftleftmidrow3) [left=of leftmidrow3] {2};
\node (leftrow3) [left=of leftleftmidrow3] {1};
\node (rightmidrow3) [right=of midrow3] {4};
\node (rightrightmidrow3) [right=of rightmidrow3] {2};
\node (rightrow3) [right=of rightrightmidrow3] {1};

\node (ellipsis) [below=3pt of midrow3] {$\vdots$};

\draw[transform canvas={xshift=-1.5pt}] (Top) -- (midrow1);
\draw[transform canvas={xshift=1.5pt}] (Top) -- (midrow1);
\draw (Top) -- (leftrow1);
\draw (Top) -- (rightrow1);

\draw (leftrow1) -- (leftrow2);
\draw[transform canvas={xshift=-1.5pt}] (leftrow1) -- (leftmidrow2);
\draw[transform canvas={xshift=1.5pt}] (leftrow1) -- (leftmidrow2);
\draw[transform canvas={xshift=-1.5pt}] (midrow1) -- (midrow2);
\draw[transform canvas={xshift=1.5pt}] (midrow1) -- (midrow2);
\draw[transform canvas={xshift=-1.5pt}] (rightrow1) -- (rightmidrow2);
\draw[transform canvas={xshift=1.5pt}] (rightrow1) -- (rightmidrow2);
\draw (rightrow1) -- (rightrow2);

\draw (leftrow2) -- (leftrow3);
\draw[transform canvas={xshift=-1.5pt}] (leftrow2) -- (leftleftmidrow3);
\draw[transform canvas={xshift=1.5pt}] (leftrow2) -- (leftleftmidrow3);
\draw[transform canvas={xshift=-1.5pt}] (leftmidrow2) -- (leftmidrow3);
\draw[transform canvas={xshift=1.5pt}] (leftmidrow2) -- (leftmidrow3);
\draw[transform canvas={xshift=-1.5pt}] (midrow2) -- (midrow3);
\draw[transform canvas={xshift=1.5pt}] (midrow2) -- (midrow3);
\draw[transform canvas={xshift=-1.5pt}] (rightmidrow2) -- (rightmidrow3);
\draw[transform canvas={xshift=1.5pt}] (rightmidrow2) -- (rightmidrow3);
\draw[transform canvas={xshift=-1.5pt}] (rightrow2) -- (rightrightmidrow3);
\draw[transform canvas={xshift=1.5pt}] (rightrow2) -- (rightrightmidrow3);
\draw (rightrow2) -- (rightrow3);

\end{tikzpicture}
\end{center}

 Then $\A = \varinjlim \A_n$ is a semisimple strongly maximal TAF algebra. However, consider the following shift-like map $\psi: \A \rightarrow \A$ which takes $\A_n$ into $\A_{n+1}$ by
 \begin{align*}
 \psi\Big(\bigoplus_{i=1}^{2n-1}  A_i\Big) \ = & A_1 \oplus (I_2 \otimes A_1) \oplus (I_4 \otimes A_1) \oplus (I_4 \otimes A_2) \otimes \cdots 
 \\ &\oplus (I_4 \otimes A_{n-1}) 
  \oplus A_{n} \oplus A_{n+1} \oplus \cdots \oplus A_{2n-1}.
 \end{align*}
 This is well defined with the $\rho_n$ embeddings and thus we define
\begin{align*}
  \psi^{-1}\Big(\bigoplus_{i=1}^{2n-1}  & A_i  \Big)   = A_1 \oplus A_2 \oplus \cdots \oplus A_{n} \oplus (I_4\otimes A_{n+1}) \oplus \cdots \quad \phantom{c}
 \\ &  \oplus (I_4 \otimes A_{2n-3})
     \oplus (I_4 \otimes A_{2n-2}) \oplus (I_2 \otimes A_{2n-1}) \oplus A_{2n-1}.
 \end{align*}
From these definitions we calculate that
\[
\psi^{-1}\circ\psi\Big(\bigoplus_{i=1}^{2n-1} A_i\Big) \ = \ \rho_{n+1}\circ\rho_n\Big(\bigoplus_{i=1}^{2n-1} A_i\Big).
\]
Thus, $\psi$ is an isometric automorphism of $\bigcup_{n=1}^\infty \A_n$ and so extends to an isometric automorphism of $\A$. 
 
 Now consider $e_{1,2} \in \T_2 \subset \A_2$. It is immediate that $e_{1,2}\A\psi^{(k)}(e_{1,2}) = \{0\}$ for all $k\geq 1$.
 Therefore, $(\A, \bbZ^+, \psi)$ is not linking even though $\A$ is semisimple.
 \end{example}

\begin{theorem}\label{semigroupsemisimple}
 Let $\A$ be a strongly maximal TAF algebra and $P$ a semigroup that is a positive spanning cone of a discrete abelian group. The dynamical system $(\A, P, \alpha)$ is linking if and only if $\A \times_\alpha P$ is semisimple.
\end{theorem}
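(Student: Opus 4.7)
My plan is to prove each direction of this theorem by adapting the corresponding direction of Theorem~\ref{mainsemisimple}. The overarching observation is that $\A \times_\alpha P$ embeds completely isometrically into $\A \rtimes_\alpha \G$ (where $\G = PP^{-1}$), and the dual action $\hat{\alpha}_\gamma : aU_p \mapsto \overline{\gamma(p)}\, aU_p$ of $\hat{\G}$ restricts to a completely isometric automorphism of $\A \times_\alpha P$ that preserves its Jacobson radical.

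For the forward direction, suppose $(\A, P, \alpha)$ is not linking: there exist a matrix unit $e\in\A$ and $t\in P$ with $e\A\alpha_{st}(e) = \{0\}$ for every $s\in P$. I claim $eU_t \in \Rad(\A \times_\alpha P)$. For any polynomial $X = \sum_i a_i U_{p_i}$ with $a_i \in \A$ and $p_i \in P$, expansion gives
\[
(eU_t X)^2 = \sum_{i,j} e\, \alpha_t(a_i)\, \alpha_{tp_i}(e)\, \alpha_{t^2 p_i}(a_j)\, U_{t^2 p_i p_j};
\]
because $\G$ is abelian, every summand contains the sandwich $e\cdot\alpha_t(a_i)\cdot\alpha_{p_i t}(e)$, which lies in $e \A \alpha_{st}(e) = \{0\}$ with the allowed choice $s = p_i \in P$. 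Thus $(eU_t X)^2 = 0$ on the dense set of polynomials and, by norm continuity, for all $X \in \A \times_\alpha P$. Hence $eU_t$ generates a nil right ideal and is a nonzero element of $\Rad(\A \times_\alpha P)$.

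For the backward direction, I argue by contradiction, assuming linking while $\Rad(\A \times_\alpha P) \neq \{0\}$. Pick $0 \neq X \in \Rad(\A \times_\alpha P)$. Because $\hat\alpha_\gamma(\Rad(\A \times_\alpha P)) \subseteq \Rad(\A\times_\alpha P)$ and the latter is norm closed, formula~\eqref{semisimpleeq} shows that the norm integral $\Phi_p(X) U_p = \int_{\hat{\G}} \hat{\alpha}_\gamma(X) \gamma(p) \, d\gamma$ lies in $\Rad(\A \times_\alpha P)$ for every $p \in P$. Choose $p \in P$ with $\Phi_p(X) \neq 0$ and define
\[
\J_p = \{b \in \A : bU_p \in \Rad(\A \times_\alpha P)\}.
\]
Left-$\A$-invariance of $\J_p$ is immediate, while the factorization $bc U_p = (bU_p)\cdot\alpha_p^{-1}(c)$ (valid because $\alpha_p \in \Aut \A$) gives right-$\A$-invariance, so $\J_p$ is a nonzero closed two-sided ideal of $\A$. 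By the inductivity of such ideals in a strongly maximal TAF algebra \cite[Theorem~4.7]{Pow}, $\J_p$ contains a matrix unit $e$, and hence $eU_p \in \Rad(\A \times_\alpha P)$.

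I now reproduce the construction of Theorem~\ref{mainsemisimple} with $eU_p$ in place of $e$. Inductively, assume $e_1 = e, e_2, \ldots, e_m$ and $g_1, g_2, \ldots, g_{m-1} \in P$ have been chosen with $\tau_{m-1} \equiv g_1 g_2 \cdots g_{m-1} \in P$. Applying the semigroup linking condition to $e_m$ with base $t = p\tau_{m-1} \in P$ yields $s_m \in P$ such that $e_m \A \alpha_{s_m p \tau_{m-1}}(e_m) \neq \{0\}$; set $g_m = s_m p \tau_{m-1}$, $h_m = s_m = g_m(p\tau_{m-1})^{-1} \in P$, and then choose $b_m \in \A$ making $e_{m+1} = e_m \alpha_{g_m}(b_m e_m)$ nonzero. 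The element $B = \sum_i 2^{-i} U_{h_i} b_i$ lies in $\A \times_\alpha P$ because every $h_i \in P$, and the product $(eU_p) B \in \Rad(\A \times_\alpha P)$ satisfies the spectral-radius lower bound $\liminf_m \|((eU_p)B)^{2^m-1}\|^{1/(2^m-1)} \geq 1/4$: Claims 1--4 in the proof of Theorem~\ref{mainsemisimple} (including the partial-isometry orthogonality of the multinomial summands) carry over because $\alpha$ and the remaining conventions are unchanged. This contradicts quasinilpotence. The main obstacle is precisely this last adaptation: prepending $U_p$ shifts every effective exponent by $p$, and the semigroup linking condition---stronger than merely asking for some linking element---is designed to absorb this shift, guaranteeing each $h_m$ lies in $P$ so that $B$ remains in the semicrossed product.
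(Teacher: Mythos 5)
Your proposal is correct and follows essentially the same route as the paper: the forward direction is the paper's nilpotency computation verbatim, and the backward direction uses the same Fourier-coefficient/dual-action argument to extract a matrix unit $e$ with $eU_p$ in the radical, followed by the same linking-driven iteration and spectral-radius estimate from Theorem~\ref{mainsemisimple}. Your bookkeeping for keeping the increments in $P$ (taking the linking base to be $p\tau_{m-1}$ with $\tau_{m-1}=g_1\cdots g_{m-1}$, so that $h_m=s_m\in P$ automatically) is organized a bit more cleanly than the paper's choice $s_m=s_m's_m''s_0$ with the a posteriori verification that the $t_m$ lie in $P$, but it is the same idea.
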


\begin{proof}
Assume that $(\A, P, \alpha)$ is not linking. This means that there exists a matrix unit $e\in \A$ and a $t\in P$ such that $e\A \alpha_{st}(e) = \{0\}$ for all $s\in P$. For every $s\in P$ and $a\in \A$ we have
\begin{align*}
(eU_taU_s)^2 &= eU_taU_seU_taU_s
\\ &= e\alpha_{t}\big(a\big)\alpha_{st}\big(e\big)\alpha_{st^2}\big(a\big)U_{s^2t^2}
\\ &= 0\alpha_{st^2}\big(a\big)U_{s^2t^2}
\\ &= 0.
\end{align*}
In the same way for any $s_1,\dots, s_n \in P$ and $a_1,\cdots, a_n\in \A$
\[
(eU_t\sum_{i=1}^n a_iU_{s_i})^2 = 0.
\]
Therefore, $eU_t \in \Rad \A \times_\alpha P$ and so the semicrossed product is not semisimple.

Conversely, suppose that $(\A, P, \alpha)$ is linking. This will follow in a similar manner as the proof of the converse in Theorem \ref{mainsemisimple}. One only needs to be careful at a few points since we are dealing with a semigroup instead of a group.

Assume that $\A \times_\alpha P$ is not semisimple. Thus, there is a non-zero $a\in \Rad \A \times_\alpha P$. 
Since we are working in a discrete abelian group we can use the Fourier theory discussed after Proposition \ref{Ceasaroapprox}. In light of this, let $\G = PP^{-1}$ and $\hat{\G}$ the Pontryagin dual of $\G$. The gauge actions $\{\psi_\gamma\}_{\gamma\in \hat{\G}}$ restrict to gauge automorphisms on $\A \times_\alpha P$ and so ideals in this algebra are left invariant by the gauge actions.
Hence, $\Rad \A \times_\alpha P$ is a closed linear space in $\A \times_\alpha P \subset \A \times_\alpha \G$,  which is left invariant by the gauge action $\{\psi_\gamma\}_{\gamma\in \hat{\G}}$. Therefore, $a_sU_s = \Phi_s(a) \in \Rad \A \times_\alpha P$ for all $s\in P$ (being careful to note that this $\Phi_s$ was defined differently).

By Proposition \ref{Ceasaroapprox} there exists $s_0\in P$ such that $\A U_{s_0} \cap \Rad \A \times_\alpha P \neq \{0\}$. This set is inductive and so there exists a matrix unit $e\in \A_{r_1}$ such that $eU_{s_0}$ is in the radical.

Start now with $e_1 \equiv e\in \A_{r_1}$. By linking there exists $s_1'\in P$ such that $e \A \alpha_{s_1's_0}(e) \neq \{0\}$. Define $s_1 = s_1's_0 \in P$. By inductivity there is a $b_1\in \A_{r_2}$ such that $e_1\alpha_{s_1}(b_1e_1)$ is a matrix unit in $\A_{r_2}$. Because $\A$ has regular embeddings and since any isometric automorphism preserves the normalizer there exists $e_1^{r_2}, e_2^{r_2}$ summands of $e_1$ such that $e_1^{r_2}$ and $\alpha_{s_1}(e_2^{r_2})$ are matrix units in $\A_{r_2}$. This allows that $b_1$ can be taken to be a normalizing partial isometry and 
  \[
 e_2 \equiv e_1\alpha_{s_1}(b_1e_1) = e_1^{r_2}\alpha_{s_1}(b_1e_2^{r_2}).
  \]
  If $e_1^{r_2} = e_2^{r_2} \equiv f$ then notice that $f^*f = \alpha_{s_1}(b_1b_1^*)$ and $ff^* = b_1^*b_1$. This implies that 
  \begin{align*}
  (eU_{s_1}b_1)^n &= eU_{s_1}b_1eU_{s_1}b_1\cdots eU_{s_1}b_1 
  \\ &= e\alpha_{s_1}(b_1e\alpha_{s_1}(b_1e\cdots \alpha_{s_1}(b_1)))U_{s_1}^n
  \\ & = f\alpha_{s_1}(b_1f\alpha_{s_1}(b_1f\cdots \alpha_{s_1}(b_1)))U_{s_1}^n
  \end{align*}
  is a partial isometry times a unitary and so $eU_{s_0}U_{s_1'}b_1 = eU_{s_1}b_1$ is not quasinilpotent, a contradiction to $eU_{s_0}$ being in the radical. Therefore, $e_1^{r_2} \neq e_2^{r_2}$ which allows us to choose $r_2, b_1, e_1^{r_2}$ and $e_2^{r_2}$ again such that $e_1^{r_2}$ and $e_2^{r_2}$ are distinct summands of $e$. We remark for later in the proof that this gives  
  \begin{equation}\label{distinctsummandsfirstcase2}
  e_1^{r_2}(e_1^{r_2})^* \perp e_2^{r_2}(e_2^{r_2})^*.
  \end{equation}
 
 Continuing this way, we get a sequence of matrix units $\{e_m\}_{m =1}^{\infty}$,  $e_m \in \A_{r_m}$, a sequence of partial isometries $\{b_m\}_{m=1}^\infty$,  and semigroup elements $\{s_m\}_{m=1}^\infty, s_m = s_m's_m''s_0\in P$, with
  \[
  e_{m+1} \equiv e_m \alpha_{s_m}(b_me_m) = e_1^{r_{m+1}} \alpha_{s_m}(b_me_2^{r_{m+1}}) \neq 0
  \]
  where $e_{1}^{r_{m+1}}, e_{2}^{r_{m+1}}$ are summands of $e_{m}$ and
  \begin{equation}\label{semigrouptweak2}
  s_m'' = \prod_{i=1}^{m-1} s_{m-i}^i \in P.
  \end{equation}
  By linking $s_m' \in P$ is chosen such that $e_m\A \alpha_{s_m's_m''s_0}(e) \neq \{0\}$.

  Again we need to consider if $e_1^{r_{m+1}} = e_2^{r_{m+1}} \equiv f$. First, by the recursive definition of $e_m$ we have 
  \begin{align*}
 eU_{s_0}B &\equiv  eU_{s_0}\alpha_{s_0^{-1}s_1}\big(b_1e\alpha_{s_2}\big(b_2e_2\alpha_{s_3}\big(b_3 \dots b_m\big)\big)\big) U_{s_m'}  
 \\& = \alpha_{s_1s_2\dots s_{m-1}}\big(e_ms_m(b_m)\big)U_{s_m},
  \end{align*}
 noting that $s_0^{-1}s_1 = s_1' \in P$.
  Hence, 
  \begin{align*}
  (eU_{s_0}B)^n & = \left(\alpha_{s_1s_2\dots s_{m-1}}\big(e_m\alpha_{s_m}(b_m)\big)U_{s_m}\right)^n
  \\ &= \alpha_{s_1s_2\dots s_{m-1}}\big(e_m\alpha_{s_m}\big(b_me_m\alpha_{s_m}\big(b_m \dots e_m\alpha_{s_m}\big(b_m\big)\big)\big)\big)U_{s_m}^n
  \\ &= \alpha_{s_1s_2\dots s_{m-1}}\big(f\alpha_{s_m}\big(b_mf\alpha_{s_m}\big(b_m \dots f\alpha_{s_m}\big(b_m\big)\big)\big)\big)U_{s_m}^n
  \end{align*}
  is again the product of a partial isometry and a unitary and so $eU_{s_0}B$ is not quasinilpotent, a contradiction. Therefore, in the same way as before we can choose $r_{m+1}$, $b_m$, $e_1^{r_{m+1}}$ and $e_2^{r_{m+1}}$ such that 
  \begin{equation}\label{distinctsummands2}
  e_1^{r_{m+1}}(e_1^{r_{m+1}})^* \perp e_2^{r_{m+1}}(e_2^{r_{m+1}})^*.
  \end{equation}

 Set
  \begin{equation} \label{Rad2}
  eU_{s_0}B \equiv  eU_{s_0}\left( \sum_{i=1}^\infty \frac{1}{2^i}U_{t_i'}b_i\right) = e\left( \sum_{i=1}^\infty \frac{1}{2^i}U_{t_i}b_i\right)
 \in \Rad \A\times_{\alpha} P,
  \end{equation}
where the semigroup elements $t_i$ will be defined later.

  We will show that
  \begin{equation} \label{normestimate2}
  \| (eU_{s_0}B)^{2^m} \| \geq 1/2^{2^{m+1}}, \quad m \in \bbN.
  \end{equation}
  This will imply that the spectral radius of $eU_{s_0}B$ is
  \[
  \lim_{m\rightarrow \infty} \|(eU_{s_0}B)^{2^m}\|^{1/2^m} \geq \lim_{m\rightarrow \infty} \left(\frac{1}{2^{2^m+1}}\right)^{1/2^m} = 1/2
  \]
  and so $eU_{s_0}B$ is not quasinilpotent, thus contradicting (\ref{Rad2}).

   To establish this contradiction, fix an $m \in \bbN$ and note that $(eU_{s_0}B)^{2^m}$ can be written as an infinite sum of the form
 \begin{multline} \label{summand2}
   \sum_{k = (k_1, k_2, \dots   k_{2^m} )\in \bbN^{2^m}} \, \,  \big(\frac{e}{2^{k_1}} U_{t_{k_1}}b_{k_1}\big)\big(\frac{e}{2^{k_2}}U_{t_{k_2}}b_{k_2}\big)\dots \big(\frac {e}{2^{k_{2^m}}}U_{t_{k_{2^m}}}b_{k_{2^m}}\big)\\
 = \sum_{k = (k_1, k_2, \dots   k_{2^m} )\in \bbN^{2^m}}  \,\, 2^{-p_k} e\alpha_{t_{k_1}}\big(b_{k_1}e\alpha_{t_{k_2}}\big(b_{k_2} \dots e\alpha_{t_{k_{2^m}}}\big(b_{k_{2^m}}\big)\big)\big)U_{t_{k_1}\dots t_{k_{2^m}}},
 \end{multline}
 where $p_k$ are suitable exponents.

   \vspace{.12in}

The following two claims remain unchanged from the proof of Theorem \ref{mainsemisimple}.
  
   \vspace{.12in}
\noindent \textit{Claim 1:} $b_ib_j^* = 0$ for $i\neq j$.

   \vspace{.12in}
\noindent \textit{Claim 2:} Different choices for the index $k = (k_1, k_2, \dots k_{2^n}) $ produce terms in (\ref{summand2}) with orthogonal domains.

\vspace{.12in}
\noindent \textit{Claim 3:} For any $m \in \bbN$, there is a choice of indices $k_1, k_2, \dots k_{2^{m}-1}$ and group elements $t_{k_1},\cdots, t_{k_{2^m-1}} \in \G = PP^{-1}$ such that
\[
e_{m+1} = e \alpha_{t_{k_1}}\big(b_{k_1} e\alpha_{t_{k_2}}\big(b_{k_2} \dots \alpha_{t_{k_{2^m-1}}}\big(b_{k_{2^m-1}}e\big)\big)\big).
\]
\vspace{.05in}
\noindent
This follows by induction. The case $m=2$ follows from the definition of $e_2$. Assume that the claim is true for $m \in \bbN$, i.e.,
\begin{equation} \label{e_m2 }
e_m = e \alpha_{t_{k_1}}\big(b_{k_1} e\alpha_{t_{k_2}}\big(b_{k_2} \dots \alpha_{t_{k_{2^{m-1}-1}}}\big(b_{k_{2^{m-1}-1}}e\big)\big)\big).
\end{equation}
Then, for $t_{k_{2^{m-1}}} = s_mt_{k_1}^{-1}\dots t_{k_{2^{m-1}-1}}^{-1}$, remembering that $\G$ is abelian, we have 
 \begin{align*}
 e_{m+1}  &= e_m\alpha_{s_m}(b_me_m)
 \\ &= e \alpha_{t_{k_1}}\big(b_{k_1}  \dots \alpha_{t_{k_{2^{m-1}-1}}}\big(b_{k_{2^{m-1}-1}}e\big)\big)
 \\ & \phantom{XXXXXXXX} \alpha_{s_m}\big(b_me \alpha_{t_{k_1}}\big( b_{k_1} \dots \alpha_{t_{k_{2^{m-1}-1}}}\big(b_{k_{2^{m-1}-1}}e\big)\big)\big)
 \\ &= e \alpha_{t_{k_1}}\big(b_{k_1}  \dots \alpha_{t_{k_{2^{m-1}-1}}}\big(b_{k_{2^{m-1}-1}}e
 \\ & \phantom{XXXXXXXX} \alpha_{t_{k_{2^{m-1}}}}\big(b_me \alpha_{t_{k_1}}\big( b_{k_1} \dots \alpha_{t_{k_{2^{m-1}-1}}}\big(b_{k_{2^{m-1}-1}}e\big)\dots\big),
 \end{align*}
which proves the claim.

It is instructive to specify the choice of indices $k_1, k_2, \dots k_{2^{m}-1}$ appearing in Claim 3. Indeed
\[ \begin{array}{c}
  k_{2^{m-1}} = m \\
  k_{2^{m-2}}= k_{3\cdot 2^{m-2}} = m-1\\
   k_{2^{m-3}}= k_{3\cdot2^{m-3}}= k_{5\cdot2^{m-3}}= k_{7\cdot2^{m-3}}=m-2\\
   \cdots \cdots \cdots \\
  k_1= k_3=k_5 = \dots = k_{2^m-1}=1.
  \end{array}
\]
We wish to now prove that the $t_m$ are actually in $P$. To this end, note that by the recursive formula $t_i = s_i v_i^{-1}$ where $v_i\in P$.
This implies that
\begin{align*}
t_m &= s_m t_{k_1}^{-1}\cdots t_{k_{2^{m-1}-1}}^{-1} 
\\ &= s_m(s_{k_1}v_{k_1}^{-1})^{-1}\cdots (s_{k_{2^{m-1}-1}}v_{k_{2^{m-1}-1}}^{-1})^{-1}
\\ &= s_m\prod_{i=1}^{m-1} s_{m-i}^{-i} v_{m-i}^i
\\ &= s_m's_0 \prod_{i=1}^{m-1} v_{m-i}^i \in P
\end{align*}
by (\ref{semigrouptweak2}).

\vspace{.2in}

 Claim 2 shows now that $\| (eU_{s_0}B)^{2^m}\|$ is at least as large as the norm of each non-zero term in (\ref{summand2}). By Claim 3 and setting $k_{2^m} = m+1$, one of these terms is $ 2^{-p_k}e_{m+1}U_{s_{m+1}}b_{m+1}$, which is non-zero. Furthermore for this term we have
\[
p_{k}=(m+1) + m+2(m-1) +2^2(m-2)+\dots +2^{m-1} = 2^{m+1}-1
\]
by an easy telescoping argument. Hence,
\[
\|(eU_{s_0}B)^{2^m}\| \geq \|2^{-p_k}e_{m+1}U_{s_{m+1}}b_{m+1}\| = \frac{1}{2^{2^{m+1}-1}}.
\]
Using this estimate in (\ref{summand2}), we obtain (\ref{normestimate2}), which is the desired contradiction. Hence $\A\times_{\alpha} P$ is semisimple.
\end{proof}

Corollary \ref{quasi-inner} transfers with no changes in the proof to this semigroup context and Theorem \ref{TUHFsemisimple} with some changes.

 \begin{corollary}
 Let $\A$ be a strongly maximal TAF algebra and $P$ a positive spanning cone of a discrete abelian group acting on $\A$ by quasi-inner isometric automorphisms. $\A$ is semisimple if and only if $\A \times_\alpha P$ is semisimple.
 \end{corollary}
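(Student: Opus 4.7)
The plan is to imitate the proof of Corollary~\ref{quasi-inner} using Theorem~\ref{semigroupsemisimple} in place of Theorem~\ref{mainsemisimple}. The only point to verify is that under quasi-inner isometric automorphisms the notion of a linking system collapses to Donsig's semisimplicity criterion, and the semigroup structure of $P$ plays no role beyond what is already supplied by Theorem~\ref{semigroupsemisimple}.

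First I would invoke Theorem~\ref{semigroupsemisimple} to reduce the claim to the equivalence
\[
(\A, P, \alpha)\text{ is linking} \iff \A \text{ is semisimple.}
\]
For the forward direction, suppose $(\A, P, \alpha)$ is linking. Pick any matrix unit $e \in \A$ and apply the linking condition with $t = 1$: there exists $s \in P$ with $e\A\alpha_s(e) \neq \{0\}$. Since $\alpha_s$ is quasi-inner, it sends each matrix unit to a unimodular scalar multiple of itself, so $\alpha_s(e) = z e$ for some $z \in \bbT$. Hence $e\A e \neq \{0\}$, and by Donsig's criterion $\A$ is semisimple.

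Conversely, suppose $\A$ is semisimple, let $e \in \A$ be a matrix unit and let $t \in P$ be arbitrary. Since $\alpha_t$ is quasi-inner we have $\alpha_t(e) = w e$ for some $w \in \bbT$, and by Donsig's criterion there exists $a \in \A$ with $eae \neq 0$. Taking $s = 1 \in P$, we then get $\alpha_{s t}(e) = w e$, so
\[
e \A \alpha_{s t}(e) \;\supseteq\; \bbC \cdot w \cdot eae \;\neq\; \{0\},
\]
which shows that $(\A, P, \alpha)$ is linking. Applying Theorem~\ref{semigroupsemisimple} in both directions completes the argument.

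I do not anticipate any real obstacle here; the only mild subtlety is checking that ``quasi-inner'' indeed forces $\alpha_s$ to act on each matrix unit by a scalar (as recorded in the proof of Corollary~\ref{quasi-inner}), after which the linking condition becomes transparently insensitive to both the choice of $t \in P$ and the witness $s \in P$, reducing it to the single condition $e\A e \neq \{0\}$.
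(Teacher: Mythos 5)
Your proposal is correct and follows essentially the same route as the paper: the paper's proof simply observes that a quasi-inner automorphism multiplies each matrix unit by a unimodular scalar, so the linking condition collapses to Donsig's criterion $e\A e\neq\{0\}$, and then Theorem~\ref{semigroupsemisimple} finishes the argument. Your version just spells out explicitly how the quantifiers over $t$ and $s$ in the semigroup linking condition become vacuous, which is exactly the content the paper leaves implicit.
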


 \begin{theorem}
 Let $(\A, P, \alpha)$ be a dynamical system with $\A$ a strongly maximal TUHF algebra and $P$ a positive spanning cone of a discrete abelian group. $\A$ is semisimple if and only if $\A \times_\alpha P$ is semisimple.
 \end{theorem}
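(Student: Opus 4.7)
By Theorem~\ref{semigroupsemisimple}, $\A \times_{\alpha} P$ is semisimple if and only if $(\A, P, \alpha)$ is semigroup linking, so the theorem reduces to showing that for TUHF $\A$, semisimplicity of $\A$ is equivalent to semigroup linking of $(\A, P, \alpha)$. The direction ``semigroup linking implies semisimplicity'' is immediate: specialising the definition to $t = 1$ yields, for each matrix unit $e$, an $s \in P \subseteq \G$ with $e\A\alpha_s(e) \neq 0$, so $(\A, \G, \alpha)$ is group linking with $\G = PP^{-1}$, and Theorem~\ref{TUHFsemisimple} gives the semisimplicity of $\A$.

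For the converse, I would prove the stronger statement that for TUHF $\A$ semisimple, $e\A\alpha_g(e) \neq 0$ for every matrix unit $e$ and every $g \in \G$; semigroup linking then holds trivially with $s = 1$. Writing $e = e_{i,j}$ and expanding the product in terms of matrix units at a sufficiently high stage reduces this to the assertion that the diagonal projection $e_{j,j}$ links in $\A$ to each diagonal summand of $\alpha_g(ee^*)$. The key lemma is therefore that in a TUHF semisimple $\A$ any two diagonal matrix units $e_p^{(n)}$ and $e_q^{(m)}$ satisfy $e_p \A e_q \neq 0$. After promoting both to a common stage, the case $p \leq q$ is witnessed by $e_{p,q}^{(n)} \in \T_{k_n}$; the case $p > q$ follows from Donsig's criterion applied to the off-diagonal matrix unit $e_{q,p}^{(n)} \in \T_{k_n}$, which by semisimplicity satisfies $e_{q,p}^{(n)} \A e_{q,p}^{(n)} \neq 0$, forcing, upon inspection of the embedding $e_{q,p}^{(n)} = \sum_c e_{q_c, p_c}^{(N)}$ at a sufficiently high stage $N$, a pair of image indices $p_c, q_{c'}$ with $p_c \leq q_{c'}$---precisely the condition that makes $e_p^{(n)} \A e_q^{(n)}$ nonzero.

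The main obstacle is tracking image positions of diagonal matrix units across the $\T_{k_n} \hookrightarrow \T_{k_N}$ embeddings, but this is routine once one observes that the column and row positions appearing in the embedding of $e_{q,p}^{(n)}$ are exactly those of $e_p^{(n)}$ and $e_q^{(n)}$. With the lemma in hand, Theorem~\ref{semigroupsemisimple} then completes the proof.
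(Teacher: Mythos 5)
Your proposal is correct and follows essentially the same route as the paper: both directions are funneled through Theorem~\ref{semigroupsemisimple}, the forward one by reducing semigroup linking to group linking (your $t=1$ specialization versus the paper's ``same proof as Theorem~\ref{TUHFsemisimple}''), and the converse by strengthening Donsig's criterion for TUHF algebras to $e\A f\neq\{0\}$ for all pairs of matrix units, which yields linking with $s=1$. Your case split ($p\le q$ via $e_{p,q}^{(n)}$, $p>q$ via Donsig applied to $e_{q,p}^{(n)}$) is only a minor repackaging of the paper's single use of the corner unit $e_{1,n}^{(n)}$ to get $e_n^{(n)}\A e_1^{(n)}\neq\{0\}$ and then compose.
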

 \begin{proof}
 If $\A \times_\alpha P$ is semisimple then $(\A, P, \alpha)$ is linking by Theorem \ref{semigroupsemisimple}. Using the exact same proof as Theorem \ref{TUHFsemisimple} we get that $\A$ is semisimple.
 
 Conversely, due to the failure of Theorem \ref{firstsemisimple} in the semicrossed product case we need a different proof.
 To this end, assume that $\A$ is semisimple. Because $\A$ is a TUHF algebra Donsig's criterion can be strengthened into the fact that for any two matrix units $e,f\in \A$ we have $e\A f \neq \{0\}$. This is due to the fact that $e_{1,n}^{(n)}\A e_{1,n}^{(n)} \neq \{0\}$ which implies that $e_n^{(n)}\A e_1^{(n)} \neq \{0\}$ for all $n\in \bbN$ such that $\T_n \subset \A$. Therefore, for any matrix unit $e\in \A$ and $t\in P$ this gives that $e\A \alpha_{t}(e) \neq \{0\}$ and so $(\A,P, \alpha)$ is linking.
 \end{proof}

\chapter[Crossed Products and Tensor Algebras]{The Crossed Product as the Tensor Algebra of a C$^*$-correspondence.} \label{Hao}

There are three sources of inspiration for the results in this chapter. First we saw in Definition~\ref{relativedefn} that given a system $(\A, \G, \alpha)$ there is a whole family of crossed products, parametrized by the possible $\ca$-covers of $\A$, which we coined as relative crossed products. In Corollary~\ref{allrcoincide} we verified that all relative \textit{reduced} crossed products coincide. This raises the question if a similar result is valid for the relative (full) crossed products. Theorem~\ref{HNtensor1} indicates that this is a very delicate problem that among other things it also rubs shoulders with the validity of WEP for certain $\ca$-algebras. 

For a second inspiration recall that we have already verified that the identities
\begin{equation} \label{envidentity}
\begin{aligned}
\cenv(\A \rtimes_{\alpha} \G )&\simeq \cenv(\A) \rtimes_{\alpha} \G \\
\cenv(\A \rtimes_{\alpha} ^r\G ) &\simeq \cenv(\A) \rtimes_{\alpha}^r \G 
\end{aligned}
\end{equation}
 are indeed true whenever $\A$ ia a Dirichlet algebra and $\G$ is an arbitrary discrete group (Theorems~\ref{Dirichletenvr} and \ref{Dirichletenv}) or $\A$ is arbitrary but $\G$ is abelian (Theorem~\ref{abelianenv}). In this chapter we continue to investigate the validity of such identities. We will show that for a very special class of operator algebras and group actions, the validity of (\ref{envidentity}) is intimately connected with an open problem in $\ca$-algebra theory, \textit{the Hao-Ng isomorphism problem}, which we will describe shortly.
 
 There is a third source of inspiration for the results of this chapter. In Theorem~\ref{crossedtensor} we proved that the crossed product of a tensor algebra of a $\ca$-correspondence with a discrete group may fail to be a tensor algebra. And yet we noticed that for an elliptic M\"obius transformation $\alpha$ of  the unit disc, the crossed product $\bbA(\bbD) \rtimes_{\alpha} \bbZ$ of the disc algebra is isomorphic to the semicrossed product $C(\bbT) \times_{\alpha} \bbZ^+$ and thus a tensor algebra. It turns out that this fact is not just a curiosity but generalizes considerably. As we shall see, the crossed product of \textit{any} tensor algebra by a generalized gauge automorphism is once again a tensor algebra of some other $\ca$-correspondence.
 
 \section{Discrete groups} \label{discrete groups}
 
 Let us set up the framework of study for this chapter and describe the Hao-Ng isomorphism problem. Let $(X, \C)$ be a non-degenerate $\ca$- correspondence over a $\ca$-algebra $\C$ and let $\G$ be a discrete group. Assume that there is a group representation $\alpha: \G \rightarrow \Aut \T_X$ so that $\alpha_{s}(\C)= \C$ and $\alpha_{s}(X)= X$, for all $s \in \G$. We call such an action $\alpha$ a \textit{generalized gauge action} of $\G$ on $(X, \C)$. Clearly the action $\alpha$ restricts to a generalized gauge action $\alpha : \G \rightarrow \Aut \T^+_X$, which in turn extends to a generalized gauge action on $\O_X$. Generalized gauge actions, and in particular the so-called quasi-free actions, have received a great deal of attention in the study of $\ca$-algebras and their states~\cite{EF, Exel, KK, LN}. 

If  $(X, \C)$, $\G$ and $\alpha$ are as above, we define a $\ca$-correspondence $(X\rtimes_{\alpha}^r \G, \C \rtimes_{\alpha}^{r} \G)$ as follows. Identify formal (finite) sums of the form $\sum_s x_sU_s$, $x_s \in X$, $s \in \G$, with their image in $\O_X \cpr$ under $\overline{\pi} \rtimes \lambda_{\H}$, where $\pi$ is a faithful representation of $\O_X$ on $\H$. We call the collection of all such sums $\big(X\rtimes_{\alpha}^r \G\big)_0$. This allows a left and right action on $\big(X\rtimes_{\alpha}^r \G\big)_0$ by $\big(\C \cpr \big)_0$, i.e., finite sums of the form $\sum_{s} c_s U_s \in \C\cpr $, simply by multiplication. The fact that $\alpha$ is a generalized gauge action guarantees that 
\[
\big(\C \cpr \big)_0  \big(X\rtimes_{\alpha}^r \G\big)_0 \big(\C \cpr \big)_0  \subseteq \big(X\rtimes_{\alpha}^r \G\big)_0.
\]
Equip $\big(X\rtimes_{\alpha}^r \G\big)_0$ with the $\big(\C \rtimes_{\alpha}^r \G\big)_0$-valued inner product $\sca{.,.}$ defined by $\sca{S, T} \equiv S^*T$, with $S, T \in \big(X\rtimes_{\alpha}^r \G\big)_0$. The completion of $\big(X\rtimes_{\alpha}^r \G\big)_0$ with respect to the norm coming from $\sca{.,.}$ becomes a $(\C\rtimes_{\alpha}^r \G)$-correspondence denoted as $X\rtimes_{\alpha}^r \G$.  

\begin{theorem}[Hao-Ng Theorem {\cite[Theorem 2.10]{HN}}]   \label{HNThm}
 Let $(X, \C)$ be a non-degenerate $\ca$-correspondence and let $\alpha : \G \rightarrow (X, \C)$ be a generalized gauge action of a discrete\footnote{Note that the Hao-Ng theorem holds for arbitrary locally compact groups. } amenable group $\G$. Then $\O_X\cpr \simeq \O_{X \cpr}$ via a $*$-isomorphism that maps generators to generators. 
\end{theorem}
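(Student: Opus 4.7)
The plan is to establish the isomorphism by identifying $\O_X \rtimes_{\alpha}^r \G$ as a Cuntz--Pimsner algebra for the correspondence $(X\rtimes_{\alpha}^r\G, \C\rtimes_{\alpha}^r\G)$ and then applying an appropriate Cuntz--Pimsner uniqueness result. First, I would construct a natural Toeplitz representation $(\pi,t)$ of $(X\rtimes_{\alpha}^r\G,\C\rtimes_{\alpha}^r\G)$ inside $\O_X\rtimes_{\alpha}^r\G$. The canonical inclusions $\C\hookrightarrow \O_X$ and $X\hookrightarrow \O_X$ extend, using the multiplier unitaries $U_s$ implementing $\alpha$, to completely isometric maps $\pi\colon (\C\rtimes_{\alpha}^r\G)_0 \to \O_X\rtimes_{\alpha}^r\G$ and $t\colon (X\rtimes_{\alpha}^r\G)_0 \to \O_X\rtimes_{\alpha}^r\G$ which are defined on finite sums $\sum_s c_sU_s$ and $\sum_s x_sU_s$ by the obvious formulas. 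The Toeplitz relations $\pi(\xi)t(x) = t(\phi(\xi)x)$ and $t(x)^*t(y)=\pi(\sca{x,y})$ follow from the original Toeplitz relations in $\O_X$ together with the identity $\alpha_s(\phi(c)x)=\phi(\alpha_s(c))\alpha_s(x)$ guaranteed by the generalized gauge action.

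Next, I would verify that $(\pi,t)$ is Cuntz--Pimsner covariant for the new correspondence, i.e., that $\psi_t(\phi_{X\rtimes_{\alpha}^r\G}(\xi))=\pi(\xi)$ for every $\xi\in J_{X\rtimes_{\alpha}^r\G}$. The key step is the identification
\[
J_{X\rtimes_{\alpha}^r\G} \;=\; J_X\rtimes_{\alpha}^r\G,
\]
which requires showing that $\alpha$ preserves $J_X$ (immediate, since $J_X=\ker\phi_X^{\perp}\cap \phi_X^{-1}(\K(X))$ is a canonical object in the correspondence and generalized gauge automorphisms preserve it), and then checking equality of the two ideals at the level of Fourier coefficients. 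Once this identification is in hand, the Cuntz--Pimsner covariance in $\O_X$ together with the definition of $(\pi,t)$ immediately lifts to Cuntz--Pimsner covariance for $\xi\in J_X\rtimes_{\alpha}^r\G$. This produces a surjective $*$-homomorphism $\pi\rtimes t\colon \O_{X\rtimes_{\alpha}^r\G}\to \O_X\rtimes_{\alpha}^r\G$.

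To verify injectivity, I would invoke a gauge-invariant uniqueness theorem for $\O_X$ (in the spirit of Theorem~\ref{GIUThm}, adapted from Toeplitz to Cuntz--Pimsner algebras by Katsura). The gauge action $\{\gamma_z\}_{z\in\bbT}$ on $\O_X$ commutes with $\alpha$ (since $\alpha$ is a generalized gauge action leaving $\C$ and $X$ separately invariant), and therefore extends to a gauge action $\gamma_z\rtimes\id$ on $\O_X\rtimes_{\alpha}^r\G$ that rescales $t(X\rtimes_{\alpha}^r\G)$ by $z$ and leaves $\pi(\C\rtimes_{\alpha}^r\G)$ fixed. Injectivity on the coefficient algebra $\C\rtimes_{\alpha}^r\G$ is inherited from the injectivity of the inclusion $\C\hookrightarrow\O_X$ together with the faithfulness of the reduced crossed product construction applied to the inclusion $\C\rtimes_{\alpha}^r\G\hookrightarrow \O_X\rtimes_{\alpha}^r\G$. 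Combining these ingredients, gauge-invariant uniqueness yields the desired $*$-isomorphism, and by construction it maps generators to generators.

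The main obstacle is the covariance check, specifically the identification $J_{X\rtimes_{\alpha}^r\G} = J_X\rtimes_{\alpha}^r\G$. The inclusion $J_X\rtimes_{\alpha}^r\G \subseteq J_{X\rtimes_{\alpha}^r\G}$ is reasonably direct once one knows that the left action on $X\rtimes_{\alpha}^r\G$ restricted to elements coming from $J_X$ lands in $\K(X\rtimes_{\alpha}^r\G)$. The reverse inclusion is subtler: it requires a Fourier-coefficient argument to show that any element of $\C\rtimes_{\alpha}^r\G$ whose left action on $X\rtimes_{\alpha}^r\G$ is compact must have each Fourier coefficient in $J_X$, and that the kernel of the left action decomposes appropriately. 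Amenability of $\G$ enters here to ensure that the reduced and full crossed products of $J_X$ agree, so that the resulting ideal is exactly a crossed product ideal rather than only a reduced one, and it is also what makes the faithful conditional expectation on $\C\rtimes_{\alpha}^r\G$ available for the Fourier-coefficient analysis underlying the gauge-invariant uniqueness argument.
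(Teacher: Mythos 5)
The first thing to say is that the monograph does not prove Theorem~\ref{HNThm} at all: it is imported verbatim from \cite[Theorem 2.10]{HN}, so there is no internal proof to compare yours against. What you have written is, in outline, the original Hao--Ng argument: represent $(X\cpr,\C\cpr)$ inside $\O_X\cpr$, identify the Katsura ideal of the crossed-product correspondence, and conclude by the Cuntz--Pimsner version of gauge-invariant uniqueness. That strategy is viable and is essentially how \cite{HN} proceed. But be aware that virtually all of the content is concentrated in the step you defer: the identification $J_{X\cpr}=J_X\cpr$, and in particular the inclusion $J_{X\cpr}\subseteq J_X\cpr$ --- without it your Toeplitz representation need not be covariant on all of $J_{X\cpr}$, so the surjection $\O_{X\cpr}\to\O_X\cpr$ is not even defined. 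Establishing it requires identifying $\K(X\cpr)$ with $\K(X)\cpr$ and then a Fourier-coefficient analysis of both $\phi^{-1}\big(\K(X\cpr)\big)$ and $(\ker\phi)^{\perp}$; your one-sentence description is a placeholder for the hardest lemma in \cite{HN}, not an argument. A smaller point: your account of where amenability enters is off for the reduced version as stated --- the faithful conditional expectation on $\C\cpr$ exists for every discrete group; amenability is what lets one pass between the full and reduced crossed products (which is how \cite{HN} phrase their theorem, and why the monograph can restate it in reduced form; see Remark~\ref{HaoNgremarkreferee}(ii)).

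It is worth contrasting your route with the one this monograph actually develops, which deliberately avoids the Katsura-ideal computation. Theorem~\ref{HNtensor2} proves $\T^+_X\cpr\simeq\T^+_{X\cpr}$ by a gauge-invariant uniqueness argument at the \emph{Toeplitz} level, where there is no covariance condition and hence no ideal to identify; by \cite[Theorem 3.7]{KatsoulisKribsJFA} this gives $\cenv(\T^+_X\cpr)\simeq\O_{X\cpr}$, so the Hao--Ng isomorphism becomes equivalent to the $\ca$-envelope identity $\cenv(\T^+_X\cpr)\simeq\O_X\cpr$. The monograph then establishes that identity by independent means in several cases (Theorem~\ref{abelianenv} for abelian $\G$, Theorem~\ref{HNBim} for Hilbert bimodules, Corollary~\ref{exact} for exact groups via \cite{BKQR}). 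The trade-off is that your route pays for the isomorphism with a delicate ideal computation inside $\C\cpr$, whereas the paper's route pays with a $\ca$-envelope identification --- which is precisely the non-selfadjoint machinery the monograph is built to supply.
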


The reader familiar with the work of Hao-Ng may have noticed that our notation in Theorem~\ref{HNThm} differs from that of Hao and Ng in \cite[Theorem 2.10]{HN}. Indeed Hao and Ng state their result for a different $\ca$-correspondence, which is denoted as $X\cpf$ and it is defined below. As it turns out, in the case where $\G$ is amenable the $\ca$-correspondences $X\cpf$ and $X\cpr$ are unitarily equivalent and so the corresponding Cuntz-Pimsner algebras are isomorphic. See Remark~\ref{HaoNgremarkreferee} (ii). 

The following is a consequence of the Hao-Ng Theorem that demonstrates its significance for our work.

\begin{corollary} \label{HNconseq}
Let $(X, \C)$ be a non-degenerate $\ca$-correspondence and let $\alpha : \G \rightarrow (X, \C)$ be a generalized gauge action of a discrete amenable group $\G$. Then
\[
\T^+_{X} \cpr \simeq \T^+_{X\cpr} \quad \mbox{and} \quad
\cenv\big (\T^+_{X}\cpr \big) \simeq  \O_{X} \cpr .
\]
\end{corollary}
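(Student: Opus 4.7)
The plan is to derive both isomorphisms as consequences of the Hao-Ng Theorem (Theorem~\ref{HNThm}) together with the crossed-product technology developed earlier in the monograph.

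First I would establish the non-selfadjoint identification $\T^+_X \cpr \simeq \T^+_{X \cpr}$. Since $\G$ is amenable, Corollary~\ref{allrcoincide} (or Theorem~\ref{r=f}) lets me view $\T^+_X \cpr$ as the relative reduced crossed product coming from the $\ca$-cover $\cenv(\T^+_X) \simeq \O_X$; that is,
\[
\T^+_X \cpr \;\simeq\; \T^+_X \rtimes^r_{\O_X, \alpha}\G \;\subseteq\; \O_X \cpr .
\]
Inside $\O_X\cpr$ this subalgebra is the norm-closure of the algebra generated by $C_c(\G,\T^+_X)$, hence by the elementary tensors $z\otimes c$ and $z\otimes x$ for $z\in C_c(\G)$, $c\in\C$, $x\in X$. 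Since these are total in the copies of $\C\cpr$ and $X\cpr$ sitting naturally inside $\O_X\cpr$, I can describe $\T^+_X\cpr$ intrinsically as the closed non-selfadjoint subalgebra of $\O_X\cpr$ generated by $\C\cpr$ and $X\cpr$.

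Next I would apply Theorem~\ref{HNThm} to transport this picture to $\O_{X\cpr}$. The Hao-Ng isomorphism $\O_X\cpr \simeq \O_{X\cpr}$ sends generators to generators, so it identifies the copies of $\C\cpr$ and of $X\cpr$ inside $\O_X\cpr$ with the canonical copies of $\C\cpr$ and $X\cpr$ inside $\O_{X\cpr}$. Consequently it maps the closed non-selfadjoint subalgebra they generate inside $\O_X\cpr$ — namely $\T^+_X\cpr$ — onto the closed non-selfadjoint subalgebra they generate inside $\O_{X\cpr}$, which by definition is $\T^+_{X\cpr}$. This yields the first required isomorphism.

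For the second isomorphism I would invoke the Katsoulis--Kribs identification $\cenv(\T^+_Y) \simeq \O_Y$ (stated in Chapter~\ref{prel}) applied to $Y = X\cpr$, combined with the first part and Hao-Ng:
\[
\cenv\bigl(\T^+_X\cpr\bigr) \;\simeq\; \cenv\bigl(\T^+_{X\cpr}\bigr) \;\simeq\; \O_{X\cpr} \;\simeq\; \O_X\cpr .
\]
The one point requiring a sentence of justification is that the step ``closed subalgebra generated by $\C\cpr$ and $X\cpr$'' truly characterises $\T^+_X\cpr$ intrinsically; this follows because the span of $\{z\otimes a : z\in C_c(\G),\ a\in \T^+_X\}$ is dense in $C_c(\G,\T^+_X)$ by \cite[Lemma~1.87]{Will} and every $a\in\T^+_X$ is a norm-limit of finite products of elements of $\C$ and $X$. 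The only genuine subtlety — and the place where amenability is essential — is the initial reduction to the relative crossed product over $\O_X$, without which one could not place $\T^+_X\cpr$ inside $\O_X\cpr$ at all; once that is secured, the proof is a clean application of Hao-Ng.
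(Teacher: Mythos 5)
Your argument is correct and follows essentially the same route as the paper, whose proof of this corollary is a one-line appeal to Theorem~\ref{HNThm} and Theorem~\ref{r=f}; your write-up simply makes explicit the identification of $\T^+_X\cpr$ with the closed subalgebra of $\O_X\cpr$ generated by $\C\cpr$ and $X\cpr$, and the Katsoulis--Kribs identification $\cenv(\T^+_Y)\simeq\O_Y$. One small correction: placing $\T^+_X\cpr$ inside $\O_X\cpr$ requires no amenability---it is immediate from Definition~\ref{reddefn} together with Corollary~\ref{allrcoincide}, which hold for arbitrary locally compact groups---so amenability enters only because Theorem~\ref{HNThm} is stated for amenable groups (compare Theorem~\ref{HNtensor2}, which obtains the first isomorphism for arbitrary discrete groups by a different argument).
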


\begin{proof} The conclusion follows directly from Theorem~\ref{HNThm} and Theorem~\ref{r=f}.\end{proof}
 
Beyond amenable groups the two notions of a crossed product differ and we distinguish two cases. For the reduced crossed product, the definition of $(X\rtimes_{\alpha}^r \G, \C \rtimes_{\alpha}^{r} \G)$ has already been given. The situation is not so tame with the full crossed product. In this case we have (at least) three crossed product $\ca$-correspondences 
\begin{itemize}
\item[(i)] \textit{\textbf{The $\ca$-correspondence $X\rtimes_\alpha \G$}}  (\cite{BKQR, HN}). Let  $\big(X\rtimes_{\alpha} \G\big)_0$ denote all formal (finite) sums of the form $\sum_s x_sU_s$, $x_s \in X$, $s \in \G$. This allows a left and right action on $\big(X\rtimes_{\alpha} \G\big)_0$ by $\big(\C \cpf \big)_0$, i.e., finite sums of the form $\sum_{s} c_s U_s $, $c_s \in \C$, simply by allowing the obvious multiplication rules or the ones coming from $\G$-covariance. Equip $\big(X\rtimes_{\alpha} \G\big)_0$ with the $\C \rtimes_{\alpha} \G$-valued inner product $\sca{.,.}$ defined by 
\[
\sca{S, T} \equiv \sum_{s,t} \, \alpha_{s^{-1}}(\sca{x_s,y_{st}})U_{t} \in \C \rtimes_{\alpha} \G,
\]
where $S= \sum_s x_sU_s$ and $T= \sum_{t} y_tU_t$ are both in $\big(X\rtimes_{\alpha} \G\big)_0$. The completion of $\big(X\rtimes_{\alpha} \G\big)_0$ with respect to the inner product $\sca{.,.}$ becomes a $\C\cpf$-correspondence denoted as $X\cpf$.
\item[(ii)] \textit{\textbf{The $\ca$-correspondence $X\check{\rtimes}_\alpha \G$.}} Identify both $\big(X\rtimes_{\alpha} \G\big)_0$ and $\big( \C \cpf \big)_0$ with their natural images inside $\T_X \cpf$. This allows a left and right action on $\big(X\rtimes_{\alpha} \G\big)_0$ by $\big(\C \cpf \big)_0$ simply by multiplication. Equip $\big(X\rtimes_{\alpha} \G\big)_0$ with the $\C \check{\rtimes}_{\alpha} \G$-valued inner product $\sca{.,.}$ defined by $\sca{S, T} \equiv S^*T$, $S, T \in \big(X\rtimes_{\alpha} \G\big)_0$, where $\C \check{\rtimes}_{\alpha} \G$ denotes the $\ca$-subalgebra of $\T_X \cpf$ generated by $\big( \C \cpf \big)_0$. The completion of $\big(X\rtimes_{\alpha} \G\big)_0$ with respect to the inner product $\sca{.,.}$ becomes a $ \C \cpu $-correspondence denoted as $X\check{\rtimes}_{\alpha} \G$.
\item[(iii)] \textit{\textbf{The $\ca$-correspondence $X\hat{\rtimes}_\alpha\G$.}} Identify both $\big(X\rtimes_{\alpha} \G\big)_0$ and $\big( \C \cpf \big)_0$ with their natural images inside $\O_X \cpf$ this time. This allows again a left and right action on $\big(X\rtimes_{\alpha} \G\big)_0$ by $\big(\C \cpf \big)_0$ simply by multiplication. Equip $\big(X\rtimes_{\alpha} \G\big)_0$ with the $\C \hat{\rtimes}_{\alpha} \G$-valued inner product $\sca{.,.}$ defined by $\sca{S, T} \equiv S^*T$, $S, T \in \big(X\rtimes_{\alpha} \G\big)_0$, where $\C \hat{\rtimes}_{\alpha} \G$ denotes the $\ca$-subalgebra of $\O_X \cpf$ generated by $\big( \C \cpf \big)_0$. The completion of $\big(X\rtimes_{\alpha} \G\big)_0$ with respect to the inner product $\sca{.,.}$ becomes a $\C \hat{\rtimes}\G$-correspondence denoted as $X\hat{\rtimes}_{\alpha} \G$.
\end{itemize}

\begin{remark} \label{HaoNgremarkreferee}
(i) The issue with the above definitions is that the algebras $\C \cpf$, $\C \check{\rtimes}_{\alpha} \G$ and $\C \hat{\rtimes}_{\alpha} \G$ might not be isomorphic. It is not even clear that there is an inclusion $\C \cpf  \subseteq \O_X \cpf$, something that would be implied if for instance $\C \cpd \simeq \C \cpf$ canonically. Indeed even in the case of the trivial action, such an inclusion would translate to 
\[
\C \otimes_{max} \ca(\G) \subseteq \O_X \otimes_{max} \ca(\G),
\]
an inclusion that hinges on the validity of WEP for $\C$. (See \cite[Corollary 3.6.8]{BO}.) Nevertheless, as we shall see in Remark~\ref{quigg}, the correspondences $X\rtimes_\alpha \G$ and $X\check{\rtimes}_\alpha \G$ are unitarily equivalent via an association that sends generators to generators. We are thankful to the authors of \cite{BKQR} for pointing this out to us. 

(ii) In the case where $\G$ is amenable, it is not difficult to see that all crossed product $\ca$-correspondences associated with the generalized gauge action $\alpha: \G \rightarrow (X, \C)$ are unitarily equivalent. We verify this for the $\ca$-correspondences $(X \cpr, \C\cpr)$ and $(X\cpf, \C \cpf)$.  Let $(\bar{\rho}_{\infty}, \overline{t}_{\infty})$ be the universal covariant representation of $(X, \C)$. Given formal sums $S= \sum_s x_sU_s$ and $T= \sum_{t} y_tU_t$, we have 
\[\begin{aligned}
(\C\cpr )_0 \ni \sca{S, T} &=  (\overline{\pi} \rtimes \lambda_{\H} )\big(\sum_s\, \overline{t}_{\infty}(x_s)U_s\big)^*(\overline{\pi} \rtimes \lambda_{\H} )\big(\sum_t\,\overline{t}_{\infty}(y_t)U_t\big)\\		
          		&= \sum_{s,t} \overline{\pi}\left(\alpha_{s^{-1}}\left(\overline{t}_{\infty}(x_s)^*\overline{t}_{\infty}(y_t)\right)\right)\lambda_{\H}(s^{-1}t) \\
		&= \sum_{s,t} \overline{\pi}\big(\alpha_{s^{-1}}\big( \bar{\rho}(\sca{x_s,y_t}\big)\lambda_{\H}(s^{-1}t)\\ 
		 &=(\overline{\pi} \rtimes \lambda_{\H} )\big(\sum_{s,t} \, \alpha_{s^{-1}}(\sca{x_s,y_t})U_{s^{-1}t} \big)\\
		 &=(\overline{\pi} \rtimes \lambda_{\H} )\big(\sum_{s,t} \, \alpha_{s^{-1}}(\sca{x_s,y_{st}})U_{t} \big)\\
		&=(\overline{\pi} \rtimes \lambda_{\H} )(\sca{S,T}),
		\end{aligned}\]
where the last inner product comes from $X \cpf$. Since $\G$ is amenable, $(\overline{\pi} \rtimes \lambda_{\H} )$ is a faithful representation of $\C \cpf$. Hence the above calculation shows that the mappings
\[
\begin{aligned}
(X \cpr)_0&\ni \sum_{s}\bar{t}_{\infty}(x_s)U_s \xrightarrow{\phantom{x}W\phantom{x}} \sum_{s} x_sU_s \in ( X \cpf)_0\\
(\C \cpr)_0&\ni \sum_{t}\overline{\rho}_{\infty}(c_t)U_t \xrightarrow{\phantom{xx}\sigma\phantom{x}}\sum_{t} c_tU_t \in ( \C\cpf)_0
\end{aligned}
\]
can be extended to $X\cpr$ and $\C\cpr$ respectively, so that the pair $(W, \sigma)$ implements the desired unitary equivalence between $(X \cpr, \C\cpr)$ and $(X\cpf, \C \cpf)$.
\end{remark}

The \textit{Hao-Ng isomorphism problem}, as popularized in \cite{BKQR, KQR, Katsuracomm, Kim}, asks whether given a non-degenerate $\ca$-correspondence $(X, \C)$ and a generalized gauge action of a discrete group $\G$, one has isomorphisms of the form $\O_X\cpf \simeq \O_{X \cpf}$ or $\O_X\cpr\simeq \O_{X \cpr}$. The analysis in this chapter indicates that in addition to the correspondence $X\rtimes_\alpha \G$, we should also pay attention to the correspondence $X\check{\rtimes}_\alpha \G$. As it turns out, a recasting of the Hao-Ng isomorphism problem using the correspondence $X\check{\rtimes}_\alpha \G$ is equivalent to resolving the identity (\ref{introident}) in that special case.

For the moment we demonstrate a result of independent interest, a tool for detecting whether a given operator algebra is completely isometrically isomorphic to the tensor algebra of some naturally occurring $\ca$- correspondence. We call this result the \textit{Extension Theorem}.  But first we need a lemma. 

\begin{lemma} \label{normincrease}
 Let $S_0, S_1, S_2, \dots S_n$ be bounded operators on a Hilbert space $\H$ and let $V$ be the forward shift on $l^2(\bbN )$. Then,
\[
\| \sum _{k = 0}^{n} \,  S_k \| \leq \| \sum _{k = 0}^{n} \,  S_k \otimes V^k\|
\]
\end{lemma}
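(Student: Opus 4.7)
The plan is to test the operator $\sum_{k=0}^n S_k \otimes V^k$ on an explicit sequence of ``diagonal" vectors in $\H \otimes \ell^2(\bbN) \simeq \ell^2(\bbN, \H)$ and let them grow long enough that the boundary effects become negligible. The key observation is that under the identification $h\otimes e_i \leftrightarrow (0,\ldots,0,h,0,\ldots)$ (with $h$ in slot $i$), the operator $S_k\otimes V^k$ acts by shifting and applying $S_k$, so $\sum_{k=0}^n S_k\otimes V^k$ realizes a lower triangular block Toeplitz operator whose ``row sums" in the interior are exactly $\sum_{k=0}^n S_k$.

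The first step is to fix $h \in \H$ with $\|h\|=1$ and, for integers $m>n$, to set
\[
\eta_m \ \equiv\ \sum_{i=0}^{m-1} h\otimes e_i \ \in\ \H\otimes \ell^2(\bbN),
\]
so that $\|\eta_m\|^2 = m$. Then I will compute directly
\[
\Big(\sum_{k=0}^n S_k\otimes V^k\Big)\eta_m \ =\ \sum_{i=0}^{m-1}\sum_{k=0}^n (S_kh)\otimes e_{i+k} \ =\ \sum_{j=0}^{m+n-1} y_j\otimes e_j,
\]
where $y_j = \sum_{k} S_kh$ with the sum over all $k\in\{0,\ldots,n\}$ such that $0\le j-k\le m-1$. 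For the interior indices $n\le j\le m-1$ all $n+1$ values of $k$ contribute, hence $y_j = \sum_{k=0}^n S_kh$ for each such $j$.

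The second step is the norm estimate: since the terms in the expansion above are orthogonal,
\[
\Big\|\Big(\sum_{k=0}^n S_k\otimes V^k\Big)\eta_m\Big\|^2 \ \ge\ \sum_{j=n}^{m-1}\|y_j\|^2 \ =\ (m-n)\Big\|\sum_{k=0}^n S_kh\Big\|^2.
\]
Dividing by $\|\eta_m\|^2 = m$ and taking square roots yields
\[
\Big\|\sum_{k=0}^n S_k\otimes V^k\Big\| \ \ge\ \sqrt{\tfrac{m-n}{m}}\,\Big\|\sum_{k=0}^n S_kh\Big\|.
\]
Letting $m\to\infty$ gives $\|\sum_k S_k\otimes V^k\| \ge \|\sum_k S_kh\|$ for every unit vector $h$, and taking the supremum over $h$ finishes the proof. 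There is no serious obstacle here; the only thing to watch is that the ``interior" index range $n\le j\le m-1$ is nonempty and grows linearly in $m$, which is exactly why sending $m\to\infty$ absorbs the boundary loss.
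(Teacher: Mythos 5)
Your proof is correct, but it takes a genuinely different route from the paper's. The paper's argument is a one-line C$^*$-algebraic observation: the Toeplitz algebra $\ca(V)$ has $C(\bbT)$ as a quotient (modulo the compacts), so evaluation at $1 \in \bbT$ gives a character $\delta_1$ with $\delta_1(V)=1$; applying the contractive $*$-homomorphism $\id\otimes\delta_1$ to $\sum_k S_k\otimes V^k$ produces $\sum_k S_k$ and the inequality follows. Your argument instead tests the operator on the long ``diagonal'' vectors $\eta_m=\sum_{i=0}^{m-1}h\otimes e_i$, exploits orthogonality of the output components, isolates the $m-n$ interior coefficients each equal to $\sum_k S_k h$, and lets $m\to\infty$ to absorb the $\sqrt{(m-n)/m}$ boundary loss. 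Every step checks out: the interior range $n\le j\le m-1$ is correctly identified, $\|\eta_m\|^2=m$, and the final supremum over unit vectors $h$ recovers the operator norm. What the paper's approach buys is brevity, at the cost of invoking the structure of the Toeplitz algebra; what yours buys is complete self-containment — it needs nothing beyond the definition of $V$ and orthogonality in $\H\otimes\ell^2(\bbN)$ — and it makes visible exactly where the asymmetry between $V$ (a proper isometry) and a unitary would or would not matter. Both are perfectly acceptable proofs of the lemma.
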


\begin{proof}
Consider the character $\delta_1$ on $\ca(V)$ which is obtained by taking quotient on $C(\bbT)$ and then evaluating at $1$. This induces a $*$-homomorphism 
\[
\id \otimes \delta_1: \ca(S)\otimes \ca(V) \longrightarrow \ca(S).
\]
The conclusion follows by applying $\id \otimes \delta_1$ on $ \sum _{k = 0}^{n} \,  S_k  \otimes V^k$.
\end{proof}

In what follows, if $\S \subseteq B(\H)$, then $\alg(\S)$ will denote the (not necessarily unital) algebra generated by $\S$, while $\overline{\alg}(\S)$ will denote its norm closure.

\begin{theorem}[Extension Theorem] \label{extend}
Let $\C \subseteq B(\H)$ be a $\ca$-algebra and let $X \subseteq  B(\H)$ be a closed $\C$-bimodule with $X^*X \subseteq \C$. If $\A \equiv \overline{\alg}(X\cup \C)$ and $U$ denotes the forward shift acting on $l^2 (\bbZ)$, then the following are equivalent
\begin{itemize}
\item[(i)] $\A$ is completely isometrically isomorphic to the tensor algebra $\T_{(X, \C) }^+$ via a map that sends generators to generators.
\item[(ii)] The association
\begin{equation} \label{umapplus}
\begin{aligned}
\C \ni C &\longrightarrow C \otimes I, \\
X \ni S &\longrightarrow S \otimes U
\end{aligned}
\end{equation}
 extends to a well-defined, completely contractive multiplicative map on $\A$.
\end{itemize}
\end{theorem}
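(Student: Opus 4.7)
The direction \textup{(i)} $\Rightarrow$ \textup{(ii)} is straightforward. Since $U$ is unitary on $\ell^2(\bbZ)$, the pair $\big(c \mapsto \rho_\infty(c) \otimes I,\; x \mapsto t_\infty(x) \otimes U\big)$ satisfies both axioms of a Toeplitz representation of $(X, \C)$ into $\T_X \otimes \ca(U)$, so by the universal property of $\T_X$ it integrates to a $*$-homomorphism whose restriction to $\T^+_{(X, \C)}$ is completely contractive and multiplicative; transporting through the isomorphism in \textup{(i)} delivers the required map on $\A$.

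For \textup{(ii)} $\Rightarrow$ \textup{(i)}, the inclusion $(\id_\C, \id_X)$ of $(\C, X)$ into $B(\H)$ is itself a Toeplitz representation of $(X, \C)$, because the axiom $t(x)^* t(y) = \rho(\sca{x,y})$ is precisely the hypothesis $X^*X \subseteq \C$. Its integrated form restricts to a canonical completely contractive homomorphism $\pi : \T^+_{(X, \C)} \to \A$, and the goal is to show $\pi$ is completely isometric. The plan is to construct, from the map $\Psi$ supplied by \textup{(ii)}, a \emph{faithful} representation of $\T_X$ that factors through $\pi$. Realize $\Psi$ concretely on $\H \otimes \ell^2(\bbZ)$ via the identification $\ca(U) \cong C(\bbT)$; since both $c \otimes I$ and $x \otimes U$ preserve $\H \otimes \ell^2(\bbN)$, so does all of $\Psi(\A)$, and the compression produces a completely contractive multiplicative map $\Psi_+ : \A \to B(\H \otimes \ell^2(\bbN))$ with $\Psi_+(c) = c \otimes I$ and $\Psi_+(x) = x \otimes S$, where $S$ denotes the unilateral shift. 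The relation $S^*S = I$ shows that $(c \otimes I,\, x \otimes S)$ is again a Toeplitz representation of $(X, \C)$, so its integrated form $\widetilde{\Psi}_+ : \T_X \to B(\H \otimes \ell^2(\bbN))$ satisfies $\widetilde{\Psi}_+\big|_{\T^+_{(X, \C)}} = \Psi_+ \circ \pi$, as one checks on generators.

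The final step invokes the Gauge Invariant Uniqueness Theorem (Theorem~\ref{GIUThm}) to conclude that $\widetilde{\Psi}_+$ is faithful on $\T_X$. The required gauge action is implemented by conjugation with $I \otimes D_\theta$, where $D_\theta e_k = e^{ik\theta} e_k$ on $\ell^2(\bbN)$; a direct calculation gives $D_\theta S D_\theta^* = e^{i\theta} S$, so $c \otimes I$ is fixed and $x \otimes S$ is scaled by $e^{i\theta}$. For Katsura's condition, observe that $SS^* = I - P_{e_0}$ is the projection onto the orthocomplement of $e_0$, so every element of $\psi_{t'}(\K(X))$ has the form $T \otimes SS^*$; the equation $c \otimes I = T \otimes SS^*$ applied to $\xi \otimes e_0$ forces $c\xi = 0$ for every $\xi \in \H$, whence $c = 0$ and $I'_{(\rho', t')} = \{0\}$. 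With both hypotheses verified, $\widetilde{\Psi}_+$ is faithful, so $\Psi_+ \circ \pi$ is completely isometric on $\T^+_{(X, \C)}$; because $\Psi_+$ is completely contractive, $\pi$ must itself be completely isometric. The main obstacle is the verification of Katsura's condition, and this is exactly where the asymmetry $SS^* \neq I$ of the unilateral shift is indispensable: attempting to apply the Gauge Invariant Uniqueness Theorem directly to $\Psi$ on $\H \otimes \ell^2(\bbZ)$ would fail because the bilateral shift is unitary, which is why the passage to the invariant subspace $\H \otimes \ell^2(\bbN)$ is essential.
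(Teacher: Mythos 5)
Your proof is correct, and its core is the same as the paper's: both arguments hinge on applying the Gauge Invariant Uniqueness Theorem to the representation $c \mapsto c\otimes I$, $x \mapsto x\otimes V$ with $V$ the unilateral shift, where the gauge action comes from the diagonal unitaries and Katsura's condition from $VV^* \neq I$ (details the paper leaves to the reader and you supply). You differ from the paper in two local but genuine ways. First, to pass from the bilateral shift $U$ to the unilateral shift, the paper proves the two-sided norm identity $\| \sum_k S_k \otimes U^k\| = \|\sum_k S_k \otimes V^k\|$ by compressing to the projections $P_m$ and translating by $U^m$, thereby inserting an intermediate condition (iii) equivalent to (ii); you instead observe that $\H\otimes\ell^2(\bbN)$ is invariant for $\Psi(\A)$ and compress, which yields only the direction you actually need but with less work. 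Second, where the paper invokes its Lemma~\ref{normincrease} (the character $\delta_1$ on $\ca(V)$) to upgrade the completely contractive map of condition (iii) to a complete isometry, you instead factor the faithful GIUT representation as $\widetilde{\Psi}_+|_{\T^+_X} = \Psi_+\circ\pi$ and deduce that a complete isometry written as a composite of two complete contractions forces each factor to be completely isometric; this renders Lemma~\ref{normincrease} unnecessary and directly gives the isometry of the canonical map $\pi\colon \T^+_{(X,\C)}\to\A$ (whose surjectivity onto $\A$, which you should state, follows since its range is closed and contains $\alg(X\cup\C)$). Your closing remark about why the bilateral shift cannot be used directly in the GIUT is exactly the right diagnosis.
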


\begin{proof}
We will be showing that condition (ii) above is equivalent to
\begin{itemize}
\item[(iii)] The association
\begin{equation} \label{vmapplus}
\begin{aligned}
\C \ni C &\longrightarrow C \otimes I, \\
X \ni S &\longrightarrow S \otimes V
\end{aligned}
\end{equation}
 extends to a well-defined, completely contractive multiplicative map on $\A$, where $V$ denotes the forward shift acting on $\ell^2(\bbN)$.
 \end{itemize}
 In order to establish the equivalence of (ii) and (iii) we need to verify
\begin{equation} \label{standardeq}
\| \sum_{k=0}^{n} S_k \otimes U^k\| = \| \sum_{k=0}^{n} S_k \otimes V^k\|,
\end{equation}
where $S_0, S_1, \dots S_n$ ranges over arbitrary elements of $B(\H)$.

Assume that $U$ acts on $l^2(\bbZ)$, with orthonormal basis $\{e_n\}_{n \in \bbZ}$, let $P_m$ be the orthogonal projection on the subspace generated by $\{e_n\}_{n=m}^{\infty}$ and let $V= P_0 U P_0$. Clearly,
\[ \big\| \sum_{k=0}^{n} S_k \otimes U^k\big\| = \sup_{m \in \bbN} \left\{  \Big\| \big(  \sum_{k=0}^{n}  S_k \otimes U^k\big)\mid_{ I \otimes P_m}\Big\| \right\}.
\]
However
\begin{align*}
 \Big\| \big(  \sum_{k=0}^{n}  S_k \otimes U^k\big)\mid_{ I \otimes P_m}\Big\| &=
 \Big\| \big(  \sum_{k=0}^{n}  S_k \otimes U^kU^mU^{-m} \big)\mid_{ I \otimes P_m}\Big\| \\
&=
 \Big\| (I\otimes U^{m}) \big(  \sum_{k=0}^{n}  S_k \otimes U^k U^{-m} \big)\mid_{ I \otimes P_m}\Big\| \\
&=
 \Big\|  \big(  \sum_{k=0}^{n}  S_k \otimes U^k  \big)(I \otimes U^{-m})\mid_{ I \otimes P_m}\Big\| \\
&=
 \Big\|  \big(  \sum_{k=0}^{n}  S_k \otimes U^k  \big)\mid_{ I \otimes P_0}\Big\| \\
&= \| \sum_{k=0}^{n} S_k \otimes V^k\|
\end{align*}
as desired. An analogous argument establishes the matricial version of (\ref{standardeq}), thus establishing the equivalence of (ii) and (iii).

In order to complete the proof, we need to establish the equivalence of (i) and (iii). 

Let $(\pi, t)$ be the representation of the $\ca$-correspondence $ (X, \C)$, with $\pi(C) = C \otimes I$, $C \in \C $ and $t(X) = S \otimes V$, $S \in X$. It is easy to see that the presence of the factor $V$ guarantees that the representation $(\pi, t)$ admits a gauge action. Furthermore, $(\pi, t)$ satisfies (\ref{Icriterion}) and so by the Gauge-Invariant Uniqueness Theorem (Theorem~\ref{GIUThm}) it extends to a faithful representation $\Phi$ of the Toeplitz-Cuntz-Pimsner algebra $\T_{X}$. We therefore obtain a completely isometric representation $\Phi$ of the tensor algebra $\T_{X}^+$ satisfying $\Phi(t_{\infty}(S))=S\otimes V$, $S \in X$ and $\Phi(\rho_{\infty}(C)) =C \otimes I$, $ C \in \C$. 

If (i) is valid and $\Psi: \A \rightarrow \T^+_{X}$ is the completely isometric isomorphism that maps generators to generators, then $\Phi\circ \Psi$ is the map that implements (\ref{vmapplus}). On the other hand, if (iii) is valid and (\ref{vmapplus}) is implemented by a completely contractive homomorphism $\Psi$, then (the matricial version of) Lemma~\ref{normincrease} implies that $\Psi$ is a complete isometry and $\Psi^{-1}\circ \Phi$ implements the isomorphism desired in (i)
\end{proof} 

\begin{remark} \label{extensionremark}
If in Theorem~\ref{extend} one makes the assumption that $\C$ contains a contractive approximate unit for $X$, then the correspondence $(X, \C)$ turns out to be non-degenerate.
\end{remark} 

We now examine the full crossed product $\ca$-algebras $\O_X \cpf$ and $\T_X \cpf$ and we consider the non-selfadjoint operator algebras $\T^+_X \rtimes_{ \O_X, \alpha} \G$ and $\T^+_X \rtimes_{ \T_X, \alpha} \G$ sitting inside them. Are any of these two algebras the tensor algebra of some $\ca$-correspondence? What are their $\ca$-envelopes? Are these relative crossed products isomorphic? The following provides answers to these questions.

\begin{theorem} \label{HNtensor1}
Let $(X, \C)$ be a non-degenerate $\ca$-correspondence and let $\alpha: \G \rightarrow (X, \C)$ be the generalized gauge action of a discrete group $\G$. Then
\begin{itemize}
\vspace{.07in}
\item[(i)] $\T^+_{X} \rtimes_{\O_X ,\alpha} \G \simeq \T^+_{X\cpd}\quad \mbox{  and  } \quad \cenv\big (\T^+_{X } \rtimes_{\O_X ,\alpha} \G \big) \simeq  \O_{X\cpd}$
\vspace{.1in}
\item[(ii)]  $\T^+_{X} \rtimes_{\T_X ,\alpha} \G \simeq \T^+_{X \cpu} \quad \mbox{ and } \quad \cenv\big (\T^+_{X} \rtimes_{\T_X ,\alpha} \G \big) \simeq  \O_{X\cpu}$
\vspace{.1in}
\end{itemize}
\vspace{.03in}
\end{theorem}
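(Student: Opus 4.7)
The plan is to apply the Extension Theorem (Theorem~\ref{extend}) to both identifications, with the ambient $\ca$-algebra being $\O_X\cpf$ in part~(i) and $\T_X\cpf$ in part~(ii). I describe the strategy for (i); the argument for (ii) is identical once $\O_X$ is replaced throughout by $\T_X$, using the gauge action on $\T_X$ from Chapter~\ref{prel}.

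First I would view $\T^+_X \rtimes_{\O_X,\alpha}\G$ as the closed subalgebra of $\O_X\cpf$ generated by the dense subsets $(X\rtimes_\alpha \G)_0$ and $(\C\rtimes_\alpha \G)_0$, whose generated $\ca$-algebra and closed bimodule inside $\O_X\cpf$ are precisely $\C\cpd$ and $X\cpd$. The elementary computation
\[
(xU_s)^*(yU_t) = \alpha_{s^{-1}}(x^*y)\,U_{s^{-1}t} \in (\C\cpf)_0 \subseteq \C\cpd
\]
shows $(X\cpd)^*X\cpd \subseteq \C\cpd$, and $\T^+_X\rtimes_{\O_X,\alpha}\G = \overline{\alg}(X\cpd \cup \C\cpd)$ by construction, so the setup required by Theorem~\ref{extend} is in place.

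To verify the hypothesis of the Extension Theorem I would exploit the canonical gauge action $\gamma\colon\bbT\to\Aut(\O_X)$, which fixes $\rho_\infty(\C)$ elementwise and sends $t_\infty(x)\mapsto z\,t_\infty(x)$. Because $\alpha$ is a generalized gauge action, each $\alpha_s$ preserves $\C$ and $X$ separately and therefore commutes with every $\gamma_z$; hence $\gamma$ lifts to a strongly continuous action $\widetilde{\gamma}\colon\bbT\to\Aut(\O_X\cpf)$ determined on $C_c(\G,\O_X)$ by $(\widetilde{\gamma}_z f)(s)=\gamma_z(f(s))$. Fixing a faithful representation $\pi\colon\O_X\cpf\to B(\H)$, define the $*$-homomorphism
\[
\Phi\colon\O_X\cpf \longrightarrow B\bigl(\H\otimes L^2(\bbT)\bigr),\qquad \bigl(\Phi(a)\xi\bigr)(z) = \pi\bigl(\widetilde{\gamma}_z(a)\bigr)\,\xi(z).
\]
Under the Fourier identification $L^2(\bbT)\simeq\ell^2(\bbZ)$, multiplication by the character $z\mapsto z$ is precisely the bilateral shift $U$. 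Since $\widetilde{\gamma}$ fixes $\C\cpd$ pointwise and multiplies every element of $X\cpd$ by the scalar $z$, the restriction of $\Phi$ to $\T^+_X\rtimes_{\O_X,\alpha}\G$ sends $c\mapsto \pi(c)\otimes I$ and $s\mapsto \pi(s)\otimes U$. Theorem~\ref{extend} then delivers the completely isometric identification $\T^+_X\rtimes_{\O_X,\alpha}\G \simeq \T^+_{X\cpd}$.

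For the $\ca$-envelope statement I would invoke the Katsoulis--Kribs formula $\cenv(\T^+_Y)\simeq\O_Y$ recalled in Chapter~\ref{prel}, applied to $Y = X\cpd$; non-degeneracy of $(X\cpd,\C\cpd)$ follows from that of $(X,\C)$, since a contractive approximate unit $\{e_i\}$ of $\C$ persists as one of $\C\cpd$ by Lemma~\ref{cai} and provides the approximation $\phi_{X\cpd}(e_i)\xi\to\xi$ on a dense subset of $X\cpd$. The main obstacle I anticipate is a bookkeeping one: one must confirm that $(X\cpd,\C\cpd)$ sitting concretely inside $\O_X\cpf$ coincides, as an abstract $\ca$-correspondence, with the one defined via formal sums in Section~\ref{discrete groups}; in particular, that the operator inner product $\sca{\xi,\eta}=\xi^*\eta$ matches the prescribed formula $\sum_{s,t}\alpha_{s^{-1}}(\sca{x_s,y_{st}})U_t$ and that the left action is the expected one. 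Once this compatibility is settled, the entire argument reduces to a direct application of Theorem~\ref{extend} followed by Katsoulis--Kribs.
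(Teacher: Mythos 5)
Your proposal is correct and follows essentially the same route as the paper: verify the bimodule relations for $(X\cpd,\C\cpd)$ inside $\O_X\cpf$, feed them into the Extension Theorem, and finish with the Katsoulis--Kribs identification $\cenv(\T^+_Y)\simeq\O_Y$. The one step you implement differently is the construction of the completely contractive map $c\mapsto c\otimes I$, $x\mapsto x\otimes U$: the paper tensors the universal covariant representation of $(\O_X,\G,\alpha)$ with the bilateral shift on the $X$-part, checks covariance is preserved, and re-integrates through the universal property of $\O_X\cpf$; you instead lift the canonical gauge circle action to $\widetilde{\gamma}\colon\bbT\to\Aut(\O_X\cpf)$ and realize the map as the multiplication representation on $\H\otimes L^2(\bbT)$. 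Both yield the same $*$-homomorphism, and your version makes transparent why $\C\cpd$ goes to the $I$-component and $X\cpd$ to the $U$-component. One small correction: the ``compatibility'' obstacle you anticipate at the end is not actually present for this theorem. The correspondence $X\cpd$ is \emph{defined} concretely inside $\O_X\cpf$ with inner product $\sca{S,T}=S^*T$; the formula $\sum_{s,t}\alpha_{s^{-1}}(\sca{x_s,y_{st}})U_t$ is the inner product of the \emph{different} correspondence $X\rtimes_\alpha\G$ of Hao--Ng/BKQR, and whether that one is unitarily equivalent to $X\cpd$ is precisely the open issue behind the Hao--Ng conjecture, not something needed here.
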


\begin{proof} 
(i) Let $(\pi_{\infty}, \phist_{\infty}, \H)$ be the universal covariant representation of $(\O_X, \G, \alpha)$ and let $U$ be the forward shift acting on $l^2(\bbZ)$. Any representation of $\O_X$ is the integrated representation of some covariant representation of $(X, \C)$; this applies in particular to $\pi_{\infty}$ and so  
\begin{align*}
\C\ni &c \longmapsto \pi_{\infty}(c) \in B(\H_{\infty}) \\
X\ni &x \longmapsto \pi_{\infty}(x)   \in B(\H_{\infty})
\end{align*}
is a covariant representation of $(X, \C)$. Hence 
\begin{align*}
\C\ni &c \longmapsto \pi_{\infty}(c) \otimes I \in B(\H_{\infty}\otimes l^2(\bbZ)) \\
X\ni &x \longmapsto \pi_{\infty}(x) \otimes U  \in B(\H_{\infty}\otimes l^2(\bbZ))
\end{align*}
is also a covariant representation of $(X, \C)$ and therefore integrates to a representation of $\O_X$ denoted as $\pi$. Set $\phist(s)=\phist_{\infty}(s)\otimes I$, $s \in \G$, and notice that the triple $(\pi, \phist, \H_{\infty}\otimes l^2(\bbZ))$ is a covariant representation for the system $(\O_X, \G, \alpha)$. Therefore it integrates to a completely contractive $*$-representation 
\[
\pi \rtimes \phist \, : \,O_X \cpf \longrightarrow  B(\H_{\infty}\otimes l^2(\bbZ)).
\]
Consider now the $\ca$-correspondence $X \cpd$ as defined in the beginning of the chapter, with the understanding that formal (finite) sums of the form $\sum_s x_s U_s \in \big(X \cpf\big)_0$ are identified with their images inside $\O_X \cpf$ under the map $\pi_{\infty} \rtimes \phist_{\infty}$. First notice that 
\[
(X \cpd )^*(X \cpd) \subseteq \C\hat{ \rtimes}_{\alpha}^{} \G.
\]
Furthermore the identities
\[
\Big( \sum_{s} \pi_{\infty}(c_s)\phist_{\infty}(s)\Big) \otimes I = (\pi\rtimes \phist) \big(\sum_s c_s U_s \big), \quad c_s \in \C, s \in \G
\]
and 
\[
\Big( \sum_{s} \pi_{\infty}(x_s)\phist_{\infty}(s)\Big) \otimes U = (\pi\rtimes \phist) \big(\sum_s x_s U_s \big), \quad x_s \in X, s \in \G
\]
show that the map 
\begin{align*}
 \C\hat{ \rtimes}_{\alpha}^{} \G \ni  \sum_s c_s U_s  & \longmapsto \Big(\sum_s c_s U_s \Big) \otimes I \\
X \cpd \ni  \sum_s x_s U_s  & \longmapsto \Big(\sum_s x_s U_s \Big) \otimes U
\end{align*}
extends to a completely contractive map on $\alg \big(X \cpd \cup \C\hat{ \rtimes}_{\alpha}^{} \G \big)$. Hence by Theorem~\ref{extend} (Extension Theorem) we have that 
\[
\T^+_{X} \rtimes_{\O_X ,\alpha} \G = \overline{\alg} \big(X \cpd \cup \C\hat{ \rtimes}_{\alpha}^{} \G \big)\simeq \T^+_{X\cpd}
\]
as desired.

The identification $\cenv\big (\T^+_{X } \rtimes_{\O_X ,\alpha} \G \big) \simeq  \O_{X\cpd}$ follows from \cite[Theorem 3.7]{KatsoulisKribsJFA}.

(ii) We just repeat the proof of part (i) modified accordingly. Actually here the proof can be made more elementary as we do not really need to use the forward shift $U \in l^2(\bbZ)$. Instead we can use the forward shift $V \in l^2(\bbN)$. In that case, the Extension Theorem is just a straightforward application of Theorem~\ref{GIUThm}.  

Indeed let $(\pi_{\infty}, \phist_{\infty}, \H)$ be the universal covariant representation of $(\T_X, \G, \alpha)$ and let $V$ be the forward shift acting on $l^2(\bbN)$. The representation
\begin{align*}
\C\ni &c \longmapsto \pi_{\infty}(c) \otimes I \in B(\H_{\infty}\otimes l^2(\bbN)) \\
X\ni &x \longmapsto \pi_{\infty}(x) \otimes V  \in B(\H_{\infty}\otimes l^2(\bbN))
\end{align*}
is also a Toeplitz representation of $(X, \C)$ and therefore integrates to a representation of $\T_X$ denoted as $\pi$. Set $\phist(s)=\phist_{\infty}(s)\otimes I$, $s \in \G$, and notice that the triple $(\pi, \phist, \H_{\infty}\otimes l^2(\bbN))$ is a covariant representation for the system $(\T_X, \G, \alpha)$. Therefore it integrates to a completely contractive $*$-representation $\pi \rtimes \phist \, : \T_X \cpf \rightarrow  B(\H_{\infty}\otimes l^2(\bbN))$. Using $\pi \rtimes \phist$ we can show as before that the assignment 
\begin{align*}
 \C\check{ \rtimes}_{\alpha}^{} \G \ni  \sum_s c_s U_s  & \longmapsto \Big(\sum_s c_s U_s \Big) \otimes I \\
X \cpu \ni  \sum_s x_s U_s  & \longmapsto \Big(\sum_s x_s U_s \Big) \otimes V
\end{align*}
extends to a completely contractive map on $\alg \big(X \cpu \cup \C\hat{ \rtimes}_{\alpha}^{} \G \big)$. Hence by Theorem~\ref{extend} (Extension Theorem), we have that 
\[
\T^+_{X} \rtimes_{\T_X ,\alpha} \G = \overline{\alg} \big(X \cpu \cup \C\check{ \rtimes}_{\alpha}^{} \G \big)\simeq \T^+_{X\cpu}
\]
as desired.

The identification $\cenv\big (\T^+_{X } \rtimes_{\T_X ,\alpha} \G \big) \simeq  \O_{X\cpu}$ follows once again from \cite[Theorem 3.7]{KatsoulisKribsJFA}.
\end{proof}

\begin{remark} \label{quigg}
It turns out that Theorem~\ref{HNtensor1} (ii) can be refined even further. Indeed, in \cite[Theorem 3.1]{BKQR} it is shown that $\T_X\cpf \simeq \T_{X\cpf}$ via a $*$-isomorphism that maps generators to generators. This implies that $\C \cpf \simeq \C \cpu$ canonically and so the correspondences $X\rtimes_\alpha \G$ and $X\check{\rtimes}_\alpha \G$ are unitarily equivalent via an association that sends generators to generators. Hence one can recast Theorem~\ref{HNtensor1} (ii) as 
\[
\T^+_{X} \rtimes_ {\T_X, \alpha} \G \simeq \T^+_{X \rtimes_{\alpha}\G} \quad \mbox{ and } \quad \cenv(\T^+_{X} \rtimes_{\T_X, \alpha} \G ) \simeq \O_{X \rtimes_{\alpha}\G}.
\]
\end{remark}

The previous result shows that the problem of deciding whether all relative full crossed products are isomorphic seems to be a delicate issue. In this particular case, the presence of an isomorphism between $\T^+_{X} \rtimes_{\T_X ,\alpha} \G$ and $\T^+_{X} \rtimes_{\O_X ,\alpha} \G$ is equivalent to the isomorphism between the tensor algebras $ \T^+_{X \rtimes_{\alpha}\G}$ and $\T^+_{X \cpd}$. Currently there are no criteria for verifying an isomorphism between tensor algebras. The standing conjecture is that the obvious sufficient condition, i.e., unitary equivalence of the corresponding correspondences, is also necessary for the existence of an isomorphism.

In light of Theorem~\ref{HNtensor1}, we offer the following modified version of the Hao-Ng isomorphism problem

\vspace{.1in}

\noindent \textbf{Hao-Ng Isomorphism Conjecture for full crossed products}. \textit{Let $(X, \C)$ be a non-degenerate $\ca$-correspondence and let $\alpha: \G \rightarrow (X, \C)$ be the generalized gauge action of a discrete group $\G$. Then
\[
\O_X \cpf   \simeq \O_{X\cpd} \simeq\O_{X\rtimes_{\alpha}\G}
\]
}
Note that if (\ref{introident}) was valid for the relative crossed product $\T^+_{X} \rtimes_{\O_X ,\alpha} \G$, i.e., $$\cenv\big(\T^+_{X} \rtimes_{\O_X ,\alpha} \G \big) \simeq \cenv( \T^+_{X} )\rtimes_{\alpha} \G  \simeq \O_X \cpf,$$ then Theorem~\ref{HNtensor1}(i) would imply the first half of the Hao-Ng isomorphism conjecture. The other half of the conjecture would follow from a similar argument involving (\ref{introident})  for the relative crossed product $\T^+_X \rtimes_{\T_X, \alpha}\G$, Theorem~\ref{HNtensor1}(ii) and \cite[Theorem 3.1]{BKQR}. However the validity of (\ref{introident}) and its variants is one of the main problems left open in this monograph. Nevertheless, in the case of a Hilbert bimodule $X$ or an abelian group $\G$, it turns out that this is the case; see the end of this chapter for more on this.

One can also formulate an analogue of the Hao-Ng isomorphism conjecture for Toeplitz algebras. As we explained earlier, the validity of the analogous conjecture $$\T_X\cpf \simeq \T_{X \check{\times}_{\alpha}\G}\simeq \T_{X\cpf}$$ has already been established in \cite[Theorem 3.1]{BKQR} .

Now we deal with the reduced crossed product and wonder whether $\T^+_X \cpr$ is a tensor algebra, provided that $\alpha$ is a generalized gauge action of $\G$. Unfortunately the strategy of the proof of Theorem~\ref{HNtensor1} does not work here as it is not clear whether the representation $\pi \rtimes \phist$ appearing in the proof can be modified to give a representation of the \textit{reduced} crossed product $\O_X \cpr$. Instead we adopt a different approach.

\begin{theorem} \label{HNtensor2}
Let $(X, \C)$ be a non-degenerate $\ca$-correspondence and let $\alpha: \G \rightarrow (X, \C)$ be a generalized gauge action of a discrete group $\G$. Then
\[
\T^+_{X} \cpr \simeq \T^+_{X\cpr}.
\]
Therefore, $$\cenv\big (\T^+_{X} \cpr \big) \simeq  \O_{X\cpr}.$$
Furthermore, $\T_{X} \cpr \simeq \T_{X\cpr}$.
\end{theorem}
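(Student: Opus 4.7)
The plan is to apply the Extension Theorem (Theorem~\ref{extend}) to the pair $(X\cpr,\C\cpr)$ sitting naturally inside $\T_X\cpr$, and then deduce the remaining statements from the known identity $\cenv(\T^+_Y)\simeq\O_Y$ together with the Gauge-Invariant Uniqueness Theorem. First I would verify that $X\cpr\subseteq \T_X\cpr$ is a closed $\C\cpr$-bimodule with $(X\cpr)^*(X\cpr)\subseteq \C\cpr$ and that $\overline{\alg}(X\cpr\cup\C\cpr)=\T_X^+\cpr$; these follow directly from the definitions of the constituent correspondences and crossed products. The substantive task is to produce a completely contractive multiplicative map on $\T_X^+\cpr$ that sends $\sum c_sU_s\mapsto (\sum c_sU_s)\otimes I$ for $c_s\in\C$ and $\sum x_sU_s\mapsto (\sum x_sU_s)\otimes V$ for $x_s\in X$, where $V$ is the unilateral shift. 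Since the ambient algebra is the reduced crossed product, one cannot simply invoke universal covariant representations as in Theorem~\ref{HNtensor1}; the dilation must be carried out \emph{inside} the reduced framework, and this is where the main difficulty lies.

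To construct the map, I would start with a faithful representation $\pi:\T_X\to B(\H)$. The Toeplitz representation $(\pi|_\C\otimes I,\pi|_X\otimes V)$ of $(X,\C)$ admits the obvious gauge action $I\otimes M_z$ on the second factor and satisfies hypothesis (\ref{Icriterion}) of Theorem~\ref{GIUThm}; it therefore integrates to a \emph{faithful} $*$-representation $\pi_V:\T_X\to B(\H\otimes\ell^2(\bbN))$. Because $\alpha$ is a \emph{generalized} gauge action (preserving $\C$ and $X$ separately), the spatial dilation by $V$ commutes with $\alpha$, so passing to the regular covariant representation $(\overline{\pi_V},\lambda_{\H\otimes\ell^2(\bbN)})$ of $(\T_X,\G,\alpha)$ yields, by the definition of the reduced crossed product, a completely contractive representation of $\T_X\cpr$. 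A direct computation (using that $\alpha_s^{-1}(x)\in X$ and $\alpha_s^{-1}(c)\in\C$) shows that, after the canonical flip $\H\otimes\ell^2(\bbN)\otimes\ell^2(\G)\cong \H\otimes\ell^2(\G)\otimes\ell^2(\bbN)$, one has $\overline{\pi_V}(c)=\overline{\pi}(c)\otimes I$ and $\overline{\pi_V}(x)=\overline{\pi}(x)\otimes V$, while $\lambda_{\H\otimes\ell^2(\bbN)}(s)=\lambda_\H(s)\otimes I$. Integrating, the map acts by $cU_s\mapsto (\overline{\pi}\rtimes\lambda_\H)(cU_s)\otimes I$ and $xU_s\mapsto (\overline{\pi}\rtimes\lambda_\H)(xU_s)\otimes V$, which after identifying $\T_X\cpr$ with its image under the norming representation $\overline{\pi}\rtimes\lambda_\H$ is exactly the map required by the Extension Theorem. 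This yields $\T_X^+\cpr\simeq \T^+_{X\cpr}$.

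The identification $\cenv(\T_X^+\cpr)\simeq \O_{X\cpr}$ is then immediate from this isomorphism and the fact that $\cenv(\T^+_Y)\simeq \O_Y$ for any $\ca$-correspondence $Y$ (cited earlier in the excerpt from \cite{KatsoulisKribsJFA}). For the last claim $\T_X\cpr\simeq \T_{X\cpr}$, I would observe that the inclusion $(\C\cpr,X\cpr)\hookrightarrow \T_X\cpr$ is a Toeplitz representation, that the gauge action on $\T_X$ (which commutes with $\alpha$ since $\alpha$ is a generalized gauge action) descends to a gauge action on the $\ca$-algebra $\ca(X\cpr,\C\cpr)\subseteq \T_X\cpr$ fixing $\C\cpr$ and scaling $X\cpr$ by $z\in\bbT$, and that the ideal $I'_{(\rho,t)}$ from (\ref{Icriterion}) is trivial for this Toeplitz representation; then Theorem~\ref{GIUThm} delivers $\T_{X\cpr}\simeq \ca(X\cpr\cup\C\cpr)=\T_X\cpr$. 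The hardest part of the argument will be the coordination in the first step: ensuring that the dilation-by-$V$ manoeuvre happens \emph{before} forming the regular covariant representation, so that the resulting spatial map lands in (the image of) $\T_X\cpr\otimes\ca(V)$ rather than only in an ambient full-crossed-product algebra — it is precisely this ordering that leverages the compatibility of the generalized gauge action with the Fock-type grading of $\T_X$.
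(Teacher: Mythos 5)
Your proposal is correct and follows essentially the same route as the paper: both arguments first dilate a faithful representation of $\T_X$ by the unilateral shift $V$ (justified by Theorem~\ref{GIUThm}), then form the regular covariant representation, and exploit the resulting middle factor $V$ to get a gauge action and the triviality of the ideal in (\ref{Icriterion}) for the induced representation of $(X\cpr,\C\cpr)$ — your closing observation about performing the $V$-dilation \emph{before} passing to the regular representation is exactly the point the paper's proof turns on. The only difference is packaging: the paper applies the Gauge-Invariant Uniqueness Theorem once to the spatial representation of $(X\cpr,\C\cpr)$ to obtain the $*$-isomorphism $\T_{X\cpr}\simeq\T_X\cpr$ and reads off the tensor-algebra statement by restriction, whereas you route the non-selfadjoint claim through the Extension Theorem and handle the $\ca$-level separately; since the Extension Theorem is itself proved via Theorem~\ref{GIUThm}, this is the same mathematics.
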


\begin{proof} Because of Corollary~\ref{allrcoincide} all relative reduced crossed products coincide and so we have flexibility in choosing which manifestation of $\T^+_X\cpr$ to work with. We choose the ``natural" one $\T^+_X \rtimes^r_{\T_X, \alpha}\G \subseteq \T_X \cpr$.

Notice that the $\ca$-algebra $\T_X$ contains a unitarily equivalent copy of the $\ca$-correspondence $(X, \C)$ and for the rest of the proof we envision  $(X, \C)$ as a subset of $\T_X$. Similarly the $\ca$-algebra $\T_X \cpr$ contains a (unitarily equivalent) copy of $(X\rtimes_{\alpha}^r \G, \C \rtimes_{\alpha}^{r} \G)$. Indeed $\T_X \cpr$ contains naturally a faithful copy of $\C \cpr$ and so the map
\[
\O_X \cpr \supseteq (X\rtimes_{\alpha}^r \G)_0 \ni \sum_g x_gu_g \longmapsto  \sum_g x_g u_g \in \T_X \cpr
\]
extends to a unitary equivalence of $\ca$-correspondences that embeds $(X\rtimes_{\alpha}^r \G, \C \rtimes_{\alpha}^{r} \G)$ inside $\T_X \cpr$.

Let $\rho \colon \T_X \rightarrow B(\H)$ be some faithful $*$-representation and let $V$ be the forward shift acting on $l^2(\bbN)$. The map
\begin{align*}
\C\ni &c \longmapsto \rho(c) \otimes I \in B(\H \otimes l^2(\bbN)) \\
X\ni &x \longmapsto \rho(x) \otimes V  \in B(\H \otimes l^2(\bbN))
\end{align*}
is a Toeplitz representation of $(X, \C)$ that admits a gauge action and and satisfies the requirements of Theorem~\ref{GIUThm}. Therefore it establishes a faithful representation $\pi : \T_X \rightarrow B(\H \otimes l^2(\bbN))$.

Now view the regular representation $\overline{\pi} \rtimes \lambda_{\H \otimes l^2(\bbN) }$ as a representation of the $\ca$-correspondence $(X\rtimes_{\alpha}^r \G, \C \rtimes_{\alpha}^{r} \G)$. Since
\begin{align*}
\big( \C \rtimes_{\alpha}^{r} \G \big)_0 \ni \sum_{g} c_{g}u_{g} \ \ \longmapsto \ \ &(\overline{\pi} \rtimes \lambda )\big( \sum_{g} c_{g}u_{g} \big) \\
&=\sum_g\sum_h \rho\big(\alpha_h^{-1}(c_g) \big)\otimes I \otimes e_{h, g^{-1}h} \\
\big( X \rtimes_{\alpha}^{r} \G \big)_0 \ni \sum_{g} x_{g}u_{g} \ \ \longmapsto \ \ &(\overline{\pi} \rtimes \lambda )\big( \sum_{g} x_{g}u_{g} \big) \\
&=\sum_g\sum_h \rho\big(\alpha_h^{-1}(x_g) \big)\otimes V \otimes e_{h, g^{-1}h} ,
\end{align*}
($e_{p,q}$ denotes the rank-one isometry on $l^2(\G)$ that maps $\xi_q$ on $\xi_p$, $p,q\in \G$) the above extends to an isometric representation of $(X\rtimes_{\alpha}^r \G, \C \rtimes_{\alpha}^{r} \G)$ that admits a gauge action (because of the middle factor $V$) and satisfies the requirements of Theorem~\ref{GIUThm}. Hence its integrated form is a canonical faithful representation of the Toeplitz-Cuntz-Pimsner algebra $\T_{X\cpr}$. In other words, if $(\pi_{\infty}, t_{\infty})$ is the universal Toeplitz representation of $(X\rtimes_{\alpha}^r \G, \C \rtimes_{\alpha}^{r} \G)$, then there exists $*$-isomorphism  
\[
\phi \colon \ca(\pi_{\infty}, t_{\infty}) \longrightarrow \overline{\pi}\rtimes \lambda\big( \T_X \cpr\big)
\]
satisfying 
\[
\phi \Big( \pi_{\infty}\big( \sum_{g} c_{g}u_{g} \big)\Big) = (\overline{\pi} \rtimes \lambda )\big( \sum_{g} c_{g}u_{g} \big) , \mbox{ for all }\sum_{g} c_{g}u_{g} 
 \in\big( \C \rtimes_{\alpha}^{r} \G \big)_0 
 \]
 and 
 \[
\phi \Big( t_{\infty}\big( \sum_{g} x_{g}u_{g} \big)\Big) =  (\overline{\pi} \rtimes \lambda )\big( \sum_{g} x_{g}u_{g} \big), \mbox{ for all }\sum_{g} x_{g}u_{g} 
 \in\big( X \rtimes_{\alpha}^{r} \G \big)_0 
 \]
Since $\pi$ is faithful, $\overline{\pi}\rtimes \lambda$ is a faithful representation of $\T_{ X}\cpr$ and so $(\overline{\pi} \rtimes \lambda)^{-1}\circ \phi$ establishes a $*$-isomorphism from $\ca(\pi_{\infty}, t_{\infty}) \simeq \T_{ X\cpr}$ onto $\T_X\cpr$ that maps $\T^+_{X\cpr}$ onto $\T^+_{X} \cpr$ in a canonical way. Hence $\T^+_{X} \cpr \simeq \T^+_{X\cpr}$.

Finally the isomorphism $\cenv\big (\T^+_{X} \cpr \big) \simeq  \O_{X\cpr}$ follows from
\cite[Theorem 3.7]{KatsoulisKribsJFA}, which implies the identification $\cenv(\T^+_{X \cpr}) \simeq \O_{X \cpr}$.
\end{proof}

Let us import yet another result from the $\ca$-algebra theory and use it to our advantage.

\begin{corollary} \label{exact}
Let $(X, \C)$ be a non-degenerate $\ca$-correspondence and let $\alpha: \G \rightarrow \Aut \O_X$ be a generalized gauge action of a discrete and exact group $\G$. Then
\[
\cenv \big(\T^+_{X} \rtimes_{\alpha}^r \G \big) \simeq  \O_{X} \cpr.
\]
\end{corollary}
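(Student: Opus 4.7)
The plan is to combine Theorem~\ref{HNtensor2} with a Hao--Ng type isomorphism for reduced crossed products by exact groups. From Theorem~\ref{HNtensor2} we already have the identification
\[
\cenv \big(\T^+_{X} \rtimes_{\alpha}^r \G \big) \simeq \O_{X \rtimes_{\alpha}^{r} \G},
\]
with no amenability hypothesis on $\G$. So the corollary is reduced to establishing that, under the exactness assumption,
\[
\O_{X \rtimes_{\alpha}^{r} \G} \simeq \O_{X} \rtimes_{\alpha}^{r} \G
\]
via a $*$-isomorphism that maps generators to generators.

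First, I would observe that the natural covariant isometric representation of $(X \rtimes_{\alpha}^{r} \G, \C \rtimes_{\alpha}^{r} \G)$ sitting inside $\O_X \rtimes_{\alpha}^{r} \G$ (the one used in the proof of Theorem~\ref{HNtensor2}) admits a gauge action: the gauge action on $\O_X$ commutes with $\alpha$ because $\alpha$ is a generalized gauge action, so it descends to a $\bbT$-action on $\O_X \rtimes_{\alpha}^{r} \G$ that fixes $\C \rtimes_{\alpha}^{r} \G$ and rotates $X \rtimes_{\alpha}^{r} \G$ by scalars. Hence, by the Gauge Invariant Uniqueness Theorem (Theorem~\ref{GIUThm}), to obtain the desired $*$-isomorphism it suffices to verify Katsura's covariance ideal condition, i.e.\ that the ideal
\[
I'_{(\overline\pi\rtimes \lambda,\, \overline\pi\rtimes \lambda \mid_{X\rtimes^r_{\alpha}\G})} \subseteq \C \rtimes_{\alpha}^{r} \G
\]
of elements whose image lies in the compacts of $X \rtimes^r_\alpha \G$ is $\{0\}$.

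The main obstacle, and the point at which exactness of $\G$ is used, is precisely this ideal computation. Here I would invoke (or reprove) the Hao--Ng isomorphism for reduced crossed products by exact discrete groups, which is known from the literature on the problem (see the work of the authors cited in the discussion before Theorem~\ref{HNtensor1}, e.g.\ \cite{BKQR, Kim}). Exactness of $\G$ ensures that the sequence
\[
0 \longrightarrow \J \rtimes_{\alpha}^{r} \G \longrightarrow \C \rtimes_{\alpha}^{r} \G \longrightarrow (\C/\J) \rtimes_{\alpha}^{r} \G \longrightarrow 0
\]
is exact for every $\alpha$-invariant ideal $\J\subseteq \C$; applying this with $\J = J_X = \ker\phi_X^\perp \cap \phi_X^{-1}(\K(X))$ and chasing the identifications $J_X \rtimes^r_\alpha \G = J_{X\rtimes^r_\alpha \G}$ together with $\psi_t (\phi_{X\rtimes^r_\alpha\G}(c)) = \rho(c)$ on this ideal, one verifies that the Katsura covariance ideal is trivial. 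This gives the identification $\O_{X \rtimes_{\alpha}^{r} \G} \simeq \O_{X} \rtimes_{\alpha}^{r} \G$, and combining with Theorem~\ref{HNtensor2} yields the stated $\ca$-envelope.
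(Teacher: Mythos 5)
Your proposal is correct and matches the paper's own proof, which is a one-line citation of the reduced Hao--Ng isomorphism for exact groups \cite[Theorem 5.5 (i)]{BKQR} combined with the identification $\cenv\big(\T^+_{X}\cpr\big)\simeq \O_{X\cpr}$ already supplied by Theorem~\ref{HNtensor2}. One minor caveat on your optional sketch of reproving the \cite{BKQR} result: the condition $I'_{(\rho,t)}=\{0\}$ from Theorem~\ref{GIUThm} is the uniqueness criterion for the \emph{Toeplitz} algebra $\T_{X\cpr}$, whereas faithfulness of a representation of $\O_{X\cpr}$ requires injectivity together with Cuntz--Pimsner covariance on $J_{X\rtimes^r_{\alpha}\G}$ (and the identification $J_{X\rtimes^r_{\alpha}\G}=J_X\rtimes^r_{\alpha}\G$, which is exactly where exactness enters) --- but since you ultimately invoke the literature result rather than relying on that sketch, this does not affect the argument.
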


\begin{proof} This follows directly from \cite[Theorem 5.5 (i)]{BKQR}.
\end{proof}

\section{The general case of a locally compact group.} \label{general case} In Section~\ref{discrete groups} we focused our attention on discrete groups for two reasons. First, the prerequisites for understanding our theory are not as many as in the general case of a locally compact group. If someone is just interested in using the crossed product in order to obtain new examples of tensor algebras, then this chapter gives an easy access. One can actually read all previous results in Chapter~\ref{Hao} with only minimal understanding of the previous chapters. On the other hand, one of the major open problems in this area, the Hao-Ng isomorphism problem, is open even for discrete groups with all its difficulties present even in that special case. 

Nevertheless, with the exception of Corollary~\ref{exact}, all previous results in Chapter~\ref{discrete groups} hold for arbitrary locally compact groups. In what follows we demonstrate how to obtain one such result, Theorem~\ref{HNtensor1}, in the generality of a locally compact group. 

We start by defining the correspondence $(X\cpd, \C \cpd)$. Let $(X, \C)$ be a non-degenerate $\ca$-correspondence and let $(\bar{\rho}_{\infty}, \overline{t}_{\infty})$ be the universal covariant representation of $(X, \C)$, acting on some Hilbert space $\H_{\infty}$. Let $C \cpd$ be the completion of $C_c\big(\G,  \bar{\rho}_{\infty}(\C)\big) \subseteq \O_X\cpf$ and similarly let $X \cpd$ be the completion of $C_c\big(\G,  \bar{t}_{\infty}(X)\big) \subseteq \O_X\cpf$.

\begin{lemma} \label{tensorprop}
If $\C \cpd$ and $X \cpd$ are as above, then
\vspace{.025in}
\begin{itemize}
\item[(i)] $(X \cpd)^*(X \cpd) \subseteq \C \cpd$
\vspace{.05in}
\item[(ii)] $(\C \cpd) (X \cpd )(\C \cpd )\subseteq  X \cpd$.
\end{itemize}
\end{lemma}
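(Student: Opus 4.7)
\medskip

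The plan is to verify both inclusions first on the dense subspaces $C_c\bigl(\G, \bar{\rho}_{\infty}(\C)\bigr)$ and $C_c\bigl(\G, \bar{t}_{\infty}(X)\bigr)$ by direct computation using the convolution/involution formulas in $\O_X \cpf$, and then extend by continuity to the completions $\C\cpd$ and $X\cpd$. Recall that on $C_c(\G, \O_X) \subseteq \O_X\cpf$ the product and involution are given by
\[
(F *G)(s) = \int_{\G} F(r)\alpha_{r}\bigl(G(r^{-1}s)\bigr)\, d\mu(r), \qquad F^*(s)= \Delta(s^{-1})\alpha_{s}\bigl(F(s^{-1})^*\bigr),
\]
and the key structural facts we shall exploit are that $\alpha$ is a generalized gauge action, hence $\alpha_{r}\bigl(\bar{\rho}_{\infty}(\C)\bigr)=\bar{\rho}_{\infty}(\C)$ and $\alpha_{r}\bigl(\bar{t}_{\infty}(X)\bigr)=\bar{t}_{\infty}(X)$ for all $r \in \G$, together with the Toeplitz-covariance identities $\bar{t}_{\infty}(x)^*\bar{t}_{\infty}(y)=\bar{\rho}_{\infty}(\sca{x,y})$, $\bar{\rho}_{\infty}(c)\bar{t}_{\infty}(x)=\bar{t}_{\infty}(\phi_X(c)x)$ and $\bar{t}_{\infty}(x)\bar{\rho}_{\infty}(c)=\bar{t}_{\infty}(xc)$.

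For (i), given $F, G \in C_c\bigl(\G,\bar{t}_{\infty}(X)\bigr)$, the involution formula yields
\[
(F^* *G)(s)= \int_{\G} \Delta(r^{-1})\, \alpha_{r}\bigl(F(r^{-1})^*G(r^{-1}s)\bigr)\, d\mu(r).
\]
The integrand takes values in $\alpha_{r}\bigl(\bar{t}_{\infty}(X)^*\bar{t}_{\infty}(X)\bigr)\subseteq \alpha_{r}\bigl(\bar{\rho}_{\infty}(\C)\bigr)=\bar{\rho}_{\infty}(\C)$, is norm continuous in $r$, and is compactly supported (in a subset of $\supp(F)^{-1}\cdot \supp(G)$). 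Bochner integration of a compactly supported continuous function valued in the closed subspace $\bar{\rho}_{\infty}(\C)$ lands in that subspace, so $F^**G \in C_c\bigl(\G,\bar{\rho}_{\infty}(\C)\bigr)\subseteq \C\cpd$. Since multiplication in the $\ca$-algebra $\O_X\cpf$ is jointly continuous and $\C\cpd$ is closed, the inclusion extends to all of $X\cpd$ by approximating arbitrary elements of $X\cpd$ in norm by sequences from $C_c\bigl(\G,\bar{t}_{\infty}(X)\bigr)$.

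For (ii), the same convolution formula shows that for $F \in C_c\bigl(\G,\bar{\rho}_{\infty}(\C)\bigr)$ and $H \in C_c\bigl(\G,\bar{t}_{\infty}(X)\bigr)$ the integrand of $(F*H)(s)$ lies in $\bar{\rho}_{\infty}(\C)\cdot \alpha_{r}\bigl(\bar{t}_{\infty}(X)\bigr)= \bar{\rho}_{\infty}(\C)\cdot\bar{t}_{\infty}(X)\subseteq \bar{t}_{\infty}(X)$ by the left-action identity, so $F*H \in C_c\bigl(\G,\bar{t}_{\infty}(X)\bigr)\subseteq X\cpd$; a symmetric argument using the right-action identity $\bar{t}_{\infty}(x)\bar{\rho}_{\infty}(c)=\bar{t}_{\infty}(xc)$ shows that $H*G \in X\cpd$ for any $G \in C_c\bigl(\G,\bar{\rho}_{\infty}(\C)\bigr)$. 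Therefore $F*H*G \in X\cpd$ on the dense subspaces, and continuity of multiplication in $\O_X\cpf$ plus closedness of $X\cpd$ give the inclusion $(\C\cpd)(X\cpd)(\C\cpd)\subseteq X\cpd$. The only delicate point in the whole argument is the Bochner-integration step ensuring that integrals of continuous, compactly supported functions with values in a closed subspace remain in that subspace, but this is standard and we will merely cite it.
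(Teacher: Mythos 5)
Your proposal is correct and follows essentially the same route as the paper: compute the convolution/involution formulas on the dense subspaces, use gauge invariance of $\alpha$ together with the Toeplitz-covariance identities to see the integrand lands in $\bar{\rho}_{\infty}(\C)$ (resp.\ $\bar{t}_{\infty}(X)$), and extend by continuity. The only cosmetic difference is that the paper runs the computation on elementary tensors $z\otimes x$ (invoking their density via \cite[Lemma 1.87]{Will}) whereas you work with general elements of $C_c$ and make the Bochner-integration step explicit; both versions rely on the same standard fact that such integrals stay in the closed subspace.
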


\begin{proof}
If $x, y \in \bar{t}_{\infty}(X)$ and $z , w \in C_c(\G)$, then 
 \begin{equation*}
 \begin{aligned}
&\big((z\otimes x)^*  (w\otimes y )\big)(s)&&=\int \Delta(r^{-1})\overline{z(r^{-1})}\alpha_{r}(x^*)w(r^{-1}s)\alpha_{r}(y)d\mu(r) &\\
 & &&=\int \Delta(r^{-1})\overline{z(r^{-1})}\alpha_{r}(x^*y)w(r^{-1}s)d\mu(r).&
     \end{aligned}
 \end{equation*}
 However, 
 \[
 x^*y \in \big(\bar{t}_{\infty}(X)\big)^*\big(\bar{t}_{\infty}(X) \big)\subseteq \bar{\rho}_{\infty}(\C)
 \]
 and so 
 \[
 (z\otimes x)^*  (w\otimes y ) \in C_c\big(\G , \bar{\rho}_{\infty}(\C)\big) \subseteq \C \cpd.
 \]
 Since elementary tensors are dense in $X \cpd$, this proves (i).
 
 For (ii), let $c \in \bar{\pi}_{\infty}(\C)$, $x \in \bar{t}_{\infty}(X)$ and $z , w \in C_c(\G)$. Then,
  \begin{equation*}
 \begin{aligned}
&\big((z\otimes c)  (w\otimes x )\big)(s)&&=\int z(r)c\alpha_r\big( w(r^{-1}s)x\big)d\mu(r) &\\
 & &&=\int z(r) w(r^{-1}s)\alpha_r\big(\alpha_r^{-1}(c)x\big)d\mu(r) &
     \end{aligned}
 \end{equation*}
  However $\G$ acts by gauge automorphisms and so 
 \[
\alpha_r^{-1}(c)x \in  \bar{\pi}_{\infty}(\C) \bar{t}_{\infty}(X)\subseteq  \bar{t}_{\infty}\big( \phi_X(\C)X \big)  \subseteq \bar{t}_{\infty}(X).
 \]
Hence $(\C \cpd) (X \cpd ) \subseteq X \cpd$ and similarly $(X \cpd )(\C \cpd )\subseteq X \cpd$. This establishes (ii).
 \end{proof}
 
Allow $\C \cpd$ to act on the left and right of $X \cpd$  simply by multiplication. Then Lemma~\ref{tensorprop} shows that $X\cpd$ equipped with that action and the $\C \cpd$-valued inner product $\sca{\cdot,\cdot}$ defined by $\sca{S, T} \equiv S^*T$, $S, T \in X \cpd$, becomes a $\ca$-correspondence over $\C \cpd$.

\begin{lemma} \label{algrelation}
Let $(X, \C)$ be a non-degenerate $\ca$-correspondence and let $(X\cpd, \C \cpd)$ be as above. Then 
\[
\overline{\alg}\big( X\cpd, \C \cpd \big) = \T^+_{X} \rtimes_{\O_X ,\alpha} \G.
\]
\end{lemma}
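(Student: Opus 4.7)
The inclusion $\overline{\alg}(X \cpd, \C \cpd) \subseteq \T^+_{X} \rtimes_{\O_X ,\alpha} \G$ is essentially by inspection: both generating sets $C_c(\G, \bar{\rho}_\infty(\C))$ and $C_c(\G, \bar{t}_\infty(X))$ are subsets of $C_c(\G, \T^+_X)$, whose closure in $\O_X \rtimes_\alpha \G$ is exactly the relative crossed product $\T^+_X \rtimes_{\O_X, \alpha} \G$. Since the latter is a norm-closed subalgebra of $\O_X \cpf$, it contains $\overline{\alg}(X\cpd, \C \cpd)$.

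For the reverse inclusion, it suffices to show that $C_c(\G, \T^+_X) \subseteq \overline{\alg}(X\cpd, \C \cpd)$. By \cite[Lemma 1.87]{Will}, the linear span of elementary tensors $z \otimes a$, with $z \in C_c(\G)$ and $a \in \T^+_X$, is dense in $C_c(\G, \T^+_X)$ in the inductive limit topology (and hence in the crossed product norm). By definition of $\T^+_X$ as the norm-closed algebra generated by $\bar{\rho}_\infty(\C) \cup \bar{t}_\infty(X)$, the linear span of monomials $a_1 a_2 \cdots a_n$ with each $a_i \in \bar{\rho}_\infty(\C) \cup \bar{t}_\infty(X)$ is dense in $\T^+_X$. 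Therefore it suffices to show that for any such monomial $a = a_1 a_2 \cdots a_n$ and any $z \in C_c(\G)$, the element $z \otimes a$ belongs to $\overline{\alg}(X\cpd, \C \cpd)$.

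The key step is an approximate-identity-type argument. Given a compact neighborhood basis $\{\U_i\}_{i \in \bbI}$ of $e \in \G$, pick non-negative $w_i \in C_c(\G)$ with $\supp w_i \subseteq \U_i$ and $\int w_i \, d \mu = 1$. For fixed $a_1, \dots, a_{n-1} \in \bar{\rho}_\infty(\C) \cup \bar{t}_\infty(X)$, I claim that the products
\[
F_{i_1, \dots, i_{n-1}} \equiv (w_{i_1} \otimes a_1)(w_{i_2} \otimes a_2) \cdots (w_{i_{n-1}} \otimes a_{n-1})(z \otimes a_n)
\]
converge to $z \otimes (a_1 a_2 \cdots a_n)$ in the $L^1$-norm (and hence in the crossed product norm) as the neighborhoods shrink. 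Unwinding the convolution product one factor at a time, $(w_{i_k} \otimes a_k)(g)(s) = \int w_{i_k}(r) g(r^{-1}s) a_k \alpha_r(\cdot)\, d \mu(r)$, and because $\alpha$ is a generalized gauge action and its action on each $a_j$ is continuous in $r$, while $w_{i_k}$ concentrates mass at $e$, the integral approaches $g(s) \, a_k (\cdot)$ uniformly, analogously to the approximation in the proof of Lemma~\ref{cai}. This gives convergence in the inductive limit topology (supports remain uniformly compactly supported), which dominates the $L^1$-norm and hence the full crossed-product norm.

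Finally, each individual factor $w_{i_k} \otimes a_k$ lies in $\C \cpd$ if $a_k \in \bar{\rho}_\infty(\C)$ and in $X \cpd$ if $a_k \in \bar{t}_\infty(X)$, and similarly for $z \otimes a_n$. Consequently each $F_{i_1, \dots, i_{n-1}} \in \alg(X \cpd, \C \cpd)$, whence $z \otimes (a_1 \cdots a_n) \in \overline{\alg}(X \cpd, \C \cpd)$, completing the proof. The main obstacle is the bookkeeping showing that the iterated convolution with shrinking kernels really does converge to the desired elementary tensor; this relies crucially on the gauge-action hypothesis, which ensures $a_1 \alpha_{r_1}(a_2) \cdots \alpha_{r_{n-1}}(a_n)$ stays in $\T^+_X$ and is norm-continuous in $(r_1, \dots, r_{n-1})$.
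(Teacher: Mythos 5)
Your proposal is correct and takes essentially the same route as the paper: the core step---realizing $z\otimes(a'b)$ as the limit of $(w_i\otimes a')(z\otimes b)$ with shrinking approximate-identity kernels $w_i$ as in Lemma~\ref{cai}---is exactly the Claim in the paper's proof. The only cosmetic difference is the preliminary density reduction: the paper expands $a\in\T_X^+$ into homogeneous components $x_n\in\bar{t}_{\infty}(X^{\otimes n})$ via the Fourier/Cesaro series and inducts on $n$, whereas you reduce to monomials in the generators $\bar{\rho}_{\infty}(\C)\cup\bar{t}_{\infty}(X)$; both reductions are routine and lead to the same peeling-off argument.
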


\begin{proof}
Let $z \in C_c (\G)$ and $a \in \T^+_X$. If $a = c +\sum_{n=1}^{\infty}   x_n $
with $c \in \bar{\rho}_{\infty}(\C)$ and $x_n \in \bar{t}(X^{\otimes n})$, $n \in \bbN$, then we have
\begin{equation} \label{tensors}
z\otimes a = z \otimes c +\sum_{n=1}^{\infty} \, z\otimes x_n.
\end{equation}
Since elementary tensors are dense in $C_c\big(\G, \T_X^+\big)$, it suffices by (\ref{tensors}) to prove that 
\[
z \otimes x \in \overline{\alg}\big( X\cpd, \C \cpd \big) 
\]
for any $z \in C_c(\G)$ and $x \in \bar{t}(X^{\otimes n})$, $ n \in \bbN$. 

We will show this by induction. The case $n=1$ is obvious. 
Assume that the result is true for all $k \leq n-1$. Let $x= x'y \in \bar{t}(X^{\otimes n})$ with $x' \in \bar{t}(X)$ and $y \in \bar{t}(X^{\otimes n-1})$.

\vspace{.12in}
\noindent \textit{Claim:} If $\{w_i \}_{i \in \bbI}$ are as in Lemma~\ref{cai}, then
\[
z \otimes x = \lim_{i \in \bbI}\, (w_i \otimes x')(z \otimes y).
\]
\vspace{.1in}
\noindent
Indeed, let $i : \O_X \rightarrow M(\O_X \cpf)$ be as in \cite[Proposition 2.34]{Will}. Then,
\begin{equation} \label{prelimit}
(w_i \otimes x')(z \otimes y)= i(x')\big((w_i \otimes I)(z \otimes y) \big).
\end{equation}
However, by Lemma~\ref{cai}, the net $\{w_i \otimes I \}_{i \in \bbI}$ is a contractive approximate identity. Hence by taking limits in (\ref{prelimit}) we obtain 
\[
\lim_{i \in \bbI}\, (w_i \otimes x')(z \otimes y)= i(x')(z \otimes y)=z \otimes x'y= z\otimes x
\]
as desired.

\vspace{.1in}

The claim and the inductive hypothesis show now that $$z\otimes x \in \overline{\alg}\big( X\cpd, \C \cpd \big)$$ and the proof of the lemma is complete.
\end{proof}

\begin{theorem} \label{HNtensor3}
Let $(X, \C)$ be a non-degenerate $\ca$-correspondence and let $\alpha: \G \rightarrow (X, \C)$ be the generalized gauge action of a locally compact group $\G$. Then
\[
\T^+_{X} \rtimes_{\O_X ,\alpha} \G \simeq \T^+_{X\cpd}\quad \mbox{  and  } \quad \cenv\big (\T^+_{X } \rtimes_{\O_X ,\alpha} \G \big) \simeq  \O_{X\cpd}\]
\end{theorem}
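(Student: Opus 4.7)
The plan is to reduce this to the Extension Theorem (Theorem~\ref{extend}) applied to the pair $(X\cpd,\C\cpd)$ sitting inside $\A\equiv \T^+_{X} \rtimes_{\O_X ,\alpha} \G \subseteq \O_X \cpf$. Lemma~\ref{tensorprop} already supplies the bimodule property and the inclusion $(X\cpd)^*(X\cpd)\subseteq \C\cpd$, while Lemma~\ref{algrelation} identifies $\A = \overline{\alg}(X\cpd\cup \C\cpd)$. Thus the theorem will follow once I produce a completely contractive multiplicative map on $\A$ that sends $c\mapsto c\otimes I$ for $c\in \C\cpd$ and $\xi\mapsto \xi\otimes U$ for $\xi\in X\cpd$, where $U$ denotes the bilateral shift on $\ell^2(\bbZ)$.

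To build this map, I would mimic the strategy from the proof of Theorem~\ref{HNtensor1}(i), but now phrased in terms of integrated representations rather than finite sums. Namely, let $(\pi_\infty,\phist_\infty,\H_\infty)$ be the universal covariant representation of $(\O_X,\G,\alpha)$ and factor $\pi_\infty$ through its restrictions to $\C$ and $X$, which we still denote by the same symbols. Tensoring with the forward shift on $\ell^2(\bbZ)$, the maps
\[
c\longmapsto \pi_\infty(c)\otimes I,\qquad x\longmapsto \pi_\infty(x)\otimes U,\qquad s\longmapsto \phist_\infty(s)\otimes I
\]
form a covariant representation of $(\O_X,\G,\alpha)$: the covariance identity survives because $U$ commutes with $\phist_\infty(s)\otimes I$, and the generalized gauge action leaves both $\C$ and $X$ invariant. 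By Proposition~\ref{Raeburnthm} this integrates to a completely contractive $*$-representation $\Phi\colon\O_X\cpf\to B(\H_\infty\otimes \ell^2(\bbZ))$, and Proposition~\ref{prop integral} guarantees that
\[
\Phi\Big(\int f(s)\phist_\infty(s)\,d\mu(s)\Big)=\Big(\int (\pi_\infty\circ f)(s)\phist_\infty(s)\,d\mu(s)\Big)\otimes U^{k},
\]
with $k=0$ when $f$ takes values in $\C$ and $k=1$ when it takes values in $X$. Since elementary tensors are dense in $C_c(\G,\C)$ and $C_c(\G,X)$, this yields $\Phi(c)=c\otimes I$ and $\Phi(\xi)=\xi\otimes U$ on $\C\cpd$ and $X\cpd$ respectively, giving exactly the completely contractive multiplicative map required by condition (ii) of Theorem~\ref{extend}.

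Applying the Extension Theorem then gives $\A\simeq \T^+_{X\cpd}$ completely isometrically via a map that preserves generators, establishing the first isomorphism. For the second, non-degeneracy of $(X\cpd,\C\cpd)$ follows from Remark~\ref{extensionremark} (the approximate unit for $\C$ built in Lemma~\ref{cai} serves also as an approximate unit for $X\cpd$), so by~\cite[Theorem 3.7]{KatsoulisKribsJFA} we have $\cenv(\T^+_{X\cpd})\simeq \O_{X\cpd}$, and transport along the isomorphism yields $\cenv(\A)\simeq \O_{X\cpd}$.

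The delicate step, and where I expect the main technical work to lie, is the precise verification that $\Phi$ restricted to $\C\cpd$ and $X\cpd$ produces $c\otimes I$ and $\xi\otimes U$ \emph{for arbitrary} elements of these closures, not merely for elementary tensors or compactly supported functions. This requires the multiplier-nondegeneracy machinery of Proposition~\ref{prop integral} together with a continuity argument showing that the operation ``tensor on the right with $U^k$'' commutes with the norm closures that define $\C\cpd$ and $X\cpd$ inside $\O_X\cpf$. Everything else---covariance, multiplicativity, and the final invocation of the Extension Theorem---is then formal.
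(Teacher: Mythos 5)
Your proposal is correct, and its skeleton (Lemma~\ref{tensorprop} + Lemma~\ref{algrelation} + the Extension Theorem) coincides with the paper's; the difference lies in how the completely contractive multiplicative map required by condition (ii) of Theorem~\ref{extend} is produced. You build it as the integrated form $\pi\rtimes u$ of the covariant Hilbert-space representation $c\mapsto\pi_\infty(c)\otimes I$, $x\mapsto\pi_\infty(x)\otimes U$, $s\mapsto u_\infty(s)\otimes I$ on $\H_\infty\otimes\ell^2(\bbZ)$ --- which is exactly the paper's own argument for the discrete case in Theorem~\ref{HNtensor1}(i), transported to the locally compact setting. The paper's proof of Theorem~\ref{HNtensor3} instead stays entirely at the level of $\ca$-algebra morphisms: it forms the equivariant $*$-homomorphism $\pi:\O_X\to\O_X\otimes C(\bbT)$ determined by $c\mapsto c\otimes I$, $x\mapsto x\otimes U$, applies the functoriality of the full crossed product \cite[Corollary 2.48]{Will} to get $\pi\rtimes\id$ into $(\O_X\otimes C(\bbT))\rtimes_{\alpha\otimes\id}\G$, and then untwists via the canonical isomorphism with $(\O_X\cpf)\otimes C(\bbT)$ \cite[Corollary 2.75]{Will}. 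The functorial route buys you a cleaner treatment of the continuity issue you flag as ``delicate'': there is no operator-valued integral to manipulate, and the identification $z\otimes c\mapsto(z\otimes c)\otimes I$, $z\otimes x\mapsto(z\otimes x)\otimes U$ on elementary tensors extends automatically because all maps involved are $*$-homomorphisms between $\ca$-algebras. Your route works too, and the step you worry about is in fact routine: $\pi\rtimes u$ is a $*$-homomorphism of $\O_X\cpf$, hence continuous, and it agrees with $S\mapsto S\otimes I$ (resp.\ $T\mapsto T\otimes U$) on the dense subspaces $C_c(\G,\bar\rho_\infty(\C))$ and $C_c(\G,\bar t_\infty(X))$ since the constant operator $U^k$ pulls out of the defining integral; both maps being continuous, they agree on the closures $\C\cpd$ and $X\cpd$. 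The only points you should make explicit are that the tensored Toeplitz representation of $(X,\C)$ remains Cuntz--Pimsner covariant because $U$ is unitary (so $\psi_{t\otimes U}=\psi_t\otimes I$ on $\K(X)$), and that the covariance relation for the triple follows on all of $\O_X$ once checked on the generating set $\C\cup X$.
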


\begin{proof}
If $(\bar{\rho}_{\infty}, \overline{t}_{\infty})$ is the universal covariant representation of $(X, \C)$, then the representation
\begin{align*}
\bar{\rho}_{\infty}(\C) \ni & c \longmapsto c\otimes I  \in B\big(\H_{\infty} \otimes \ell^2(\bbZ)\big) \\
\bar{t}_{\infty} (X) \ni &\,x \longmapsto   x \otimes U  \in B\big(\H_{\infty} \otimes \ell^2(\bbZ)\big) ,
\end{align*}
is also covariant, where $U$ denotes the forward shift on $\ell^2(\bbZ)$. Therefore it integrates to a $*$-representation $\pi: \O_X \rightarrow \O_X \otimes C(\bbT)$. Clearly $\pi$ is equivariant with respect to the dynamical systems $(\O_X, \G, \alpha)$ and $\big(\O_X\otimes C(\bbT), \G, \alpha\otimes \id\big)$. Therefore, \cite[Corollary 2.48]{Will} implies the existence of a $*$-homomorphism 
\[
\pi\rtimes \id : \O_X \cpf \longrightarrow \big(\O_X \otimes C(\bbT)\big) \rtimes_{\alpha\otimes\id}\G
\]
satisfying $\pi\rtimes \id (f)(s) = \pi \big(f(s)\big)$, $s \in \G$, for all $f \in C_c(\G, \O_X)$. By \cite[Corollary 2.75]{Will}  there exists a $*$-isomorphism 
\[
\phi : \big(\O_X \otimes C(\bbT)\big) \rtimes_{\alpha\otimes\id}\G \longrightarrow \big(\O_X  \cpf \big) \otimes C(\bbT)
\]
which carries $z\otimes (a\otimes d)\mapsto (z\otimes a)\otimes d$, with $a \in O_X$, $d \in C(\bbT)$ and $z \in C_c(\G)$. Hence, the completely contractive mapping $\phi\circ (\pi\rtimes \id)$ implements the assignment
\begin{align*} 
C_c\big(\G, \bar{\rho}_{\infty}(\C)\big) \ni&\, z \otimes c  \longmapsto (z \otimes c) \otimes I \\
C_c\big(\G, \bar{t}_{\infty} (X)\big) \ni &\, z \otimes  x \longmapsto  (z \otimes c) \otimes U.  
\end{align*}
This implies that the requirements of the Extension Theorem are satisfied for the $\C \cpd$-bimodule $X\cpd$. Hence $$\overline{\alg}\big( X\cpd, \C \cpd \big) \simeq \T_{X \cpd}^+ .$$ The conclusion follows now from Lemma~\ref{algrelation}.
\end{proof}

A similar approach works for $(X \cpu, \C, \cpu)$. This $\ca$-correspondence is built with the aid of the universal Toeplitz representation $(\rho_{\infty}, t_{\infty})$.  We define $C \cpu$ to be the completion of $C_c\big(\G,  {\rho}_{\infty}(\C)\big) \subseteq \T_X\cpf$ and similarly we let $X \cpu$ to be the completion of $C_c\big(\G,  {t}_{\infty}(X)\big) \subseteq \T_X\cpf$. By repeating our previous arguments, we obtain the other half of Theorem~\ref{HNtensor1}, i.e., 
 $$\T^+_{X} \rtimes_{\T_X ,\alpha} \G \simeq \T^+_{X \cpu} \quad \mbox{ and } \quad \cenv\big (\T^+_{X} \rtimes_{\T_X ,\alpha} \G \big) \simeq  \O_{X\cpu}$$
As we mentioned in Remark~\ref{quigg}, the $\ca$-correspondences $(X \cpu, \C \cpu)$ and $(X \cpf, \C\cpf)$ are unitarily equivalent via a canonical map. However it is not clear to us whether or not $(X \cpd, \C \cpd)$ and the $\ca$- correspondence $(X \cpf, \C, \cpf)$, as defined in \cite[pg. 1082]{BKQR}, are unitarily equivalent. This issue is resolved affirmatively by the Hao-Ng Theorem in the case where $\G$ is amenable. In what follows we verify this in another important case by offering a resolution to the Hao-Ng isomorphism problem in that case as well.

\section{Hilbert $\ca$-bimodules}
A $\ca$-correspondence $(X, \C, \phi_X)$ is said to be a Hilbert $\C$-bimodule, if there exists a right $\C$-valued inner product $[\cdot,\cdot]$ which satisfies 
\[
\phi_X\big( [\xi, \zeta]\big) \eta = \xi \sca{\zeta, \eta} , \quad \mbox{for all } \xi, \zeta, \eta \in X.
\]
There are many useful characterizations of Hilbert bimodules. For instance, by \cite[Proposition 5.18 (iii)]{KatsuraJFA} $(X, \C, \phi_X)$ is a Hilbert $\C$-bimodule iff the restriction  of $\phi_X$ on $J_X$ maps onto $\K(X)$.

The following settles the Hao-Ng conjecture for Hilbert bimodules.

\begin{theorem} \label{HNBim}
Let $(X, \C)$ be a non-degenerate Hilbert bimodule and let $\alpha: \G \rightarrow (X, \C)$ be the generalized gauge action of a locally compact group $\G$. Then 
\[
\O_X \cpf   \simeq \O_{X\cpd} \simeq \O_{X\cpf}.
\]
\end{theorem}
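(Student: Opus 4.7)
The strategy is to lift the Hilbert bimodule structure of $(X,\C)$ through the crossed product and then invoke a Gauge-Invariant Uniqueness argument; I will prove the two isomorphisms $\O_X \cpf \simeq \O_{X\cpd}$ and $\O_{X\cpd} \simeq \O_{X\cpf}$ more or less in parallel. The first order of business is to identify the coefficient algebras. Since $(X,\C)$ is a Hilbert bimodule, $\O_X$ carries the gauge $\bbT$-action $\{\gamma_z\}_{z\in\bbT}$, and integration over $\bbT$ yields a faithful conditional expectation $E\colon \O_X\to \C$. A generalized gauge $\G$-action $\alpha$ commutes with $\{\gamma_z\}$, so $E$ is $\G$-equivariant and extends to a faithful $\C\cpf$-valued expectation $\widetilde E\colon \O_X\cpf \to \C\cpf$ defined on $C_c(\G,\O_X)$ by $\widetilde E(f)(s)=E(f(s))$ and extended by continuity. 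Faithfulness of $\widetilde E$ forces the canonical map $\C\cpf\hookrightarrow \O_X\cpf$ to be a complete isometry, whence $\C\cpd\simeq \C\cpf$ canonically.

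With this identification in hand, a direct convolution computation using the Hilbert bimodule left inner product $[x,y]= xy^*\in \C$ inside $\O_X$ shows that $(X\cpd)(X\cpd)^*\subseteq \C\cpd$, so that $X\cpd$ is itself a Hilbert bimodule over $\C\cpd$; comparing the resulting left and right inner product formulas with those defining the Hao--Ng bimodule $X\cpf$ of \cite{HN, BKQR} produces a unitary equivalence $(X\cpd,\C\cpd)\simeq(X\cpf,\C\cpf)$, which by functoriality of the Cuntz--Pimsner construction gives $\O_{X\cpd}\simeq \O_{X\cpf}$. For the remaining isomorphism $\O_X\cpf\simeq \O_{X\cpd}$ I would apply Katsura's Gauge-Invariant Uniqueness Theorem for Cuntz--Pimsner algebras to the inclusion $(X\cpd,\C\cpd)\hookrightarrow \O_X\cpf$. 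Three points must be verified. First, the inclusion is Cuntz--Pimsner covariant, which in the Hilbert bimodule case reduces to the identity $\psi_t(\phi_{X\cpd}(c))=\pi(c)$ for $c\in J_{X\cpd}$, and this propagates from the analogous covariance inside $\O_X$ because the rank-one operator identity $\phi_X([x,y])=\theta_{x,y}$ convolves equivariantly with $\alpha$. Second, the dual $\bbT$-action on $\O_X\cpf$, induced from $\gamma_z$ by functoriality (using that $\gamma_z$ commutes with every $\alpha_s$), supplies the required gauge action. Third, faithfulness of $\widetilde E$ from the previous paragraph forces the relevant ideal $I'$ to vanish. The resulting faithful $*$-homomorphism $\O_{X\cpd}\to \O_X\cpf$ has image the closed $*$-subalgebra generated by $X\cpd\cup\C\cpd$, which exhausts $\O_X\cpf$ since $X\cup\C$ generates $\O_X$.

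The main obstacle is the propagation of Cuntz--Pimsner covariance through the crossed product in the first point above: for a general non-degenerate $\ca$-correspondence one cannot guarantee that $\phi_{X\cpd}(\C\cpd)\subseteq \K(X\cpd)$, which is precisely why the full Hao--Ng conjecture remains open. The Hilbert bimodule hypothesis sidesteps this, because the pointwise identity $\phi_X([x,y])=\theta_{x,y}$ is preserved under convolution with the $\G$-action, so the associated crossed product correspondence automatically has $\phi_{X\cpd}(\C\cpd)\subseteq\K(X\cpd)$. Once this step is in place, the rest of the argument is a bookkeeping exercise applying the Gauge-Invariant Uniqueness Theorem.
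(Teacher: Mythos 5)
Your proposal is correct in substance but reaches the main isomorphism by a genuinely different route from the paper. The paper derives $\O_X\cpf\simeq\O_{X\cpd}$ from its non-selfadjoint machinery: Kakariadis's theorem that $(X,\C)$ is a Hilbert bimodule iff $\T^+_X$ is Dirichlet, combined with Theorem~\ref{Dirichletenv} (giving $\O_X\cpf=\cenv(\T_X^+)\cpf\simeq\cenv(\T_X^+\rtimes_{\O_X,\alpha}\G)$) and Theorem~\ref{HNtensor3} (giving $\cenv(\T^+_X\rtimes_{\O_X,\alpha}\G)\simeq\O_{X\cpd}$). You instead argue entirely on the $\ca$-level: the inclusion of $(X\cpd,\C\cpd)$ into $\O_X\cpf$ is an injective covariant representation admitting the gauge action $\gamma_z\rtimes\id$, so Katsura's Gauge-Invariant Uniqueness Theorem applies and its image exhausts $\O_X\cpf$. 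That is a legitimate alternative; it buys independence from the tensor-algebra theory, while the paper's route gets the Cuntz--Pimsner covariance for free and illustrates why the Dirichlet hypothesis is the right one. For the remaining identification $\C\cpd\simeq\C\cpf$ (hence $\O_{X\cpd}\simeq\O_{X\cpf}$), both arguments use the $\G$-equivariant conditional expectation onto $\C$, exactly as in the paper's appeal to Combes and Itoh. Two imprecisions are worth flagging, neither fatal. First, \emph{faithfulness} of $\widetilde E$ is not the operative property (and is not automatic for full crossed products); what makes $\C\cpf\to\O_X\cpf$ completely isometric is that $\widetilde E$ is bounded for the two universal norms and satisfies $\widetilde E\circ\iota=\id_{\C\cpf}$ --- establishing that boundedness is precisely the content of the Combes/Itoh result, and ``extended by continuity'' conceals the only nontrivial step. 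Second, the condition $I'_{(\rho,t)}=\{0\}$ belongs to the Toeplitz version of the uniqueness theorem (Theorem~\ref{GIUThm}); for the Cuntz--Pimsner version what you need is injectivity on $\C\cpd$ (automatic by construction), Cuntz--Pimsner covariance (which holds because, as you observe, $X\cpd$ is again a Hilbert bimodule with left inner product $[S,T]=ST^*$, and covariance is automatic for bimodule representations), and the gauge action.
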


\begin{proof}
Kakariadis has proven~\cite[Theorem 2.2]{Kak} that a $\ca$-correspondence $(X, C)$ is a Hilbert bimodule iff the tensor algebra $\T^+_X$ is Dirichlet. Therefore we can apply Theorem~\ref{Dirichletenv} and Theorem~\ref{HNtensor3} to show that 
\[
\O_X\cpf \simeq \cenv(\T_X^+)\cpf \simeq \cenv(\T_X^+ \rtimes_{\O_X , \alpha} \G) \simeq O_{X \cpd}
\]
as desired.
It remains to verify that $O_{X\cpd} \simeq \O_{X\cpf}$. Let $\Phi$ be the conditional expectation appearing in (\ref{condexpO}). Since $(X, \C)$ is a Hilbert bimodule, $\Phi$ projects onto $\C$ \cite[Proposition 5.18 (i)]{KatsuraJFA}. Furthermore $\Phi$ commutes with $\alpha$. Hence the requirements of \cite[Section~\ref{intro}0, Proposition]{Combes} or \cite{Itoh} are satisfied and so $\C \cpf \simeq \C \cpd$ via a map that sends generators to generators. This completes the proof.
\end{proof}

It is instructive to recast Theorem~\ref{HNBim} in the language of Abadie~\cite{Ab}.

\begin{corollary} \label{nice}
Let $(\beta , \gamma)$ be a covariant action of a locally compact group $\G$ on a Hilbert $\C$-bimodule $X$. If $\alpha$ is the strongly continuous action of $\G$ on $\C \rtimes X$ induced by $(\beta , \gamma)$, then $(\C \rtimes X)\rtimes_{\alpha} \G \simeq (\C \rtimes_{\beta} \G)\rtimes ( X \rtimes_{\gamma} \G)$.
\end{corollary}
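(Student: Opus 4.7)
The plan is to show that Corollary~\ref{nice} is really just a translation of Theorem~\ref{HNBim} into the language of Abadie's ``crossed products by Hilbert bimodules'' from \cite{Ab}. Thus the task reduces almost entirely to bookkeeping: identifying Abadie's objects with the Cuntz-Pimsner objects appearing in Theorem~\ref{HNBim}, and verifying that the actions match up.

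First I would unwind Abadie's notation. For a Hilbert $\C$-bimodule $X$, the algebra $\C \rtimes X$ is by definition the Cuntz-Pimsner algebra $\O_X$; this is why the construction was historically called a ``crossed product by a Hilbert bimodule''. The covariant pair $(\beta,\gamma)$ consists of a continuous action $\beta : \G \rightarrow \Aut \C$ together with a compatible linear action $\gamma : \G \rightarrow \mathcal{L}(X)$ satisfying the covariance relations with respect to both the left and right $\C$-module structures and the $\C$-valued inner products. By the universal property of $\O_X = \C \rtimes X$, this pair integrates to a unique strongly continuous action $\alpha : \G \rightarrow \Aut \O_X$ which restricts to $\beta$ on (the canonical copy of) $\C$ and to $\gamma$ on (the canonical copy of) $X$; in particular $\alpha$ is a generalized gauge action in the sense of Chapter~\ref{Hao}.

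Next I would identify the right-hand side. The construction $X \rtimes_\gamma \G$ in Abadie's notation produces a Hilbert $(\C \rtimes_\beta \G)$-bimodule whose underlying space is the completion of $C_c(\G, X)$ under the natural left and right actions and inner products induced by $\gamma$; a direct comparison with the definition of $X \cpd$ from Chapter~\ref{Hao} Section~\ref{general case} shows that the two are unitarily equivalent as Hilbert $(\C \cpd)$-bimodules, via the map sending generators to generators. (Here one uses that $\gamma$ is implemented inside $\O_X$ by $\alpha$, so that the $\C\cpd$-valued inner product on $X\cpd$ computed inside $\O_X \cpf$ agrees with the one defined intrinsically by $\gamma$.) Consequently $(\C \rtimes_\beta \G)\rtimes (X \rtimes_\gamma \G) \simeq \O_{X \cpd}$, via an isomorphism that intertwines the generating copies of $\C \rtimes_\beta \G$ and $X \rtimes_\gamma \G$.

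With these identifications in hand, the corollary follows from Theorem~\ref{HNBim}, which gives $\O_X \rtimes_\alpha \G \simeq \O_{X \cpd}$. The only subtle point—and the one I would check most carefully—is the claim that $X \rtimes_\gamma \G$ and $X \cpd$ are unitarily equivalent Hilbert bimodules over $\C \cpd$. The fact that $X$ is a Hilbert bimodule (not merely a correspondence) is precisely what is needed: by \cite[Proposition~5.18(i)]{KatsuraJFA} the canonical expectation $\Phi$ on $\O_X$ lands in $\C$, and $\Phi$ commutes with $\alpha$ (since $\alpha$ is a generalized gauge action), so that $\C \cpf \simeq \C \cpd$ canonically—this was already exploited in the proof of Theorem~\ref{HNBim}. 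The same averaging then ensures that the $\C \cpd$-valued inner product agrees with the one prescribed by Abadie, completing the identification.
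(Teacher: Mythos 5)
Your proposal is correct and takes essentially the same route as the paper: the authors themselves present Corollary~\ref{nice} as nothing more than a recasting of Theorem~\ref{HNBim} into Abadie's language, noting that Abadie's covariant pair with its induced action is the same framework as a generalized gauge action and that $\C \rtimes X \simeq \O_X$. Your write-up simply makes explicit the bookkeeping (in particular the identification of $X \rtimes_\gamma \G$ with the crossed product correspondence) that the paper leaves to the reader.
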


Abadie's~\cite{Ab} ``covariant pair" and its ``induced strongly continuous action" constitute the same framework of study as the "generalized gauge action of a locally compact group" of this monograph.  What Abadie defines as $\C \rtimes X$ is isomorphic to the Cuntz-Pimsner algebra $\O_X$ and so the above corollary is indeed a recasting of Theorem~\ref{HNBim}. 

Corollary~\ref{nice} was obtained by Abadie as Proposition 4.5 but only in the case where $\G$ is amenable. It is a technical result with a rather long proof. Hao and Ng \cite{HN} considered Abadie's result as a motivating force for their theory. They gave a very short proof of it \cite[Corollary 2.12]{HN} as an application of their theory, but again, only in the case where $\G$ is amenable. It is quite pleasing to see that our ``non-selfadjoint" approach removes the requirement of $\G$ being amenable from all previous considerations.  

In \cite{HN}, Hao and Ng give a second application of their theorem, this time involving the generalized gauge action of an abelian group $\G$. Actually using the results of this monograph, we can give an alternative proof of the Hao-Ng Theorem for the case where $\G$ is abelian. Indeed combining Theorem~\ref{abelianenv} and \cite[Theorem 3.7]{KatsoulisKribsJFA} we obtain 
\[
\O_X\cpf \simeq \cenv(\T_X^+)\cpf \simeq \cenv(\T_X^+ \rtimes_{\alpha} \G).
\]
However the amenability of $\G$ and Theorem~\ref{HNtensor3} imply
\[
\cenv(\T_X^+ \rtimes_{\alpha} \G) \simeq \cenv(\T_X^+ \rtimes_{\O_X, \alpha} \G)  \simeq \O_{X \cpd} \simeq \O_{X \cpf}
\]
as desired. It is worth mentioning that even the case $\G = \bbT$ of the Hao-Ng Theorem is being used in current research.


\chapter{Concluding Remarks and Open Problems} \label{problems}

We close the monograph with a brief discussion of various open problems that have appeared throughout and we consider them important for the further development of the theory.

\begin{problem} \label{identity}
If $(\A, \G , \alpha)$ is a dynamical system, then verify the identity
\[
\cenv\big(\A \cpf\big)\simeq \cenv(\A)\cpf.
\]
\end{problem}
Without any doubt this is the most important problem left open in the monograph. At the end of the previous chapter we indicated that a positive resolution of Problem \ref{identity} and its relative crossed product variants will also imply a positive resolution of the Hao-Ng isomorphism problem.  We have verified Problem~\ref{identity} in the case where $\G$ is a locally compact abelian group (Theorem~\ref{abelianenv})  and in the case where $\A$ is Dirichlet (Theorem \ref{Dirichletenvr}).

\begin{problem} \label{relative}
Give an example of a dynamical system $(\A, \G , \alpha)$ and two $\alpha$-admissible $\ca$-covers $(C_i, j_i)$ for $\A$, $j=1,2$, so that 
\[
\A \rtimes_{\C_1, j_1, \alpha} \G \not\simeq \A \rtimes_{\C_2, j_2, \alpha} \G 
\]
\end{problem}

Theorem~\ref{r=f} shows that for such a (counter)example, $\G$ will have to be non-amenable. This problem also relates to the various crossed product $\ca$-correspondences appearing in Chapter~\ref{Hao} and our recasting of the Hao-Ng isomorphism problem.

\begin{problem} \label{TXrelative}
Let $(X, \C)$ be a non-degenerate $\ca$-correspondence and let $\alpha: \G \rightarrow (X, \C)$ be the generalized gauge action of a locally compact group. Is $\T_X^+ \cpf $ the tensor algebra of some $\ca$-correspondence?
\end{problem}

In Chapter~\ref{Hao} we did not deal with the full crossed product $\T^+_X \cpf$ as it is not relevant to the Hao-Ng isomorphism problem. Nevertheless it is important to know the answer. Note that this problem too is open only for non-amenable groups. If Problem~\ref{relative} has a negative answer, i.e., all relative full crossed products are isomorphic, then Theorem~\ref{HNtensor3} will imply a positive answer for this problem.

\begin{problem} \label{probsemis}
If $\A$ is semisimple does it follow that $\A \rtimes_{\alpha} \bbR$ is also semisimple? What about the converse?
\end{problem}

This problem is motivated by Theorems~\ref{firstsemisimple} and \ref{Tsemis} which treat the cases where $\G$ is either discrete and abelian or compact and abelian respectively. What about other groups? It would also be interesting to have a characterization of semisimplicity for algebras of the form $\A \cpf$ where $\A$ is a strongly maximal TAF algebra and $\G=\bbT$ or $\bbR$.

\begin{problem} \label{diagprob}
Characterize the diagonal for  either $\A \cpf$ or $\A \cpr$.
\end{problem}

Of course the ``right" answer is that the diagonal of $\A \cpf$ is $(\A \cap\A^*)\cpf$, while the diagonal of $\A \cpr$ is $(\A \cap\A^*)\cpr$. Theorem~\ref{diagonal descr} verifies that in the case where $\G$ is a discrete amenable group. Also algebras of the form $\A \cpf$ or $\A \cpr$ that happen to be tensor algebras for some correspondence $(X, \C)$ have diagonal equal to $\C$. So we can characterize the diagonal of the crossed products appearing in Chapter~\ref{Hao}. We know nothing beyond these two cases.

\begin{problem}
When are two algebras of the form $A(\bbD) \rtimes_{\alpha} \bbZ$ isomorphic as algebras?
\end{problem}

Of course there is nothing special about the disc algebra $A(\bbD)$ but this seems to be the simplest case of the isomorphism problem for non-selfadjoint crossed products and yet we know very little even in that special case. Note that if $\alpha$ is an elliptic M\"obius automorphism of the disc, then $A(\bbD) \rtimes_{\alpha} \bbZ \simeq C(\bbT) \times_{\alpha}\bbZ^+$ and so the theory of Davidson and Katsoulis \cite{DavKatCr} applies. 

\begin{problem}
Give complete isomorphism invariants for algebras of the form $\A \rtimes_{\alpha} \bbZ$, where $\A$ is a strongly maximal TAF algebra and $\alpha$ an isometric automorphism.
\end{problem}

The TAF algebras have been classified up to isometric isomorphism through the use of the the fundamental groupoid. (See \cite{Pow} and the references therein.) We wonder whether one can develop an analogous theory for crossed products of such algebras. There is nothing special for $\G = \bbZ$; a broader theory would be welcome as well.

\vspace{0.1in}

{\noindent}{\it Acknowledgement.} EK has benefited from many discussions through the years that inspired him and guided him in fleshing out some of the ideas appearing in this monograph. In that respect, he is particularly grateful to Mihalis Anoussis, Aristedes Katavolos and Justin Peters for their insight and  patient listening. EK is also grateful to the second named author and the University of Virginia for inviting him to visit during the preparation of this work and the subsequent hospitality. 

Certain parts of this monograph were presented by EK to an audience during his visit at the Chinese Academy of Science in the summer of 2016. EK would like to thank both his hosts Professors Liming Ge and Wei Yuan for the stimulating conversations and their hospitality during his stay.

Both authors are grateful to the anonymous referee, whose comments and corrections helped improve the quality of the monograph. 

\vspace{0.1in}

{\noindent}{\it Note added in proof.} In a recent paper \cite{KatsIMRN} the first named author has resolved Problem \ref{identity} for all discrete amenable groups by verifying the identity $\cenv(\A \cpr) \simeq \cenv(\A)\cpr$, for any discrete group $\G$. This has also consequences for the Hao-Ng isomorphism problem. On the other hand, Harris and Kim in \cite{HK} present counterexamples which show that the identity of Problem \ref{identity} may fail for discrete groups which are not amenable. In the same paper, Harris and Kim also answer Problem~\ref{relative}. 

In \cite{KatsRamJFA}, the authors of the present paper continue their investigation on Takai duality and its applications. In particular, they show that any semicrossed product of an operator algebra by an abelian ordered group is stably isomorphic to a non-selfadjoint crossed product. This leads to more examples of non-semisimple operator algebras which give semisimple crossed products. In \cite{KatsRamJFA} it is also shown that Problem~\ref{probsemis} has a negative answer. In the same paper, the authors partly resolve Problem~\ref{diagprob} by characterizing the diagonal of $\A\cpf$ in the case where $\G$ is a compact abelian group.

In \cite{KatsRamHaoNg} the authors of the present paper continue the investigation of the Hao-Ng isomorphism problem and its connections with non-selfadjoint crossed products. Among others, they provide a positive answer to Problem~\ref{TXrelative}. In earlier versions of this monograph it was claimed that the validity of (\ref{introident}) for crossed products of tensor algebras by gauge automorphisms is equivalent to the positive resolution of the Hao-Ng isomorphism problem. Even though in this monograph it is established that (\ref{introident}) and its variants imply the Hao-Ng isomorphism, the converse requires more work. The case of hyperrigid $\ca$-correspondences is treated in \cite{KatsRamHaoNg}.

\backmatter
\bibliographystyle{amsplain}

\begin{thebibliography}{99}

\bibitem{Ab} B. Abadie,  \textit{Takai duality for crossed products by Hilbert $\ca$-bimodules},
J. Operator Theory \textbf{64} (2010), 19--34. MR2669425

\bibitem{AS}J. Arazy and B. Solel,
\textit{Isometries of nonselfadjoint operator algebras},
J. Funct. Anal. \textbf{90} (1990), 284--305. MR1052336

\bibitem{Arv1} W. Arveson,  
\textit{Operator algebras and measure preserving automorphisms}, Acta Math. \textbf{118} (1967), 95--109. MR0210866

\bibitem{Arvsub} \bysame,
\textit{Subalgebras of $\ca$-algebras}, Acta Math. \textbf{123} (1969), 141--224. MR0253059



\bibitem{Arvenv} \bysame,
\textit{The non-commutative Choquet boundary}, J. Amer. Math. Soc. \textbf{21} (2008), 1065--1084. MR2425180

\bibitem{ArvJ}  W. Arveson and K. Josephson, 
\textit{Operator algebras and measure preserving automorphisms II},
J. Funct. Anal. \textbf{4} (1969), 100--134. MR0250081



\bibitem{BKQR}  E. Bedos, S. Kaliszewski, J. Quigg and D. Robertson, 
\textit{A new look at crossed product correspondences and associated $\ca$--algebras},
J. Math. Anal. Appl. \textbf{426} (2015), 1080--1098. MR3314880

\bibitem{Bl1} D. Blecher,
\textit{Modules over operator algebras and the maximal $\ca$-dilation}, J. Funct. Anal. \textbf{169} (1999), 251--288. MR1726755

\bibitem{Bl2} \bysame,
\textit{A Morita theorem for algebras of operators on Hilbert space}, J. Pure Appl. Algebra \textbf{156} (2001), 153--169. MR1808820

\bibitem{BlLM} D. Blecher and C. Le Merdy,
\textit{Operator algebras and their modules-an operator space approach}, London Mathematical Society Monographs New Series \textbf{30}, Oxford University Press, 2004. MR2111973

\bibitem{Burckel} R. Burckel
\textit{Iterating Analytic Self-Maps of Discs}, The American Mathematical Monthly \textbf{88} (1981), 396--407. MR0622955 

\bibitem{BO} N. Brown and N. Ozawa,
\textit{$\ca$-algebras and finite-dimensional approximations}, Graduate Studies in Mathematics, \textbf{88}, American Mathematical Society, Providence, RI, 2008. MR2391387

\bibitem{Combes} F. Combes,
\textit{Crossed products and Morita equivalence}, 
 Proc. London Math. Soc. \textbf{49} (1984), 289--306. MR0748991
 
 \bibitem{CM}  C. Cowen and B. MacCluer, 
 \textit{Composition operators on spaces of analytic functions} Studies in Advanced Mathematics, CRC Press, Boca Raton, FL, 1995. xii+388 pp. MR1397026
 
 \bibitem{Dav} K. Davidson,
 \textit{Similarity and compact perturbations of nest algebras}, J. Reine Angew. Math. \textbf{348}(1984), 286--294. MR0733923

 \bibitem{DFK} K. Davidson, A. Fuller and E.T.A. Kakariadis,
    \textit{Semicrossed products of operator algebras by semigroups}, Mem. Amer. Math. Soc., in press.


\bibitem{DavKatAdv} K. Davidson and E. Katsoulis, 
\textit{Primitive limit algebras and $\ca$-envelopes}, Adv. Math. \textbf{170} (2002), 181--205. MR1932328


\bibitem{DavKatCr}  \bysame, \textit{Isomorphisms between topological conjugacy algebras}, J. Reine Angew. Math.\textbf{ 621} (2008), 29--51. MR2431249


\bibitem{DavKatAn} \bysame, \textit{Semicrossed products of simple C*-algebras},  Math. Ann. \textbf{342} (2008), 515--525. MR2430987


\bibitem{DKDoc} \bysame, \textit{Dilation theory, commutant lifting and semicrossed products}, Doc. Math. \textbf{16} (2011), 781--868. MR2861393

\bibitem{DavKatMem} \bysame, \textit{Operator algebras for multivariable dynamics} Mem. Amer. Math. Soc. \textbf{209} (2011), no. 982, viii+53 pp. MR2752983


\bibitem{DavKen} K. Davidson and M. Kennedy,
\textit{The Choquet boundary of an operator system}, Duke Math. J., {\bf 164} (2015), 2989--3004. MR3430455

\bibitem{DavPit} K. Davidson and D. Pitts, \textit{The algebraic structure of non-commutative analytic Toeplitz algebras}, Math. Ann. \textbf{311} (1998), 275--303. MR1625750   


\bibitem{DRS}  K. Davidson, C. Ramsey and O. Shalit, 
\textit{The isomorphism problem for some universal operator algebras}, Adv. Math. \textbf{228} (2011), 167--218. MR2822231

\bibitem{DRS2}  \bysame, 
\textit{Operator algebras for analytic varieties} Trans. Amer. Math. Soc. \textbf{367} (2015), 1121--1150. MR3280039

\bibitem{DKQ} V. Deaconu, A. Kumjian and J. Quigg,
\textit{Group actions on topological graphs}, Ergodic Theory Dynam. Systems  \textbf{32} (2012),1527--1566. MR2974209

\bibitem{Don} A. Donsig,
\textit{Semisimple triangular AF algebras}, J. Funct. Anal. \textbf{111} (1993), 323--349. MR1203457

\bibitem{DonHJFA}  A. Donsig and A. Hopenwasser,
\textit{Order preservation in limit algebras}, J. Funct. Anal. \textbf{133} (1995), 342--394. MR1354035

\bibitem{DonH}  \bysame,
\textit{Analytic partial crossed products},  Houston J. Math. \textbf{31} (2005),495--527. MR2132849


\bibitem{DKM}  A. Donsig, A. Katavolos  and A. Manoussos, 
\textit{The Jacobson radical for analytic crossed products}, J. Funct. Anal. \textbf{187} (2001), 129--145. MR1867344

\bibitem{DMar} A. Dor-On and D. Markiewicz,
\textit{Operator algebras and subproduct systems arising from stochastic matrices}, J. Funct. Anal. \textbf{267} (2014), 1057--1120. MR3217058 

\bibitem{DMar2} \bysame,
\textit{$\ca$-envelopes of tensor algebras arising from stochastic matrices}, arXiv:1605.03543.

\bibitem{DrMc} M. Dritschel and S. McCullough, \textit{Boundary representations for families of representations of operator algebras and spaces},  J. Operator Theory  \textbf{53}  (2005), 159--167. MR2132691

\bibitem{FMR} N. Fowler, P. Muhly and I. Raeburn, 
\textit{Representations of Cuntz-Pimsner algebras}
Indiana Univ. Math. J. \textbf{52} (2003), 569--605. MR1986889

\bibitem{Es} S. Echterhoff, S. Kaliszewski, J. Quigg, and I. Raeburn, 
\textit{A categorical approach to imprimitivity theorems for $\ca$-dynamical systems}, Mem. Amer. Math. Soc. \textbf{180} (850), (2006). MR2203930

\bibitem{ER}  E. Effros and Z. Ruan, 
\textit{On nonselfadjoint operator algebras}, Proc. Amer. Math. Soc. \textbf{110} (1990), 915--922. MR0986648

\bibitem{EF}  G. Elliott and X. Fang, 
\textit{Ideals and simplicity of crossed products of graph $\ca$-algebras by quasi-free actions}, M\"unster J. Math. \textbf{3} (2010), 11--18. MR2775353

\bibitem{Exel} R. Exel, \textit{KMS states for generalized gauge actions on Cuntz-Krieger algebras (an application of the Ruelle-Perron-Frobenius theorem)}, Bull. Braz. Math. Soc. (N.S.) \textbf{35} (2004), 1--12. MR2057042

\bibitem{GootL} E. Gootman and A. Lazar, 
\textit{Crossed products of type I AF $\ca$-algebras by abelian groups}, Isr. J. Math \textbf{56} (1986), 267--279. MR0882253

\bibitem{GootL2} \bysame, 
\textit{Applications of non-commutative duality to crossed product $\ca$-algebras determined by an action or coaction}, Proc. London Math. Soc. \textbf{59} (1989), 593--624. MR1014872

\bibitem{Hamana} M. Hamana, \textit{Injective envelopes of operator systems}, Publ. Res. Inst. Math. Sci. \textbf{15} (1979), 773--785. MR0566081

\bibitem{HN} G. Hao and C-K. Ng,
\textit{Crossed products of $\ca$-correspondences by amenable group actions},
J. Math. Anal. Appl. \textbf{345} (2008), 702--707. MR2429169

\bibitem{HK} S. Harris and S. Kim,
\textit{Crossed products of operator systems}, manuscript arXiv:1803.10759.

\bibitem{Har}  M. Hartz, 
\textit{Topological isomorphisms for some universal operator algebras}, 
J. Funct. Anal. \textbf{263} (2012), 3564--3587. MR2984075

\bibitem{Hu} T. Hudson,
\textit{Ideals in triangular AF algebras}, Proc. London Math. Soc. \textbf{69} (1994), 345--376. MR1281969

\bibitem{Itoh} S. Itoh, 
\textit{Conditional expectations in $\ca$-crossed products}, Trans. Amer. Math. Soc, \textbf{267} (1981), 661--667. MR0626496

\bibitem{Jorg}  P. Jorgensen and X.C. Quan, 
\textit{Covariance group $\ca$-algebras and Galois correspondence},
 Internat. J. Math. \textbf{2} (1991), 673--699. MR1137092

\bibitem{Kak}  E.T.A. Kakariadis,
\textit{The Dirichlet property for tensor algebras}, Bull. Lond. Math. Soc. \textbf{45} (2013), 1119--1130. MR3138481

\bibitem{Kak2}  E.T.A. Kakariadis, 
\textit{The Silov boundary for operator spaces}, Integral Equations Operator Theory \textbf{76} (2013), 25--38.

\bibitem{KakKatJFA1} E.T.A. Kakariadis and E. Katsoulis,
\textit{Semicrossed products of operator algebras and their $\ca$-envelopes},  J. Funct. Anal. \textbf{262} (2012), 3108--3124. MR2885949

\bibitem{KS} E. Kakariadis and O. Shalit,
\textit{On operator algebras associated with monomial ideals in noncommuting variables}, arXiv:1501.06495. 

\bibitem{KQR} S. Kaliszewski, J. Quigg and D. Robertson, 
\textit{Functoriality of Cuntz-Pimsner correspondence maps}, J. Math. Anal. Appl. \textbf{405} (2013), 1--11. MR3053482

\bibitem{KatsIMRN} E. Katsoulis,
\textit{$\ca$-envelopes and the Hao-Ng isomorphism for discrete groups}, Int. Math. Res. Not. IMRN, Volume 2017, Issue 18 (2017), 5751--5768.

\bibitem{Katssurvey} \bysame,
\textit{Non-selfadjoint operator algebras: dynamics, classification and $\ca$-envelopes}, Recent advances in operator theory and operator algebras, 27--81, CRC Press, Boca Raton, FL, 2018.

\bibitem{KaKr} E. Katsoulis and D. Kribs, \textit{Isomorphisms of algebras associated with directed graphs}, Math. Ann.  \textbf{330}  (2004), 709--728. MR2102309

\bibitem{KatsoulisKribsJFA} \bysame, \textit{Tensor algebras of $C^*$-correspondences and their $\ca$-envelopes}, J. Funct. Anal.  \textbf{234}  (2006), 226--233. MR2214146

\bibitem{KR} E. Katsoulis and C. Ramsey,
\textit{Limit algebras and integer-valued cocycles, revisited}, J. London Math. Soc. \textbf{94}, (2016), 839--858.

\bibitem{KatsRamJFA} E. Katsoulis and C. Ramsey,
\textit{Crossed products of operator algebras: applications of Takai duality}, J. Funct. Anal.  \textbf{275} (2018), 1173--1207.

\bibitem{KatsRamHaoNg} E. Katsoulis and C. Ramsey,
\textit{The non-selfadjoint approach to the Hao-Ng isomorphism}, arXiv:1807.11425.

\bibitem{Katsuracomm} T. Katsura, 
\textit{Private communication with Hao and Ng}.

\bibitem{KatsuraJFA}  \bysame, 
\textit{On $\ca$-algebras associated with $\ca$-correspondences},
 J. Funct. Anal. \textbf{217} (2004), 366--401. MR2102572

\bibitem{Katsura} \bysame, \textit{A class of $\ca$-algebras generalizing both graph algebras and homeomorphism $\ca$-algebras. I. Fundamental results}, Trans. Amer. Math. Soc.  \textbf{356}  (2004), 4287--4322. MR2067120

\bibitem{Kim} D.W. Kim,
\textit{Coactions of Hopf $\ca$-algebras on Cuntz-Pimsner algebras}, arXiv: 1407.6106 [math.OA].

\bibitem{Kis}  A. Kishimoto, \textit{Outer automorphisms and reduced crossed products of simple $\ca$-algebras}, Comm. Math. Phys. \textbf{81} (1981), 429--435. MR0634163

\bibitem{KK} A. Kishimoto and A. Kumjian, 
\textit{Crossed products of Cuntz algebras by quasi-free automorphisms}, Operator algebras and their applications (Waterloo, ON, 1994/1995), 173--192, Fields Inst. Commun., 13, Amer. Math. Soc., Providence, RI, 1997. MR1424962

\bibitem{LN} M. Laca and S. Neshveyev, 
\textit{KMS states of quasi-free dynamics on Pimsner algebras}, J. Funct. Anal. \textbf{211} (2004), 457--482. MR2056837


\bibitem{LS} D. Larson and B. Solel, 
\textit{Structured triangular limit algebras} Proc. London Math. Soc. \textbf{75} (1997), 177--193. MR1444318

\bibitem{McM} M. McAsey and P. Muhly, 
\textit{Representations of nonselfadjoint crossed products}, Proc. London Math. Soc.  \textbf{47} (1983), 128--144. MR0698930

\bibitem{Mey}  R. Meyer, 
\textit{Adjoining a unit to an operator algebra},  J. Operator Theory \textbf{46} (2001), 281--288. MR1870408

\bibitem{M} P. Muhly, 
\textit{Radicals, crossed products, and flows}, Ann. Polon. Math. \textbf{43} (1983), 35--42. MR0727885

\bibitem{MS} P. Muhly and B. Solel, \textit{Tensor algebras over $C^*$-correspondences: representations, dilations, and $C^*$-envelopes},  J. Funct. Anal.  \textbf{158}  (1998), 389--457. MR1648483

\bibitem{MS2} \bysame, \textit{Schur class operator functions and automorphisms of Hardy algebras}, Doc. Math. \textbf{13} (2008), 365--411. MR2520475

\bibitem{MSf1} \bysame, \textit{Tensorial function theory: from Berezin transforms to Taylor's Taylor series and back}, Integral Equations Operator Theory \textbf{76} (2013), 463--508. MR3073943

\bibitem{MSf2} \bysame, \textit{Matricial function theory and weighted shifts}, Integral Equations Operator Theory \textbf{84} (2016), 501--553. MR3483873

\bibitem{Nebbia} C. Nebbia,
\textit{A note on the amenable subgroups of $PSL(2,\bbR)$}, Monatsh. Math. \textbf{107} (1989), 241--244. MR1008682

\bibitem{Nielsen} M. Nielsen, 
\textit{Duality for full crossed products of $\ca$-algebras by non-amenable groups}, Proc. Amer. Msth. Soc. \textbf{126} (1998), 2969--2978. MR1469427

\bibitem{PaulsExt} V. Paulsen, 
\textit{A covariant version of Ext}, Michigan Math. J. \textbf{29} (1982), 131--142. MR0654474

\bibitem{Paulsen} \bysame,
\textit{Completely Bounded Maps and Operator Algebras}, Cambridge Studies in Advanced Mathematics \textbf{78}, Cambridge University Press, 2002. MR1976867

\bibitem{Pet} J. Peters,
\textit{Semicrossed products of $\ca$-algebras}, J. Funct. Anal. \textbf{59} (1984), 498--534. MR0769379

\bibitem{Pet2} \bysame, 
\textit{The ideal structure of certain nonselfadjoint operator algebras}, Trans. Amer. Math. Soc. \textbf{305} (1988), 333--352. MR0920162

\bibitem{PW}Y. Poon and B. Wagner,
\textit{$\bbZ$-analytic TAF algebras and dynamical systems}, Houston J. Math. \textbf{19} (1993), 181--199. MR1225457

\bibitem{Pop} G. Popescu, \textit{Non-commutative disc algebras and their representations}, Proc. Amer. Math. Soc.  \textbf{124}  (1996), 2137--2148. MR1343719

\bibitem{Pop2} \bysame, \textit{Free holomorphic automorphisms of the unit ball of $B(H)^n$}, J. Reine Angew. Math. \textbf{638} (2010), 119--168. MR2595338

\bibitem{Pop3} \bysame, \textit{Operator theory on noncommutative domains}, Mem. Amer. Math. Soc. \textbf{205} (2010), no. 964, vi+124 pp. MR2643314

\bibitem{Pop4} \bysame, \textit{Free biholomorphic classification of noncommutative domains}, Int. Math. Res. Not. (2011) 784--850. MR2773331

\bibitem{Pop5} \bysame, \textit{Joint similarity to operators in noncommutative varieties}, Proc. Lond. Math. Soc. \textbf{103} (2011), 331--370. MR2821245

\bibitem{Pop6} \bysame, \textit{Free biholomorphic functions and operator model theory} J. Funct. Anal. \textbf{262} (2012), 3240--3308. MR2885953

\bibitem{Pow} S.C. Power,
\textit{Limit algebras: an introduction to subalgebras of $\ca$-algebras}, Pitman Research Notes in Mathematics Series \textbf{278}, Longman Scientific $\&$ Technical, Harlow 1992. MR1204657

\bibitem{Q} J. Quigg, 
\textit{private communication.}

\bibitem{Raeb} I. Raeburn,
\textit{On crossed products and Takai duality}, Proc. Edinburgh Math. Soc. \textbf{31} (1988), 321--330. MR0989764

\bibitem{Ram} C. Ramsey,
\textit{Automorphisms and dilation theory of triangular UHF algebras}, Integral Equations Operator Theory \textbf{77} (2013), 89--105. MR3090165

\bibitem{Ram2} \bysame,
\textit{Automorphisms of free products and their application to multivariable dynamics}, Indiana Univ. Math. J. \textbf{65} (2016), 899--924.

\bibitem{Rordam}  M. R{\o}rdam, F. Larsen and N. Laustsen, 
\textit{An introduction to K-theory for $\ca$-algebras}, London Mathematical Society Student Texts, \textbf{49}, Cambridge University Press, Cambridge (2000), xii+242 pp. MR1783408

\bibitem{Rin}  J. Ringrose, 
\textit{On some algebras of operators}, Proc. London Math. Soc. \textbf{15} (1965), 61--83. MR0171174

\bibitem{R2}  \bysame, 
\textit{On some algebras of operators II} Proc. London Math. Soc. (3) \textbf{16} (1966) 385--402. MR0196516

\bibitem{Rudin} W. Rudin,
\textit{Function theory in the unit ball of $\bbC^n$}, Springer-Verlag, New York, NY, 1980. MR0601594

\bibitem{SS}  O. Shalit and B. Solel, 
\textit{Subproduct systems}, Doc. Math. \textbf{14} (2009), 801--868. MR2608451

\bibitem{Sch} C. Schafhauser, 
\textit{Cuntz-Pimsner algebras, crossed products and K-theory}, J. Funct. Anal., {\bf 269} (2015), 2927--2946. MR3394625

\bibitem{Takai} H. Takai,
\textit{On a duality for crossed products of $\ca$-algebras},
J. Functional Analysis \textbf{19} (1975), 25--39. MR0365160

\bibitem{Vis} A.  Viselter,
\textit{Covariant representations of subproduct systems}, Proc. Lond. Math. Soc. (3) \textbf{102} (2011), 767--800. MR2793449

\bibitem{Vis2} \bysame, 
\textit{Cuntz-Pimsner algebras for subproduct systems} Internat. J. Math. \textbf{23} (2012), 1250081, 32 pp. MR2949219

\bibitem{Will} D. Williams, 
\textit{Crossed products of $\ca$-algebras}, Mathematical Surveys and Monographs, Vol. 134, American Mathematical Society, 2007. MR2288954


\end{thebibliography}


\end{document}